\theoremstyle{plain}
\newtheorem{thm}{Theorem}[section]
\numberwithin{equation}{section} 
\numberwithin{figure}{section} 
\theoremstyle{plain}
\newtheorem{cor}[thm]{Corollary} 
\theoremstyle{definition}
\newtheorem{defn}[thm]{Definition}
\theoremstyle{plain}
\newtheorem{lem}[thm]{Lemma} 
\theoremstyle{plain}
\newtheorem{prop}[thm]{Proposition} 
\theoremstyle{plain}
\newtheorem{fact}[thm]{Fact}
\theoremstyle{plain}
\theoremstyle{plain}
\newtheorem{rem}[thm]{Remark}
\newtheorem{exam}[thm]{Example}
\newtheorem{obse}[thm]{Observation}
\newtheorem{theo}[thm]{Theorem}
\newtheorem{rema}[thm]{Remark}
\newtheorem{nota}[thm]{Notation}
\newcommand{\Z}{\mathbb{Z}}
\newcommand{\FF}{\mathbb{F}}
\newcommand{\cL}{\mathcal{L}}
\newcommand{\og}{\omega}
\newcommand{\cA}{\mathcal{A}}
\def\seq{\subseteq}
\newcommand{\Frac}{\mathrm{Frac}}
\newcommand{\dv}{\mathrm{div}}
\newcommand{\set}[1]{\left\{ {#1} \right\}}
\newcommand{\abs}[1]{\lvert {#1} \rvert}
\def \<{\langle}
\def \>{\rangle}
\def\Ind#1#2{#1\setbox0=\hbox{$#1x$}\kern\wd0\hbox to 0pt{\hss$#1\mid$\hss}
\lower.9\ht0\hbox to 0pt{\hss$#1\smile$\hss}\kern\wd0}
\def\ind{\mathop{\mathpalette\Ind{}}}
\def\Notind#1#2{#1\setbox0=\hbox{$#1x$}\kern\wd0\hbox to 0pt{\mathchardef
\nn=12854\hss$#1\nn$\kern1.4\wd0\hss}\hbox to
0pt{\hss$#1\mid$\hss}\lower.9\ht0 \hbox to
0pt{\hss$#1\smile$\hss}\kern\wd0}
\def\nind{\mathop{\mathpalette\Notind{}}}
\newcommand{\eq}{^{\operatorname{eq}}}
\newcommand{\acl}{\operatorname{acl}}
\newcommand{\spane}{\operatorname{span}}
\newcommand{\tp}{\operatorname{tp}}
\newcommand{\dcl}{\operatorname{dcl}}
\newcommand{\scl}{\operatorname{scl}}
\newcommand{\vect}[1]{\langle {#1} \rangle}
\begin{document}

\title{Vector spaces with a dense-codense generic submodule}

\date{\today}

\author{Alexander Berenstein}
\address{Universidad de los Andes,
Cra 1 No 18A-12, Bogot\'{a}, Colombia}
 \urladdr{\href{www.matematicas.uniandes.edu.co/\textasciitilde aberenst}{www.matematicas.uniandes.edu.co/\textasciitilde aberenst}}

\author[C. d'Elb\'ee]{Christian d'Elb\'ee}
\address{Mathematisches Institut der Universit\"at Bonn \\
 Germany}
\email{cdelbee@math.uni-bonn.de}
\urladdr{\href{https://choum.net/~chris/page_perso/}{https://choum.net/~chris/page\_perso/}}

\author{Evgueni Vassiliev}
\address{Grenfell Campus, Memorial University of Newfoundland, Corner Brook, NL A2H 6P9, Canada}
 \urladdr{\href{https://sites.google.com/site/yevgvas}{https://sites.google.com/site/yevgvas}}

\keywords{stability, NIP, NSOP$_1$, NTP$_2$, simple theories, unary predicate expansions, model companion, geometric structures}
\subjclass[2010]{03C10, 03C45, 03C64}
\thanks{The authors would like to thank Nicholas Ramsey for providing the proof of Proposition \ref{nsop1uptofinite}. The first and second named authors are grateful to the ECOS Nord program 048-2019 for supporting a visit of the second named author to Universidad de los Andes, Bogot\'a, which led to this collaboration.}

\begin{abstract}
We study expansions of a vector space  $V$ over a field $\FF$, possibly with extra structure, with a generic submodule over a subring of $\FF$. We construct a natural expansion by existentially defined functions so that the expansion in the extended language satisfies quantifier elimination. We show that this expansion preserves tame model theoretic properties such as stability, NIP, NTP$_1$, NTP$_2$ and NSOP$_1$. We also study induced independence relations in the expansion.
\end{abstract}

\pagestyle{plain}

\maketitle

\tableofcontents

\section{Introduction}

This paper brings together ideas of  dense-codense expansions of geometric structures \cite{BeVa-H, BeVaLP,BHK} with ideas about generic expansions by groups by D'Elb\'ee \cite{De}. Our base structures are vector spaces over a fixed field $\FF$, possibly with extra structure, such that the algebraic closure agrees with the $\FF$-linear span. We then fix a subring $R$ of $\FF$ and we study expansions by additive $R$-submodules satisfying some form of genericity (for technical details, see the definition of $T_U$ and $T^G$ in Definition 
\ref{twobasictheories}), the main goal is to see how tame model-theoretic properties transfer from the original structure to the expansion.

There are many papers that deal with expansions by predicates and preservation of tame properties. There are general approaches \cite{CZ,ChSi} that study stable or NIP structures expanded by predicates, the main idea being that if the induced structure on the predicate is stable/NIP and if the formulas in the expansion are equivalent to bounded formulas (i.e. where the quantifiers range over the predicate), then the pair is again stable/NIP. On the other hand one can start with a geometric structure \cite{BeVa-H, BeVaLP,BHK} and study preservation of properties like NTP$_2$, strong dependence, supersimplicity or NSOP$_1$ under expansions by well-behaved dense codense predicates (for example, the predicate being an elementary substructure \cite{BeVaLP}, a collection of algebraically independent elements \cite{BeVa-H}  or a multiplicative substructure with the Mordell-Lang property \cite{BHK, vdDG, BeVaG}). This approach shares some ingredients with the previous one, formulas in the expanded language are equivalent to bounded formulas and the density property implies that the induced structure on the predicate is tame.
One can also study different generic expansions, a classical example is the generic predicate, which preserves simplicity (see \cite{ChPi}). A more general construction is due to Winkler \cite{Win} in his thesis: given a model-complete $\cL$-theory $T$, and a language $\cL' \supset \cL$, the theory $T$ can be considered as an $\cL'$-theory which has a model-companion, provided $T$ has elimination of $\exists^\infty$. Winkler \cite{Win} also considered the expansion of a theory by generic Skolem functions. Both expansions of Winkler were later shown to preserve the property NSOP$_1$ (\cite{KrRa}, \cite{Nub}). One can also consider the expansion of a theory by a predicate for a reduct of this theory, for instance expanding a theory of fields by an additive or multiplicative generic subgroup (see \cite{De, DeA, Abg}).

We start this paper (Section \ref{Sub:qefractionfield}) by studying the expansion of a theory of $\FF$-vector space (in which the algebraic closure is the vector span) by a predicate for a generic $R$-submodule, where $R$ stands for an integral domain such that $\FF=\Frac(R)=$ the fraction field of $R$. After adding predicates for $pp$-formulas in the language of $R$-modules, we characterize the expansions that are existentially closed, prove the existence of a model companion $T^G$ and in doing so show quantifier elimination for the expansion. 
As a Corollary of quantifier elimination, we show that $T^G$ is stable (resp. NIP) whenever $T$ is stable (resp. NIP).

In Section \ref{sec:basics}, we study the general case where we drop the assumption $\FF=\Frac(R)$. We construct a natural expansion by existentially defined functions so that the expansion in the extended language satisfies quantifier elimination and we prove a model-completeness result. In Section \ref{sec:NIPandstable} we use our description of definable sets to show that NIP and stability are preserved under these expansions. We would like to point out that these preservation results can be proved using the approach from \cite{BHK}.

In Section \ref{sec:preservationTP}, we prove the main results of this paper: preservation of  NTP$_1$, NTP$_2$ and NSOP$_1$ in the expansion. The approach we follow is to check the property by doing a formula-by-formula analysis, separating the cases when the corresponding definable set is \emph{small} (algebraic over the predicate) or \emph{large}. To show the preservation of NSOP$_1$ we build on ideas presented in \cite{Ra}, the proofs for the preservation of NTP$_1$, NTP$_2$ generalize ideas presented in \cite{BeKi,DoKi}. In Section \ref{sec:G-basis}, we study independence notions in the expansion, assuming the original theory has a good notion of independence. As a Corollary, we give another proof for the preservation of simplicity assuming $T$ has $SU$-rank one
and $\FF=\Frac(R)$. In Section \ref{sec:moreexamples}, we introduce a candidate for an example of an non-simple NSOP$_1$ pregeometric theory with modular pregeometry.

\section{A first example: the case $\FF = \Frac(R)$}\label{Sub:qefractionfield}

Let $\FF$ be a field and let $R$ be a subring of $\FF$ such that $\FF = \Frac(R)$ (the fraction field of $R$). Let $\cL_0 = \set{(\lambda\cdot)_{\lambda\in \FF}, +,0}$ be the language of vector spaces over $\FF$ and let $\cL = \set{+,0,\set{\lambda \cdot}_{\lambda\in \FF},\dots}$ be an extension. Let $T$ be a complete $\cL$-theory that expands the theory of vector spaces over $\FF$ which has quantifier elimination in $\cL$ and that satisfies the following properties:\\
(i) Whenever $M\models T$ and $\vec a\in M^n$, $\dcl(\vec a) = \acl(\vec a) = \spane_{\FF}(\vec a)$.\\ (ii) It eliminates the quantifier $\exists^\infty$.

Let $G$ be a unary predicate and for each formula $\phi(\vec x)$ in the language $\cL_{R-mod}=\set{+,0,(r\cdot)_{r\in R}}$ of $R$-modules, let $P_{\phi}(\vec x)$ be a new predicate. Let $\cL_G$ be the expansion of $\cL$ by $G$ and $P_{\phi}$ for all formulas $\phi$ in $\cL_{R-mod}$.
We will consider pairs $(V,G)$ in the language $\cL_G$ that satisfy the following first order conditions:
\begin{enumerate}[label=(\Alph*)]
    \item $G$ is an $R$-module and for all formulas
    $\phi(\vec x)$ in the language of $R$-modules, $\forall \vec x (P_\phi(\vec x) \to G(\vec x))$ and 
    $\forall \vec x (G(\vec x) \to (P_\phi(\vec x) \leftrightarrow \hat \phi(\vec x))$, where $\hat \phi(\vec x)$ in the relativization of $\phi(\vec x)$ to $G$ and we interpret the relativization seeing $G$ as an $R$-module.
    \item (For all $r\in R\setminus\{0\}$, $r G$ is dense in $V$). For every $\mathcal{L}$-formula $\phi(x,\vec y)$, the axiom $\exists^\infty x \phi(x,\vec y)\rightarrow \exists x (\phi(x,\vec y)\wedge r G(x))$;
    \item (Extension/co-density property) for any $\cL$-formulas $\phi(x,\vec y)$ and $\psi(x,\vec y,\vec z)$ and $n\ge 1$, the axiom 
    $$(\exists^\infty x\phi(x,\vec y)\wedge \forall\vec z \exists^{\le n}x\psi(x,\vec y,\vec z))\to \exists x (\phi(x,\vec y)\wedge \forall \vec z (G(\vec z)\to \neg\psi(x,\vec y,\vec z))).$$
\end{enumerate}

Let $T^G$ be the $\cL_G$ theory satisfying the schemes $(A)$, $(B)$ and $(C)$. In all models $(V,G)$ under consideration, the predicate $G$ will interpret an $R$-module. Since modules have quantifier elimination up to boolean combinations of pp-formulas (see \cite{Zie}), we can always assume that the predicates $P_{\phi}(\vec x)$ are only defined for positive and negative
instances of pp-formulas. Throughout this paper we will assume the reader is familiar with basic properties of pp-formulas inside $R$-modules.

\begin{nota} In what follows, whenever $(V,G)\models T^G$ and $A\subseteq V$, we will write $G(A)$ for $A\cap G(V)$.
\end{nota}

Our first goal is to show that the theory $T^G$ has quantifier elimination. We will start by proving properties of the divisible elements and the $\mathcal{L}$-terms.

\begin{lem}[Density of $G^\dv$]\label{lm_densediv}
Let $(V,G)$ be an $\abs{\FF}^+$-saturated model of $T^{G}$. Let $B\subseteq V$ such that $\abs{B}\leq \abs{\FF}$ and $p(x)$ be a consistent non-algebraic $\mathcal{L}$-type in a single variable over $B$. Let $G^\dv = \bigcap_{r\in R\setminus \{0\}} r G$. Then $p(V)\cap G^\dv(V)$ is infinite.
\end{lem}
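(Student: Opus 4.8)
The plan is to realize, by saturation, the partial $\cL_G$-type over $B$ asserting ``$x$ satisfies $p$ and $x$ lies in $rG$ for every $r\in R\setminus\{0\}$''. For $r\in R\setminus\{0\}$ let $\phi_r(x)$ be the pp-formula $\exists y\,(ry=x)$. By axiom $(A)$, for $a\in V$ we have $P_{\phi_r}(a)\leftrightarrow\big(G(a)\wedge\exists y\,(G(y)\wedge ry=a)\big)$, and since $G$ is an $R$-module the first conjunct is implied by the second; hence $P_{\phi_r}(a)$ holds exactly when $a\in rG$. Thus $G^\dv(x)$ is expressed by the partial type $\{P_{\phi_r}(x):r\in R\setminus\{0\}\}$, and it suffices to show that $q(x):=p(x)\cup\{P_{\phi_r}(x):r\in R\setminus\{0\}\}$ is realized in $(V,G)$, and in fact has infinitely many realizations.

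First I would check consistency of $q$ by finite satisfiability, so that $|\FF|^+$-saturation (note $q$ has at most $|\FF|$ many formulas, over a set of size at most $|\FF|$) gives a realization. A finite fragment of $q$ is implied by $\theta(x,\vec b)\wedge\bigwedge_{i=1}^k P_{\phi_{r_i}}(x)$, where $\vec b\in B$ and $\theta$ is a finite conjunction of formulas of $p$; since $R$ is an integral domain, $r:=r_1\cdots r_k\neq 0$, and for each $i$ we can write $r=r_i s_i$ with $s_i\in R$, so $rG\subseteq r_iG$. Hence it is enough to find $a\in V$ with $\theta(a,\vec b)$ and $a\in rG$: if $a=ry_0$ with $y_0\in G$, then $a=r_i(s_iy_0)$ with $s_iy_0\in G$, so $P_{\phi_{r_i}}(a)$ holds for all $i$. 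Since $p$ is non-algebraic, $\theta(V,\vec b)$ is infinite, and because the $\cL$-reduct of $(V,G)$ models $T$, which eliminates $\exists^\infty$, we get $(V,G)\models\exists^\infty x\,\theta(x,\vec b)$; axiom $(B)$ applied to $\theta$ and $r$ then yields the desired $a$.

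Finally, to pass from one realization to infinitely many, I would repeat the argument over $B'=B\cup\{a_1,\dots,a_n\}$ with $p(x)$ replaced by $p(x)\cup\{x\neq a_i:i\le n\}$, still a non-algebraic partial $\cL$-type over a set of size at most $|\FF|$; its realization lies in $p(V)\cap G^\dv(V)$ and differs from each $a_i$. Hence $p(V)\cap G^\dv(V)$ is infinite. The only genuinely delicate points are the translation, via axiom $(A)$ and the module structure, of ``$a\in rG$'' into the single predicate $P_{\phi_r}$, and the product trick collapsing finitely many divisibility demands into one instance of axiom $(B)$; the rest is routine compactness bookkeeping, and axiom $(C)$ is not needed here.
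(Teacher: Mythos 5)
Your proof is correct and follows essentially the same route as the paper's one-paragraph proof: use saturation (the paper says ``compactness'') to reduce to a finite fragment $p(x)\cup\{x\in r_iG : i\le k\}$, collapse the finitely many divisibility demands into the single condition $x\in rG$ with $r=r_1\cdots r_k$, then invoke the density axiom (B) applied to a non-algebraic $\cL$-formula from $p$. What you add beyond the paper's sketch is precisely the routine bookkeeping it leaves implicit: that $r\neq 0$ because $R$ is an integral domain (this is where $\FF=\Frac(R)$ is used), that $rG\subseteq r_iG$, the translation between $a\in rG$ and $P_{\phi_r}(a)$ via axiom (A) and the module structure, and the iteration over $B\cup\{a_1,\dots,a_n\}$ that upgrades a single realization to infinitely many (the paper just asserts ``has infinitely many realisations''). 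One tiny inaccuracy that does not affect the argument: you cite elimination of $\exists^\infty$ to conclude $(V,G)\models\exists^\infty x\,\theta(x,\vec b)$; in fact this holds simply because $\theta(V,\vec b)$ is infinite, by the semantics of $\exists^\infty$ — elimination of $\exists^\infty$ is only needed to make the axiom scheme (B) first-order in the first place.
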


\begin{proof}
By compactness, it is sufficient to show that for all $r_1,\dots,r_n\in R$, $p(x)$ has infinitely realisations in $\bigcap_{i=1}^{n} r_iG$. Let $r = r_1\dots r_n$. Now note that by axiom (B), the type $rG(x)\wedge p(x)$ has infinitely many realisations.
\end{proof}

\begin{lem}\label{disj_terms}
  Let $t(\vec x)$ be an $\cL$-term, then there exists $\cL$-formulas $\theta_1(\vec x),\dots,\theta_n(\vec x)$ forming a partition of the universe, (i.e. such that $V\models \forall \vec x ( \bigvee_i \theta_i(\vec x)) \wedge \bigwedge_{i\neq j} \neg \exists \vec x \theta_i(\vec x)\wedge \theta_j(\vec x)$), and $\cL_0$-terms $t_1(\vec x), \dots, t_n(\vec x)$ such that
  \[t(\vec x) = y \leftrightarrow \bigvee_i t_i(\vec x)=y\wedge \theta_i(\vec x).\]
\end{lem}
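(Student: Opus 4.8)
The plan is to exploit assumption (i)---that $\dcl$ coincides with the $\FF$-linear span---together with compactness to show that the single $\cL$-term $t$ is ``piecewise'' an $\cL_0$-term. First I would note that for any $M \models T$ and any tuple $\vec a \in M^{\abs{\vec x}}$ we have $t(\vec a) \in \dcl(\vec a) = \spane_\FF(\vec a)$. Since, modulo the theory of $\FF$-vector spaces, every $\cL_0$-term in $\vec x$ is equivalent to a finite $\FF$-linear combination $\sum_i \lambda_i x_i$, and conversely every such linear combination is given by an $\cL_0$-term, the set $\spane_\FF(\vec a)$ is exactly $\set{s(\vec a) : s \text{ an } \cL_0\text{-term in } \vec x}$. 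Hence for each such $\vec a$ there is an $\cL_0$-term $s$ with $M \models t(\vec a) = s(\vec a)$.

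Next I would run a compactness argument. Let $\vec c$ be a tuple of new constants; the previous paragraph shows that $T \cup \set{t(\vec c) \ne s(\vec c) : s \text{ an } \cL_0\text{-term in the variables } \vec x}$ has no model. By compactness there are finitely many $\cL_0$-terms $t_1(\vec x), \dots, t_n(\vec x)$ with
\[
T \vdash \forall \vec x\, \bigvee_{i=1}^{n} t(\vec x) = t_i(\vec x),
\]
and these $t_1,\dots,t_n$ will be the $\cL_0$-terms claimed in the statement.

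Finally I would disjointify: put $\theta_i(\vec x) := \big(t(\vec x) = t_i(\vec x)\big) \wedge \bigwedge_{j<i}\big(t(\vec x) \ne t_j(\vec x)\big)$ for $i = 1,\dots,n$. These are $\cL$-formulas (as $t$ is an $\cL$-term and $\cL_0 \seq \cL$); they are pairwise inconsistent by construction, and their disjunction is equivalent over $T$ to the displayed formula, so they partition the universe in the required sense. On $\theta_i$ one has $t(\vec x) = t_i(\vec x)$ by definition, and this yields the biconditional $t(\vec x) = y \leftrightarrow \bigvee_i\big(t_i(\vec x) = y \wedge \theta_i(\vec x)\big)$: for ``$\Rightarrow$'', given $t(\vec x) = y$ choose the unique $i$ with $\theta_i(\vec x)$, so $t_i(\vec x) = t(\vec x) = y$; ``$\Leftarrow$'' is immediate since $\theta_i(\vec x)$ forces $t(\vec x) = t_i(\vec x)$. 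The only real content is the finiteness extracted by compactness in the middle step, which rests entirely on assumption (i); I expect no genuine obstacle beyond keeping track of the (trivial) fact that $\cL_0$-terms are, modulo the vector space axioms, precisely the $\FF$-linear combinations of the variables.
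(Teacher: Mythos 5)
Your proof is correct and follows essentially the same route as the paper's: use assumption (i) to place $t(\vec a)$ in $\spane_\FF(\vec a)$, extract finitely many linear combinations by compactness, and then disjointify. The only difference is that you carry out the disjointification explicitly via $\theta_i := (t=t_i)\wedge\bigwedge_{j<i}(t\neq t_j)$, whereas the paper simply asserts ``we may assume that $\vec\lambda^i\cdot\vec x = y$ define disjoint vector spaces'' and takes $\theta_i := (t_i = t)$ without further disjointification --- your version is the more careful rendering of that step.
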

\begin{proof}
  Since $T$ satisfies property $(i)$, the algebraic closure agrees with the $\FF$-span, hence $\set{t(\vec x) = y}\cup \set{\vec\lambda \cdot \vec x \neq y\mid \vec \lambda\in \FF^n}$ is inconsistent. Thus we can find $\vec \lambda^1, \dots, \vec \lambda^n\in \FF^n$ such that $t(\vec x) = y \rightarrow \bigvee_i \vec \lambda^i \cdot \vec x = y$. We may assume that $\vec \lambda^i \vec x=y$ define disjoint vector spaces. Now choose $t_i(\vec x) = \vec \lambda^i \cdot \vec x$ and let $\theta_i(\vec x)$ be the formula $t_i(\vec x) = t(\vec x)$. 
\end{proof}

\begin{thm}\label{theo_QEfrac}
 The theory $T^G$ has quantifier elimination.
\end{thm}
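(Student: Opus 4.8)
The plan is to use the standard back-and-forth criterion for quantifier elimination: given two $\abs{\FF}^+$-saturated models $(V_1,G_1),(V_2,G_2)\models T^G$, a common substructure $C$ (closed under all function symbols of $\cL_G$, in particular an $\FF$-subspace on which $G$ and the $P_\phi$ are as inherited), a partial $\cL_G$-isomorphism $f\colon C_1\to C_2$ with $C_i\seq V_i$, and an element $a\in V_1$, one must extend $f$ to $C_1\langle a\rangle$. By Lemma \ref{disj_terms} the $\cL$-terms over $C_1$ applied to $a$ are, on a definable partition, just $\cL_0$-linear combinations, so the substructure generated by $a$ over $C_1$ is controlled by $\tp_{\cL}(a/C_1)$ together with the $\cL_{R\text{-}mod}$-quantifier-free data recording which elements of the $R$-module generated by $a$ and $C_1$ lie in $G_1$ (equivalently, which $pp$-formulas, i.e. which $P_\phi$, hold). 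So the goal reduces to realizing in $(V_2,G_2)$ the ``type'' consisting of: (1) the $\cL$-type $f(\tp_{\cL}(a/C_1))$, and (2) the prescribed $G$-membership pattern on $R$-linear combinations of $a$ and $C_1$.

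The case division is the usual dichotomy between the \emph{algebraic} and the \emph{transcendental/generic} case. First, if $a\in\acl_{\cL}(C_1)=\spane_{\FF}(C_1)$ by property $(i)$, then $a=\vec\lambda\cdot\vec c$ for some tuple $\vec c$ from $C_1$ and $\vec\lambda\in\FF^n$; writing $\lambda_j=r_j/s_j$ with $r_j,s_j\in R$, the element $a$ and all the relevant $R$-combinations lie in the divisible hull of $C_1$, so the $P_\phi$-data on them is already determined over $C$ by axiom scheme (A) and by $f$ being an $\cL_G$-embedding — here I would use the fact (to be recorded, perhaps via a preliminary lemma about $G^{\dv}$ and $\cL_0$-terms analogous to Lemmas \ref{lm_densediv} and \ref{disj_terms}) that membership in $G$ of a fixed $\FF$-combination of elements of $C$ is governed by the $pp$-formulas over $C$, which $f$ preserves. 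In the transcendental case $a\notin\spane_{\FF}(C_1)$, I split further according to whether the prescribed pattern forces $a$ into a coset of (a finite-index submodule of) $G$ or not. If it does — i.e. $\tp_{\cL}(a/C_1)$ together with the prescribed $pp$-data is consistent with $a$ lying in $rG$ for suitable $r$ — then I realize it in $V_2$ using axiom (B) (and Lemma \ref{lm_densediv} to get a whole non-algebraic set of candidates inside $G^{\dv}$, so that finitely many further $pp$-conditions, which only cut out cosets of proper submodules, can be met). If the pattern instead prescribes that $a$ avoid $G$ (more precisely, avoid all the ``small'' sets $\psi(x,\vec c,G(V))$ that the quantifier-free $\cL_G$-type of $a/C_1$ demands it avoid), then I invoke the extension/co-density axiom (C): non-algebraicity of the $\cL$-formula isolating (a piece of) $\tp_{\cL}(a/C_1)$ together with the finiteness of each $\psi(x,\vec y,\vec z)$-fibre gives an element of $V_2$ realizing the $\cL$-type and lying outside $G_2$-coordinatized exceptional sets, and one checks this element carries exactly the right $P_\phi$-pattern (namely, the trivial/generic one on new $R$-combinations).

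The main obstacle, as usual in these dense-codense QE arguments, is the bookkeeping in the generic case: showing that the quantifier-free $\cL_G$-type of $a$ over $C_1$ really does decompose cleanly into an $\cL$-part plus a ``$pp$-module-part'', and that the latter only ever imposes coset conditions (so it can be satisfied generically, inside the infinite set produced by (B) or by (C)) rather than some more rigid constraint. Concretely, one must verify that for $a$ generic over $C_1$, the $R$-module $C_1\langle a\rangle\cap G_1$ is freely determined: the only relations are the ``forced'' ones coming from $pp$-formulas that already hold in $G_1$ over $C$, plus whichever single coset-of-$G$ condition on $a$ itself the type specifies. This is where the hypothesis $\FF=\Frac(R)$ is used — it guarantees that clearing denominators never takes us outside $R$-combinations in a way that creates new, unexpected divisibility constraints — and where I expect the bulk of the technical work (and the appeals to Lemmas \ref{lm_densediv} and \ref{disj_terms}) to be concentrated. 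Once that is in place, a routine induction on the length of the tuple $a$, plus the standard saturation/back-and-forth wrap-up, completes the proof.
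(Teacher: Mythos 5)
Your architecture --- back-and-forth between saturated models, reduction of $\cL$-terms to $\cL_0$-terms via Lemma \ref{disj_terms}, a case split on the position of $a$ relative to $G$, density plus Lemma \ref{lm_densediv} when $a$ lands in $G$, co-density otherwise --- matches the paper's. However, your case analysis is not exhaustive. Your ``transcendental'' dichotomy is: either the prescribed pattern places $a$ in $rG$ for some $r\in R$ (hence $a\in G$, since $rG\subseteq G$), or $a$ avoids all small sets. This misses the case $a\in\spane_\FF(G(V)\cup C_1)\setminus G$: there $a=q_1g_1+\dots+q_ng_n+c$ with $q_i\in\FF$, $g_i\in G$, $c\in C_1$, so $a\notin G$ and your first branch does not apply, yet clearing denominators yields $sa+c'\in G$ for suitable $s\in R$ and $c'\in C_1$, so $a$ does not avoid the small sets and your second branch fails too. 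The paper's proof handles this as a separate Step 2: extend $\sigma$ first to the witnesses $g_1,\dots,g_n$ (each falls under the $a\in G$ case), then define $\sigma(a)$ by $\FF$-linearity. You need to add that reduction, or restructure so that $a$ is always preceded by a generating set of $G(\vect{a C_1})$.

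Two smaller issues. First, your ``algebraic case'' $a\in\acl_{\cL}(C_1)=\spane_\FF(C_1)$ is vacuous: an $\cL_G$-substructure is closed under the $\FF$-scalar operations, so $\spane_\FF(C_1)=C_1$; the paper simply starts from $a\in V\setminus B$ and notes this makes $\tp_\cL(a/B)$ non-algebraic. Second, in the $a\in G$ branch, the claim that ``finitely many further $pp$-conditions \dots can be met'' by a candidate chosen generically inside $G^{\dv}$ is not automatic: $pp$-formulas cut out cosets, and a random element of $G^{\dv}$ realizing the $\cL$-type may miss the prescribed cosets. The paper's argument instead first picks $a''\in G_2$ realizing the prescribed $R$-module type (by saturation of $G_2$ as a module), then shifts by $d\in G^{\dv}$ realizing the $\cL$-type translated by $a''$: since $d\in rG$ for every $r\in R\setminus\{0\}$, adding $d$ does not change membership in any coset of $r_1G+\dots+r_kG$, so the shift repairs the $\cL$-type while preserving the $pp$-type. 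Your proposal should make that shift explicit rather than assert that the $pp$-conditions can be met.
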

\begin{proof}
We show that the set of partial isomorphisms between two $\abs{\mathcal{L}}^+$-saturated models $(V,G), (V', G')$ of $T^{G}$ has the back and forth property. Let $B\subset V, B'\subset V'$ be two small substructures (i.e. $B = \vect B$ and $B' = \vect{B'}$, $|B|,|B'|<\abs{ \mathcal{L}}^+$) such that there exists a partial isomorphism $\sigma: B \rightarrow B'$.

Let $a\in V\setminus B$. Since $B=\spane_\FF(B)$ and $T$ satisfies property $(i)$, we have that $\tp_\cL(a/B)$ is non-algebraic. Every formula in $\tp^{QF}_{\cL_G}(a/B)$, the quantifier free type in the extended language, is equivalent to a disjunction of formulas of the form 
 \begin{align*}
     &\phi(x,\vec b)\wedge P_\psi(t_1(x,\vec b)),\dots, t_k(x,\vec b))\wedge  \neg P_{\psi'}(t_1'(x,\vec b)),\dots, t_k'(x,\vec b))\\
     &\wedge \bigwedge_i t_i(x,\vec b)\in G\wedge \bigwedge_j t_j(x,\vec b)\notin G\wedge \bigwedge_k t_k(x,\vec b)=0\wedge \bigwedge_l t_l(x,\vec b)\neq 0.
 \end{align*}
 where $\phi(x,\vec y)$ is an $\cL$-formula and each of $t_i(x,\vec b)$, $t_j(x,\vec b)$,
 $t_k(x,\vec b)$, $t_l(x,\vec b)$ is an $\cL$-term.
 By Lemma \ref{disj_terms}, up to a finite disjunction, we may assume that each term is an $\cL_0$-term, i.e. of the form $\lambda x+ b$ for $b\in \spane_\FF(B)$ and $\lambda\in \FF\setminus \{0\}$. 
 As $\FF = \hat R$, conditions of the form 
 $P_\psi(t_1(x,\vec b),\dots, t_k(x,\vec b))\wedge  \neg P_{\psi'}(t_1'(x,\vec b),\dots, t_k'(x,\vec b))$ with $\cL_0$-terms are equivalent to a single $P_\theta(\vec t(x,\vec b))$ (for $\vec t(x,b) = (t_1(x,\vec b),\dots,t_k(x,\vec b))$ a tuple of linear combinations of $x$ and $\vec b$ with coefficients in $R$) up to a finite disjunction of formulas of the form $t_j(x,\vec b)\notin G$. As $a\notin B$, conditions of the form $t_k(x, \vec b) = 0$ do not appear in $\tp^{QF}_{\cL_G}(a/B)$. It turns out we only need to consider formulas of the form
\[\phi(x,\vec b)\wedge P_\psi(\vec t(x,\vec b))
     \wedge \bigwedge_i t_i(x,\vec b)\in G\wedge \bigwedge_j t_j(x,\vec b)\notin G\wedge \bigwedge_l t_l(x,\vec b)\neq 0.\]

 We will extend the map $\sigma$ by cases.
 
 \underline{Step 1.} If $a\in G$. Then, as $\FF=\hat R$, formulas of the form $qx+b\in G$, where $q\in \FF$, are equivalent to formulas of the form $rx+g\in r'G$ for some $r,r'\in R$ and $g\in G(B)$ so conditions of the form $rx+g\in r'G\wedge x\in G$ are equivalent to $P_\psi (x,g)$ for some $\cL_{R-mod}$-formula $\psi$. By quantifier elimination in $R$-modules \cite{Zie}, the condition $P_\psi(\vec t(x, \vec b))$ is equivalent to a boolean combination of formulas of the form $rx+\vec r \cdot \vec b \in s_1G+\dots + s_n G$. If $x\in G$, the latter is equivalent to $rx+ g\in s_1G+\dots + s_n G$ for some $g\in G(B)$, so $P_\psi(\vec t(x, \vec b))$ is equivalent to some $P_{\psi'}(x, \vec g)$  and tuple $\vec g\in G(B)$. Let $q(x) = \set{P_\psi(x,\vec g)\mid G\models \psi(a,\vec g), \psi\in\cL_{R-mod}, \vec g\in G(B)}$, $q^\sigma(x) = \sigma(q)(x)$ and $p(x) = \sigma(\tp_\cL(a/B))$. To extend $\sigma$ it is enough to show that there are infinitely many realisations of $p(x)\cup q^\sigma(x)$ in $V'$. First using the fact that $G(V')$ is $\abs{\FF}^+$-saturated and that $|B'|<|\FF|^+$, there is $a''\models q^\sigma(x)$. Let $p_{sh}(x) = p(x+a'')$, the type shifted by $a''$. The type $p_{sh}$ is also non-algebraic, hence by density of $R$-divisible elements (Lemma \ref{lm_densediv}), there exists infinitely many $d\in G^{div}$ such that $d\models p_{sh}(x)$. For any such $d$, let $a'=d+a''$.

Claim: $a'\models q^\sigma(x)\cup p(x)$. First, as $d\models p_{sh}(x)$, $a' = d+a''\models p(x)$. By quantifier elimination in $R$-modules \cite{Zie}, every formula in $q^\sigma$ is a boolean combination of conditions of the form $ra'+g \in r_1G+\dots+r_nG$ for $r \in R$ and $g\in G(B)$. As $d\in G^\dv$, we have that $rd\in r_1G$ for all $r\in R$, hence $ra'+g \in r_1G+\dots+r_nG$ if and only if $ra+g\in r_1G+\dots+r_nG$, so since $a''\models q^\sigma(x)$, we also have $a'\models q^\sigma(x)$. It follows that $q^\sigma(x)\cup p(x)$ has infinitely many realisations.\\
\underline{Step 2.} If $a\in \spane_\FF(G(V)B)$. Then $a = q_1g_1+\dots +q_ng_n + b$ for some $\vec q\in \FF$ and $\vec g\in G$. Applying step 1 to the elements $g_1,\dots,g_n$, we can extend $\sigma$ to a partial isomorphism (which we still call $\sigma$) that includes $g_1,\dots,g_n$ in its domain. Now set $\sigma(a) = q_1\sigma(g_1)+\dots+q_n\sigma(g_n)+\sigma(b)$.\\
\underline{Step 3.} If $a\notin \spane_\FF(GB)$ then formulas of the form $qx+b\in G$ do not appear in $\tp^{QF}_{\cL_G}(a/B)$, and the formula $P_\psi(x,\vec g)$ belongs to $\tp^{QF}_{\cL_G}(a/B)$ only when $\psi$ is the negation of a pp-formula. Thus it is enough to show that $\sigma(\tp_\cL(a/B))$ have infinitely many realisations in $V'\setminus \spane_\FF(GB')$, which follows easily from condition $(C)$, compactness, the fact that $|B|'<|\FF|$ and the fact that $(V',G')$ is $\abs{\FF}^+$-saturated.
\end{proof}

\begin{cor}\label{back-and-forthQE}
Let $(V,G)$ and $(V',G')$ be two models of $T^G$. Then whenever $\vec a\in V$, $\vec a'\in V'$ are two tuples of the same length such that 
\begin{enumerate}
\item $\tp_{R-mod}(G(\spane_{R}(\vec a)))= \tp_{R-mod}(G(\spane_{R}(\vec a')))$ (the types 
agree in the sense of R-modules\footnote{By which we mean that 
\[\set{P_{\psi}(\vec x)\mid G\models \psi(\vec g), \vec g\in G(\spane_\FF(\vec a)),\psi\in \cL_{R-mod}} = \set{P_{\psi}(\vec x)\mid G\models \psi(\vec g'), \vec g'\in G(\spane_\FF(\vec a')), \psi\in \cL_{R-mod}} .\]});
\item $\tp_{\mathcal{L}}(\vec a)=\tp_{\mathcal{L}}(\vec a')$ (their types agree in the language $\mathcal{L}$) ;
\end{enumerate}
Then $\tp_G(\vec a)=\tp_G(\vec a')$.
\end{cor}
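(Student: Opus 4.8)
The plan is to deduce this from quantifier elimination. By Theorem \ref{theo_QEfrac} the theory $T^G$ eliminates quantifiers, so in any model $\tp_G(\vec a)$ is determined by the quantifier-free $\cL_G$-type of $\vec a$; hence it suffices to prove $\qftp_{\cL_G}(\vec a)=\qftp_{\cL_G}(\vec a')$ (no saturation is needed, quantifier elimination being a property of the theory). An atomic $\cL_G$-formula is either $\cL$-atomic, or of the form $G(t(\vec x))$, or of the form $P_\psi(t_1(\vec x),\dots,t_k(\vec x))$ with $\cL$-terms $t,t_i$ and $\psi$ a pp-formula or the negation of one. On $\cL$-atomic formulas, $\qftp_{\cL_G}(\vec a)$ is just $\qftp_\cL(\vec a)$, which agrees with $\qftp_\cL(\vec a')$ by hypothesis (2). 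For the remaining atomic formulas, Lemma \ref{disj_terms} lets us replace each $\cL$-term involved by an $\cL_0$-term on the pieces of a finite $\cL$-definable partition of the universe; by hypothesis (2), $\vec a$ and $\vec a'$ lie in corresponding pieces of a common refinement of these partitions, so we are reduced to atomic formulas $G(\ell(\vec x))$ and $P_\psi(\ell_1(\vec x),\dots,\ell_k(\vec x))$ in which every $\ell_i$ is an $\cL_0$-term, i.e.\ an $\FF$-linear form in $\vec x$ (with no constant term, as we work over $\emptyset$).

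Next I would check that the truth values at $\vec a$ of all such formulas are recorded by the datum in hypothesis (1). This is the same manipulation as in Steps 1--3 of the proof of Theorem \ref{theo_QEfrac}: after clearing a common denominator $s\in R\setminus\{0\}$ (legitimate because scalar multiplication by $s$ is a bijection of the $\FF$-vector space $V$), the condition $\ell(\vec a)\in G$ becomes $(s\ell)(\vec a)\in sG$ with $s\ell$ an $R$-linear form, and $P_\psi(\ell_1(\vec a),\dots,\ell_k(\vec a))$ together with the relevant membership conditions becomes a Boolean combination of conditions $r_1a_1+\dots+r_na_n\in s_1G+\dots+s_mG$ with all $r_i,s_j\in R$ --- that is, a pp-condition (possibly negated) on the tuple of those $\FF$-linear combinations of $\vec a$ that lie in $G$. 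Thus $\qftp_{\cL_G}(\vec a)$ is determined by $\tp_\cL(\vec a)$ together with the $\cL_{R-mod}$-type, computed inside the module $G$, of $G(\spane_\FF(\vec a))$; and the latter is exactly what hypothesis (1), as unwound in the footnote, asserts to be the same for $\vec a$ and $\vec a'$. Hence $\qftp_{\cL_G}(\vec a)=\qftp_{\cL_G}(\vec a')$, and quantifier elimination gives $\tp_G(\vec a)=\tp_G(\vec a')$.

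The argument is mostly bookkeeping; the one step that is not purely formal is the collapse, in the second paragraph, of the $G$- and $P_\psi$-conditions on $\FF$-linear forms into a single pp-type of a tuple from $G$, via the denominator-clearing and the bijectivity of scalar multiplication on $V$ --- but this is precisely the algebra already performed in the proof of Theorem \ref{theo_QEfrac}. I would also stress that hypothesis (2) does part of the work: since the $\cL$-atomic formulas include all $\FF$-linear dependencies $\lambda_1x_1+\dots+\lambda_nx_n=0$, hypothesis (2) guarantees that $\vec a\mapsto\vec a'$ extends to a well-defined $\FF$-linear isomorphism $\spane_\FF(\vec a)\to\spane_\FF(\vec a')$, which is what makes the comparison of $R$-module types in hypothesis (1) meaningful.
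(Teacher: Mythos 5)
Your proposal is correct and follows essentially the same route as the paper's own proof: invoke quantifier elimination (Theorem \ref{theo_QEfrac}), normalize terms to $\cL_0$-forms via Lemma \ref{disj_terms} and a common $\cL$-definable partition, clear denominators using $\FF=\Frac(R)$ to turn $G$- and $P_\psi$-conditions into pp-conditions on $R$-linear combinations lying in $G$, and then read off the conclusion from hypotheses (1) and (2). Your closing remark that hypothesis (2) supplies the $\FF$-linear isomorphism $\spane_\FF(\vec a)\to\spane_\FF(\vec a')$ which makes hypothesis (1) coherent is a useful clarification that the paper's proof leaves implicit.
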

\begin{proof}
From Theorem \ref{theo_QEfrac}, $T^G$ has quantifier elimination, hence every formula in $\tp_G(\vec a)$ is equivalent to a disjunction of formulas of the form
\[\phi(\vec a)\wedge P_\psi(\vec t(\vec a))\wedge  \bigwedge_i t_i(\vec a)\in G\wedge \bigwedge_j t_j(\vec a)\notin G\]
for some quantifier-free $\cL$-formula $\phi(\vec x)$, $\psi$ an $R$-module formula and $\cL$-terms $(t_i(\vec x),t_j(\vec x))_{i,j}$, $\vec t(\vec x)$. Using Lemma~\ref{disj_terms}, we may assume that terms are $\FF$-linear combinations and that $\vec t(\vec a)$ is a tuple of $R$-linear combinations (as in the proof of Theorem \ref{theo_QEfrac}) in $G$.
It follows that $\tp_G(\vec a)$ is equivalent to a set of formulas of the form \[\phi(\vec a)\wedge P_\psi(\vec t(\vec a))\wedge \bigwedge_i \vec q_i\cdot \vec a\in G\wedge \bigwedge_j \vec q_j\cdot \vec a\notin G\]
for $\vec q_i,\vec q_j\in \FF$. As $\FF = \Frac(R)$, each condition of the form $\vec q_i\cdot \vec a\in G$ is equivalent to a condition $\vec r_i\cdot \vec a\in s_iG$ for $\vec r_i,s_i\in R$. From condition \textit{(1)}, $\vec r_i\cdot \vec a\in s_iG$ if and only if $\vec r_i\cdot \vec a'\in s_iG$.
Also by \textit{(1)}, we have that $P_\psi(\vec t(\vec a))$ holds if and only if $P_\psi(\vec t(\vec a'))$ holds. Finally by condition (2), for $\phi(\vec x)$ an $\mathcal{L}$-formula, $\phi(\vec a)$ holds if and only if $\phi(\vec a')$ holds.  This proves the desired result.
\end{proof}

\begin{cor}[to the proof of Theorem \ref{theo_QEfrac}]
Let $(V,G)$ be a model of $T^{G}$ and let $B\subseteq V$. Then 
\[\acl_G(B) = \dcl_G(B) = \spane_\FF(B).\]
\end{cor}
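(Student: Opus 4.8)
The inclusions $\spane_\FF(B) \subseteq \dcl_G(B) \subseteq \acl_G(B)$ are immediate, since any $\FF$-linear combination of elements of $B$ is the value of an $\cL_0$-term (hence an $\cL_G$-term) with parameters in $B$, so it lies in $\dcl_G(B)$, and $\dcl_G \subseteq \acl_G$ always. For the reverse inclusion, since $\acl_G(B) = \bigcup\{\acl_G(B_0) : B_0 \subseteq B \text{ finite}\}$ and algebraic closure may be computed after passing to a sufficiently saturated elementary extension, it suffices --- replacing $B$ by $\spane_\FF(B)$, which affects neither $\acl_G(B)$ nor the hypothesis and keeps $\abs{B} \le \abs{\FF}$ when $B$ is finite --- to prove: if $(V,G) \models T^G$ is $\abs{\FF}^+$-saturated, $B \subseteq V$ is an $\FF$-subspace with $\abs{B} \le \abs{\FF}$, and $a \in V \setminus B$, then $\tp_G(a/B)$ has infinitely many realizations in $V$. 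Note that $\tp_\cL(a/B)$ is then non-algebraic, as $\acl_\cL(B) = \spane_\FF(B)$ by property $(i)$. I would argue by the case analysis in the proof of Theorem \ref{theo_QEfrac}, the point being that the constructions there in fact yield infinitely many, not just one, realization.

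Suppose first $a \in G$. Put $p(x) = \tp_\cL(a/B)$ and let $q(x) = \{G(x)\} \cup \{P_\psi(x, \vec g) : \vec g \in G(B),\ \psi \in \cL_{R-mod},\ G \models \psi(a, \vec g)\}$ be the $R$-module type of $a$ over $G(B)$. Step 1 of the proof of Theorem \ref{theo_QEfrac} shows that, given $G(x)$, the type $\tp^{QF}_{\cL_G}(a/B)$ is equivalent to $p(x) \cup q(x)$, and that $p(x) \cup q(x)$ has infinitely many realizations in $V$: pick $a'' \models q(x)$ using $\abs{\FF}^+$-saturation of the module $G(V)$, then take $a' = d + a''$ for the infinitely many $d \in G^\dv(V)$ realizing the shifted type $p(x + a'')$ (Lemma \ref{lm_densediv}), each of which still satisfies $q(x)$ because $r G^\dv \subseteq G^\dv \subseteq sG$ for all $r, s \in R \setminus \{0\}$. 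By quantifier elimination (Theorem \ref{theo_QEfrac}), or directly by Corollary \ref{back-and-forthQE}, every such $a'$ satisfies $\tp_G(a'/B) = \tp_G(a/B)$; these $a'$ being pairwise distinct, $a \notin \acl_G(B)$.

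Next suppose $a \notin \spane_\FF(G(V)B)$. Using $\FF = \Frac(R)$ and that $G(V)$ is an $R$-module one has $\spane_\FF(G(V)) = \bigcup_{s \in R \setminus \{0\}} s^{-1} G(V)$, so over $B$ the requirement $x \notin \spane_\FF(G(V)B)$ is expressed by the partial type $\Sigma(x) = \{ sx + c \notin G : s \in R \setminus \{0\},\ c \in B \}$. One checks that $\tp_\cL(a/B) \cup \Sigma(x)$ is consistent by applying condition $(C)$ to each of its finite fragments (with $\phi$ a finite conjunction from the non-algebraic type $\tp_\cL(a/B)$, and $\psi(x, \vec y, z) = \bigvee_i (s_i x + c_i = z)$, which satisfies $\forall z\, \exists^{\le k} x\, \psi$), realizes it in $V$ by saturation, and observes it is non-algebraic, hence has infinitely many realizations $a'$; by Step 3 of the proof of Theorem \ref{theo_QEfrac} (or by Corollary \ref{back-and-forthQE}, since $G(\spane_\FF(B \cup \{a'\})) = G(B) = G(\spane_\FF(B \cup \{a\}))$) each satisfies $\tp_G(a'/B) = \tp_G(a/B)$, so $a \notin \acl_G(B)$. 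Finally, if $a \in \spane_\FF(G(V)B) \setminus B$, write $a = q_1 g_1 + \dots + q_n g_n + b$ with $g_i \in G(V)$, $b \in B$, $q_i \in \FF \setminus \{0\}$ and $n$ minimal; then $n \ge 1$, and by minimality $g_1 \notin C := \spane_\FF(\{g_2, \dots, g_n\} \cup B)$, an $\FF$-subspace of size $\le \abs{\FF}$. Applying the first case over $C$ to $g_1 \in G(V) \setminus C$ provides infinitely many pairwise-distinct realizations $g_1'$ of $\tp_G(g_1/C)$; since $x \mapsto q_1 x + q_2 g_2 + \dots + q_n g_n + b$ is an $\cL_0$-definable bijection of $V$ over parameters from $C$ (here $q_1 \ne 0$), transporting along it yields infinitely many pairwise-distinct $a'$ with $\tp_G(a'/C) = \tp_G(a/C)$, a fortiori $\tp_G(a'/B) = \tp_G(a/B)$, so $a \notin \acl_G(B)$.

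The argument is essentially bookkeeping on top of the proof of Theorem \ref{theo_QEfrac}, and the two places that need care are: (i) the upgrade from ``a witness exists'' to ``infinitely many witnesses exist'' --- transparent in the first two cases, where Lemma \ref{lm_densediv} and condition $(C)$ respectively already produce infinite families, but Step 2 of the proof of Theorem \ref{theo_QEfrac} only yields a single witness, which is why the last case is handled by the separate minimal-representation reduction to the case $a \in G$; and (ii) the fact, used throughout, that $\tp_\cL(a/B)$ together with the $R$-module type of $a$ over $G(B)$ determines $\tp_G(a/B)$, which is exactly Corollary \ref{back-and-forthQE}.
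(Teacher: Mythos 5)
Your proof is correct and follows the same overall strategy as the paper (tracing through the three cases of the back-and-forth in the proof of Theorem \ref{theo_QEfrac}), but it is carried out with more care than the paper's own one-line treatment. For the case $a \in G$, you invoke directly the $G^\dv$-shift from Step 1 of the QE proof, which already manufactures infinitely many solutions of $p(x)\cup q(x)$; the paper instead remarks that $q=\tp_{R\text{-mod}}(a/G(B))$ is a non-algebraic module type, an observation that is true but neither necessary nor by itself sufficient (one still needs the shift to reconcile the module type with the $\cL$-type). The more substantial contribution of your write-up is the third case: the paper simply asserts "the other two cases are similar," but Step 2 of the QE proof produces only a single preimage $\sigma(a)$, so non-algebraicity does not fall out verbatim. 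Your minimal-representation reduction --- write $a=q_1g_1+\cdots+q_ng_n+b$ with $n$ minimal, note $g_1\notin C:=\spane_\FF(\{g_2,\dots,g_n\}\cup B)$, apply the first case over $C$, and transport along the $C$-definable affine bijection --- is exactly the missing bookkeeping that makes this case go through, and your invocation of Corollary \ref{back-and-forthQE} to upgrade "same $\cL$-type and same module data" to "same $\cL_G$-type" in the codense case is likewise the right tool.
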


\begin{proof}
    We follow the proof of Theorem \ref{theo_QEfrac}. In case 1, ($a\in G$ and $a\notin \vect{B}$), observe that $a$ is not in $\acl_{R-mod}(G(B))$ hence $q=\tp_{R-mod}(a/G(B))$ is a non-algebraic type so neither is $q^\sigma$, thus $q^\sigma\cup \tp_{\cL}(a/B)$ has infinitely many realisations. The other two cases are similar.  
\end{proof}

\begin{lem}\label{lm_pregeom_SAP}
Let $T$ be a complete pregeometric theory with quantifier elimination. Then $T$ has SAP. 
\end{lem}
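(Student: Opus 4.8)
The plan is to establish a \emph{generic} (disjoint, dimension-preserving) amalgamation of models of $T$ over a common algebraically closed substructure: quantifier elimination is used to realise partial maps inside a monster model, and the pregeometry attached to $\acl$ is used to keep dimensions under control. Disjointness of the amalgam is exactly what the pregeometric hypothesis buys on top of plain quantifier elimination.

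First I would fix a monster model $\mathfrak C\models T$ and consider substructures $A\le B$, $A\le C$ with $A=\acl_\cL(A)$, after renaming realising $B$ as a small elementary substructure of $\mathfrak C$. The goal is to produce an $\cL$-embedding $\sigma\colon C\to\mathfrak C$ fixing $A$ pointwise and such that $\dim(\sigma(\vec c)/B)=\dim(\vec c/A)$ for every finite tuple $\vec c$ from $C$. Granting this, pick a small $N\preceq\mathfrak C$ containing $B\cup\sigma(C)$: the inclusion $B\hookrightarrow N$ and $\sigma\colon C\to N$ agree on $A$, and if $e\in B\cap\sigma(C)$, writing $e=\sigma(c)$, then from $e\in B\subseteq\acl(B)$ we get $\dim(\sigma(c)/B)=0$, hence $\dim(c/A)=0$, i.e.\ $c\in\acl_\cL(A)=A$, so $e=\sigma(c)=c\in A$; thus $B\cap\sigma(C)=A$, which is the strong amalgamation property. (Dropping $A=\acl_\cL(A)$ and taking $B,C$ to be models, the same computation gives $B\cap\sigma(C)=\acl_\cL(A)$.)

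The heart of the argument is the construction of $\sigma$ by transfinite recursion along an enumeration $(c_i)_{i<\mu}$ of $C$. I maintain a partial isomorphism $\sigma_i$ (elementary, by quantifier elimination) whose domain is the substructure generated by $A\cup\{c_j:j<i\}$, fixing $A$, and satisfying the invariant $\dim(\sigma_i(\vec c)/B)=\dim(\vec c/A)$ for finite tuples $\vec c$ from $\{c_j:j<i\}$. At a successor stage consider $p:=\sigma_i\big(\qftp(c_i/\langle A,c_{<i}\rangle)\big)$, which by quantifier elimination is a complete $\cL$-type over the small set $\sigma_i(\langle A,c_{<i}\rangle)$. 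If $c_i\in\acl(A,c_{<i})$ then $p$ is algebraic; realising it by some $d\in\acl(\sigma_i(A,c_{<i}))$ leaves both dimensions unchanged. If $c_i\notin\acl(A,c_{<i})$ then $p$ is non-algebraic over a small set, hence has more realisations than the size of the (small) set $\acl(B\cup\sigma_i(c_{<i}))$, so I may choose a realisation $d\notin\acl(B\cup\sigma_i(c_{<i}))$; then $\dim(c_{\le i}/A)$ and $\dim(\sigma_{i+1}(c_{\le i})/B)$ both increase by exactly $1$. In either case $\sigma_{i+1}:=\sigma_i\cup\{c_i\mapsto d\}$ extends uniquely to an $\cL$-embedding of $\langle A,c_{\le i}\rangle$ preserving the invariant; at limit stages I take unions, and the invariant survives by the finite character of $\dim$.

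I expect the delicate point to be the non-algebraic step: justifying that a non-algebraic complete type over a small parameter set has a realisation avoiding $\acl$ of another small set — this uses that in a $\kappa$-saturated model a non-algebraic type over a set of size ${<}\kappa$ has at least $\kappa$ realisations, while $\acl$ of a small set stays small — together with the bookkeeping that keeps the two dimension functions in lock-step through all successor and limit stages. It is precisely the exchange property (pregeometricity) that makes $\dim$ well defined and additive, and hence what promotes the quantifier-elimination amalgamation to a \emph{strong} one.
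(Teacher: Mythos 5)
Your proof is correct, and it takes a mildly different route from the paper's. The paper first establishes AP as a separate claim (by realising both embeddings in a large saturated, strongly homogeneous model and conjugating by an automorphism), then upgrades AP to SAP by invoking, as a black box, the existence of a free (pregeometry-non-forking) extension of the "type" of one factor over the base with respect to the other factor; once $M_1''$ and $M_2'$ are pregeometry-independent over the algebraically closed $M_0'$, disjointness $M_1'' \cap M_2' = M_0'$ is immediate. You instead build the free copy of $C$ over $B$ directly, element by element, by transfinite recursion inside the monster, maintaining the invariant $\dim(\sigma_i(\vec c)/B)=\dim(\vec c/A)$; this is essentially a from-scratch reconstruction of the existence of a free extension, which makes your argument more elementary and self-contained (it does not need a separate AP step) at the cost of more bookkeeping. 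One point you gloss over that deserves a line: the invariant is stated for \emph{all} finite subtuples, and in the algebraic case ($c_i\in\acl(A,c_{<i})$) it is not immediate that $\dim(d/B\sigma_i(\vec c))=\dim(c_i/A\vec c)$ for a \emph{proper} subtuple $\vec c$ of $c_{<i}$; the fix is that if $\dim(\vec d/B)=\dim(\vec d/A)$ holds for a tuple $\vec d$ then it automatically holds for every subtuple (by additivity of $\dim$ and the inequality $\dim(\cdot/B)\le\dim(\cdot/A)$), so it suffices to check the invariant on a tuple $\vec c\vec e c_i$ where $c_i\in\acl(A\vec c\vec e)$, and that is exactly what your choice of $d$ gives. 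The paper also records (in the remark after the lemma) that the free-extension step can be replaced by a citation to Adler, and that the pregeometric hypothesis is not really needed for AP $\Rightarrow$ SAP; your construction, being tailored to the pregeometric case, does not obviously generalise in that direction.
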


\begin{proof}
We first prove the following claim.\\
\noindent \textbf{Claim}. Let $T$ be a complete theory with quantifier elimination. Then $T$ has AP. 

Let $M_0,M_1,M_2\models T$ and assume there are embeddings $f_i:M_0\rightarrow M_i$ for $i=1,2$. By quantifier elimination the maps are elementary embeddings. Let $\kappa$ be the biggest cardinal among $|M_1|, |M_2|$. Let $N\models T$ be $\kappa^+$-saturated and $\kappa^+$-strongly homogeneous. Then 
by $\kappa^+$-saturation for each $i=1,2$ there is an elementary map $g_i:M_i\to N$. Since $g_1(f_1(M_0))\equiv g_2(f_2(M_0))$ by strong homogeneity there is an automorphism $h$ of $N$ such that
for each $m_0\in M_0$, $h(g_2(f_2(m_0)))=g_1(f_1(m_0))$. Then
$h(g_2(M_2))$ is an elementary copy of $M_2$, $g_1(M_1)$ an elementary copy of $M_1$
and for each $m_0\in M_0$ we have $h(g_2(f_2(m_0)))=g_1(f_1(m_0))$ and the AP holds.

We now prove SAP. Let $M_0,M_1,M_2$ be three models of $T$ and $f_i:M_0\rightarrow M_i$ be embeddings. Since $T$ has the AP there exists a model $M_3$ of $T$ and $g_i:M_i\rightarrow M_3$ such that $g_1\circ f_1=g_2\circ f_2$. Let $M_0' = g_1\circ f_1(M_0)$, $M_1' = g_1(M_1)$, $M_2' = g_2(M_2)$. The type of $M_1'$ over $M_0'$ has a free extension with respect to $M_2'$ (in the sense of the pregeometry), hence in a monster model $N$ of $T$ containing $M_3$ there exists $M_1'' \equiv_{M_0'} M_1'$ such that $M_1''\subset N$ and $M_2'$ are independent over $M_0'$, in particular, as $M_0'$, $M_1''$ and $M_2'$ are algebraically closed, $M_1''\cap M_2' = M_0'$. Let $\sigma$ be an automorphism of $N$ over $M_0'$ and such that $\sigma(M_1') = M_1''$. Then $g_1' = (\sigma\upharpoonright M_1')\circ g_1$ is an embedding $M_1\rightarrow N$ and $g_1'(M_1)\cap g_2(M_2) = M_1''\cap M_2' = M_0' = g_1'\circ f_1 (M_0)$.
\end{proof}

\begin{rem}
The previous lemma actually holds without the assumption of $T$ being pregeometric. One could use for instance \cite[Proposition 1.5]{Adler09} to get the existence of such $M_1''$.
\end{rem}

\begin{prop}\label{TU-modelcompletion}
Let $T_U$ be the $\cL_G$-theory satisfying condition $(A)$. If $T_U$ is inductive, then $T^G$ is the model-completion of $T_U$.
\end{prop}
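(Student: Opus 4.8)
The plan is to verify the standard package of four conditions that together make $T^G$ the model completion of $T_U$: (i) $T^G$ is model complete; (ii) every model of $T_U$ embeds into a model of $T^G$; (iii) conversely, every model of $T^G$ is a model of $T_U$; and (iv) $T_U$ has the amalgamation property (over common $\cL_G$-substructures that are models of $T_U$). Condition (i) is immediate from quantifier elimination (Theorem \ref{theo_QEfrac}), and (iii) holds since the axioms of $T^G$ contain the scheme $(A)$, which is by definition $T_U$. Granting (ii) and (iv), $T^G$ is a model companion of $T_U$ by (i)--(iii), and (iv) upgrades this to a model completion in the usual way: if $M\models T_U$ and $N_1,N_2\models T^G\cup\mathrm{Diag}(M)$, one amalgamates $N_1$ and $N_2$ over $M$ inside a model of $T_U$ by (iv), embeds the amalgam into a model $N_3\models T^G$ by (ii), and uses (i) to see that $N_1\hookrightarrow N_3\hookleftarrow N_2$ are elementary, so that $N_1\equiv_M N_2$ and $T^G\cup\mathrm{Diag}(M)$ is complete.

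For (ii) I would use that $T_U$ is inductive and build an increasing chain $(V_0,G_0)\subseteq(V_1,G_1)\subseteq\cdots$ of models of $T_U$, of length $\omega$, with $(V_0,G_0)=(V,G)$. At stage $n+1$ I adjoin, for every instance of $(B)$ and of $(C)$ with parameters in $V_n$ whose $\cL$-hypothesis already holds in $V_n$, a witness: for a $(B)$-instance with datum $(\phi,r,\vec b)$ a fresh element $g$ with $\phi(rg,\vec b)$, which I also throw into $G_{n+1}$ (so that $rg$ witnesses $\exists x(\phi(x,\vec b)\wedge rG(x))$); for a $(C)$-instance with datum $(\phi,\psi,k,\vec b)$ a fresh element $c$ with $\phi(c,\vec b)$, adjoined to $V_{n+1}$ only. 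The one piece of bookkeeping that matters is to choose all the new witnesses of a given stage $\FF$-linearly independent over $V_n$; this is possible because $T$ eliminates $\exists^\infty$, so each requirement names an infinite $\cL$-definable set, and $\acl_\cL(V_n)=\spane_\FF(V_n)=V_n$. This independence makes $G_n$ a direct summand, hence a pure submodule, of $G_{n+1}:=G_n+\sum Rg$ and gives $G_{n+1}\cap V_n=G_n$, so each $(V_n,G_n)\hookrightarrow(V_{n+1},G_{n+1})$ is an $\cL_G$-embedding of models of $T_U$; by inductiveness the union $(V_\omega,G_\omega)$ is again a model of $T_U$, and since $V_n\preceq V_\omega$ (quantifier elimination of $T$) every instance of $(B)$ or of $(C)$ over $V_\omega$ was treated at some finite stage, and a $(B)$-witness plainly stays valid in the union.

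The hard part --- the step I expect to be the real obstacle --- is checking that a $(C)$-witness $c$ introduced at stage $n+1$ still satisfies $\forall\vec z\,(G_\omega(\vec z)\to\neg\psi(c,\vec b,\vec z))$ in the limit, even though $G$ keeps growing. Here I would use that $\psi(\cdot,\vec b,\vec g)$ has at most $k$ solutions, all contained in $\spane_\FF(\vec b,\vec g)$, so it is enough to know $c\notin\spane_\FF(\vec b,G_\omega)$; and this holds because, by the chosen independence at every stage, everything thrown into $G$ after stage $n$ is $\FF$-linearly independent from $V_n\cup\{c\}$, so $\spane_\FF(\vec b,G_\omega)$ never reaches $c$. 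Controlling this interaction between the co-density scheme $(C)$ and the passage to the direct limit is exactly what forces the independence bookkeeping into the construction and is the technical core of part (ii).

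Finally, for (iv), let $(V_0,G_0)$ be a common $\cL_G$-substructure, itself a model of $T_U$, of two models $(V_1,G_1),(V_2,G_2)$ of $T_U$. I would first amalgamate the $\cL$-reducts strongly: $T$ is pregeometric with quantifier elimination, so by Lemma \ref{lm_pregeom_SAP} it has the strong amalgamation property, yielding $V_3\models T$ with $\cL$-embeddings of $V_1,V_2$ over $V_0$ and $V_1\cap V_2=V_0$ inside $V_3$. Set $G_3:=G_1+G_2\le V_3$ and let $P_\phi^{V_3}$ be the relativization of $\phi$ to $G_3$, so that $(V_3,G_3)\models T_U$ by construction. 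Since the embeddings preserve $G$ one gets $G_1\cap G_2\subseteq V_1\cap V_2=V_0$, hence $G_1\cap G_2=G_0$ and $G_3\cap V_i=G_i$, so $G_3$ is the pushout $G_1\oplus_{G_0}G_2$ of the inclusions $G_1\hookleftarrow G_0\hookrightarrow G_2$; and since these are $\cL_G$-embeddings of models of $T_U$, the module inclusions $G_0\hookrightarrow G_i$ are pure. Using the standard fact that the pushout of a pure embedding of modules along an arbitrary homomorphism is again pure, $G_1\hookrightarrow G_3$ and $G_2\hookrightarrow G_3$ are pure; hence the predicates $P_\phi$ are preserved, the maps $(V_i,G_i)\hookrightarrow(V_3,G_3)$ are $\cL_G$-embeddings agreeing on $(V_0,G_0)$, and $T_U$ has the amalgamation property.
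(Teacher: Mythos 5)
Your proof is correct and follows essentially the same route as the paper: a chain argument showing every model of $T_U$ embeds in a model of $T^G$, a reduction of amalgamation for $T_U$ to the strong amalgamation property of $T$ (Lemma \ref{lm_pregeom_SAP}) together with setting $G_3=G_1+G_2$, and quantifier elimination for model completeness. You are more explicit than the paper in two spots it leaves implicit: the linear-independence bookkeeping that keeps a $(C)$-witness valid after passing to the union of the chain, and the purity of the module embeddings $G_i\hookrightarrow G_3$ (which is what guarantees the $P_\phi$-predicates are preserved), which you correctly obtain from the fact that $G_3$ is the pushout $G_1\oplus_{G_0}G_2$ of pure embeddings.
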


\begin{proof}
We first check that every model of $T_U$ extends to a model of $T^G$. This is done by a standard chain argument. For example, if $(V,G)\models T_U$, $\phi(x,\vec y)$ is an $\mathcal{L}$-formula, $r\in R\setminus \{0\}$ and $\vec a\in V$ is such that $V\models \exists^{\infty}x \phi(x,\vec a)$, we let $V_1$ be a proper extension of $V$ such that $\phi(V_1,\vec a)\setminus \phi(V,\vec a)$ is infinite. Then one chooses $b_1\in \phi(V_1,\vec a)\setminus \phi(V,\vec a)$ and defines $G_1=\langle G,\frac{b_1}{r}\rangle $ the smallest $R$-module containing $G$ and $\frac{b_1}{r}$. One may apply this argument for all tuples $\vec a\in V$ with $V\models \exists^{\infty}x \phi(x,\vec a)$ and assume that $\phi(rG_1,\vec a)\neq \emptyset$. Then we repeat the process for tuples in $V_1$ and build a chain $\{(V_i,G_i)\}_{i=1}^\infty$ whose union has the desired properties.

Second, one needs to check that $T_U$ has the amalgamation property. In fact, $T_U$ has the strong amalgamation property. Let $(V_0,U_0)$, $ (V_1,U_1)$ and $(V_2,U_2)$ be three models of $T_U$ such that there exists embeddings $f_i:(V_0,U_0)\rightarrow (V_i,U_i)$ for $i=1,2$. By Lemma~\ref{lm_pregeom_SAP}, $T$ has SAP hence there exists $V_3\models T$ and $\cL$-embeddings $g_i : V_i\rightarrow V_3$ ($i=1,2$) such that $g_1\circ f_1=g_2\circ f_2$ and $g_1(V_1)\cap g_2(V_2) = g_1\circ f_1(V_0) = g_2\circ f_2(V_0)$. Define $U_3 = g_1(U_1)+g_2(U_2)$, then $(V_3,U_3)$ is a model of $T_U$. We have to check that $g_i:(V_i,U_i)\rightarrow (V_3,U_3)$ are $\cL_U$-embeddings, i.e. $g_i(V_i)\cap U_3 = g_i(U_i)$ for $i=1,2$. Without loss of generality, we may assume that $i=1$. We have that $g_1(V_1)\cap (g_1(U_1)+g_2(U_2)) = g_1(U_1)+g_1(V_1)\cap g_2(U_2)$. Also $g_1(V_1)\cap g_2(V_2)=g_2\circ f_2(V_0)$ hence $g_1(V_1)\cap g_2(U_2)\subseteq g_2\circ f_2(V_0)$. Now as $g_2$ is injective and $f_2$ is an $\cL_U$ embedding, we have that $g_2(U_2)\cap g_2\circ f_2(V_0)\subseteq g_2\circ f_2(U_0)$ hence $g_1(V_1)\cap g_2(U_2)\subseteq g_2\circ f_2(U_0)= g_1\circ f_1(U_0)\subseteq g_1(U_1)$. It follows that $g_1(V_1)\cap (g_1(U_1)+g_2(U_2)) = g_1(U_1)$, which finishes the argument. 

Finally note that $T^G$ has quantifier elimination and thus it is model complete.
\end{proof}

It follows from Theorem \ref{theo_QEfrac} that the theory $T^G$ has quantifier elimination in the extended language. We will use this fact below to show that several tameness properties are preserved in the expansion. 

We start with a small lemma showing the stability of the new predicate.

\begin{lem}\label{Gisstablemodelcompanion}
Let $t(\vec x,\vec y)$ be a $\mathcal{L}_0$-term, then the formula $G(t(\vec x,\vec y))$ is stable.
\end{lem}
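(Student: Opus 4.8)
The plan is to reduce the stability of $G(t(\vec x,\vec y))$ to the stability of a formula in the pure $R$-module reduct of $G$, which is always stable since $R$-modules are stable. First I would use Lemma~\ref{disj_terms} to partition the variable space into $\cL$-definable pieces $\theta_i$ on which $t$ agrees with an $\cL_0$-term (in fact with an $\FF$-linear combination); since a finite boolean combination of stable formulas is stable, it suffices to treat $G(\ell(\vec x,\vec y))$ where $\ell$ is $\FF$-linear, say $\ell(\vec x,\vec y)=\vec\mu\cdot\vec x+\vec\nu\cdot\vec y$ with coefficients in $\FF$. Writing the $\FF$-coefficients as fractions over $R$ and clearing denominators, the condition $\vec\mu\cdot\vec x+\vec\nu\cdot\vec y\in G$ becomes equivalent to a condition of the form $\vec r\cdot\vec x+\vec s\cdot\vec y\in sG$ for $\vec r,\vec s\in R^{\ast}$ and $s\in R\setminus\{0\}$, i.e.\ to a pp-formula (with a parameter-free divisibility predicate $sG$) in the language of $R$-modules evaluated on $G$.

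The key point is then to see that such a condition defines a stable formula in $(V,G)\models T^G$. I would argue as in Corollary~\ref{back-and-forthQE}: by quantifier elimination for $T^G$ (Theorem~\ref{theo_QEfrac}) together with the back-and-forth analysis, whether $\vec a$ satisfies $\vec r\cdot\vec x+\vec s\cdot\vec y\in sG$ depends only on the $R$-module type of $G(\spane_R(\vec a))$, never on $\tp_\cL(\vec a)$ per se for this particular formula. Concretely, the relation ``$\vec r\cdot\vec x+\vec s\cdot\vec y\in sG$'' is a pullback under the (partial, $\cL_0$-linear) map $(\vec x,\vec y)\mapsto \vec r\cdot\vec x+\vec s\cdot\vec y$ of the relation ``$z\in sG$'' on the sort $G$; and the induced structure on $G$ is, by axiom scheme $(A)$, exactly that of an $R$-module (the predicates $P_\phi$ precisely encode the relativizations of pp-formulas). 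Since $R$-modules are stable, the relation $z\in sG$ is stable in the induced structure on $G$; pulling back along a definable function and intersecting with the $\cL_0$-definable domain preserves stability. Hence $G(\ell(\vec x,\vec y))$ is stable, and reassembling over the partition $\{\theta_i\}$ gives the result.

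The step I expect to be the main obstacle — or at least the one requiring the most care — is making precise that the induced structure on $G$ coming from $\cL_G$ is no richer than the $R$-module structure, so that ``stable in the $R$-module'' upgrades to ``stable in $(V,G)$''. This is where one genuinely uses quantifier elimination: by Theorem~\ref{theo_QEfrac} every $\cL_G$-formula is a boolean combination of $\cL$-formulas, membership-in-$G$ conditions on $\cL$-terms, and $P_\psi$'s; when all the relevant terms land in $G$ and the $\cL$-part carries no information constraining the $G$-part (as in Step~1 of the proof of Theorem~\ref{theo_QEfrac}), the formula's truth is governed purely by the $R$-module type. An alternative, perhaps cleaner, route is to quote the general principle that a formula of the form $\phi(f(\vec x),f(\vec y))$ with $f$ definable and $\phi$ stable is stable, after first checking directly from quantifier elimination that $\vec r\cdot\vec x+\vec s\cdot\vec y\in sG$ cannot be used to define an order — any attempted order would descend to an order definable in the stable $R$-module on $G$, a contradiction.
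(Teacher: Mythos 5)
Your underlying instinct — that this formula should be stable for module-theoretic reasons — is sound, but the specific reduction you outline has a genuine gap: you conflate the induced $R$-module structure \emph{on the predicate} $G$ (the structure whose universe is $G(V)$) with the ambient structure $(V,+,0,G,\{r\cdot\}_{r\in R})$. The order property for $G(t(\vec x,\vec y))$, if it existed, would be witnessed by tuples $\vec a_i,\vec b_j$ ranging over $V$, not over $G$; the map $(\vec x,\vec y)\mapsto t(\vec x,\vec y)$ lands in $V$, not in $G$. So the relation ``$z\in sG$'' you want to pull back is a relation on $V$, whose stability is exactly what needs proving, and it is not legitimate to invoke ``stability of the induced $R$-module on $G$'' since nothing forces the witnesses into $G$. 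For the same reason your fallback — ``any attempted order would descend to an order in the stable $R$-module on $G$'' — has no content: there is no canonical descent of such witnesses to $G$.

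The paper's proof is more elementary and goes in a different direction: it makes no appeal to quantifier elimination, to Lemma~\ref{disj_terms} (which is in any case unnecessary here, since $t$ is by hypothesis already an $\cL_0$-term), or to the induced structure on $G$. Writing $t(\vec x,\vec y)=\vec\alpha\cdot\vec x+\vec\beta\cdot\vec y$ and setting $c_i=\vec\alpha\cdot\vec a_i$, $d_j=-\vec\beta\cdot\vec b_j$, the alleged order property becomes $c_i-d_j\in G$ iff $i<j$, i.e., membership of $c_i$ and $d_j$ in the same coset of the subgroup $G$. Coset membership is an equivalence relation and cannot have the order property: $c_1-d_2\in G$, $c_1-d_3\in G$, and $c_2-d_3\in G$ force $d_2+G=d_3+G$ and hence $c_2-d_2\in G$, contradicting $2\not<2$. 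If you want to salvage an argument in your spirit, the correct invocation is not ``the $R$-module on $G$'' but rather that $(V,+,0,G,\{r\cdot\}_{r\in R})$ is an abelian structure, hence stable, and the formula (after clearing denominators) lives in this reduct; this is precisely the strategy of Proposition~\ref{abelianstructure} later in the section.
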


\begin{proof}
Assume that $t(\vec x,\vec y) = \vec \alpha \cdot \vec x+\vec \beta \cdot \vec y$ and that there exist sequences $(\vec a_i)_{i\in \omega}$ and $(\vec b_i)_{i\in \omega}$ such that 
$t(\vec a_i,\vec b_j)\in G$ if and only if $i<j$. 
Now let $c_i=\vec \alpha \cdot \vec a_i$ and $d_i=-\vec \beta \cdot \vec b_i$. Then we have
$c_i-d_j\in G$ if and only if $i<j$. In particular we get $c_1\in d_2+G$, $c_1\in d_3+G$ and
$c_2\in d_3+G$. So the cosets $d_3+G$, $d_2+G$ are equal
and thus $c_2\in d_2+G$, which implies that 
$c_2-d_2\in G$, a contradiction.
\end{proof}

\begin{cor}\label{preservationresultsmodelcompanion}
If $T$ is stable, then $T^G$ is stable. If $T$ is NIP, then so is $T^G$.
\end{cor}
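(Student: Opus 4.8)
The plan is to deduce both statements from the quantifier elimination established in Theorem \ref{theo_QEfrac}, reducing the question of stability (resp. NIP) of $T^G$ to a formula-by-formula analysis of the atomic $\cL_G$-formulas. By Theorem \ref{theo_QEfrac} every $\cL_G$-formula is equivalent to a boolean combination of $\cL$-formulas, formulas of the form $G(t(\vec x,\vec y))$ for $\cL$-terms $t$, and formulas of the form $P_\psi(\vec t(\vec x,\vec y))$ for $\cL_{R\text{-}mod}$-formulas $\psi$ and tuples of $\cL$-terms. Since stability and NIP are preserved under boolean combinations of formulas, it suffices to check each of these three families. The $\cL$-formulas are stable (resp. NIP) by hypothesis on $T$, since a formula that is stable (resp. NIP) as an $\cL$-formula remains so in any expansion of the structure. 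So the work is concentrated in the other two families.

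First I would handle the $G$-predicate formulas. Using Lemma \ref{disj_terms}, up to a finite disjunction over an $\cL$-definable partition, every term $t(\vec x,\vec y)$ appearing inside $G(\cdot)$ may be taken to be an $\cL_0$-term, i.e. an $\FF$-linear combination $\vec\alpha\cdot\vec x+\vec\beta\cdot\vec y$. Lemma \ref{Gisstablemodelcompanion} then gives exactly that $G(t(\vec x,\vec y))$ is stable for such terms, and since stable implies NIP this also covers the NIP case. (If one wants NIP directly without routing through stability the same coset argument works, but stability is the stronger conclusion and is already in hand.)

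Next I would treat the formulas $P_\psi(\vec t(\vec x,\vec y))$. Again by Lemma \ref{disj_terms} we may assume the $t_i$ are $\FF$-linear combinations, and, arguing as in the proof of Theorem \ref{theo_QEfrac} (using $\FF=\Frac(R)$ to clear denominators), that $\vec t(\vec x,\vec y)$ is a tuple of $R$-linear combinations. By quantifier elimination for modules \cite{Zie}, $P_\psi$ is a boolean combination of $pp$-conditions, each of which, when pulled back along $\vec t$, becomes a condition of the form $\vec r\cdot\vec x+\vec s\cdot\vec y\in r_1G+\dots+r_nG$; by the translation-by-cosets argument of Lemma \ref{Gisstablemodelcompanion} (or the general fact that $R$-module formulas, viewed on the abelian group $(V,+)$ with the distinguished submodule $G$, define a stable structure), each such formula is stable, hence NIP. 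Taking boolean combinations, $P_\psi(\vec t(\vec x,\vec y))$ is stable (resp. NIP).

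The only genuine obstacle is the bookkeeping in the second step: making sure the reduction via Lemma \ref{disj_terms} and the clearing of denominators genuinely lands every module-predicate formula in the shape covered by the coset argument, and confirming that the induced structure $(V,+,G)$ together with the $R$-module structure is stable — but this is a standard fact about modules with a distinguished pp-definable-type predicate, and the one-dimensional instance is precisely Lemma \ref{Gisstablemodelcompanion}. Once each atomic family is dealt with, closure of stability and NIP under boolean combinations finishes the proof.
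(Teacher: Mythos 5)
Your proposal is correct and takes essentially the same route as the paper: QE from Theorem \ref{theo_QEfrac} reduces everything to boolean combinations of $\cL$-formulas, $G(t)$-type formulas (handled via Lemma \ref{disj_terms} and Lemma \ref{Gisstablemodelcompanion}), and module-predicate formulas, with stability/NIP closed under boolean combinations. Your treatment of the $P_\psi(\vec t(\vec x,\vec y))$ case is slightly more explicit than the paper's one-line appeal to ``any $R$-module formula is stable'' --- you correctly note that after clearing denominators each pp-condition pulled back along $R$-linear terms defines a coset of an $\emptyset$-definable subgroup of a power of $V$, which is the same coset/abelian-structure observation the paper later formalizes as Proposition \ref{abelianstructure}; this is a useful clarification of a step the paper leaves terse.
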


\begin{proof}
By Theorem \ref{theo_QEfrac} every formula in the extended language $\mathcal{L}_G$ is equivalent to a disjunction of conjunctions of $\mathcal{L}$-formulas, $R$-module formulas and formulas of the form $t(x,\vec y)\in G$ and their negation. Since the theory of modules over any ring is stable, any $R$-module formula is stable. 

Assume now that $T$ is stable, then any $\mathcal{L}$-formula is stable. By Lemma \ref{disj_terms}, any formula of the form $t(x,\vec y)\in G$, where $t(\vec x,\vec y)$ be a $\mathcal{L}$-term, is equivalent to a formula of the form $\bigvee_i t_i(x, \vec y)\in G \wedge \theta_i(x, \vec y)$ where $\theta_i(x, \vec y)$ is an $\mathcal{L}$-formula and each $t_i(x, \vec y)$ is a $\mathcal{L}_0$-term. By Lemma \ref{Gisstablemodelcompanion} the formulas $t_i(x, \vec y)\in G$ are stable for $i\leq n$ and by hypothesis $\theta_i(x, \vec y)$ is also stable, thus $t(x,\vec y)\in G$ is stable. Thus if $T$ is stable so is $T^G$. The same argument works for preservation of $NIP$.
\end{proof}

We even get some easy properties about strong dependence and the stability spectrum based on properties of the subgroups of $G$:

\begin{cor}\label{stableandnotsuperstable}
Assume that $char(\FF)=0$ and that for infinitely many primes $\{p_i: i\in I\}$ we have that the index of $p_iG$ in $G$ is infinite. Then if $T$ is stable we have that $T^G$ is strictly stable. If $T$ is NIP then $T^G$ is not strongly dependent.
\end{cor}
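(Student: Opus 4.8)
The plan is to exhibit, in every model $(V,G)$ of $T^G$ satisfying the hypotheses, infinite families of $\emptyset$-definable equivalence relations with infinitely many classes: a strictly refining chain (defeating superstability) and a cross-cutting family (defeating strong dependence), and then to conclude using Corollary~\ref{preservationresultsmodelcompanion}. Since $\mathrm{char}(\FF)=0$ we have $\Z\subseteq R$, so fix an enumeration $(p_n)_{n<\omega}$ of an infinite set of rational primes with $[G:p_nG]$ infinite (for each $n$ and $m$, the condition $[G:p_nG]\ge m$ is first order, so it holds in every model of $T^G$). For $r\in R\setminus\{0\}$ write $\psi_r(w)$ for the pp-formula $\exists v\,(w=rv)$ of $\cL_{R-mod}$; by axiom $(A)$ the formula $P_{\psi_r}(x-y)$ holds exactly when $x-y\in rG$, and hence
\[\mathrm{Eq}_r(x,y)\ :=\ \bigl(G(x)\leftrightarrow G(y)\bigr)\ \wedge\ \bigl(G(x)\to P_{\psi_r}(x-y)\bigr)\]
is a $\emptyset$-definable equivalence relation on $V$ whose classes are the cosets of $rG$ inside $G$ together with the single class $V\setminus G$; the $\mathrm{Eq}_r$-class of $a\in G$ is $a+rG$, and $\mathrm{Eq}_r$ has infinitely many classes precisely when $[G:rG]$ is infinite.

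To rule out strong dependence I would use the relations $\mathrm{Eq}_{p_n}$, $n<\omega$. On the classes meeting $G$ they are mutually \emph{cross-cutting}: given $n_0<\dots<n_{k-1}$ and $g_0,\dots,g_{k-1}\in G$, the ideals $p_{n_i}R$ are pairwise comaximal (distinct primes of $\Z$), so the Chinese Remainder Theorem for $R$-modules makes the system $x-g_i\in p_{n_i}G$ solvable with $x\in G$. Thus for each $n$ one can pick elements $(c^n_i)_{i<\omega}$ of $G$ in pairwise distinct $p_nG$-cosets; the rows $\{\mathrm{Eq}_{p_n}(x,c^n_i):i<\omega\}$ are $2$-inconsistent, while for every $\eta\in\omega^\omega$ the path $\{\mathrm{Eq}_{p_n}(x,c^n_{\eta(n)}):n<\omega\}$ is finitely satisfiable in $G$, hence consistent. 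This is an inp-pattern of infinite depth. When $T$ is NIP, $T^G$ is NIP by Corollary~\ref{preservationresultsmodelcompanion}, and a NIP theory admitting an inp-pattern of infinite depth is not strongly dependent; this gives the second assertion.

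To rule out superstability, put $q_n:=p_0p_1\cdots p_n$ and consider the chain $\mathrm{Eq}_{q_0}\supseteq\mathrm{Eq}_{q_1}\supseteq\cdots$ (since $q_{n+1}G\subseteq q_nG$, the relation $\mathrm{Eq}_{q_{n+1}}$ refines $\mathrm{Eq}_{q_n}$). For $a\in G$ the number of $\mathrm{Eq}_{q_{n+1}}$-classes inside the $\mathrm{Eq}_{q_n}$-class $a+q_nG$ is $[q_nG:p_{n+1}q_nG]$. Multiplication by $q_n$ induces an isomorphism $G/p_{n+1}G\cong q_nG/p_{n+1}q_nG$: it is surjective, and its kernel $\{g\in G: q_ng\in p_{n+1}q_nG\}=p_{n+1}G+G[q_n]$ equals $p_{n+1}G$, because $\gcd(q_n,p_{n+1})=1$ together with a relation $1=\alpha q_n+\beta p_{n+1}$ over $\Z$ forces the $q_n$-torsion $G[q_n]$ to lie in $p_{n+1}G$. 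Hence $[q_nG:p_{n+1}q_nG]=[G:p_{n+1}G]$ is infinite, so each $\mathrm{Eq}_{q_n}$-class inside $G$ splits into infinitely many $\mathrm{Eq}_{q_{n+1}}$-classes; a strictly refining $\emptyset$-definable chain of equivalence relations with infinite branching is the standard witness of non-superstability. Since $T^G$ is stable by Corollary~\ref{preservationresultsmodelcompanion} when $T$ is, it is strictly stable.

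The argument is essentially bookkeeping around two points that do require care: that $\mathrm{Eq}_r$ is a genuine $\emptyset$-definable equivalence relation of $\cL_G$ (secured by its definition via $P_{\psi_r}$ and axiom $(A)$), and the arithmetic identity $[q_nG:p_{n+1}q_nG]=[G:p_{n+1}G]$, equivalently $G[q_n]\subseteq p_{n+1}G$, which is the small computational heart of the proof. The invoked model-theoretic facts---that an inp-pattern of infinite depth precludes strong dependence in a NIP theory, and that a strictly refining $\emptyset$-definable chain of equivalence relations with infinite branching precludes superstability---are standard.
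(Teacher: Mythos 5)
Your proof is correct and follows the same approach as the paper, which (in a two-sentence sketch) uses an array built from the cosets of the $p_iG$ to defeat strong dependence and the descending chain $p_1G \supseteq p_1p_2G \supseteq \cdots$ to defeat superstability. You usefully make explicit what the paper leaves to the citation of \cite{BeVaG}: that the equivalence relations $\mathrm{Eq}_r$ are genuinely $\emptyset$-definable via the $P_\psi$-predicates, the CRT step showing the paths of the array are consistent, and the computation $[q_nG : p_{n+1}q_nG] = [G : p_{n+1}G]$ — the latter is in fact needed (not just that the chain is strictly descending, but that each step has infinite index) and is worth stating.
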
 

\begin{proof}
One can build an array using the collection of groups $\{p_iG: \ \ i\in I \}$ and their cosets (see for example \cite{BeVaG}) to show that the expansion does not preserve strong dependence.
Similarly, if we order the primes in the list as $p_1,p_2,\dots$ then
the chain of definable groups $p_1G, (p_1p_2)G,\dots$ is strictly descending and the expansion can not be superstable. 
\end{proof}

We end this section with some examples and comments relating our work with the general perspective from \cite{BHK}.

\begin{exam}\label{purevectorspace}
Let $T$ is the theory of a pure vector space over a field $\mathbb{F}$ of characteristic zero. This is a strongly minimal theory and thus geometric and it satisfies that $\acl=\spane_{\FF}$. 
As we saw before the corresponding theory $T^G$ is stable and we will see below how the stability spectrum of the expansion will vary according to the choice of $R$.

Assume first that $R=\mathbb{F}=\mathbb{Q}$, then the corresponding theory $T^G$ is the theory of beautiful pairs and it will be $\omega$-stable of Morley rank equal to $2$ (see for example \cite{Bue} for the value of Morley rank in beautiful pairs of strongly minimal theories). It is the model companion of the theory of pairs of models of $T$ (see for example \cite{Va}).

On the opposite end, consider the case $R=\mathbb{Z}$. By Corollary \ref{stableandnotsuperstable} the corresponding expansion $T^G$ will be stable, not superstable. It is the model companion of the theory of $T$ expanded by a subgroup. Note that this is vector space-like  phenomenon, the theory of a field of characteristic $0$ with a predicate for an additive subgroup does not have a model-companion \cite{De}. 

Assume now that $R = \Z_{2\Z}$, the rationals whose denominator is relatively prime with $2$. Then
the collection of definable groups $\{2^nG: n\in \omega\}$ is a descending chain and $T^G$ is not superstable. \textbf{Question.} \textquestiondown Is $T^G$ dp-minimal?
\end{exam}

\begin{prop}\label{abelianstructure} Let $(V,G)\models T^G$ and assume that $V$, seen as an $\cL$-structure, is an abelian structure. Then $(V,G)$ is an abelian structure.
\end{prop}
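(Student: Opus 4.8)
The plan is to exploit the quantifier elimination of Theorem \ref{theo_QEfrac} together with the fact that, once $\cL$-terms are replaced by $\cL_0$-terms via Lemma \ref{disj_terms}, every atomic $\cL_G$-formula defines, on each piece of a finite $\cL$-definable partition of the ambient power of $V$, a coset of a subgroup. Recall that a structure carrying a $\emptyset$-definable abelian group operation is an abelian structure if and only if every subset of every Cartesian power that is definable with parameters is a finite Boolean combination of cosets of $\emptyset$-definable subgroups. Since $V$ is an abelian structure in $\cL$, this property holds for $\cL$-definable sets, and it is exactly what has to be checked for $(V,G)$.

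First I would reduce to atomic formulas. By Theorem \ref{theo_QEfrac} every $\cL_G$-definable subset of some $V^m$ is a finite Boolean combination of sets defined by formulas of the form $\varphi(\vec t(\vec x))$ with $\varphi$ an atomic $\cL$-formula, or $G(t(\vec x))$, or $P_\psi(t_1(\vec x),\dots,t_k(\vec x))$, where the $t_i$ are $\cL$-terms with parameters from $V$ (equalities being subsumed by taking $\varphi$ of the form $u=0$). As a Boolean combination of Boolean combinations of cosets of $\emptyset$-definable subgroups is again of that shape, it is enough to treat each of these three kinds. Applying Lemma \ref{disj_terms} to the tuple of terms involved yields an $\cL$-definable partition $\theta_1,\dots,\theta_r$ of $V^m$ on each piece of which $\vec x\mapsto\vec t(\vec x)$ agrees with an affine map $\vec x\mapsto H_j(\vec x)+\vec c_j$, where $H_j\colon V^m\to V^k$ is a $\emptyset$-definable group homomorphism (an $\FF$-matrix) and $\vec c_j\in V^k$. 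Each $\theta_j$, being $\cL$-definable, is a Boolean combination of cosets of $\emptyset$-definable subgroups of $V^m$, which remain $\emptyset$-definable in $(V,G)$; so after intersecting with the $\theta_j$ we may assume the terms are affine maps.

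It then remains to note that the relevant target sets are Boolean combinations of cosets of $\emptyset$-definable subgroups of $V^k$: for $\varphi$ an atomic $\cL$-formula this is the abelian-structure hypothesis on $V$; $G(V)$ is a subgroup of $V$ as it is an $R$-module; and for a pp-formula $\psi$ the interpretation of $P_\psi$ is $\{\vec y\in G(V)^k : G\models\psi(\vec y)\}$, a subgroup of $V^k$ (pp-formulas define subgroups in any module, and relativizing to the submodule $G$ preserves this), while for a negated pp-formula one obtains the complement in $G(V)^k$ of such a subgroup, still a Boolean combination of cosets. Since the preimage of a coset of a subgroup under a group homomorphism is a coset of a subgroup or empty, and preimages commute with Boolean operations, each of $H_j^{-1}(\varphi(V)-\vec c_j)$, $H_j^{-1}(G(V)-\vec c_j)$ and $H_j^{-1}(P_\psi-\vec c_j)$ is a Boolean combination of cosets of $\emptyset$-definable subgroups of $V^m$; intersecting with $\theta_j$ and taking the finite union over $j$ finishes the argument, so $(V,G)$ is an abelian structure. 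The one place that calls for care is the bookkeeping for the module predicates $P_\psi$ --- reducing their arguments to $\FF$-linear maps via Lemma \ref{disj_terms}, reducing $\psi$ to positive and negative pp-formulas via quantifier elimination in modules, and observing that the relativization of a pp-formula to $G$ is a subgroup of $V^k$; the rest is the routine closure of abelian structures under naming $\emptyset$-definable subgroups and their cosets.
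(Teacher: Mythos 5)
Your proposal is correct and takes essentially the same route as the paper: reduce via Theorem \ref{theo_QEfrac} to atomic formulas, use Lemma \ref{disj_terms} to replace $\cL$-terms by $\cL_0$-terms on an $\cL$-definable partition, and then observe that the target sets ($\cL$-definable sets, $G$, and the $P_\psi$-definable subsets) are all Boolean combinations of cosets of $\emptyset$-definable subgroups. Your packaging via preimages of cosets under $\emptyset$-definable group homomorphisms is a cleaner way of phrasing what the paper does by a short case analysis on the vanishing of the coefficients $\lambda_i$, and you are slightly more careful about $P_\psi$ being applied to tuples of terms (which the paper glosses over), but the content is the same.
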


\begin{proof}
We show that all definable subsets in the pair are again boolean combinations of cosets of $\emptyset$-definable groups. By Theorem \ref{theo_QEfrac} the expansion has quantifier elimination
and it suffices to show that atomic formulas in the pair $(V,G)$ give rise to definable sets that also have this property. Since $\mathcal{L}$-definable sets have the desired property, we only need to consider definable sets given by $G(t(\vec x, \vec a))$ where $t(\vec x,\vec y)$ is an $\mathcal{L}$-term and for a predicates of the form $P_{\phi}(x,\vec b)$. Consider first the case of a formula of the form $G(t(\vec x, \vec a))$. Assume first that there are $\lambda_1,\dots,\lambda_n,\mu_1,\dots,\mu_k\in \FF$ such that $t(\vec x,\vec y)=\lambda_1 x_1+\dots +\lambda_n x_n+\mu_1y_1+\dots+\mu_ky_k$. We now consider two cases.\\
Case 1: Assume that $\lambda_1,\dots,\lambda_n=0$. Then $G(t(\vec x, \vec a))$ holds if and only if $\mu_1a_1+\dots+\mu_ka_k\in G$. If $\mu_1a_1+\dots+\mu_ka_k\in G$ then $G(t(V^n, \vec a))=V^n$ and if $\mu_1a_1+\dots+\mu_ka_k\not \in G$ then $G(t(V^n, \vec a))=\emptyset$ and both sets are boolean combinations of $\emptyset$-definable groups.\\
Case 2: Assume that for some $i$, we have $\lambda_i\neq 0$. Choose $\vec b=b_1,\dots,b_n\in V$ such that $\lambda_1 b_1+\dots +\lambda_n b_n=\mu_1a_1+\dots+\mu_k a_k$. Then $G(t(\vec x, \vec a))$ holds if and only if $\lambda_1(x_1-b_1)+\dots+\lambda_k(x_k-b_k)\in G$. Now observe that $H=\{\vec x\in V^n:\lambda_1x_1+\dots+\lambda_nx_n\in G\}$ is a $\emptyset$-definable group and 
the coset $H-\vec b$ agrees with $G(t(V^n, \vec a))$. 

Assume now that the term $t(\vec x)$
agrees with $t_i(\vec x)$ for some $\cL_0$-terms $\{t_i(\vec x): i\leq n\}$ and some partition $\{\theta_i(\vec x): i\leq n\}$ given by $\cL$-formulas. Then
$G(t(\vec x))$ holds if and only if 
$G(t_i(\vec x))$ holds when $\theta_i(\vec x)$ holds. Since both $\theta(\vec x)$ and $G((t_i(\vec x))$ are boolean combinations of cosets, so is their conjuntion as well as disjuntions of families of this form. 

On the other hand, all definable sets in a module are boolean combination of cosets of $\emptyset$-definable groups, in particular this will be the case for $P_{\phi}(x,\vec b)$.
\end{proof}

\begin{cor}\label{1basedgroupscase1} Let $T$ be the theory of a pure vector space over a field $\mathbb{F}$ and let $R$ be a subring.
Then $T^G$ is $1$-based.
\end{cor}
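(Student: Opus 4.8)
The plan is to read this off from Proposition~\ref{abelianstructure}: I will check that when $T$ is the theory of a pure $\FF$-vector space the hypothesis of that proposition is satisfied, so that every model of $T^G$ is an abelian structure, and then invoke the classical fact that abelian structures are stable and $1$-based.

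\emph{Step 1: the pure vector space is an abelian structure and fits the standing hypotheses.} For the pure vector space the extra language is empty, so $\cL=\cL_0$ and a model $V$ of $T$ is literally an $\FF$-module. Every atomic $\cL_0$-formula is (equivalent to) a linear equation $\lambda_1 x_1+\dots+\lambda_n x_n=0$ with $\lambda_i\in\FF$, whose solution set is a $\emptyset$-definable subgroup of $V^n$; hence $V$, viewed as an $\cL$-structure, is an abelian structure in the sense of Proposition~\ref{abelianstructure}. One also checks in passing that $T$ does fall under the standing framework (so that $T^G$ is defined): it has quantifier elimination, $\dcl(\vec a)=\acl(\vec a)=\spane_\FF(\vec a)$, and it eliminates $\exists^\infty$ (indeed every definable subset of $V^1$ is finite or cofinite), which are precisely conditions (i) and (ii).

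\emph{Step 2: transfer along Proposition~\ref{abelianstructure}, then quote $1$-basedness of abelian structures.} By Proposition~\ref{abelianstructure}, any $(V,G)\models T^G$ is again an abelian structure, i.e. every definable subset of every Cartesian power is a finite boolean combination of cosets of $\emptyset$-definable subgroups; in particular $T^G$ is stable (this also follows from Corollary~\ref{preservationresultsmodelcompanion}, since the pure vector space is strongly minimal, hence stable). Now it is classical that abelian structures are $1$-based: passing to $T\eq$, every definable group in an abelian structure is weakly normal, and by the Hrushovski--Pillay analysis of weakly normal groups every definable set is a boolean combination of cosets of $\acl\eq(\emptyset)$-definable subgroups, which forces $1$-basedness (see the standard references on geometric stability theory and on the model theory of modules). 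Since $1$-basedness is a property of the complete theory, $T^G$ is $1$-based.

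I do not anticipate a real obstacle: essentially all the work is already packaged in Proposition~\ref{abelianstructure}, and the only mild point of care is the very first step, namely verifying that the pure vector space really is an abelian structure and really satisfies the hypotheses of the general framework, which is immediate.
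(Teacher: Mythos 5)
Your proof is correct and follows essentially the same route as the paper: apply Proposition~\ref{abelianstructure} to conclude that $(V,G)$ is an abelian structure, then invoke the classical fact that abelian structures are $1$-based. The only cosmetic difference is how the premise is obtained — the paper notes that a pure vector space is a $1$-based stable group and hence an abelian structure, whereas you verify directly from quantifier elimination in $\cL_0$ that all definable sets are boolean combinations of cosets of $\emptyset$-definable subgroups, which is arguably the more self-contained check.
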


\begin{proof}
Since $V$ is a $1$-based group, it is an abelian structure. Now apply the previous Proposition.
\end{proof}

\begin{exam}\label{orderedvectorspace}
In this example we deal with the base structure $\mathcal{V}=(V, +,0,<,\set{\lambda_r}_{r\in \mathbb{F}})$ and we assume that $\mathbb{F}$ is an ordered field and $V$ is an ordered vector space over $\mathbb{F}$. The theory of 
$\mathcal{V}$ is dense o-minimal, has quantifier elimination and $\acl=\spane_{\FF}$. In particular it is geometric.

We first consider the case where $R=\mathbb{Z}$ and
$\FF=\Frac(\mathbb{Z})=\mathbb{Q}$. Since $T$ has NIP, so does $T^G$. Moreover since $nG$ has infinite index in $G$ for $n>1$ the expansion is not strongly dependent.

Now consider the case where $\FF=R=\mathbb{Q}$, then the theory $T^G$ agrees with the theory of lovely pairs (dense pairs) and it is strongly dependent of $dp$-rank two (see for example \cite[Lemma 3.5]{DoGo}).
\end{exam}

Finally, we point a few connections with the setting introduced in \cite{BHK} that can also be used to analize this family of expansions. Following the terminology from \cite{BHK}, we can define the languages $\mathcal{L}_\beta=\mathcal{L}=\{+,0,\{\lambda\}_{\lambda \in \FF},\dots\}$ and $\mathcal{L}_\alpha=\{+,0,\{\lambda\}_{\lambda\in R}\}$ and define  $T_\beta=T=Th(V,+,0,\{\lambda\}_{\lambda\in \FF},\dots)$ and $T_\alpha$ to be the theory of $R$-modules. The theory $T_\beta$ is geometric and by the results we proved in this section $T^G$ is an example of a \emph{Mordell-Lang theory of pairs} in the sense of Definition 2.6 \cite{BHK}. In particular, by \cite[ Corollary 3.6]{BHK} the theory $T^G$ is near model complete (we prove that it has quantifier elimination, furthermore we show in Proposition \ref{TU-modelcompletion} that it is the model-completion of $T_U$ when $T^U$ is inductive) and Corollary \ref{preservationresultsmodelcompanion} follows from \cite[section 4]{BHK}. There are some other properties of the pair that are studied \cite{BHK} that are not addressed in this paper; among others, it follows by \cite[Theorem 4.5]{BHK} that the family considered in Example \ref{orderedvectorspace} has o-minimal open core.

\section{V-structures: Back-and-forth  and first properties}\label{sec:basics}

We now turn to the general case, we do not assume anymore that $\FF = Frac(R)$, instead we suppose that $\FF$ is a field of characteristic zero and that $R$ is a subring of $\mathbb{F}$. Let $\cL_0 = \set{+,0,\set{\lambda \cdot}_{\lambda\in \FF}}$
and let $\cL\supset \cL_0$ be an extension. Let $T$ be an $\cL$-theory expanding the theory of vector spaces over $\FF$ which has quantifier elimination in $\cL$ for which $\dcl = \acl = \spane_{\FF}$ and such that it eliminates the quantifier $\exists^\infty$. 

\begin{nota}
Throughout the rest of the paper, we will denote $\hat R = Frac(R)$.
\end{nota}

Let $G$ be a unary predicate and for each formula $\phi(\vec x)$ in the language $\cL_{R-mod}=\{ +,0,(r\cdot)_{r\in R} \}$ of $R$-modules, let $P_{\phi}(\vec x)$ be a new predicate. Let $\cL_G$ be the expansion of $\cL$ by $G$ and $P_{\phi}$ for all formula $\phi$ in $\cL_{R-mod}$. We will consider pairs $(V,G)$ in the language $\cL_{G}$ such that $V\models T$ and that also satisfy the following first order conditions
\begin{enumerate}[label=(\Alph*)]
    \item $G$ is a proper $R$-submodule of the universe, and for all $\vec a\in V$, $P_\phi(\vec a)$ if and only if $\vec a\in G$ and $G\models \phi(\vec a)$ as an $R$-module.
    \item If $\lambda_1,\dots,\lambda_n\in \FF$ are $\hat R$-linearly independent, then for all $g_1,\dots,g_n\in G$
    \[\lambda_1g_1+\dots+\lambda_n g_n = 0 \implies \bigwedge_i g_i = 0.\]
    \item (Density Property) for all $r\in R\setminus\{0\}$, $r G$ is dense in the universe. This is a first order property that can be axiomatized through the scheme: for every $\mathcal{L}$-formula $\phi(x,\vec y)$, add the sentence $\exists^\infty x \phi(x,\vec y)\rightarrow \exists x (\phi(x,\vec y)\wedge r G(x)$);
    \item (Extension/co-density property) for any $\cL$-formulas $\phi(x,\vec y)$ and $\psi(x,\vec y,\vec z)$ and $n\ge 1$, the following sentence
    $$(\exists^\infty x\phi(x,\vec y)\wedge \forall\vec z \exists^{\le n}x\psi(x,\vec y,\vec z))\to \exists x (\phi(x,\vec y)\wedge \forall \vec z (G(\vec z)\to \neg\psi(x,\vec y,\vec z))).$$
\end{enumerate}

\begin{rema}
Since our ambient structure is a vector space $V$ over $\FF$ satisfying $\dcl = \acl = \spane_{\FF}$, we can be a little more explicit in axiom scheme $(D)$. Instead of listing all $\psi(x,\vec y,\vec z)$ such  that $\exists^{\le n}x\psi(x,\vec y,\vec z))$ holds, we could simply list all finite disjuntions of linear equations with coefficients in $\FF$ in the variables $x,\vec y,\vec z$ with nontrivial coefficient in $x$.
\end{rema}

\begin{defn}\label{twobasictheories}
Let $T_U$ be the theory consisting of $T$ together with the schemes $(A)$ and $(B)$ and $T^G$ the theory consisting of adding $(A),(B),(C),(D)$.
\end{defn}

 In order to understand definable sets in $T^G$, we will consider an expansion by definition of $T_U$ and $T^G$, see Definition~\ref{df_Tplus}.

\begin{rem}
Assume that $\vec \lambda\in \FF^n$ is $\hat R$-independent, and that in a model of $T_U$ we have $a = \lambda_1 g_1+\dots+\lambda_n g_n$ for some $g_i\in G$. Then, axiom (B) implies that such collection $(g_1,\dots,g_n)$ is unique and depends only on $a$ and $\vec \lambda$, hence the following definition.
\end{rem}

\begin{defn}\label{df_Tplus}
 For each finite $\hat R$-independent tuple $\vec \lambda$, we add a new unary predicate $G_{\vec \lambda}$. For each finite $\hat R$-independent tuple $\vec \lambda = \lambda_1,\dots, \lambda_n$ and $1\leq i\leq n$ we also add new a unary function symbol $f_{\vec \lambda,i}$. Let 
 \[\cL_G^+ = \cL_G\cup\set{G_{\vec \lambda}\mid \vec \lambda \in \FF^n \mbox{ is $\hat R$-independent}}\cup \set{f_{\vec \lambda,i}\mid , 1\leq i \leq n, \vec\lambda\in \FF^n \mbox{ is $\hat R$-independent}  }.\] Let $T^{G+}$ be the expansion of $T^G$ to the language $\cL_G^+$ by the following sentences:
\[\forall x \left( G_{\vec \lambda}(x) \leftrightarrow \exists \vec y \in G \ \sum_i \lambda_i y_i = x\right)\]
\[\forall x, y \left[x = f_{\vec\lambda,i}(y) \leftrightarrow \left(y\in G_{\vec\lambda} \wedge \exists \vec x \in G\ y = \vec \lambda \cdot \vec x \wedge x_i = x \right) \vee \left(y\notin G_{\vec\lambda}\wedge x = 0\right)\right]\]
We will show that $T^{G+}$ has quantifier elimination. 
\end{defn}

The following notion is a straightforward modification of the corresponding notions from \cite{vdDG} and \cite{BHK}.

\begin{defn}[Mordell-Lang property]
A model $(V,G)$ of $T_U$ has the \emph{Mordell-Lang property} if for all definable sets $X$ of the form $\lambda_1 x_1+\dots +\lambda_n x_n=0$ with $\lambda_1,\dots,\lambda_n\in \FF$, the trace $X\cap G^n$ is equal to the trace $Y\cap G^n$ for $Y$ defined by a conjunction of formulas of the form $r_1x_1+\ldots+r_nx_n=0$ where $r_i\in R$. In particular, $X\cap G^n$ is $\emptyset$-definable from the $R$-module structure in $G$, i.e., by a formula of the form $P_\psi(x_1,\dots,x_n)$ for some $\cL_{R-mod}$ pp-formula $\psi$.
\end{defn}

\begin{lem}\label{lm_ML}
Every model of $T_U$ has the Mordell-Lang property.
\end{lem}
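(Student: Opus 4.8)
\textbf{Proof proposal for Lemma \ref{lm_ML}.}

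The plan is to reduce the statement about an arbitrary linear subspace $X = \{\lambda_1 x_1 + \dots + \lambda_n x_n = 0\}$ to the defining axiom (B), which precisely controls what linear dependences over $\FF$ can hold among elements of $G$. First I would fix a model $(V,G)\models T_U$ and a tuple $\vec\lambda = (\lambda_1,\dots,\lambda_n)\in\FF^n$, and examine when a tuple $\vec g = (g_1,\dots,g_n)\in G^n$ lies in $X$, i.e. when $\sum_i \lambda_i g_i = 0$. The key observation is that the trace $X\cap G^n$ only depends on the $\hat R$-linear relations among the $\lambda_i$: choose a maximal $\hat R$-linearly independent subtuple of $\vec\lambda$, say (after reindexing) $\lambda_1,\dots,\lambda_m$, so that each $\lambda_j$ for $j>m$ can be written as $\lambda_j = \sum_{i\le m} q_{ji}\lambda_i$ with $q_{ji}\in\hat R$. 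Clearing denominators, there is $s\in R\setminus\{0\}$ with $s q_{ji}\in R$ for all relevant $i,j$.

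Next I would rewrite the equation $\sum_i \lambda_i g_i = 0$ by multiplying through by $s$ and grouping the coefficient of each $\lambda_i$, $i\le m$: the equation becomes $\sum_{i\le m}\lambda_i\big(s g_i + \sum_{j>m} (s q_{ji}) g_j\big) = 0$, where now each expression $h_i := s g_i + \sum_{j>m}(s q_{ji})g_j$ lies in $G$ because $G$ is an $R$-module and the coefficients $s, s q_{ji}$ are in $R$. Since $\lambda_1,\dots,\lambda_m$ are $\hat R$-independent, axiom (B) applies and forces $h_i = 0$ for each $i\le m$. Conversely, if all these $R$-linear equations $s g_i + \sum_{j>m}(s q_{ji})g_j = 0$ hold, then certainly $s\sum_i\lambda_i g_i = 0$, and since $V$ is a $\FF$-vector space (in particular torsion-free as $\mathrm{char}(\FF)=0$ and $s\ne 0$) we get $\sum_i\lambda_i g_i = 0$. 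Thus $X\cap G^n$ coincides with $Y\cap G^n$ where $Y$ is defined by the conjunction of the $\cL_0$-equations $s x_i + \sum_{j>m}(s q_{ji})x_j = 0$ for $i = 1,\dots,m$, all of which have coefficients in $R$. Finally, by axiom (A) this trace, being cut out inside $G$ by a conjunction of $R$-linear equations, is exactly defined by $P_\psi$ for the corresponding pp-formula $\psi$ in $\cL_{R-mod}$, giving the last sentence of the statement.

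I do not expect a serious obstacle here; the main point to be careful about is the bookkeeping with denominators — making sure a single $s\in R\setminus\{0\}$ can be chosen to clear all the coefficients $q_{ji}$ simultaneously (finiteness of the tuple makes this routine) — and the use of torsion-freeness of $V$ to cancel $s$ in the converse direction, which is where the hypothesis $\mathrm{char}(\FF)=0$ enters. One should also note the degenerate cases: if some $\lambda_i = 0$ the corresponding variable is free and can simply be dropped, and if $\vec\lambda = 0$ then $X = V^n$ and $Y = V^n$ works trivially.
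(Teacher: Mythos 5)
Your proof is correct and follows essentially the same argument as the paper's: extract a maximal $\hat R$-independent subtuple of $\vec\lambda$, express the remaining coefficients as $\hat R$-linear combinations, clear denominators, regroup, and apply axiom (B); you also supply the converse direction that the paper leaves as "easy to see." One small remark: your appeal to $\mathrm{char}(\FF)=0$ when cancelling $s$ is unnecessary — $V$ is an $\FF$-vector space and $s\in R\setminus\{0\}\subseteq\FF$ is invertible in $\FF$, so $sv=0$ implies $v=0$ regardless of characteristic.
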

\begin{proof}
Let $\lambda_0,\lambda_1,\ldots, \lambda_k\in \mathbb{F}$ and let $X$ be the definable set given by \[\lambda_0 x_0+\lambda_1x_1+\ldots +\lambda_kx_k=0.\]
We may assume that for some $0\le m\le k$, $\lambda_0,\lambda_1,\ldots, \lambda_m$ are linearly independent over $\hat R$ as elements of the field $\mathbb{F}$, while $\lambda_{m+1},\ldots,\lambda_k\in \spane_{\hat R}(\lambda_0,\lambda_1,\ldots, \lambda_m)$.

Then for each $m+1\le i\le k$ we have \[\lambda_i=\Sigma_{j=0}^m q_{i,j}\lambda_j,\]
where $q_{i,j}\in \hat R$. Now let $g_0,\dots,g_k\in G$ be a realization of $X$. Collecting terms with $\lambda_0,\ldots, \lambda_m$, we have:
\[\lambda_0(g_0+\Sigma_{i=m+1}^k q_{i,0}g_i)+\lambda_1(g_1+\Sigma_{i=m+1}^k q_{i,1}g_i)+\ldots+ \lambda_m(g_m+\Sigma_{i=m+1}^k q_{i,m}g_i)=0.\]

By multiplying by a common multiple of the denominators of the $q_{i,j}$, there are $r_{i,j}\in R$ such that 
\[\lambda_0(r_{0,0}g_0+\Sigma_{i=m+1}^k r_{i,0}g_i)+\lambda_1(r_{1,1}g_1+\Sigma_{i=m+1}^k r_{i,1}g_i)+\ldots+ \lambda_m(r_{m,m}g_m+\Sigma_{i=m+1}^k r_{i,m}g_i)=0.\]

By axiom (B), each of the terms in parentheses above is equal to $0$, that is \[r_{0,0}g_0+r_{m+1,0}g_{m+1}+r_{m+2,0}g_{m+2}+\ldots+ r_{k,0}g_k=0.\]
\[\vdots\]
\[r_{m,m}g_m+r_{m+1,m}g_{m+1}+r_{m+2,m}g_{m+2}+\ldots+ r_{k,m}g_k=0.\]
The collection of equations listed (as a formula in the elements $g_0,\dots,g_k$) is  $\emptyset$-definable in the $R$-module structure in $G$. It is easy to see also that any solution $(g_0,\dots,g_k)\in G^{k+1}$ of this set of equations is also a realization of $X\cap G^{k+1}$ and vice versa.
\end{proof}

\begin{rema}\label{rk:mordelllang}
We actually proved a stronger version of the Mordell-Lang property: given a definable set $X$ of the form $\lambda_1 x_1+\dots +\lambda_n x_n=0$ with $\lambda_1,\dots,\lambda_n\in \FF$, the trace $X\cap G^n$ is quantifier free definable by a positive formula in the $R$-module $G$. We could have also restated the Modell-Lang property as any $\FF$-linear dependence in $G$ is witnessed by a linear combination with coefficients in $\hat R$ 
\end{rema}

Recall that $\cL_0 = \set{(\lambda\cdot)_{\lambda\in \FF}, +,0}$. We want to express $\cL_G^+$-terms using only the $\FF$-vector space language together with the $f_{\vec \lambda,i}$-functions. So we introduce the auxiliary language
\[\cL_0^+ := \cL_0\cup\set{ (f_{\vec \lambda,i})_{\vec \lambda\in \FF^n, i\leq n}}.\]

\begin{lem}\label{lm_reduction_fG} Let $(V,G)\models T^{G+}$ and
  let $t(\vec x)$ be an $\cL_G^+$-term. Then there exists $\cL$-formulas $\theta_1(\vec x),\dots,\theta_n(\vec x)$ forming a partition of $V^{|\vec x|}$ and $\cL_0^+$-terms $t_1(\vec x), \dots, t_n(\vec x)$ such that
  \[t(\vec x ) = y \leftrightarrow \bigvee_{i=1}^n t_i(\vec x)=y\wedge \theta_i(\vec x).\]
\end{lem}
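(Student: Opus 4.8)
The plan is to induct on the structure of the $\cL_G^+$-term $t(\vec x)$. The base cases are variables and the constant $0$, which are already $\cL_0$-terms (take $n=1$, $\theta_1 = \top$, $t_1 = t$). For the inductive step there are three kinds of function symbols to handle: the $\FF$-scalars $\lambda\cdot$, the addition $+$, and the new function symbols $f_{\vec\lambda,i}$. The scalars and addition are easy because the class of ``term-with-a-finite-$\cL$-partition into $\cL_0^+$-terms'' is closed under them: given $t(\vec x)$ with partition $\{\theta_i\}$ and $\cL_0^+$-pieces $\{t_i\}$, the term $\lambda\cdot t(\vec x)$ has the same partition with pieces $\{\lambda\cdot t_i\}$; and given $s(\vec x), t(\vec x)$ with partitions $\{\theta^s_i\}, \{\theta^t_j\}$ and pieces $\{s_i\}, \{t_j\}$, the term $s(\vec x)+t(\vec x)$ has the common refinement $\{\theta^s_i \wedge \theta^t_j\}$ as partition with pieces $\{s_i + t_j\}$. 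In each case the pieces are again $\cL_0^+$-terms and the new partition is still a partition by $\cL$-formulas.

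The only real work is the case $t(\vec x) = f_{\vec\lambda,k}(s(\vec x))$ for some inner $\cL_G^+$-term $s$ and some $\hat R$-independent tuple $\vec\lambda = (\lambda_1,\dots,\lambda_m)$, $1\le k\le m$. By the inductive hypothesis, write $s(\vec x) = y \leftrightarrow \bigvee_i s_i(\vec x) = y \wedge \theta_i(\vec x)$ with $\cL_0^+$-terms $s_i$ and an $\cL$-partition $\{\theta_i\}$. It suffices to treat each piece $\theta_i$ separately: on $\theta_i$ we have $t(\vec x) = f_{\vec\lambda,k}(s_i(\vec x))$. So we may assume $s$ itself is an $\cL_0^+$-term. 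Now I want to further split according to whether $s(\vec x) \in G_{\vec\lambda}$. Using Lemma \ref{lm_ML} (the Mordell-Lang property), the membership $s(\vec x) \in G_{\vec\lambda}$ — i.e.\ $s(\vec x)$ is an $\vec\lambda$-combination of elements of $G$ — should itself be expressible by an $\cL_G^+$-formula involving only $G$, the $\FF$-linear structure, and finitely many of the $f_{\vec\lambda',i}$; in fact, since an $\cL_0^+$-term is built from variables using $\FF$-scalars, $+$ and the $f_{\vec\lambda',i}$, and each $f_{\vec\lambda',i}(z)$ always lies in $G$ (or is $0$), one can already describe membership in $G_{\vec\lambda}$ as a condition ``$G(\cdot)$'' applied to suitable $\cL_0^+$-terms together with an $\FF$-linear-algebra condition relating $\vec\lambda$ to the coefficients appearing in $s$. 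On the set where $s(\vec x)\notin G_{\vec\lambda}$ we have $f_{\vec\lambda,k}(s(\vec x)) = 0$, which is an $\cL_0$-term; on the set where $s(\vec x)\in G_{\vec\lambda}$, the axioms defining $T^{G+}$ together with the uniqueness granted by axiom (B) (see the Remark preceding Definition \ref{df_Tplus}) say $f_{\vec\lambda,k}(s(\vec x))$ is the $k$-th coordinate of the unique $G$-tuple witnessing $s(\vec x)\in G_{\vec\lambda}$, and on this set it should coincide with a fixed $\cL_0^+$-term in $\vec x$ (obtained by solving the relevant $\FF$-linear system, whose coefficients are absorbed into new $f_{\vec\lambda',i}$-symbols or are already among the $f_{\vec\lambda,i}$).

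The subtlety — and the step I expect to be the main obstacle — is that the defining condition for $s(\vec x)\in G_{\vec\lambda}$ and the resulting expression for $f_{\vec\lambda,k}(s(\vec x))$ are not uniform in the ``shape'' of $s$: one must decompose $s(\vec x)$ according to the $\hat R$-linear-algebraic relationship between $\vec\lambda$ and the $\FF$-coefficients of $s$, and this requires finitely many cases corresponding to which sub-tuples of coefficients are $\hat R$-independent and how the rest depend on them, exactly as in the proof of Lemma \ref{lm_ML}. Within each such case the condition ``$s(\vec x)\in G_{\vec\lambda}$'' becomes a conjunction of ``$G(\cdot)$''-conditions on $\cL_0^+$-terms, hence an $\cL_G^+$-formula — but note it is \emph{not} an $\cL$-formula, so the ``partition by $\cL$-formulas'' requirement in the statement must be reconciled: the point is that these $G$-conditions can be pushed inside, i.e.\ one shows that on each piece of a sufficiently fine $\cL$-partition coming from Lemma \ref{disj_terms} applied to the inner terms, the value of $t(\vec x)$ is given by a single $\cL_0^+$-term regardless of the $G$-conditions, because the two candidate values ($0$ versus the solved coordinate) can be unified once $s(\vec x)$ is known to lie in $G_{\vec\lambda}$ precisely when the solved coordinate lies in $G$, which is automatically recorded by the $f_{\vec\lambda,i}$-functions themselves. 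I would carry this out by first fixing the finite case-division on coefficient-dependence, then in each case writing down the explicit $\cL_0^+$-term, checking it agrees with $f_{\vec\lambda,k}(s(\vec x))$ on the relevant $\cL$-definable piece, and finally taking the common refinement of all the $\cL$-partitions produced.
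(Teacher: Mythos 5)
There is a genuine gap. You have misidentified which case of the induction carries the content. The target language $\cL_0^+$ already contains the symbols $f_{\vec\lambda,i}$, so the case $t(\vec x)=f_{\vec\lambda,k}(s(\vec x))$ is trivial: once the inductive hypothesis gives $s(\vec x)=z\leftrightarrow\bigvee_i s_i(\vec x)=z\wedge\theta_i(\vec x)$ with each $s_i$ an $\cL_0^+$-term, the term $f_{\vec\lambda,k}(s_i(\vec x))$ is \emph{already} an $\cL_0^+$-term, and you are done on the piece $\theta_i$. Your attempt to further split according to whether $s(\vec x)\in G_{\vec\lambda}$ and then ``unfold'' the definition of $f_{\vec\lambda,k}$ is not only unnecessary, it is actively wrong for this lemma: the condition $s(\vec x)\in G_{\vec\lambda}$ is not an $\cL$-formula, so the split would break the requirement that the partition be by $\cL$-formulas, and nothing in the statement asks you to eliminate the $f$-symbols. (That elimination, when restricted to one new element $a$ over a base, is the content of Lemma~\ref{lm_termscases}, not of this lemma.)

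Meanwhile you have omitted the case that is the actual content. You write that the only function symbols to handle are $\lambda\cdot$, $+$ and the $f_{\vec\lambda,i}$, but $\cL$ is an arbitrary extension of $\cL_0=\{+,0,(\lambda\cdot)_\lambda\}$: it may contain additional function symbols, and $\cL_G^+$-terms may be built using them. This is exactly the case the paper treats via Lemma~\ref{disj_terms}: if $t(\vec x)=g(t_1'(\vec x),\dots,t_n'(\vec x))$ for an $\cL$-function symbol $g$, one first applies Lemma~\ref{disj_terms} to $g$ to get an $\cL$-partition $\psi_j(\vec z)$ with $\cL_0$-terms $s_j(\vec z)$ such that $g(\vec z)=y\leftrightarrow\bigvee_j s_j(\vec z)=y\wedge\psi_j(\vec z)$, then substitutes the inductive decompositions of the $t_k'$ and takes the common refinement of all the partitions, using that the pieces are pairwise disjoint to pull the disjunctions out. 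Your proposal never invokes Lemma~\ref{disj_terms} and therefore has no mechanism to deal with these extra $\cL$-symbols; without that step the induction does not close.

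A corrected version of your plan would keep your base cases and your (correct but subsumed) handling of $+$ and $\lambda\cdot$, treat $f_{\vec\lambda,k}$ by the one-line observation above, and add the $\cL$-function-symbol case using Lemma~\ref{disj_terms} exactly as in the paper; the common-refinement bookkeeping you sketch for $+$ is the right idea and carries over verbatim.
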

\begin{proof}
  We prove it by induction on the complexity of $t(\vec x)$. If $t(\vec x) = g(t'_1(\vec x), \dots, t'_n(\vec x))$ for some $\cL$-function $g$, then by induction hypothesis, for each $1\leq k\leq n$ there exists $\cL$-partitions $(\theta_i^k(\vec x))_i$ and $\cL_0^+$-terms $(t_i^k(\vec x))_i$ such that for each $k\leq n$, $t'_k(\vec x) = z \leftrightarrow \bigvee_i t_i^k(\vec x)=z\wedge \theta_i^k(\vec x)$. Also, by Lemma~\ref{disj_terms}, there exists $\cL_0$-terms $s_1(\vec z), \dots, s_m(\vec z)$ and an $\cL$-partition $\psi_1(\vec z),\dots, \psi_m(\vec z)$ such that $g(\vec z) = y \leftrightarrow \bigvee_j s_j(\vec z)=y\wedge \psi_j(\vec z)$. Putting together, we have 
  \begin{align*}
  t(\vec x) = y &\leftrightarrow \left( \exists z_1,\dots, z_n \bigwedge_k z_k = t_k'(\vec x) \wedge g(\vec z) = y \right)\\
                &\leftrightarrow \left( \exists z_1,\dots, z_n \bigwedge_k \left( \bigvee_i t_i^k(\vec x)=z_k \wedge \theta_i^k(\vec x) \right) \wedge \left( \bigvee_j s_j(\vec z)=y\wedge \psi_j(\vec z) \right) \right)
\end{align*}
It is easy to see that for any finite family of sets $A_{i,k}$ such that $A_{i,k}\cap A_{j,k} = \emptyset$ for $i\neq j$, then $\bigcap_k\bigcup_i A_{i,k} = \bigcup_i\bigcap_k A_{i,k}$ hence 
\begin{align*}
     t(\vec x) = y           &\leftrightarrow \left( \exists z_1,\dots, z_n \bigwedge_k \bigvee_{i,j} \left( t_i^k(\vec x)=z_k \wedge \theta_i^k(\vec x) \wedge  s_j(\vec z)=y\wedge \psi_j(\vec z) \right)\right)\\
                &\leftrightarrow \left( \exists z_1,\dots, z_n  \bigvee_{i,j} \bigwedge_k \left( t_i^k(\vec x)=z_k \wedge \theta_i^k(\vec x) \wedge  s_j(\vec z)=y\wedge \psi_j(\vec z) \right)\right)\\
                &\leftrightarrow \left( \bigvee_{i,j}  \bigwedge_k \left( s_j(t_i^1(\vec x), \dots, t_i^n(\vec x))=y\wedge \theta_i^k(\vec x) \wedge \psi_j(t_i^1(\vec x), \dots, t_i^n(\vec x)) \right)\right)\\
                &\leftrightarrow  \bigvee_{i,j}  s_j(t_i^1(\vec x), \dots, t_i^n(\vec x))=y\wedge \left(\left(\bigwedge_k  \theta_i^k(\vec x)\right) \wedge \psi_j(t_i^1(\vec x), \dots, t_i^n(\vec x)) \right)
 \end{align*}
We now have to check that the family $\left(\left(\bigwedge_k  \theta_i^k(\vec x)\right) \wedge \psi_j(t_i^1(\vec x), \dots, t_i^n(\vec x)) \right)_{i,j}$ forms a partition of $V^{\abs{\vec x}}$. It is clear that the union is the whole universe. Let $(i,j)\neq (i',j')$. If $i\neq i'$, then $\left(\bigwedge_k  \theta_i^k(\vec x)\right) \wedge \psi_j(t_i^1(\vec x), \dots, t_i^n(\vec x))$ and $\left(\bigwedge_k  \theta_{i'}^k(\vec x)\right) \wedge \psi_{j'}(t_{i'}^1(\vec x), \dots, t_{i'}^n(\vec x))$ are disjoint since $(\theta_i^k(\vec x))_i$ is a partition for all $k$. Otherwise if $i=i'$ and $j\neq j'$ then the result follows from the fact that $\psi_{j}(t_{i}^1(\vec x), \dots, t_{i}^n(\vec x))$ and $\psi_{j'}(t_{i}^1(\vec x), \dots, t_{i}^n(\vec x))$ are disjoint, because $(\psi_j(\vec z))_j$ defines a partition of $V^{\abs{\vec z}}$.
\end{proof}

We now give a description of $\cL_0^+$-terms.

 \begin{lem}\label{lm_termsall}
 In a model $(V,G)$ of $T^{G+}$, any term $t(\vec a)$ in the language $\cL_0^+$ is equivalent to a term of the form:
 \[ \sum_i\lambda_i f_{\vec \mu_i, k_i}(\vec \alpha_i \cdot \vec a')+\vec \beta \cdot \vec a\]
 where $\vec a ' = \vec a\cap (V\setminus G)$ and $\lambda_i\in \FF, \vec \alpha_i\in \FF^{|\vec a'|}, \vec \mu_i \in \FF^m, \beta\in \FF^{|\vec a|}$.
 \end{lem}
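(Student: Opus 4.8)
The plan is to argue by induction on the structure of the $\cL_0^+$-term $t$, working entirely inside $\cL_0^+$ (so neither Lemma~\ref{disj_terms} nor Lemma~\ref{lm_reduction_fG} is invoked). Since $\cL_0^+$ has only the binary $+$, the unary scalings $\lambda\cdot$ ($\lambda\in\FF$), and the unary functions $f_{\vec\lambda,i}$, there are four cases. A variable $a_i$ (or the constant $0$) already has the claimed shape, taking the linear part $\vec\beta\cdot\vec a$ to be $a_i$ (resp.\ $0$) and no $f$-summands. If $t=\lambda\cdot s$, or $t=s_1+s_2$ with $s,s_1,s_2$ of the claimed shape, then scaling all coefficients by $\lambda$, respectively adding the two expressions, again gives the claimed shape; here it matters that each $f$-argument involves only the shared non-$G$ part $\vec a'$.

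The substantial case is $t=f_{\vec\mu,k}(s)$. By induction, $s(\vec a)=t':=\sum_i\lambda_i f_{\vec\mu_i,k_i}(\vec\alpha_i\cdot\vec a')+\vec\beta\cdot\vec a$. Put $g_i:=f_{\vec\mu_i,k_i}(\vec\alpha_i\cdot\vec a')\in G$, split $\vec\beta\cdot\vec a=\vec\beta''\cdot\vec a''+\vec\beta'\cdot\vec a'$ according as a coordinate of $\vec a$ does or does not lie in $G$, and set $h:=\sum_i\lambda_ig_i+\vec\beta''\cdot\vec a''\in\spane_\FF(G(V))$ and $\ell:=\vec\beta'\cdot\vec a'$, so $t'=h+\ell$. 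If $t'\notin G_{\vec\mu}$ then $f_{\vec\mu,k}(t')=0$, of the required shape. Otherwise $t'=\sum_{j=1}^m\mu_jx_j$ for unique $x_1,\dots,x_m\in G$ --- uniqueness being exactly axiom (B), as $\vec\mu$ is $\hat R$-independent --- and $f_{\vec\mu,k}(t')=x_k$; also $\ell=t'-h\in\spane_\FF(G(V))$.

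To extract $x_k$ in the right form, one clears denominators: pick $N\in R\setminus\{0\}$ and an $\hat R$-independent tuple $\vec\rho$ whose initial segment is $\vec\mu$, such that all of $\lambda_i,\vec\beta'',\vec\beta'$ lie in $\spane_{\hat R}(\vec\rho)$ and $\ell\in G_{\vec\rho}$ (possible since $\ell\in\spane_\FF(G(V))$, after enlarging $N$). Substituting $\lambda_i=\sum_r q_{ir}\rho_r$ and $\beta''_j=\sum_r s_{jr}\rho_r$ ($q_{ir},s_{jr}\in\hat R$) into $h$ yields $Nh=\sum_r\rho_rH_r$ with $H_r:=\sum_i (Nq_{ir})g_i+\sum_j (Ns_{jr})a''_j\in G$; writing $\ell=\sum_r\rho_rL_r$ with $L_r\in G$, we have $L_k=f_{\vec\rho,k}(\ell)$. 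Then $Nt'=\sum_r\rho_r(H_r+NL_r)$, while $t'=\sum_{j\le m}\mu_jx_j$ gives $Nt'=\sum_{j\le m}\rho_j(Nx_j)+\sum_{r>m}\rho_r\cdot 0$. These are two $G$-coefficient decompositions over the $\hat R$-independent tuple $\vec\rho$, so axiom (B) makes the coefficients agree; in particular $Nx_k=H_k+NL_k$, i.e.
\[ f_{\vec\mu,k}(t')=x_k=\sum_i q_{ik}f_{\vec\mu_i,k_i}(\vec\alpha_i\cdot\vec a')+f_{\vec\rho,k}(\vec\beta'\cdot\vec a')+\sum_j s_{jk}a''_j, \]
which has the required shape.

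I expect the case $t=f_{\vec\mu,k}(s)$ to be the only real obstacle, and within it the delicate point that the residual linear combination $\ell$ of the non-$G$ coordinates need not be generic --- it may itself fall into $\spane_\FF(G(V))$, so it cannot simply be discarded. The remedy is to absorb $\ell$ into a single fresh function symbol $f_{\vec\rho,k}$ for a suitably enlarged $\hat R$-independent tuple $\vec\rho$, the uniqueness of $G$-decompositions (axiom (B)) being exactly what keeps the denominator-clearing consistent. One should also keep in mind that the split into the subcases $t'\in G_{\vec\mu}$ and $t'\notin G_{\vec\mu}$ is a genuine case distinction on the quantifier-free $\cL_G^+$-type of $\vec a$, so the statement is really that the element $t(\vec a)$ equals some expression of the displayed form.
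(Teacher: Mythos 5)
Your proof is correct and lands in the same place as the paper's, but it is organized differently in the one nontrivial step. The paper first proves a "strip one $G$-element" Claim — for $g\in G$, $b\in V$, $\alpha,\vec\mu\in\FF$ there are $s\in R$, $q\in\hat R$, $\vec\mu'$ with $f_{\vec\mu,k}(\alpha g+b)=\tfrac1s f_{\vec\mu',k}(sb)+qg$ — then runs the induction to get the weaker normal form $\sum_i\lambda_i f_{\vec\mu_i,k_i}(\vec\alpha_i\cdot\vec a)+\vec\beta\cdot\vec a$ (with the full $\vec a$ inside the $f$'s), and finally applies the Claim iteratively to replace $\vec a$ by $\vec a'$. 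You instead do the extraction all at once inside the case $t=f_{\vec\mu,k}(s)$, by enlarging $\vec\mu$ to $\vec\rho$, clearing denominators, and comparing two $G$-decompositions of $Nt'$ over the $\hat R$-independent tuple $\vec\rho$ via axiom (B). Both arguments rest on exactly the same two ingredients (clearing denominators into $R$, and uniqueness of $G$-decompositions over $\hat R$-independent tuples), so this is a reorganization rather than a different method; your version avoids the explicit iteration of the paper's Claim at the cost of a slightly heavier bookkeeping step.

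Two small points to repair. First, "$\ell\in G_{\vec\rho}$" cannot be arranged as you state it: $\ell$ is a fixed element of $V$, and enlarging $N$ does not change it. What is true, and what your computation actually uses, is that $N\ell\in G_{\vec\rho}$ for the same denominator-clearing $N$. The cleanest route to this is not via $\ell\in\spane_\FF(G(V))$ but via the identity $N\ell=Nt'-Nh$: here $Nt'\in G_{\vec\rho}$ because $t'\in G_{\vec\mu}\subseteq G_{\vec\rho}$ and $N\in R$, and $Nh\in G_{\vec\rho}$ because, after clearing denominators, the $\hat R$-expansions of $\lambda_i$ and $\vec\beta''$ over $\vec\rho$ display $Nh=\sum_r\rho_r H_r$ with $H_r\in G$ (note the coordinates $\vec a''$ lie in $G$, so $Ns_{jr}a_j''\in G$); since $G_{\vec\rho}$ is a subgroup, $N\ell\in G_{\vec\rho}$. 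Propagating the extra $N$ yields
\[
f_{\vec\mu,k}(t')=\sum_i q_{ik}\,f_{\vec\mu_i,k_i}(\vec\alpha_i\cdot\vec a')+\tfrac1N\,f_{\vec\rho,k}(N\vec\beta'\cdot\vec a')+\sum_j s_{jk}\,a''_j,
\]
still of the required shape. Second, the hypothesis that $\vec\beta'$ lie in $\spane_{\hat R}(\vec\rho)$ is superfluous: only $\lambda_i$ and $\vec\beta''$ matter for choosing $\vec\rho$ and $N$, and $N\ell\in G_{\vec\rho}$ then comes for free as above.
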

 \begin{proof}

 First, observe the following:\\
 \textit{Claim}: For every $g\in G$, $b\in V$, $\alpha,\vec \mu\in \FF$, there exists $s\in R$, $q\in \hat R$ and $\vec\mu'\in \FF$ such that 
 \[f_{\vec \mu,k}(\alpha g +b) = \frac{1}{s} f_{\vec \mu',k}(sb)+ qg.\]

 We proof the claim by cases.
 \begin{itemize}
     \item Case 1: Assume $\alpha\notin \spane_{\hat R}(\vec \mu)$. Then $f_{\vec \mu,k}(\alpha g +b) = f_{\vec \mu',k}(b)$, for $\vec \mu ' = \vec \mu^{\frown}(-\alpha)$, so choose $s = 1$ and $q = 0$;
    \item Case 2: Assume $\alpha \in \spane_{\hat R}(\vec \mu)$, write $\alpha = q_1\mu_1+\dots+ q_n\mu_n$ for some $q_j\in \hat R$. Let $s$ be the product of denominators of $(q_i)_i$, so $sq_i\in R$, for all $i$. Then write $b+\alpha g = \vec \mu \cdot \vec g$ with $\vec g\in G$, so $f_{\vec\mu,k}(b+\alpha g) = g_k$. On the other hand $b = \sum_i \mu_i(g_i-q_i g)$, hence $sb = \sum_i\mu_i (sg_i+sq_i g)$. As $sg_i+sq_i g\in G$, we have $f_{\vec\mu,k}(sb) = sg_k+rg$ for $r = sq_k\in R$. We conclude that $f_{\vec \mu,k}(\alpha g +b) = g_k =  \frac{1}{s} f_{\vec \mu,k}(sb) - q_k g$, so choose $q = q_k$.
 \end{itemize}

 By the claim, the result follows if every term is equivalent to one of the form $\sum_i\lambda_i f_{\vec \mu_i, k_i}(\vec \alpha_i \cdot \vec a)+\vec \beta \cdot \vec a$, which we prove now by induction. By linearity of the expression, the only step to check is the one where $t(\vec a) = f_{\vec \gamma,i}(t'(\vec a))$. For convenience, we assume $i = 1$. By induction, $t'(\vec a)$ is of the form $\sum_i\lambda_i f_{\vec \mu_i, k_i}(\vec \alpha \cdot \vec a)+\vec \beta \cdot \vec a$. If $t(\vec a)\neq 0$, then there exists $g_1,\dots,g_n$ such that 
 \[\sum_i\lambda_i f_{\vec \mu_i, k_i}(\vec \alpha_i \cdot \vec a)+\vec \beta \cdot \vec a = \vec \gamma\cdot \vec g\]
 As $\vec \gamma$ is linearly independent, we may assume there is $s<\abs{\vec \lambda}$ be such that $\vec \gamma ^\frown(\lambda_i)_{i\leq s}$ is $\hat R$-independent and $\lambda_i\in \spane_{\hat R}(\vec \gamma (\lambda_i)_{i\leq s})$ for $i>s$. Let $q_{i,j}, q_{i,j}'\in \hat R$ be such that for $j>s$ we have $\lambda_j = \sum_{i=0}^s q_{i,j}\lambda_i+ \sum_{i=0}^{\abs{\vec \gamma}} q_{i,j}'\gamma_i$. Let $\vec \delta = \vec \gamma^\frown(\lambda_i)_{i\leq s}$. It follows that \[f_{\vec \delta,1}(\vec \beta \cdot \vec a) = g_1 - \sum_{j = s+1}^{\abs{\vec \lambda}} q_{1,j}f_{\vec\mu_j,k_j}(\vec \alpha_j \cdot \vec a)\]
 So $g_1 = t(\vec a)$ is of the required form.\end{proof}
 
 We denote by $\vect{B}$ the $\cL_0^+$-substructure generated by $B$. Using Lemma \ref{lm_termsall}, 
 \[\vect{B} = \spane_\FF(B\cup\set{f_{\vec{\lambda},i}(c) \mid c\in \spane_\FF(B), \vec\lambda\in \FF, i\leq \abs{\vec\lambda}}).\]

 \begin{lem}\label{lm_termscases}
 Let $(V,G)$ be a model of $T^{G+}$, $B\subset V$ and $a\in V$.
\begin{enumerate}
    \item If $a\in G$ or $a\notin\spane_\FF(BG)$ then every $\cL_0^+$-term $t(a,\vec b)$ with $\vec b\in B$ is equal to a term of the form $\lambda a +b'$ for some $b'\in \vect{B}$ and $\lambda\in \FF$.
    \item If $a\in\spane_\FF(BG)$, let $\vec \alpha\in \FF$ be $\hat R$-independent and let $c\in \spane_\FF(B)$ be such that $a = \sum_{i=1}^n \alpha_if_{\vec \alpha,i}(a-c) +c$. Then every $\cL_0^+$-term $t(a, \vec b)$ for $\vec b\in B$ is equal to a term of the form $\sum_{i=1}^{\abs{\alpha}}\lambda_i f_{\vec\alpha,i}(a - c)+b'$ for $\vec\lambda,\vec\beta\in \FF$, $b'\in \vect{B}$.
\end{enumerate}
\end{lem}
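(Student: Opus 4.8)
The plan is to deduce both parts from the normal form for $\cL_0^+$-terms proved in Lemma~\ref{lm_termsall}, by applying that lemma to a cleverly chosen tuple. Recall that for any tuple $\vec d$ from $V$, the value of an $\cL_0^+$-term at $\vec d$ equals $\sum_i \lambda_i f_{\vec\mu_i,k_i}(\vec\alpha_i\cdot\vec d\,')+\vec\beta\cdot\vec d$, where $\vec d\,'=\vec d\cap(V\setminus G)$. Two elementary observations will be used throughout. First, from the description of $\vect B$ given just after Lemma~\ref{lm_termsall}, any $\FF$-linear combination of elements of $B$ and of elements of the form $f_{\vec\lambda,i}(c)$ with $c\in\spane_\FF(B)$ lies in $\vect B$; in particular $\spane_\FF(B)\subseteq\vect B$. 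Second, for every $\hat R$-independent $\vec\mu$ we have $G_{\vec\mu}\subseteq\spane_\FF(G)$, since membership of $x$ in $G_{\vec\mu}$ witnesses $x$ as an $\FF$-linear combination of elements of $G$; hence $f_{\vec\mu,k}(y)\neq 0$ forces $y\in\spane_\FF(G)$.

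For part (1), apply Lemma~\ref{lm_termsall} to the tuple $(a,\vec b)$. If $a\in G$, the non-$G$ part of $(a,\vec b)$ is a sub-tuple of $B$, so every $f$-argument lies in $\spane_\FF(B)$ and every $f$-value lies in $\vect B$; the linear part has the shape $\lambda a + b'$ with $b'\in\spane_\FF(B)\subseteq\vect B$, giving the claim. If instead $a\notin\spane_\FF(BG)$ (so $a\notin G$), the non-$G$ part of $(a,\vec b)$ is $a$ together with a sub-tuple of $B$, so each $f$-argument has the form $\gamma a + b_0$ with $b_0\in\spane_\FF(B)$. If $\gamma=0$ the $f$-value is of the form $f_{\vec\mu,k}(b_0)\in\vect B$; if $\gamma\neq 0$, then a nonzero $f$-value would force $\gamma a+b_0\in G_{\vec\mu}\subseteq\spane_\FF(G)$ by the second observation, whence $a\in\spane_\FF(BG)$, contradicting the hypothesis, so the $f$-value is $0\in\vect B$. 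Again the linear part is $\lambda a$ modulo $\spane_\FF(B)$, and the claim follows.

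For part (2), set $g_i:=f_{\vec\alpha,i}(a-c)$, so that $g_i\in G$ (values of $f_{\vec\alpha,i}$ always lie in $G$) and the hypothesis becomes $a=\sum_i\alpha_i g_i + c$ with $c\in\spane_\FF(B)$; fix a finite $\vec d$ from $B$ and $\vec\delta$ over $\FF$ with $c=\vec\delta\cdot\vec d$. Given an $\cL_0^+$-term $t(x,\vec y)$, substituting $x\mapsto\sum_i\alpha_i x_i+\vec\delta\cdot\vec z$ yields another $\cL_0^+$-term $t^*$ with $t(a,\vec b)=t^*(g_1,\dots,g_n,\vec b,\vec d)$. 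Apply Lemma~\ref{lm_termsall} to $t^*$ at the tuple $(g_1,\dots,g_n,\vec b,\vec d)$: since all $g_i$ lie in $G$, the non-$G$ part of this tuple is a sub-tuple of $B$, so every $f$-value lies in $\vect B$ by the first observation; the linear part contributes $\sum_i\lambda_i g_i=\sum_i\lambda_i f_{\vec\alpha,i}(a-c)$ plus an element of $\spane_\FF(B)\subseteq\vect B$. This is exactly the asserted form. (The existence of such $\vec\alpha$ and $c$ when $a\in\spane_\FF(BG)$ is routine: write $a$ as an $\FF$-combination of a $B$-part and finitely many elements of $G$, and rescale the $G$-coefficients by a common denominator of the scalars involved to make them $\hat R$-independent.)

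The arguments are short once Lemma~\ref{lm_termsall} is in place; the only points requiring attention are the bookkeeping in part~(2) --- one must expand $c$ into coordinates over $B$ so that after substitution the object is genuinely an $\cL_0^+$-term over a tuple contained in $G\cup B$ --- and, in part~(1), the clean dichotomy on the $a$-coefficient of each $f$-argument, which is exactly what the second observation supplies. In particular no iteration of the auxiliary Claim from the proof of Lemma~\ref{lm_termsall} is needed: a single further application of Lemma~\ref{lm_termsall} after substitution does all the work.
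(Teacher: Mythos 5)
Your proof is correct, and for part (2) it takes a route that is genuinely different from the paper's. The paper proves a separate Claim reducing $f_{\vec\mu,k}(\beta a + b)$ directly to the form $b' + \sum_i q_i f_{\vec\alpha,i}(a-c)$, re-using the Claim from inside the proof of Lemma~\ref{lm_termsall}; you instead substitute $a = \sum_i\alpha_i g_i + c$ (with $g_i := f_{\vec\alpha,i}(a-c)\in G$) into the term, obtaining an $\cL_0^+$-term $t^*$ evaluated at a tuple whose non-$G$ entries lie entirely in $B$, and then apply Lemma~\ref{lm_termsall} once more to $t^*$. This is cleaner since it keeps all the bookkeeping in a single application of the normal form. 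For part (1) your argument matches the paper's in essence, but it is slightly more careful: the paper asserts $f_{\vec\mu,i}(\alpha a + \vec\alpha\vec b) = 0$ ``for all $\alpha$'' when $a\notin\spane_\FF(BG)$, whereas this is only forced when the coefficient of $a$ is nonzero, and your explicit dichotomy on the $a$-coefficient (giving $0$ when nonzero, and an element of $\vect B$ when zero) fills that gap properly.
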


\begin{proof}
By Lemma \ref{lm_termsall}, every $\cL_0^+$-term $t(a,\vec b)$ is equal to $\sum_i\lambda_i f_{\vec \mu_i, k_i}(\vec \alpha_i \vec c)+\beta a+ \vec \beta \cdot \vec b$, for $\vec c = a^{\frown}\vec b \cap (V\setminus G)$.\\
\textit{(1)} Assume first that $a\in G$. The result is clear since $\vec c \subseteq \vec b$, so we can choose $b' = \sum_i\lambda_i f_{\vec \mu_i, k_i}(\vec \alpha_i \vec c)+\vec \beta \cdot \vec b\in \vect{B}$ and $t(a,\vec b)=b'+\beta a$. If on the other hand $a\notin \spane_\FF(BG)$, then $f_{\vec\mu,i}(\alpha a+\vec \alpha\vec b) = 0$ for all $\alpha, \vec\alpha, \vec \mu$, so the result follows.\\
\textit{(2)} Let $\alpha_1,\dots,\alpha_n\in \FF$ be such that for some $g_1,\dots,g_n\in G$ one have $a = \vec\alpha\cdot\vec g +c$ for $c\in \spane_\FF(B)$. By extracting an $\hat R$-basis of $\vec \alpha$ and replacing $a$ by $ra$ for some $r\in R$, we may assume that $\vec \alpha$ is $\FF$-independent, so $a = \sum_{i=1}^n \alpha_if_{\vec \alpha,i}(a-c) +c$ and $g_i = f_{\vec \alpha,i}(a-c)$. 

\textit{Claim:} For all $\beta,\vec \mu \in \FF$ and $b\in \vect B$, $f_{\vec\mu,k}(\beta a + b) = b'+ \sum_{i=1}^n q_i f_{\vec \alpha,i}(a-c)$, for some $b'\in \vect B$, and $q_i\in \hat R$.

For all $\vec \mu,\beta\in \FF$ and $b\in \vect{B}$, $f_{\vec\mu,k}(\beta a + b)= f_{\vec\mu,k}(\alpha_1'g_1+\dots+\alpha_n' g_n + \beta c + b)$ for $\alpha_i' = \beta \alpha_i$. From the claim in the proof of Lemma \ref{lm_termsall}, we have that 
\begin{align*}
    f_{\vec\mu,k}(\beta a + b) &= f_{\vec\mu,k}(\alpha_1'g_1+\dots+\alpha_n' g_n +\beta c+b)\\ &= f_{\vec\mu',k}( \beta c+b)+\sum_{i=1}^n q_i g_i\\
    &= b'+ \sum_{i=1}^n q_i f_{\vec \alpha,i}(a-c) 
\end{align*}
for $b'= f_{\vec\mu',k}( \beta c+b)\in \vect B$.

By Lemma \ref{lm_termsall}, for all $\vec b\in B$, every $\cL_0^+$-term $t(a,\vec b)$ is the sum of an $\FF$-linear combinations of $a,\vec b$ (which is of the desired form since $a = \sum_{i=1}^n \alpha_if_{\vec \alpha,i}(a-c) +c$) and of an $\FF$-linear combination of $f_{\vec \mu, k}(\beta a +\vec \gamma \vec b)$ which is also of the desired form by the Claim.
\end{proof}

We will use the term `locally' to mean that something holds on a finite definable partition of the universe.
\begin{lem}\label{cor_QEmod}
Let $(V,G)$ be a model of $T^{G+}$, let $\vec b\in V$, $\vec \lambda\in \FF$. Let $t(\vec x,\vec y)$ be an $\cL^+_G$-term. Then every formula of the form 
\[t(\vec x,\vec b) \in G_{\vec \lambda}\wedge \vec x\subset G\]
is locally equivalent to a formula of the form $P_{\psi}(\vec x, \vec h)$ for $\psi(\vec x,\vec z)$ an $\cL_{R-mod}$-formula and a tuple $\vec h$ in $G(\vect{B})$. Equivalently, there is a finite $\cL$-partition $(\theta_i(\vec x))_i$ of the universe and $\cL_{R-mod}$-formulas $\psi_i$ such that 
\[\left(t(\vec x,\vec b) \in G_{\vec \lambda}\wedge \vec x\subset G\right) \leftrightarrow \bigvee_i P_{\psi_i}(\vec x, \vec h)\wedge \theta_i(\vec x).\]
\end{lem}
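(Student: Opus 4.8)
The plan is to use a finite $\cL$-definable case distinction to reduce the question to one about the $R$-module $G$ alone, and then to invoke the Mordell-Lang property together with quantifier elimination in modules. First, apply Lemma~\ref{lm_reduction_fG} to $t(\vec x,\vec y)$ regarded as a term in all of its variables: this yields a finite $\cL$-partition $(\theta_i(\vec x,\vec y))_i$ and $\cL_0^+$-terms $t_i(\vec x,\vec y)$ with $t=t_i$ on $\theta_i$. Specializing $\vec y=\vec b$ gives a finite $\cL$-partition $(\theta_i(\vec x,\vec b))_i$ of the universe on whose $i$-th piece $t(\vec x,\vec b)$ agrees with the $\cL_0^+$-term $t_i(\vec x,\vec b)$. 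So I may assume $t$ is itself an $\cL_0^+$-term, and it suffices to find, for such $t$, an $\cL_{R-mod}$-formula $\psi$ and a tuple $\vec h$ in $G(\vect B)$ with $(t(\vec x,\vec b)\in G_{\vec\lambda}\wedge\vec x\subset G)\leftrightarrow P_\psi(\vec x,\vec h)$. Both sides are false whenever $\vec x\not\subset G$ (the right-hand side by axiom~(A)), so from now on I assume $\vec x\subset G$.

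Under this assumption the non-$G$ part of $(\vec x,\vec b)$ is contained in $\vec b$, so by Lemma~\ref{lm_termsall} we may write $t(\vec x,\vec b)=\vec\mu\cdot\vec x+b'$ with $\vec\mu\in\FF^{|\vec x|}$ and $b'\in\vect B$, the $f$-function terms occurring involving only $\vec b$. By the description of $\vect B$ displayed after Lemma~\ref{lm_termsall}, its $\FF$-generators outside $\vec b$ are $f_{\vec\lambda,i}$-values and hence lie in $G(\vect B)$; so, writing $\vec b_0:=\vec b\cap(V\setminus G)$, we have $\vect B=\spane_\FF(G(\vect B)\cup\vec b_0)$ and may write $b'=\vec\nu\cdot\vec h+\vec\gamma\cdot\vec b_0$ for some $\vec\nu,\vec\gamma\in\FF$ and a tuple $\vec h$ in $G(\vect B)$. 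Now $t(\vec x,\vec b)\in G_{\vec\lambda}$ asserts that $\vec\gamma\cdot\vec b_0=\vec\lambda\cdot\vec g-\vec\mu\cdot\vec x-\vec\nu\cdot\vec h$ for some $\vec g\in G$, so in particular $\vec\gamma\cdot\vec b_0\in\spane_\FF(G)$; hence if $\vec\gamma\cdot\vec b_0\notin\spane_\FF(G)$ the defined set is empty and $\psi$ may be taken to be a contradictory $\cL_{R-mod}$-formula.

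So assume $\vec\gamma\cdot\vec b_0\in\spane_\FF(G)$; I claim it then lies in $\spane_\FF(G(\vect B))$. Write $\vec\gamma\cdot\vec b_0=\sum_j\kappa_jg_j$ with $\kappa_j\in\FF$ and $g_j\in G$, choose an $\hat R$-basis $\vec e$ of $\spane_{\hat R}(\vec\kappa)$ and $q\in R\setminus\{0\}$ clearing all denominators, so that $q(\vec\gamma\cdot\vec b_0)=\vec e\cdot\vec g'$ with $\vec e$ $\hat R$-independent and $\vec g'\in G$. By axiom~(B), the definitions of $G_{\vec e}$ and $f_{\vec e,k}$, and closure of $\vect B$ under these functions, $g'_k=f_{\vec e,k}(q(\vec\gamma\cdot\vec b_0))\in\vect B\cap G=G(\vect B)$, so $\vec\gamma\cdot\vec b_0\in\spane_\FF(G(\vect B))$. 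Enlarging $\vec h$ to absorb it into the first summand, we may now assume $b'=\vec\nu\cdot\vec h$ with $\vec h$ a tuple in $G(\vect B)$.

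It remains to analyze $\exists\vec g\in G\,(\vec\mu\cdot\vec x+\vec\nu\cdot\vec h-\vec\lambda\cdot\vec g=0)$ for $\vec x\subset G$. The equation $\vec\mu\cdot\vec X+\vec\nu\cdot\vec H-\vec\lambda\cdot\vec Z=0$ is $\FF$-linear in $(\vec X,\vec H,\vec Z)$, so by the Mordell-Lang property (Lemma~\ref{lm_ML}) its trace on the corresponding power of $G$ is cut out by a finite conjunction $\chi$ of $R$-linear equations; therefore the displayed condition is equivalent to $P_\psi(\vec x,\vec h)$ with $\psi:=\exists\vec Z\,\chi$ a pp-formula, the conjunct $\vec x\subset G$ being recovered from $P_\psi$ via axiom~(A) since $\vec h$ already lies in $G$. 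Running this over the pieces $\theta_i$ and concatenating the finitely many tuples $\vec h$ so obtained into one tuple in $G(\vect B)$ gives the statement. The step I expect to be the main obstacle is isolating the ``generic'' part $\vec\gamma\cdot\vec b_0$ of $b'$ and showing, via the extraction functions $f_{\vec e,k}$ and axiom~(B), that once this part happens to lie in $\spane_\FF(G)$ it already lies in $\spane_\FF(G(\vect B))$; the remainder is a routine combination of Lemmas~\ref{lm_reduction_fG}, \ref{lm_termsall} and~\ref{lm_ML} with quantifier elimination in modules.
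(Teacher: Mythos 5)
Your proof is correct and takes essentially the same route as the paper's: both pass through Lemma~\ref{lm_reduction_fG} to reduce to $\cL_0^+$-terms, write the term (for $\vec x\subset G$) as $\vec\mu\cdot\vec x + b'$ with $b'\in\vect{B}$, express $b'$ via an $\hat R$-basis and the extraction functions $f_{\vec e,k}$ (together with axiom (B)) as an $\FF$-combination of elements of $G(\vect{B})$, and conclude with the Mordell-Lang property (Lemma~\ref{lm_ML}) and quantifier elimination for modules. The only cosmetic difference is that you first split $b'=\vec\nu\cdot\vec h+\vec\gamma\cdot\vec b_0$ and dispose of the case $\vec\gamma\cdot\vec b_0\notin\spane_\FF(G)$ by emptiness, whereas the paper picks a realization $\vec a$ and an $\hat R$-basis $\vec\delta$ of $\spane_{\hat R}(\vec\alpha\vec\lambda)$ to exhibit $b\in G_{\vec\delta}$ directly, the empty case being trivial.
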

\begin{proof}
By Lemma \ref{lm_reduction_fG}, $t(\vec x,\vec b)$ is locally an $\cL_0^+$-term, and by Lemma~\ref{lm_termscases} \textit{(1)}, we may assume that $t(\vec x,\vec b)$ is locally of the form $\vec \alpha\cdot  \vec x + b$ for some $b\in \vect B$. We show that the formula 
$\vec \alpha\cdot  \vec x + b\in G_{\vec \lambda}$ is equivalent to a formula $P_\psi(\vec x, \vec h)$ for $\vec h\in G(\vect B)$ and $\psi$ an $\cL_{R-mod}$-formula. Let $\vec a\in G$ be a realisation of $\vec \alpha\cdot  \vec x + b\in G_{\vec \lambda}$ and let $\vec g'\in G$ be such that $\vec\alpha\cdot \vec a + b = \vec \lambda\cdot \vec g'$. Then for some basis $\vec \delta$ of $\spane_{\hat R}(\vec\alpha\vec\lambda)$ we have $b\in G_{\vec\delta}$ and hence there exists $\vec h\in G(\vect B)$ such that $b = \vec\delta\cdot\vec h$. It follows that $\vec\alpha\cdot \vec a + \vec\delta\cdot \vec h = \vec \lambda\cdot \vec g'$. As $\vec a\in G$, by Lemma \ref{lm_ML}, there exists an $\cL_{R-mod}$-formula $\psi(\vec x, \vec y)$ such that $\exists \vec z\in G (\vec\alpha\cdot \vec x + \vec\delta\cdot\vec h = \vec \lambda\cdot \vec z)$ is equivalent to $G\models \psi(\vec x, \vec h)$, hence to $P_\psi(\vec x, \vec h)$.
\end{proof}

\begin{rem}\label{rk_cor_term}
Note that the proof above also proves that  whenever $t(\vec x,\vec b)$ is a term, there exists a finite $\cL$-partition $(\theta_i(\vec x))_{i\leq n}$
of the universe and $\cL_{R-mod}$-formulas $(\psi_i)_{i\leq n}$ such that

\[\left(t(\vec x,\vec b)=0 \wedge \vec x\subset G\right) \leftrightarrow \bigvee_i P_{\psi_i}(\vec x, \vec h)\wedge \theta_i(\vec x).\]
\end{rem}

\begin{prop}\label{prop_shift}
Let $(V,G)$ be a model of $T^{G+}$ and let $B = \vect B\subset V$. Assume that $(V,G)$ is $\abs{B}$-saturated. Let $a\in G$ and let \begin{itemize}
    \item $q(x)$ be the set of boolean combinations of formulas of the form $t(x, \vec b)\in G_{\vec \lambda}$ satisfied by $a$, for $\vec b\in B$, $t( x, \vec b)$ an $\cL_G^+$-term and $\vec \lambda\in\FF$.
    \item $q_1(x)$ be the set of formulas of the form $P_\phi(x,\vec g)$ satisfied by $a$, for $\vec g\in G(B)$, where $\phi(x,\vec y)$ is an $\cL_{R-mod}$-formula.
\end{itemize}  
Then  $\tp_\cL(a/\emptyset)\cup q_1(x)\models q(x)$. Moreover, for any non-algebraic $\cL$-type $p(x)$, $p(x)\cup q_1(x)$ is consistent with infinitely many realisations.
\end{prop}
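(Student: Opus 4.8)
The plan is to handle the two assertions separately: the entailment is essentially a repackaging of Lemma~\ref{cor_QEmod} and Remark~\ref{rk_cor_term}, while for the ``moreover'' clause I would run a shift‑by‑a‑divisible‑element argument in the spirit of Step~1 of the proof of Theorem~\ref{theo_QEfrac}.

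For the entailment $\tp_\cL(a/\emptyset)\cup q_1(x)\models q(x)$: since $a\in G$, the formula ``$x\in G$'' (that is, $P_{x=x}(x)$) belongs to $q_1(x)$, so I may work modulo $x\in G$. Given a generator $\chi(x)$ of $q(x)$ --- a boolean combination of literals $t(x,\vec b)\in G_{\vec\lambda}$, with the empty tuple $\vec\lambda$ producing equality literals $t(x,\vec b)=0$ --- I would conjoin each literal with $x\in G$ and apply Lemma~\ref{cor_QEmod} (respectively Remark~\ref{rk_cor_term}) to rewrite it, modulo $x\in G$, as a boolean combination of formulas $P_\psi(x,\vec h)$ with $\vec h\in G(\vect B)=G(B)$ and of $\cL$-formulas in the single variable $x$ over $\emptyset$. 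Then $\chi(x)\wedge(x\in G)$ becomes a boolean combination of $\cL(\emptyset)$-formulas in $x$ and of such $P_\psi$'s, and whether it holds of $a$ is decided by $\tp_\cL(a/\emptyset)$ (for the $\cL$-part) together with $q_1(x)$ (for the $P$-part, using that under $x\in G$ one has $\neg P_\psi(x,\vec h)\leftrightarrow P_{\neg\psi}(x,\vec h)$). Since $\chi$ holds of $a$, it is implied; as $\chi$ was arbitrary, this yields $q(x)$.

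For the ``moreover'' clause I would first reduce to the case $a\notin\vect B$ (if $a\in\vect B$ then $a\in G(B)$ and $q_1(x)$ already forces $x=a$, a degenerate situation). Observe that $q_1(x)$ codes exactly $\tp_{R-mod}(a/G(B))$. Fix a non-algebraic $\cL$-type $p(x)$ over a small set and pass to a $\kappa$-saturated monster $(\mathbb V,\mathbb G)\succeq(V,G)$ with $\kappa>|\vect B|$. Let $p_{sh}(x):=\{\varphi(x+a,\vec c):\varphi(x,\vec c)\in p\}$, again a non-algebraic $\cL$-type over a small set. By the density of $R$-divisible elements --- the verbatim analogue of Lemma~\ref{lm_densediv}, now using axiom (C) --- the partial type $p_{sh}(x)\cup\{\,rG(x):r\in R\setminus\{0\}\,\}$ is consistent and non-algebraic, hence has $\ge\kappa$ realisations; I would pick a realisation $d\in G^{\dv}=\bigcap_{r\in R\setminus\{0\}}rG$ with $d\notin\vect B-a$ (possible since $|\vect B-a|<\kappa$) and set $a':=a+d$. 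Then $a'\models p$ because $d\models p_{sh}$. To see $a'\models q_1$ it suffices, by quantifier elimination in $R$-modules, to check that $a$ and $a'$ lie in the same cosets of pp-definable subgroups of $\mathbb G$ over $G(B)$: for a coset $e+H$ with $H\ne\{0\}$ this holds because $a'-a=d\in G^{\dv}\subseteq H$, while for $e+H=\{e\}$ (the case $H=\{0\}$) the point $e$ lies in $\dcl_{R-mod}(G(B))\subseteq\spane_{\hat R}(G(B))\subseteq\vect B$, so neither $a$ nor $a'$ equals $e$ since both avoid $\vect B$. Distinct admissible $d$ give distinct $a'$, so $p\cup q_1$ has $\ge\kappa$ realisations.

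The step I expect to be the real obstacle is the inclusion $G^{\dv}\subseteq H$ used above, i.e.\ the claim that every nonzero pp-definable subgroup $H\le\mathbb G$ in one variable (over $\emptyset$) contains $r\mathbb G$ for some $r\in R\setminus\{0\}$. Writing $H=\{x\in\mathbb G:\vec c\,x\in D(\mathbb G^l)\}$ with $\vec c\in R^m$ a column and $D\in R^{m\times l}$, a nonzero $h\in H$ gives $h\vec c=D\vec y_0$ for some $\vec y_0\in\mathbb G^l$, which expresses that the class $\bar{\vec c}$ of $\vec c$ in $\mathrm{coker}(D:R^l\to R^m)$ satisfies $\bar{\vec c}\otimes h=0$ in $\mathrm{coker}(D)\otimes_R\mathbb G$. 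Since $\mathbb G\otimes_R\hat R$ is an $\hat R$-vector space, writing $h=\sum_i q_i v_i$ with $(v_i)$ a basis and some $q_{i_0}\ne0$ forces $\bar{\vec c}=0$ in $\mathrm{coker}(D)\otimes_R\hat R=\hat R^m/D(\hat R^l)$; hence $\vec c=D\vec q$ for some $\vec q\in\hat R^l$, and clearing denominators gives $r\vec c=D\vec e$ with $r\in R\setminus\{0\}$ and $\vec e\in R^l$, so $\vec c\,(rg)=D(\vec e\,g)\in D(\mathbb G^l)$ for every $g\in\mathbb G$, i.e.\ $r\mathbb G\subseteq H$. The density analogue of Lemma~\ref{lm_densediv} and the passage to the monster are then routine.
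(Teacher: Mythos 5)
Your proposal is correct and takes the same two-step route as the paper: the entailment via Lemma~\ref{cor_QEmod}, and the ``moreover'' clause via the shift $a'=a+d$ with $d\in G^\dv$ obtained from the density of $R$-divisible elements. Where you diverge is in the module-theoretic check that $a'\models q_1(x)$. The paper reduces it to the claims that every formula in $q_1$ is a Boolean combination of conditions $ra'+b\in s_1G+\dots+s_nG$ and that $rd\in s_1G+\dots+s_nG$ always holds, neither of which is literally right for a general integral domain $R$ (pp-subgroups in one variable need not be intersections of divisibility conditions, and the absorption claim fails when all $s_i=0$). You instead work directly with arbitrary pp-definable cosets $e+H$, prove via the tensor argument (using that $G$ is torsion-free, being an additive subgroup of a characteristic-$0$ field) that every nonzero one-variable pp-definable subgroup $H$ of $G$ over $\emptyset$ contains $rG$ for some $r\in R\setminus\{0\}$, hence $G^\dv\subseteq H$, and treat the degenerate case $H=\{0\}$ separately. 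This is a genuine tightening of the argument. Your preliminary reduction to $a\notin\vect B$ is also needed and correctly matches the setting in which the paper actually applies the proposition (in Theorem~\ref{thm_QE}, $a\in V\setminus B$). The one place your write-up is incomplete is the assertion $\dcl_{R-mod}(G(B))\subseteq\spane_{\hat R}(G(B))$, which you state without proof; it is true, but deserves an argument (for example, over $\hat R$: if the homogeneous system $\vec c\, x = D\vec z$ projects to $\{0\}$ on the $x$-coordinate, decompose the inhomogeneous solution with respect to an $\hat R$-basis extending the parameter tuple to see that the $x$-component lies in the $\hat R$-span of the parameters).
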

\begin{proof}
Let $\phi(x,\vec b)\in q(x)$. By Lemma~\ref{cor_QEmod}, for $ x\in G$, every formula of the form $t(x,\vec b)\in G_{\vec \lambda}$ is equivalent to $\bigvee_i P_{\psi_i}(\vec x, \vec h)\wedge \theta_i(\vec x)$. As $\cL$-formulas and formulas of the form $P_\psi$ are closed by boolean combinations, every formula $t(x,\vec b)\not \in G_{\vec \lambda}$ is also equivalent to a finite disjunction of expressions the form $\bigvee_j P_{\psi_j}(x,\vec h) \wedge \vartheta_j(x)$ (note that here $\vartheta_j$ do not necessarily form a partition of the universe). More generally, for $\phi(x,\vec b)\in q(x)$, the expression $\phi(x,\vec b) \wedge x\in G$ is equivalent to a formula of the form $\bigvee_j P_{\psi_j}(x,\vec h) \wedge \vartheta_j(x)$, for $\cL_{R-mod}$-formulas $\psi_i$ and $\cL$-formulas $\vartheta_i$. Let $a'\models \tp_\cL(a/\emptyset)\cup q_1(x)$  and let $\phi(x,\vec b)\in q(x)$. Then by Lemma \ref{cor_QEmod} there is an $\cL_{R-mod}$-formula $\psi(x,\vec y)$ and an $\cL$-formula $\vartheta(x)$ such that $a\models  P_{\psi}(x,\vec g)\wedge \vartheta(x)$ and $ (P_{\psi}(x,\vec g)\wedge\vartheta(x))\rightarrow \phi(x,\vec b)$, for some $g\in G(B)$. It follows that $a'$ satisfies $\phi(x,\vec b)$.

Let $p(x)$ be a non-algebraic $\cL$-type. We show that $q_1(x)$ is consistent with $p(x)$, with infinitely many realisations.  Let $p_{sh}(x) = p(x+a)$. The type $p_{sh}$ is also non-algebraic, hence by density of $R$-divisible elements (Lemma \ref{lm_densediv}), there exists infinitely many $d$'s such that $d\models p_{sh}(x)\cap G^\dv$. Let $a'=d+a$.

Claim: $a'\models q_1 (x)\cup p(x)$. First, as $d\models p_{sh}(x)$, $a' = d+a\models p(x)$. By quantifier elimination in $R$-modules, every formula in $q_1(x)$ is a boolean combination of conditions of the form $ra'+b \in s_1G+\dots+s_nG$ for $r,\vec s \in R$ and $b\in G(B)$. As $d\in G^\dv$, we have that $rd\in s_1G+\dots+s_nG$ for all $r,\vec s\in R$, hence $ra'+b \in s_1G+\dots+s_nG$ if and only if $ra+b\in s_1G+\dots+s_nG$, so $a'\models q_1(x)$. It follows that $q_1(x)\cup p(x)$ has infinitely many realisations.
\end{proof}

\begin{cor}\label{cor_shift}
Let $p(\vec x)$ be an $\cL$-type over $B = \vect B$, such that \[p(\vec x)\models ``\text{$\vec x$ is $\FF$-independent over $B$}".\] 
Let $q(\vec x)$ be any set of boolean combinations of formula of the form $t(\vec x, \vec b)\in G_{\vec \lambda}$, for $\vec b\in B$ and $t(\vec x,\vec y)$ an $\cL^+_G$-term, which is consistent in $G$. If $(p(\vec x)\upharpoonright \emptyset) \cup q(\vec x)$ is consistent in $G$, then $p(\vec x)\cup q(\vec x)$ is also consistent in $G$.
\end{cor}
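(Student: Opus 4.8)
The plan is to view Corollary~\ref{cor_shift} as the multivariable form of Proposition~\ref{prop_shift} and to combine the two assertions of that proposition, suitably upgraded to tuples. Fix a sufficiently saturated model $(V,G)$ and, without loss, assume $p$ is complete over $B$. The two ingredients will be: (i) if $\vec a_0\in G^{|\vec x|}$ and $q_1(\vec x):=\tp_{R-mod}(\vec a_0/G(B))$ is its $R$-module type over $G(B)$, then $\tp_\cL(\vec a_0/\emptyset)\cup q_1(\vec x)$ implies every boolean combination of formulas $t(\vec x,\vec b)\in G_{\vec\lambda}$ ($\vec b\in B$) satisfied by $\vec a_0$ --- in particular it implies $q$ once $\vec a_0\models q$; and (ii) a shift producing, out of $\vec a_0$, a tuple realizing $p\cup q_1$ in $G$.

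For (i), the point (as in the first paragraph of the proof of Proposition~\ref{prop_shift}) is that because $\vec a_0$ and all competing realizations lie in $G$, Lemmas~\ref{lm_reduction_fG},~\ref{lm_termscases}(1),~\ref{lm_ML} and~\ref{cor_QEmod} rewrite each $t(\vec x,\vec b)\in G_{\vec\lambda}$, conjoined with ``$\vec x\subseteq G$'', as an $R$-module formula $P_\psi(\vec x,\vec h)$ with $\vec h\in G(B)$, modulo a \emph{parameter-free} $\cL$-partition (over $G$ the $f_{\vec\lambda,i}$-terms collapse to $\FF$-linear forms without case split, so the only split is the parameter-free one of Lemma~\ref{disj_terms}). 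Hence on each parameter-free piece $\theta_i(\vec x)$, $q$ becomes a boolean combination of such $P_\psi$'s; a realization of $\tp_\cL(\vec a_0/\emptyset)\cup q_1$ lies on the same piece as $\vec a_0$ (the $\theta_i$ being parameter-free) and agrees with $q_1$ on the module conditions, hence satisfies $q$.

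For (ii): set $p_{sh}(\vec x):=\{\phi(\vec x+\vec a_0)\mid\phi(\vec x)\in p\}$. Using that $p$ implies ``$\vec x$ is $\FF$-independent over $B$'', one checks coordinate by coordinate that $p_{sh}$ stays non-algebraic over the already-chosen coordinates (a complete non-algebraic $\cL$-type has realizations off the algebraic closure of any small set), so iterating Lemma~\ref{lm_densediv} gives $\vec d\models p_{sh}$ with every $d_i\in G^{\dv}$. Then $\vec a'':=\vec d+\vec a_0\in G^{|\vec x|}$ realizes $p$ (as $\vec d\models p_{sh}$) and also realizes $q_1$: writing each $P_\psi(\vec x,\vec h)$, by quantifier elimination in $R$-modules, as a boolean combination of conditions $\sum_jr_jx_j+c\in s_1G+\dots+s_mG$ with $c$ in the $R$-span of $G(B)$, the shift contributes $\sum_jr_jd_j\in G^{\dv}\subseteq s_1G+\dots+s_mG$, changing none of them (the tuple version of the Claim in the proof of Proposition~\ref{prop_shift}). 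By (i) applied to $\vec a_0$, with $p\supseteq p\restriction\emptyset=\tp_\cL(\vec a_0/\emptyset)$, we get $p\cup q_1\models q$, so $\vec a''\models p\cup q$.

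To assemble this, one needs $\vec a_0\in G^{|\vec x|}$ realizing $(p\restriction\emptyset)\cup q$ and moreover $\FF$-independent over $B$; this is the delicate point, and it is precisely here that the hypotheses ``$q$ consistent in $G$'' and ``$(p\restriction\emptyset)\cup q$ consistent in $G$'' must be used together --- the reason $\FF$-independence of $\vec a_0$ over $B$ matters is that otherwise $q_1=\tp_{R-mod}(\vec a_0/G(B))$ could contain a genuine equation $\sum_jr_jx_j=-c$ with $c\neq0$ in $\spane_\FF(B)$ (such equations do occur among the $G_{\vec\lambda}$-conditions, via axiom~(B) forcing an $\hat R$-coefficient block of a relation in $G$ to vanish), and a generic $G^{\dv}$-shift would destroy it. I also expect some routine saturation/cardinality bookkeeping to be needed when invoking Lemma~\ref{lm_densediv}.
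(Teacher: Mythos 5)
The paper proves this by induction on $|\vec x|$, treating one variable at a time: having found $(a_1,\ldots,a_{n-1}) \models p_{n-1}\cup q_{n-1}$ by the induction hypothesis, it passes to completions $p'(x_n), q'(x_n)$ over the \emph{enlarged base} $\vect{Ba_1\ldots a_{n-1}}$ and invokes Proposition~\ref{prop_shift} for a single variable over this new base. Your approach is genuinely different: you attempt a single global shift of an $n$-tuple $\vec a_0 \models (p\restriction\emptyset)\cup q$ by a tuple $\vec d \in (G^{\dv})^n$. This is a reasonable-looking generalization of the one-variable shift, but it has the gap that you yourself flag in your last paragraph, and I do not see how to fill it along your lines. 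Concretely: the shift argument rests on the fact that adding $\vec d$ with $d_i \in G^\dv$ leaves every formula ``$\sum_j r_j x_j + h \in s_1G+\dots+s_mG$'' in $q_1 = \tp_{R\mathrm{-mod}}(\vec a_0/G(B))$ invariant; but this fails exactly when the target subgroup is trivial, i.e.\ when $q_1$ contains an honest equation $\sum_j r_j x_j = -h$ with $h \in \spane_R(G(B))$. Such equations occur precisely when $\vec a_0$ is $\FF$-dependent on $B$. The hypothesis gives you $\vec a_0 \models p\restriction\emptyset$, hence $\FF$-independence over $\emptyset$ only; nothing forces $\vec a_0$ to be $\FF$-independent over $B$, and choosing it to be so is essentially as hard as the corollary itself (you would be solving the ``$\FF$-independent over $B$'' constraint plus $q$ simultaneously, which is the whole point). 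So the ``delicate point'' is a genuine gap, not bookkeeping.

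The paper's induction sidesteps this cleanly: at step $n$, the hypothesis ``$p \models \vec x$ $\FF$-independent over $B$'' guarantees that the completion $p'(x_n)$ over $\vect{Ba_1\ldots a_{n-1}}$ is non-algebraic, so the element being produced at that step is automatically not in the $\FF$-span of the current base, which is exactly the condition ensuring that the relevant $q_1$ at that step has no equations and the one-variable shift of Proposition~\ref{prop_shift} goes through. Re-basing after each coordinate is the mechanism that makes ``$\FF$-independence over $B$'' propagate, and your all-at-once version loses it. Two smaller remarks: (a) your step~(i) describes the $\cL$-partition coming out of Lemmas~\ref{lm_reduction_fG} and~\ref{cor_QEmod} as parameter-free, but those pieces $\theta_i$ are $\cL$-formulas in both $\vec x$ and $\vec y$ and acquire parameters from $\vec b$ once you substitute; the assertion $\tp_\cL(\vec a_0/\emptyset)\cup q_1 \models q$ is exactly what Proposition~\ref{prop_shift} already records, so it is better to cite it than to re-derive it under the parameter-free assumption; (b) your step~(ii) coordinate-by-coordinate realization of $p_{sh}$ inside $(G^\dv)^n$ is fine and is implicitly what the inductive argument is doing, one variable at a time.
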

\begin{proof}
We prove it by induction on $\abs{\vec x}$. For $\abs{\vec x} = 1$, it is Lemma \ref{prop_shift} (since $p(x)\cup q_1(x)\models p(x)\cup q(x)$). 
Assume $\vec x = (x_1,\dots,x_n)$. Let $p_{n-1}(x_1,\dots, x_{n-1})$ and $q_{n-1}(x_1,\dots, x_{n-1})$ be the restrictions of $p(\vec x)$ and $q(\vec x)$ to the first $n-1$ variables. By induction hypothesis, there exists $(a_1,\dots,a_{n-1})\models p_{n-1}\cup q_{n-1}$. In particular, $a_1,\dots, a_{n-1}$ are $\FF$-independent over $B$. Let $p'(x_n)$ and $q'(x_n)$ be completions to $\vect{Ba_1,\dots,a_{n-1}}$ of $p(a_1,\dots,a_{n-1},x_n)$ and $q(a_1,\dots,a_{n-1},x_n)$, so that $p'(x_n) \models ``x_n\notin \vect{Ba_1,\dots,a_{n-1}}"$. Using again Lemma \ref{prop_shift}, $p'(x_n)\cup q'(x_n)$ has a realisation, say $a_n$, then $(a_1,\dots a_n)\models p(\vec x)\cup q(\vec x)$. 
\end{proof}

\begin{thm}\label{thm_QE}
 The theory $T^{G+}$ has quantifier elimination.
\end{thm}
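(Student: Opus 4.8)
The plan is to prove quantifier elimination by the usual back-and-forth criterion, mirroring the proof of Theorem~\ref{theo_QEfrac} but exploiting the new function symbols $f_{\vec\lambda,i}$ to reduce the general situation to the case of a new element lying in $G$. Fix two sufficiently saturated models $(V,G),(V',G')$ of $T^{G+}$ and $\cL_G^+$-substructures $B=\vect B$, $B'=\vect{B'}$ of size less than the saturation degree, together with an $\cL_G^+$-isomorphism $\sigma\colon B\to B'$ (such a configuration exists: e.g.\ $B=B'=\{0\}$ with $0\mapsto 0$). By quantifier elimination in $T$ and in the theory of $R$-modules, $\sigma$ is $\cL$-elementary on $B$ and $\cL_{R-mod}$-elementary on $G(B)$. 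Given $a\in V\setminus B$ we must produce $a'\in V'$ so that $\sigma$ extends to an $\cL_G^+$-isomorphism sending $a$ to $a'$. By Lemma~\ref{lm_reduction_fG}, Lemma~\ref{lm_termsall} and Lemma~\ref{lm_termscases} the substructure $\vect{Ba}$ has a simple explicit description, so it suffices to realize in $V'$ the transported quantifier-free $\cL_G^+$-type of $a$ over $B$; and by Lemma~\ref{disj_terms}, Lemma~\ref{cor_QEmod}, Remark~\ref{rk_cor_term} and quantifier elimination in $R$-modules, that type is (locally, i.e.\ on a finite $\cL$-partition) determined by $\tp_\cL(a/B)$ together with finitely many conditions $t(a,\vec b)\in G_{\vec\mu}$ and $P_\psi$-conditions with parameters in $G(B)$. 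Since $\acl=\spane_\FF$ in $T$ and $a\notin B=\spane_\FF(B)$, the type $\tp_\cL(a/B)$ is non-algebraic, hence so is $p(x):=\sigma(\tp_\cL(a/B))$.

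We then distinguish the same three cases as in Theorem~\ref{theo_QEfrac}. \emph{Case $a\notin\spane_\FF(BG)$.} Here Lemma~\ref{lm_termscases}(1) shows every $\cL_0^+$-term $t(a,\vec b)$ equals $\lambda a+b'$ with $\lambda\in\FF$, $b'\in B$, and no condition $t(a,\vec b)\in G_{\vec\mu}$, and no positive $P_\psi$-condition, involving $a$ nontrivially can hold; so the quantifier-free type of $a$ over $B$ reduces to $\tp_\cL(a/B)$, and it suffices to realize $p(x)$ by an element of $V'\setminus\spane_\FF(B'G')$, which follows from axiom~$(D)$, compactness and saturation exactly as in Step~3 of Theorem~\ref{theo_QEfrac}. \emph{Case $a\in G$.} Again by Lemma~\ref{lm_termscases}(1) every $\cL_0^+$-term is of the form $\lambda a+b'$, and by Lemma~\ref{cor_QEmod} and Remark~\ref{rk_cor_term} all the conditions $t(a,\vec b)\in G_{\vec\mu}$ (including $t(a,\vec b)\in G$ and $t(a,\vec b)=0$) reduce to $P$-conditions, so the quantifier-free type of $a$ over $B$ is determined by $\tp_\cL(a/B)$ and the $R$-module type $q_1(x)$ of $a$ over $G(B)$. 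Transporting, $q_1^\sigma(x)$ is a consistent $R$-module type over $G(B')$, hence realized by some $a^\dagger\in G(V')$ by saturation of the $R$-module $G(V')$; applying Proposition~\ref{prop_shift} inside $(V',G')$ with $a^\dagger$ in the role of $a$, the set $p(x)\cup q_1^\sigma(x)$ has (infinitely many) realizations. Any such realization $a'$ lies in $G$, satisfies $p$ and $q_1^\sigma$, and by the first assertion of Proposition~\ref{prop_shift} also satisfies the transported $G_{\vec\mu}$-type over $B'$; hence $\sigma\cup\{(a,a')\}$ extends to an $\cL_G^+$-isomorphism of $\vect{Ba}$ onto $\vect{B'a'}$.

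\emph{Case $a\in\spane_\FF(BG)\setminus B$.} By Lemma~\ref{lm_termscases}(2) there are an $\hat R$-independent (indeed $\FF$-independent) tuple $\vec\alpha$ and $c\in\spane_\FF(B)$ with $a=\sum_i\alpha_i g_i+c$, where $g_i:=f_{\vec\alpha,i}(a-c)\in G$. Choosing greedily, we may reorder so that $g_1\notin\vect B$, $g_2\notin\vect{Bg_1}$, \dots, $g_k\notin\vect{Bg_1\cdots g_{k-1}}$, while every remaining $g_j$ lies in $\vect{Bg_1\cdots g_k}$. Applying the previous case successively to $g_1$ over $B$, then $g_2$ over $\vect{Bg_1}$, and so on, we extend $\sigma$ to an $\cL_G^+$-isomorphism $\bar\sigma$ having $g_1,\dots,g_k$ in its domain; since then $a=\sum_i\alpha_i g_i+c$ lies in $\vect{Bg_1\cdots g_k}$, the domain of $\bar\sigma$, we set $a':=\bar\sigma(a)=\sum_i\alpha_i\bar\sigma(g_i)+\sigma(c)$. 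This completes the back-and-forth, and hence the proof of quantifier elimination.

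The main obstacle is organizational rather than conceptual: one must carefully put an arbitrary quantifier-free $\cL_G^+$-formula into the ``normal form'' used above — which is exactly the purpose of Lemmas~\ref{lm_reduction_fG}, \ref{lm_termsall}, \ref{lm_termscases}, \ref{cor_QEmod} and Remark~\ref{rk_cor_term} — and keep careful track of the finite $\cL$-partitions hidden behind the word ``locally'', so that matching the finitely many relevant atomic conditions at $a$ and at $a'$ genuinely matches the full quantifier-free types over $B$ and $B'$. All the real content has already been isolated: the shifting-by-a-divisible-element argument (Proposition~\ref{prop_shift}, resting on Lemma~\ref{lm_densediv}) and the Mordell--Lang-type input (Lemma~\ref{lm_ML}) behind Lemma~\ref{cor_QEmod}. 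Once these are in place, the argument is essentially an assembly following the template of Theorem~\ref{theo_QEfrac}.
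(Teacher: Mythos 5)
Your proposal is correct and follows essentially the same three-step back-and-forth argument as the paper's proof: reduction via Lemma~\ref{lm_termscases}, the $a\in G$ case via Proposition~\ref{prop_shift}, the $a\in\spane_\FF(BG)$ case by reducing to elements of $G$, and the $a\notin\spane_\FF(BG)$ case via axiom $(D)$. The only cosmetic differences are your explicit introduction of the auxiliary realization $a^\dagger$ of $q_1^\sigma$ (which the paper leaves implicit when invoking Proposition~\ref{prop_shift} in $(V',G')$) and your careful greedy selection of $g_1,\dots,g_k$ so that each extension step genuinely adds a new element, which sharpens the paper's terser Step 2.
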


\begin{proof}
We show that the set of partial isomorphisms between two $\abs{\FF}^+$-saturated models $(V,G), (V', G')$ of $T^{G+}$ has the back and forth property. Let $B\subset V, B'\subset V'$ be two small substructures (i.e. $B = \vect B$ and $B' = \vect{B'}$) such that there exists a partial isomorphism $\sigma: B \rightarrow B'$ (i.e. a function that preserves the quantifier free type in $\cL_G^+$, relatively to the ambient model, $\tp^{(V,G)}_{QF\cL_G^+}(B) = \tp^{(V',G')}_{QF\cL_G^+}(B)$). 

 Let $a\in V\setminus B$. As $B=\vect{B}$, by hypothesis on $T$ we have that $\tp_\cL(a/B)$ is non-algebraic.

\underline{Step 1.} If $a\in G(V)$. From Proposition~\ref{prop_shift}, the quantifier-free type of $a$ over $B$ is implied by $\tp_\cL(a/B)$, the set $q(x)$ of boolean combinations of conditions of the form $t(x,\vec b)\in G_{\vec \lambda}$ satisfied by $a$ in $V$, for $b\in B$, $t(x,\vec y)$ an $\cL_G^+$ term and $\vec \lambda \in \FF$, and inequations $\alpha x \neq b$ for $b\in B$. Equations do not appear as $a\notin B$ (by Lemma~\ref{lm_termscases} (1)). By compactness and Proposition \ref{prop_shift}, it is enough to show that $\sigma[q(x)\cup \tp_\cL(a/B)]$ has infinitely many solutions. Let $q_1(x)$ be as in Proposition \ref{prop_shift}. In particular, for any $\phi(x,\vec g)\in q_1(x)$ we have $(V,G)\models P_\psi(\vec g)$ and $\vec g\in G(B)$ for $\psi(\vec y) = \exists x\phi(x,\vec y)$, so $(V',G')\models P_\psi(\sigma(\vec g))$. By compactness, this shows that $\sigma (q_1) = q_1^\sigma(x)$ is a consistent partial type in $(V',G')$, and $q_1^\sigma$ is the set of boolean combinations of formulas of the form $t(x,\vec b')$ satisfied by a realisation of $q_1^\sigma$ in $V'$.

Let $p(x) = \sigma(\tp_\cL(a/B))$. The fact that $q_1^\sigma(x)\cup p(x)$ has infinitely many realisations follows from Proposition \ref{prop_shift}, as $p(x)$ is non-algebraic. Using again Proposition \ref{prop_shift}, we have that $q_1^\sigma(x)\cup p(x)\models q^{\sigma}(x)$, so the type $q^\sigma(x)\cup p(x)$ is realised and non-algebraic, so we can extend $\sigma$ by $\sigma(a) = a'$ for some $a'\models q^\sigma(x)\cup p(x)$.

\underline{Step 2.} If $a\in \spane_\FF(B G)$, then by Lemma \ref{lm_termscases} (2) there exists $\hat R$-independent $\vec \alpha\in \FF$ and $c\in B$ such that $a =  \vec\alpha\cdot \vec g+ c$, and so $g_i = f_{\vec \alpha,i}(a-c)$. Lemma \ref{lm_termscases} (2) also implies that every $\cL_G^+$-term $t(a,\vec b)$ is locally an $\FF$-linear combination of $g_1,\dots,g_n$. By Step 1 we can extend $\sigma$ to $g_1,\dots,g_n$ hence we can extend $\sigma$ to $a$. 

\underline{Step 3.} If $a\in V\setminus \spane_\FF(B G)$.
By Lemma \ref{lm_termscases} (1), every term $t(a,\vec b)$ is locally of the form $\alpha a +b$, for some $b\in B$. 
So it is enough to find in $V'$ an element $a'$ such that $a'+b'\notin G_{\vec\lambda}$ for all $b'\in B'$, $\vec\lambda\in \FF$, and such that $\sigma(\tp_\cL(a/B)) = \tp_\cL(a'/B')$. As $\tp_\cL(a/B)$ is non-algebraic, such $a'$ exists by axiom $(D)$ and $\abs{\FF}^+$-saturation.
\end{proof}

Recall that $T_U$ is the theory consisting of $T$ together with the schemes $(A)$ and $(B)$ and $T^G$ the theory consisting of adding $(A),(B),(C),(D)$, hence $T^G$ is the restriction of $T^{G+}$ to the language $\cL_G$.
\begin{cor}
If $T_U$ is inductive, then $T^G$ is model-complete and it is the model-completion of $T_U$.
\end{cor}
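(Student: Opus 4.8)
The plan is to adapt the proof of Proposition~\ref{TU-modelcompletion} to the present, more general, setting, using Theorem~\ref{thm_QE} in place of Theorem~\ref{theo_QEfrac}. I would check three things: (1) $T^G$ is model-complete; (2) every model of $T_U$ embeds into a model of $T^G$; (3) $T_U$ has the amalgamation property. Item~(1) is the first assertion of the corollary. For the second, items~(1)--(3) together with the hypothesis that $T_U$ is inductive show that $T^G$ is the model companion of $T_U$, and a model companion of a theory with the amalgamation property is a model completion of it.

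For (1): by Theorem~\ref{thm_QE} the theory $T^{G+}$ has quantifier elimination, so it is model-complete. Since the symbols $G_{\vec\lambda}$ and $f_{\vec\lambda,i}$ are defined over $\cL_G$ (Definition~\ref{df_Tplus}), every model of $T^G$ expands uniquely to a model of $T^{G+}$, and it remains to see that an $\cL_G$-embedding between models of $T^G$ is automatically an $\cL_G^+$-embedding between the associated $T^{G+}$-models. By Lemma~\ref{lm_termscases} this reduces to checking that the relations $x\in G_{\vec\lambda}$ and the values $f_{\vec\lambda,i}(x)$ are both preserved and reflected; I would get this from axiom~(B) — the $\hat R$-coefficient representation $a=\sum_i\lambda_i g_i$ of an element of $G_{\vec\lambda}$ is unique, by the remark after Definition~\ref{df_Tplus} — together with the Mordell--Lang property (Lemma~\ref{lm_ML}), which controls which $\FF$-linear dependences among elements of $G$ can occur. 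Once this is in place, model-completeness of $T^{G+}$ descends to $T^G$.

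For (2) I would run the chain argument from the first part of the proof of Proposition~\ref{TU-modelcompletion}, using inductiveness of $T_U$: each instance of the density scheme~(C) is handled by passing to a proper $\cL$-extension realising a non-algebraic extension of the relevant type and adjoining $b/r$ to $G$, which preserves axiom~(B) because $b$ is chosen $\cL$-generic so that no new $\hat R$-independent $\FF$-linear relation among elements of $G$ is created; each instance of the co-density scheme~(D) is handled by adjoining an $\cL$-generic solution of $\phi$ avoiding the finitely many sets $\psi(\cdot,\vec a,\vec g)$, and one adjoins all later generators of $G$ generically so that earlier co-density witnesses stay valid. For (3) I would invoke the strong amalgamation property of $T$ (Lemma~\ref{lm_pregeom_SAP}) to amalgamate $V_1,V_2$ over $V_0$ with $g_1(V_1)\cap g_2(V_2)=g_1\circ f_1(V_0)$, put $U_3=g_1(U_1)+g_2(U_2)$, and verify $U_i=U_3\cap g_i(V_i)$ exactly as in Proposition~\ref{TU-modelcompletion}.

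The hardest point I anticipate consists of the two steps with no counterpart in the case $\FF=\Frac(R)$, where $\hat R=\FF$ makes axiom~(B) vacuous and $T^G$ itself has quantifier elimination: the lift of $\cL_G$-embeddings of $T^G$-models to $\cL_G^+$-embeddings in~(1), and the verification in~(3) that $U_3=U_1+U_2$ still satisfies axiom~(B), i.e.\ that gluing two copies of $G$ over a common submodule produces no new $\FF$-linear dependence with $\hat R$-independent coefficients. For the latter I would try to use that, in a model of $T_U$, axiom~(B) identifies $\spane_\FF(U_i)$ with $\FF\otimes_{\hat R}\spane_{\hat R}(U_i)$, and choose the vector-space amalgam $V_3$ compatibly with these decompositions, so that $\spane_\FF(U_1)\cap\spane_\FF(U_2)$, computed inside $V_3$, is exactly $\spane_\FF(U_0)$.
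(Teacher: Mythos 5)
Your proposal follows the same route as the paper's proof, which similarly divides into three parts: (i) model-completeness of $T^G$ from quantifier elimination of $T^{G+}$ together with $\cL_G$-definability of the new symbols; (ii) SAP of $T_U$; (iii) extension of $T_U$-models to $T^G$-models by a chain argument. Your reformulation of (i) as \emph{lift $\cL_G$-embeddings between $T^G$-models to $\cL_G^+$-embeddings} is the dual of the paper's phrase that the $f_{\vec\lambda,i}$ are existentially definable in $\cL_G$: a total function whose graph is defined by an existential formula is automatically both preserved and reflected by substructures, and $G_{\vec\lambda}$ then follows because $G_{\vec\lambda}(y)\leftrightarrow y=\sum_i\lambda_i f_{\vec\lambda,i}(y)$.

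You are right to flag, as the two genuinely new points over the case $\hat R=\FF$, (a) that (i) must work in $\cL_G$ and not just in $\cL_G^+$, and (b) that axiom (B) must be checked for $U_3=g_1(U_1)+g_2(U_2)$ in the amalgamation. The paper is silent on both. The place to press your sketch hardest is your justification of (a): axiom~(B) gives \emph{uniqueness} of the representation $a=\sum_i\lambda_i g_i$ once such a representation exists, but the substantive content of the lift is \emph{reflection} --- that when $a\in M\subseteq N$ (both models of $T^G$) and $a\in G_{\vec\lambda}(N)$, the witnessing $g_i\in G(N)$ already lie in $M$. Equivalently, the graph of $f_{\vec\lambda,i}$ must be existentially $\cL_G$-definable modulo $T^G$ even on the default branch $z=0\wedge y\notin G_{\vec\lambda}$, which on its face has only a universal definition. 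Mordell--Lang (Lemma~\ref{lm_ML}) governs $\FF$-linear relations among elements of $G$ and does not directly apply to a relation $a=\vec\lambda\cdot\vec g$ in which $a$ is not itself in $G$, so the combination of axiom~(B) and Mordell--Lang that you propose does not by itself close this; you will likely need to bring in the density and co-density axioms. This is where the paper itself is most compressed, and your sketch inherits the burden.
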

\begin{proof}
$T^{G+}$ has quantifier elimination by Theorem \ref{thm_QE}, and every function $f_{\vec \lambda,i}$ is existentially definable in the language $\cL_G$, to $T^G$ is model-complete. Note that the proof of  Lemma~\ref{lm_pregeom_SAP} did not use that $\hat R = \FF$, hence we have that $T_U$ has SAP. It remains to prove that every model of $T_U$ extends to a model of $T^G$, which is similar to the proof of Corollary ~\ref{TU-modelcompletion}.
\end{proof}

 We can now characterize the algebraic closure in the extended language.
 
\begin{cor}
Let $(V,G)$ be a model of $T^{G+}$ and let $B\subseteq V$. Then 
\[\acl_G(B) = \dcl_G(B) = \vect{B} = \spane_\FF(B\cup\set{f_{\vec{\lambda},i}(c) \mid c\in \spane_\FF(B), \vec\lambda\in \FF, i\leq \abs{\vec\lambda}}).\]
\end{cor}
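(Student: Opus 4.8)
The plan is to read everything off the quantifier elimination of Theorem~\ref{thm_QE}. First, since $\cL_G^+$ is obtained from $\cL_G$ by adjoining the $\emptyset$-definable predicates $G_{\vec\lambda}$ and the $\emptyset$-definable functions $f_{\vec\lambda,i}$, for every parameter set $B$ one has $\acl_G(B)=\acl_{G^+}(B)$ and $\dcl_G(B)=\dcl_{G^+}(B)$; so it is enough to prove $\acl_{G^+}(B)\subseteq\vect B\subseteq\dcl_{G^+}(B)$, the chain $\dcl_{G^+}(B)\subseteq\acl_{G^+}(B)$ being automatic. Next I would note that replacing $B$ by $\vect B$ changes none of the three sets (using $\vect{\vect B}=\vect B$, valid because by Lemma~\ref{lm_termsall} $\vect B$ is exactly the $\cL_0^+$-substructure, hence the $\cL_G^+$-substructure, generated by $B$), so one may assume $B=\vect B$. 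The inclusion $\vect B\subseteq\dcl_{G^+}(B)$ is then immediate from the explicit description $\vect B=\spane_\FF(B\cup\set{f_{\vec\lambda,i}(c)\mid c\in\spane_\FF(B),\ \vec\lambda\ \hat R\text{-independent}})$ recorded after Lemma~\ref{lm_termsall}: each $f_{\vec\lambda,i}$ is a function symbol, so $f_{\vec\lambda,i}(c)\in\dcl_{G^+}(c)\subseteq\dcl_{G^+}(B)$, and $\dcl_{G^+}(B)$ is closed under $\FF$-linear combinations.

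The substantive inclusion is $\acl_{G^+}(B)\subseteq\vect B=B$. Here I would run the back-and-forth analysis of the proof of Theorem~\ref{thm_QE}: fix $a\in V\setminus B$; since $B=\vect B$, $\tp_\cL(a/B)$ is non-algebraic, and by quantifier elimination it suffices to show that the quantifier-free $\cL_G^+$-type of $a$ over $B$ has infinitely many realisations. I would split into the same three cases. In \emph{Case 1} ($a\in G$), Step~1 of the proof of Theorem~\ref{thm_QE} shows this quantifier-free type is implied by $\tp_\cL(a/B)\cup q(x)$ (the inequations $\alpha x\neq b$ already lie in the non-algebraic $\cL$-type), and Proposition~\ref{prop_shift}, applied with the identity map, gives $\tp_\cL(a/B)\cup q_1(x)\models q(x)$ together with infinitely many realisations of $\tp_\cL(a/B)\cup q_1(x)$. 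In \emph{Case 2} ($a\in\spane_\FF(BG)\setminus B$), Lemma~\ref{lm_termscases}(2) yields an $\hat R$-independent $\vec\alpha$, elements $g_1,\dots,g_n\in G$ and $c\in\spane_\FF(B)\subseteq B$ with $a=\vec\alpha\cdot\vec g+c$ and $g_i=f_{\vec\alpha,i}(a-c)$; hence $a$ and $\vec g$ are interdefinable over $B$, so $a\in\acl_{G^+}(B)$ would force each $g_i\in\acl_{G^+}(B)$, and then Case~1 gives $g_i\in B$, whence $a\in\vect B$ — a contradiction. In \emph{Case 3} ($a\notin\spane_\FF(BG)$), Step~3 of the proof of Theorem~\ref{thm_QE} shows every $\cL_G^+$-term in $a$ over $B$ is locally of the form $\alpha a+b$ and that the quantifier-free type of $a$ over $B$ is governed by $\tp_\cL(a/B)$ together with the conditions $a-b\notin G_{\vec\lambda}$; axiom $(D)$, $\abs{\FF}^+$-saturation and compactness then supply infinitely many $a'\in V\setminus\spane_\FF(BG)$ with $\tp_\cL(a'/B)=\tp_\cL(a/B)$, each realising $\tp_{G^+}(a/B)$. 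In every case $a\notin\acl_{G^+}(B)$, giving $\acl_{G^+}(B)\subseteq\vect B$.

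The steps I expect to require the most care are the bookkeeping ones, not any new idea: verifying that the partial types produced inside the proof of Theorem~\ref{thm_QE} genuinely generate the \emph{full} quantifier-free type of $a$ over $B$ (so that their infinitude of realisations really witnesses $a\notin\acl$), and, in Case~2, checking the interdefinability $a\in\dcl_{G^+}(\vec g B)$ and $\vec g\subseteq\dcl_{G^+}(aB)$ carefully enough to transport non-algebraicity between $a$ and the tuple $\vec g$ of its $G$-components. Both amount to re-examining the casework of Theorem~\ref{thm_QE} and of Lemmas~\ref{lm_termscases} and~\ref{cor_QEmod}, so the corollary follows without real obstruction once those lemmas are in hand.
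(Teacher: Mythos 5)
Your proof is correct and follows essentially the same route as the paper's: the easy inclusion $\vect B\subseteq\dcl$, followed by the three-case analysis ($a\in G$, $a\in\spane_\FF(BG)\setminus\vect B$, $a\notin\spane_\FF(BG)$) via Proposition~\ref{prop_shift}, Lemma~\ref{lm_termscases}(2) with a reduction to the first case, and axiom~$(D)$ respectively. The preliminary remarks on $\cL_G$ versus $\cL_G^+$ and on reducing to $B=\vect B$ are harmless additions the paper leaves implicit.
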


\begin{proof}
It is clear that $\vect B\subseteq \dcl_G(B)\subseteq \acl_G(B)$. Let $a\notin \vect B$.\\
\textit{Case 1.} If $a\in G$, then from quantifier elimination $\tp_G(a/B)$ is determined by $\tp_\cL(a/B)\cup q(x)$ for $q(x)$ as in Proposition \ref{prop_shift}, and is non-algebraic from the conclusion of Proposition \ref{prop_shift}, since $\tp_\cL(a/B)$ is non-algebraic.\\
\textit{Case 2.} If $a\in \spane_\FF(BG(V))$. By Lemma \ref{lm_termscases} (2) there is an $\FF$-independent tuple $\vec \alpha$, $\vec g\in G$ and $c\in B$ such that $a = \vec \alpha\cdot \vec g + c$ and $g_i = f_{\vec \alpha, i}(a-c)$ and such that every term in $a\vec b$ is equal to an element in $\spane_\FF(\vec gB)$. By quantifier elimination, $\tp_G(a/B)$ is determined by $\tp_{G}(\vec g/B)\cup \set{g_1 = f_{\vec \alpha, 1}(x-c),\dots, g_n = f_{\vec \alpha, n}(x-c)}$. As $a\notin \vect B$, there is some $i$ such that $g_i\notin \vect B$, hence by case 1 $g_i\notin \acl(B)$ hence $a\notin \acl(B)$.\\
\textit{Case 3.} If $a\in \spane_\FF(BG(V))$.
By Lemma \ref{lm_termscases} and quantifier elimination, $\tp_G(a/B)$ is determined by $\tp_\cL(a/B)\cup \set{x\notin G_{\vec \lambda}+b \mid b\in \vect B, \vec\lambda\in \FF, \hat R\text{-independent}}$. By codensity (D), as $\tp_\cL(a/B)$ is non algebraic, this type has infinitely many realisations, so $a\notin \acl_G(B)$.
\end{proof}

From Theorem~\ref{thm_QE}, every $\cL_G$-formula $\phi(\vec x)$ is a boolean combination of quantifier free $\cL_G^+$-formulas, so there exists an $\cL$-formula $\psi(\vec x)$, an $\cL_{R-mod}$-formula $\theta(\vec x)$ and $\cL_G^+$-terms $t_i(\vec x), t_i'(\vec x), t_j(\vec x), t_j'(\vec x)$ such that $\phi(\vec x)$ is equivalent to a disjunction of formulas of the form
 \[ \psi(\vec x)\wedge P_\theta (\vec x) \wedge \bigwedge_i t_i(\vec x)\in G_{\vec \lambda_i}\wedge \bigwedge_j t_j(\vec x)\notin G_{\vec \mu_j} \wedge \bigwedge_i t_i(\vec x)=0\wedge \bigwedge_j t_j(\vec x)\neq 0 \]
Note that $P_\theta(\vec x)$ is equivalent to a formula such that quantifiers only occur in the predicate $G$. Similarly, using the definition of $G_{\vec\lambda_i}$ and $f_{\vec\lambda,i}$, we see that quantification only occurs in the group $G$. This particular instance of formulas is called \emph{bounded} in the sense of \cite{ChSi}, hence every formula in $T^G$ is bounded.
 
 For subsets of $G$ we get a cleaner description.

\begin{defn}\label{def-G-ind}
Let $A\subset V$. We say that $A$ is {\it $G$-independent} if $A\ind_{G(A)}G(V)$.
\end{defn}

We now give a characterization of $G$-independent sets.

\begin{lem}\label{G-ind-characterization}
Let $A\subset V$, then $A$ is $G$-independent if and only if 
$\spane_{\FF}(A)=\vect A $.
\end{lem}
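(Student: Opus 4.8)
The plan is to prove both directions by tracing through the structure of $\cL_0^+$-terms established in Lemma~\ref{lm_termsall} and Lemma~\ref{lm_termscases}, together with the characterization of $G$-independence in terms of the $R$-module independence relation. Recall $\vect{A}$ denotes the $\cL_0^+$-substructure generated by $A$, which by the remark after Lemma~\ref{lm_termsall} equals $\spane_\FF(A\cup\{f_{\vec\lambda,i}(c)\mid c\in\spane_\FF(A),\ \vec\lambda\in\FF,\ i\leq\abs{\vec\lambda}\})$. So the content of the equality $\spane_\FF(A)=\vect{A}$ is precisely that for every $c\in\spane_\FF(A)$ and every $\hat R$-independent $\vec\lambda$, the elements $f_{\vec\lambda,i}(c)$ already lie in $\spane_\FF(A)$; equivalently, every $\FF$-linear combination of elements of $A$ that happens to lie in $G_{\vec\lambda}$ is in fact an $\FF$-combination of elements of $G(A)=A\cap G(V)$ (up to elements of $\spane_\FF(A)$ that are $0$ in the relevant coordinate).

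For the direction ($\Leftarrow$), suppose $\spane_\FF(A)=\vect{A}$. To show $A\ind_{G(A)}G(V)$ it suffices to show $\spane_\FF(A)\ind_{G(A)}G(V)$, and since $\acl=\spane_\FF$ controls the $\cL$-structure and the $R$-module structure on $G$ is stably embedded, the relevant thing to check is that $G(\spane_\FF(A))=\spane_\FF(A)\cap G(V)$ is $R$-module-independent from $G(V)$ over $G(A)$. But the hypothesis says $\spane_\FF(A)$ is closed under the $f_{\vec\lambda,i}$, so any $g\in\spane_\FF(A)\cap G(V)$ is obtained without leaving $\spane_\FF(A)$; using axiom (B) (the $\hat R$-linear independence of the $\lambda_i$ forces uniqueness of the witnessing tuple in $G$), one checks that $G(\spane_\FF(A))$ is the $R$-span inside $G$ of $G(A)$ together with finitely many $f_{\vec\lambda,i}(c)$'s, all of which are in $\spane_\FF(A)$, so no new elements of $G$ are algebraic over $A$ beyond $G(A)$ and its $R$-module consequences. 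One then invokes the Mordell--Lang property (Lemma~\ref{lm_ML}) and quantifier elimination in $R$-modules to conclude the $R$-module type of $G(\spane_\FF A)$ over $G(V)$ is determined over $G(A)$, giving $A\ind_{G(A)}G(V)$.

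For the direction ($\Rightarrow$), suppose $\spane_\FF(A)\subsetneq\vect{A}$, so there is some $c\in\spane_\FF(A)$ and $\hat R$-independent $\vec\lambda$ with $c\in G_{\vec\lambda}$ but $f_{\vec\lambda,i}(c)\notin\spane_\FF(A)$ for some $i$. Write $c=\sum_j\lambda_j g_j$ with $g_j\in G(V)$; the $g_j$ are not all in $G(A)$ (else $f_{\vec\lambda,i}(c)=g_i\in A$). By Lemma~\ref{lm_ML} (Mordell--Lang) the dependence $c\in\spane_\FF(A)\cap G_{\vec\lambda}$ is witnessed by $R$-linear relations, and one uses axiom (B) again: the witnessing tuple $(g_1,\dots,g_n)$ is the \emph{unique} one, so these $g_j\in\acl_G(A)$ but $g_j\notin G(A)$, and they are not in the $R$-module span of $G(A)$ inside $G(V)$ either — hence $\tp_{R\text{-mod}}(g_j/G(V))$ is not determined over $G(A)$, witnessing $A\nind_{G(A)}G(V)$. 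The main obstacle I anticipate is the bookkeeping in this last implication: making precise that $g_j\notin G(A)$ forces genuine non-independence (rather than merely $g_j\notin G(A)$ as sets) requires using that $G(V)$ is a generic/large submodule — specifically that over the small set $G(A)$ the module $G(V)$ realizes non-algebraic $R$-module types, which follows from the density axioms (C) together with saturation, as exploited in the proof of Theorem~\ref{thm_QE}. Once that is in place the equivalence is immediate.
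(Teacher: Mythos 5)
The central issue with your argument is a misreading of what $\ind$ means in Definition~\ref{def-G-ind}. Throughout the paper — and this is made explicit in Section~\ref{sec:G-basis} ("We will reserve the symbol $\ind$ for algebraic independence") — $\ind$ is the \emph{pregeometry} independence relation associated with $\acl=\spane_\FF$, not forking independence, not Kim-independence, and certainly not $R$-module forking. So $A\ind_{G(A)}G(V)$ is the purely dimensional statement (by modularity of the pregeometry) that $\spane_\FF(A)\cap\spane_\FF(G(V))=\spane_\FF(G(A))$. There is nothing to check about $R$-module types, stable embeddedness, Mordell--Lang witnesses, or realizing non-algebraic module types over $G(A)$. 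Your reduction in the ($\Leftarrow$) direction to "$G(\spane_\FF A)$ is $R$-module-independent from $G(V)$ over $G(A)$" is therefore addressing a different relation than the one defined, and the conclusion you reach ("the $R$-module type of $G(\spane_\FF A)$ over $G(V)$ is determined over $G(A)$") is not what the lemma requires. Likewise, in the ($\Rightarrow$) direction the "obstacle" you anticipate — needing saturation and density to turn "$g_j\notin G(A)$" into non-independence — is a phantom: once you have an element $a\in\spane_\FF(A)\cap\spane_\FF(G(V))$ with $a\notin\spane_\FF(G(A))$, that is \emph{by definition} a witness to $A\nind_{G(A)}G(V)$ in the pregeometry. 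No genericity is needed.

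Once the correct meaning of $\ind$ is in place, the paper's proof is short and you had the right ingredients lying around. For ($\Rightarrow$), failure of independence gives $a\in\spane_\FF(A)\cap\spane_\FF(G(V))\setminus\spane_\FF(G(A))$; writing $a=\sum\lambda_ig_i$ with $\vec\lambda$ an $\hat R$-independent tuple, the $g_i=f_{\vec\lambda,i}(a)$ cannot all lie in $\spane_\FF(A)$ (else $a\in\spane_\FF(G(A))$), so $\spane_\FF(A)\subsetneq\vect A$. For ($\Leftarrow$), assume WLOG $A=\spane_\FF(A)$ and $A\ind_{G(A)}G(V)$; given $f_{\vec\lambda,i}(a)\neq 0$ for some $a\in A$, we have $a\in\spane_\FF(G(V))\cap A=\spane_\FF(G(A))$ by the independence assumption, and then Lemma~\ref{lm_termscases}(2) (applied with $B=\emptyset$) shows every $\cL_0^+$-term in $a$, in particular $f_{\vec\lambda,i}(a)$, already lies in $\spane_\FF(A)$. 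The appeals to Mordell--Lang, $R$-module quantifier elimination, axiom (C), and saturation in your sketch are all unnecessary for this lemma.
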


\begin{proof} 
Clearly $\spane_{\FF}(A)\subseteq \vect A$ holds for any set $A$. In order to proof the lemma, we will show that $A\ind_{G(A)} G(V)$ if and only if $\spane_{\FF}(A)\supseteq \vect A$.

We may assume that $A=\spane_{\FF}(A)$.
Suppose first that $A\nind_{G(A)} G(V)$, then there is $a\in A\setminus  \spane_{\FF}(G(A))$ such that $a\in \spane_{\FF}(G(V))$. Let $\lambda_1,\dots,\lambda_n\in \FF$ and $g_1,\dots,g_n\in G(V)$ be such that
$a=\lambda_1g_1+\dots+\lambda_ng_n$. We may choose $\lambda_1,\dots,\lambda_n\in \FF $ to be $\hat R$-independent. Since 
$a\not \in \spane_{\FF}(G(A))$, then $A$ is not
closed under the function $f_{\vec \lambda,i}$ for some $i$ and thus 
$\spane_{\FF}(A)\subsetneq \vect A$.

Now assume that $A\ind_{G(A)} G(V)$ and $A=\spane_{\FF}(A)$.
 Using Lemma \ref{lm_termsall}, we get that $\vect{A} = \spane_\FF(A\cup\set{f_{\vec{\lambda},i}(c) \mid c\in \spane_\FF(A), \vec\lambda\in \FF, i\leq \abs{\vec\lambda}})$, so $A = \vect A$ if and only if $f_{\vec{\lambda},i}(a)\in A$ for all $a\in \spane_\FF(A)$, $\vec\lambda\in \FF$ a $\hat R$-independent tuple and $i\leq \abs{\vec\lambda}$. Let $\lambda_1,\dots,\lambda_n\in \FF$ be $\hat R$-independent, let $a\in A$ and consider the function $f_{\vec \lambda,i}(a)$.
If $f_{\vec \lambda,i}(a)=0$, since $A=\spane_{\FF}(A)$, we have that $0\in A$ as desired. If $f_{\vec{\lambda},i}(a)\neq 0$, then $a\in \spane_\FF(G) \cap A$, which equals $\spane_\FF(G(A))$ by assumption,
so $a\in \spane_\FF(G(A))$. Then there exists an $\hat R$-independent tuple $\vec \alpha$ such that $a=\sum_i\alpha_i f_{\vec \alpha,i}(a)$, with $f_{\vec \alpha,i}(a)\in G(A)$. By Lemma~\ref{lm_termscases} (2) (with $B = \emptyset)$, every $\cL_0^+$-term in $a$ is a linear combination of $f_{\vec \alpha,i}(a)$ hence belongs to $\spane_\FF(A)$ so $f_{\vec{\lambda},i}(a)\in G(A)$.
\end{proof}

The next corollary shows quantifier elimination for $G$-independent tuples down to $\cL\cup\{G\}$-formulas and $R-mod$-formulas (the latter restricted to elements of $G$), similar to Proposition 3.4 of \cite{BeVaG}.

\begin{cor}\label{G-ind-qe}
Assume $(V,G)$, $(W,G)$ are models of $T^{G+}$ and let $\vec a \in V$ and $\vec b\in W$ be $G$-independent tuples such that $tp_{\cL}(\vec a, G(\vec a))=\tp_{\cL}(\vec b, G(\vec b))$ and $\tp_{R-mod}(G(\vec a))=\tp_{R-mod}(G(\vec b))$. Then $\tp_{\cL_G^+}(\vec a)=\tp_{\cL_G^+}(\vec b)$.
\end{cor}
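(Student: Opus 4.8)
The plan is to prove this as a corollary of the quantifier elimination theorem (Theorem~\ref{thm_QE}), using the extra hypothesis of $G$-independence to simplify the quantifier-free type. By Theorem~\ref{thm_QE}, it suffices to show that $\vec a$ and $\vec b$ have the same quantifier-free $\cL_G^+$-type, i.e. that the map $\vec a \mapsto \vec b$ (together with $G(\vec a)\mapsto G(\vec b)$, which makes sense since $G(\vec a)$ is a tuple of $\FF$-linear combinations of $\vec a$ by $G$-independence via Lemma~\ref{G-ind-characterization}) extends to an $\cL_0^+$-isomorphism $\vect{\vec a}\to \vect{\vec b}$ preserving $G$ and all the predicates $P_\phi$ and $G_{\vec\lambda}$.

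First I would use Lemma~\ref{G-ind-characterization}: since $\vec a$ is $G$-independent, $\vect{\vec a} = \spane_\FF(\vec a)$, and similarly $\vect{\vec b} = \spane_\FF(\vec b)$. So the substructure generated by $\vec a$ in $\cL_G^+$ is just the $\FF$-span, with no new elements introduced by the functions $f_{\vec\lambda,i}$; in fact on $\spane_\FF(\vec a)$ each $f_{\vec\lambda,i}$ takes values in $G(\vec a)\subseteq\spane_\FF(\vec a)$ or is $0$. Next I would observe that an $\cL_0^+$-term $t(\vec a)$ is, by Lemma~\ref{lm_termsall} and the $G$-independence, equivalent on this structure to an honest $\FF$-linear combination of $\vec a$ together with terms $f_{\vec\alpha,i}$ applied to $\FF$-linear combinations of $\vec a$ — and by Lemma~\ref{lm_termscases}(2) (applied with $B=\emptyset$ to each coordinate lying in $\spane_\FF(G)$) these $f$-values are again $\FF$-linear combinations of the fixed generators $G(\vec a)$. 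So the quantifier-free $\cL_G^+$-type of $\vec a$ is generated by: (i) $\cL$-formulas about $\vec a$ and $G(\vec a)$; (ii) formulas $t_i(\vec a)\in G$ resp. $=0$, which by the previous reduction amount to linear conditions on $\vec a$ that land in $\spane_\FF(G(\vec a))$ — and hence, by Lemma~\ref{lm_ML} (Mordell--Lang, as used in Remark~\ref{rk_cor_term} and Lemma~\ref{cor_QEmod}), are equivalent to formulas $P_{\psi}(\vec a,\vec g)$ over $\vec g\in G(\vec a)$; (iii) membership conditions $t_i(\vec a)\in G_{\vec\lambda}$, which similarly reduce via Lemma~\ref{cor_QEmod} to formulas $P_\psi$ over $G(\vec a)$; and (iv) the defining clauses $g_i = f_{\vec\alpha,i}(\text{linear comb.})$ expressing which $\FF$-linear combinations of $\vec a$ lie in $\spane_\FF(G)$ and recording their $G$-coordinates. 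Each of these is controlled either by $\tp_{\cL}(\vec a, G(\vec a))$ or by $\tp_{R-mod}(G(\vec a))$.

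Then the conclusion follows: the hypothesis $\tp_{\cL}(\vec a, G(\vec a)) = \tp_{\cL}(\vec b, G(\vec b))$ handles all clauses of type (i), and also guarantees that the same $\FF$-linear relations (and the same pattern of which linear combinations land in $G$) hold of $\vec b$, so that $G(\vec b)$ really is the image of $G(\vec a)$ under the linear map and clauses of type (iv) transfer; and $\tp_{R-mod}(G(\vec a)) = \tp_{R-mod}(G(\vec b))$ — understood via the predicates $P_\psi$ as in the footnote to Corollary~\ref{back-and-forthQE} — handles all the $P_\psi$-formulas arising in (ii) and (iii). Hence $\tp^{QF}_{\cL_G^+}(\vec a) = \tp^{QF}_{\cL_G^+}(\vec b)$, and quantifier elimination upgrades this to full type equality.

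The main obstacle I anticipate is bookkeeping rather than anything conceptually deep: one must be careful that when a coordinate of (a linear combination of) $\vec a$ lies in $\spane_\FF(G)$, the relevant $f_{\vec\alpha,i}(\vec a)$ are expressed in terms of a \emph{fixed} finite $\hat R$-independent tuple $\vec\alpha$ and that the corresponding $G$-elements lie in $G(\vec a)$ — this is exactly what the last paragraph of the proof of Lemma~\ref{G-ind-characterization} does, so I would invoke it directly. The only subtlety is matching up the choice of $\vec\alpha$ and the witnesses on the $\vec a$-side and the $\vec b$-side, which follows because the predicate $G_{\vec\alpha}$ and the conditions ``$\vec\alpha\cdot\vec x + c \in G_{\vec\alpha}$'' are part of the quantifier-free data already shown to be determined by the two given types.
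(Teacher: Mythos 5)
Your proof is correct and follows essentially the same approach as the paper: using quantifier elimination (Theorem~\ref{thm_QE}) to reduce to the quantifier-free $\cL_G^+$-type, invoking Lemma~\ref{G-ind-characterization} to identify $\vect{\vec a}$ with $\spane_\FF(\vec a)$, and using the Mordell--Lang property to reduce the $G$-membership conditions to $R$-module formulas over $G(\vec a)$ so that they are controlled by $\tp_{R\text{-mod}}(G(\vec a))$. The paper phrases it as directly extending the $\cL$-elementary map $\vec a\mapsto\vec b$ to $\tau:\spane_\FF(\vec a)\to\spane_\FF(\vec b)$ and verifying $\tau$ preserves $G$ and the $f_{\vec\lambda,i}$; your enumeration of formula types is an equivalent bookkeeping of the same verification.
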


\begin{proof}
Note that the $\cL$-elementary map taking $\vec a$ to $\vec b$ extends uniquely to an elementary map $$\tau: \spane_{\mathbb{F}}(\vec a)\to \spane_{\mathbb{F}}(\vec b).$$
We claim that $$\tau (G(\spane_{\mathbb{F}}(\vec a)))= G(\spane_{\mathbb{F}}(\vec b)).$$
It suffices to show that $$\tau (G(\spane_{\mathbb{F}}(\vec a)))\subseteq  G(\spane_{\mathbb{F}}(\vec b)).$$
Indeed, if $\vec a=(a_1,\ldots, a_n)$, and $g\in G(\spane_{\mathbb{F}}(\vec a))$, then $g\in G(V)$ and $$g=\lambda_1a_1+\ldots +\lambda_na_n$$ for some $\lambda_i\in\mathbb{F}$. On the other hand, $\vec a$ is $G$-independent, so assuming for simplicity that $G(\vec a)=(a_1,\ldots, a_k)$ for some $k\leq n$, we also have $g=\alpha_1 a_1+\ldots + \alpha_k a_k$ for some $\alpha_i\in \mathbb{F}$. By the Mordell-Lang property, we may assume that $\alpha_i\in \hat R$. The fact that $$\alpha_1 a_1+\ldots +\alpha_k a_k\in G(V)$$ is witnessed by $\tp_{R-mod}(G(\vec a))$, and since 
$\tp_{R-mod}(G(\vec a))=\tp_{R-mod}(G(\vec b))$, we also have that $\tau(g)=\alpha_1 \tau(a_1)+\ldots+ \alpha_k \tau(a_k)\in G(W)$, as we wanted.
Since $\vec a$ and $\vec b$ are $G$-independent, Lemma \ref{G-ind-characterization} implies that $\spane_\FF(a) = \vect{a}$ and $\spane_\FF(b) = \vect{b}$ and by the observation 
 above $\tau(f_{\vec \lambda,i}(\vec a))=f_{\vec \lambda,i}(\vec b)$, so $\tau$ preserves the functions in the extended language $\mathcal{L}_{G}^+$ and by quantifier elimination (Theorem \ref{thm_QE}) we get that $\tau$ is an elementary map. 
\end{proof}

\begin{cor}\label{inducedstructG}
Let $\vec a$ be a $G$-independent tuple, let $\vec h=G(\vec a)$ and let $\varphi(\vec x,\vec y)$ be an $\cL_G^+$-formula. Then there is an $\cL$-formula $\psi(\vec x,\vec y)$ and an $\cL_{R-mod}$-formula $\theta(\vec x, \vec y)$ such that
$$\forall \vec x ([\varphi(\vec x,\vec a)\wedge G(\vec x)] \leftrightarrow [\psi(\vec x,\vec a)\wedge P_\theta (\vec x,\vec h) \wedge G(\vec x)]).$$
\end{cor}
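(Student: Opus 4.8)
The plan is to prove this by reducing $\varphi$ to the bounded normal form recorded after Theorem~\ref{thm_QE} and then exploiting the two hypotheses — that $\vec x$ ranges over $G$ and that $\vec a$ is $G$-independent — to convert every $G$-flavoured conjunct into an $R$-module condition with parameters from $\vec h$. This is the definable-set refinement of Corollary~\ref{G-ind-qe}, and I would prove it from the same ingredients: the term-normalization lemmas of this section together with Lemmas~\ref{cor_QEmod} and~\ref{lm_ML}.

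First, by Theorem~\ref{thm_QE}, $\varphi(\vec x,\vec a)$ is a finite disjunction of formulas
\[ \psi'(\vec x,\vec a)\wedge P_{\theta'}(\vec t(\vec x,\vec a))\wedge\bigwedge_i t_i(\vec x,\vec a)\in G_{\vec\lambda_i}\wedge\bigwedge_j t_j(\vec x,\vec a)\notin G_{\vec\mu_j}\wedge\bigwedge_k r_k(\vec x,\vec a)=0\wedge\bigwedge_l r_l(\vec x,\vec a)\neq 0, \]
with $\psi'$ an $\cL$-formula, $\theta'$ an $\cL_{R-mod}$-formula, and $\vec t,t_i,t_j,r_k,r_l$ tuples of $\cL_G^+$-terms. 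Since $\vec a$ is $G$-independent, Lemma~\ref{G-ind-characterization} gives $\vect{\vec a}=\spane_\FF(\vec a)$; and since we only care about the locus where $G(\vec x)$ holds, I would apply Lemma~\ref{lm_reduction_fG} and then Lemma~\ref{lm_termscases}(1) coordinate by coordinate to the entries of $\vec x$ (each of which lies in $G$) to see that, on each block of a suitable finite $\cL$-partition of $V^{\abs{\vec x}}$, every term above equals an $\FF$-linear combination $\vec\alpha\cdot\vec x+b$ with $b\in\spane_\FF(\vec a)$.

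Next I would feed the conjuncts through Lemma~\ref{cor_QEmod} and Remark~\ref{rk_cor_term}: for $\vec x\subseteq G$ each of $(\vec\alpha\cdot\vec x+b)\in G_{\vec\lambda}$, its negation, and $(\vec\alpha\cdot\vec x+b)=0$ becomes, after possibly refining the $\cL$-partition, a formula $P_\eta(\vec x,\vec h')$ with $\vec h'\in G(\spane_\FF(\vec a))$, and $P_{\theta'}(\vec t(\vec x,\vec a))$ is handled the same way by rerunning the Mordell--Lang-plus-module-QE computation from the proof of Lemma~\ref{cor_QEmod}. The crucial bookkeeping observation is that, because $\vec a$ is $G$-independent, $G(\spane_\FF(\vec a))=\spane_\FF(\vec h)\cap G\subseteq\dcl_{R-mod}(\vec h)$, using the Mordell--Lang property (Lemma~\ref{lm_ML}, Remark~\ref{rk:mordelllang}) together with torsion-freeness of $G$; hence the parameters $\vec h'$ can be renamed over $\vec h$ inside $\cL_{R-mod}$ and absorbed. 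Collecting everything (and using that $\cL$-formulas, resp. $\cL_{R-mod}$-formulas, are closed under finite boolean combinations), $\varphi(\vec x,\vec a)\wedge G(\vec x)$ becomes a finite disjunction of formulas $\psi_i(\vec x,\vec a)\wedge P_{\theta_i}(\vec x,\vec h)\wedge G(\vec x)$ with $\psi_i\in\cL$ and $\theta_i\in\cL_{R-mod}$; grouping the finitely many $\cL$-parts into a partition over $\vec a$ then exhibits the desired description of the induced structure on $G$.

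The step I expect to be the main obstacle is precisely this parameter control in the module conjuncts: one must verify that after reducing a term to $\vec\alpha\cdot\vec x+b$ the constant $b$ either kills the condition outright (when $b\notin\spane_\FF(G)$) or lies in $\spane_\FF(\vec h)$, and that the $R$-module predicate one ends up with really mentions only $\vec h$ and not the remaining entries of $\vec a$ — this is the only place the $G$-independence of $\vec a$ enters essentially, through Lemmas~\ref{G-ind-characterization} and~\ref{lm_ML}. The rest is a routine, if bookkeeping-heavy, passage through the term-normalization machinery; one could alternatively derive the statement from Corollary~\ref{G-ind-qe} and compactness, but locating the module parameters inside $\dcl_{R-mod}(\vec h)$ is cleanest done syntactically as above.
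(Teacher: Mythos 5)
Your proof takes the same route the paper does (its proof just cites Theorem~\ref{thm_QE}, Lemma~\ref{cor_QEmod} and Remark~\ref{rk_cor_term}), but you spell out the parameter-control step the paper glosses over: after Lemma~\ref{cor_QEmod} the module parameters land in $G(\vect{\vec a})$, and $G$-independence gives $G(\vect{\vec a})=G(\spane_\FF(\vec a))=\spane_\FF(\vec h)\cap G\subseteq\dcl_{R-mod}(\vec h)$ by Mordell--Lang and torsion-freeness, so they can be rewritten over $\vec h$. That computation is correct, and it is indeed the only place where the $G$-independence of $\vec a$ does real work.

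The step you wave at is the final collapse, and it is a genuine soft spot. You (correctly) end up with a finite disjunction $\bigvee_i\psi_i(\vec x,\vec a)\wedge P_{\theta_i}(\vec x,\vec h)$ on $G(\vec x)$, but ``grouping the finitely many $\cL$-parts into a partition over $\vec a$'' does not produce a single $\psi\wedge P_\theta$: a finite union of sets of the form (\,$\cL$-definable\,)$\,\cap\,$(\,$R$-module-definable\,) in $G$ is not in general again such an intersection. For a concrete obstruction, take $T$ to be $\Q$-vector spaces with a generic predicate $S$, $R=\Z$, $\vec a=\emptyset$, and $\varphi(x):=S(x)\vee x\in 2G$; on $G$ this defines $(S\cap G)\cup 2G$, and two applications of Proposition~\ref{prop_shift} (which lets one freely combine a non-algebraic $\cL$-type with an $R$-module type on $G$) show that this set is not $\psi(G)\cap P_\theta(G)$ for any $\cL$-formula $\psi$ and $R$-module formula $\theta$: one produces $g_1\in 2G\setminus S$, then $g_2\in G\setminus(2G\cup S)$ with $\tp_\cL(g_2)=\tp_\cL(g_1)$ (forcing $g_2\in\psi(G)$ and hence $g_2\notin P_\theta(G)$), then $g_3\in S\cap G$ with the same $R$-module type as $g_2$ (forcing $g_3\notin P_\theta(G)$ yet $g_3\in\psi(G)\cap P_\theta(G)$). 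So the honest conclusion is a finite disjunction (equivalently a boolean combination) of $\cL$- and $R$-module conditions, which is precisely what Lemma~\ref{cor_QEmod} and Remark~\ref{rk_cor_term} deliver, and which is what the downstream uses (Propositions~\ref{NIPoverG}, \ref{step2prop}, Lemma~\ref{lm_SOP2G}, Proposition~\ref{prop_nsop1reductiontoG}) actually need after passing to a cofinal disjunct along the indiscernible sequence, array or tree. This looks like a slight overstatement in the corollary's own formulation rather than a defect peculiar to your argument, but your closing sentence should either retain the disjunction or flag the issue explicitly.
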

 
\begin{proof} 
This is a direct consequence of quantifier elimination (Theorem \ref{thm_QE}), Lemma \ref{cor_QEmod} and Remark \ref{rk_cor_term}.
\end{proof}

Recall from \cite{BeVaGroups} that a unary expansion $(M,P)$ of a model $M$ of geometric theory $T$ satisfies Type Equality Assumption (TEA) if whenever $\vec a, \vec b, \vec c\in M$ are such that $\vec a$ is $P$-independent (i.e. $\vec a\ind_{P(\vec a)} P(M)$), $\vec b\ind_{\vec a}P(M)$, $\vec c\ind_{\vec a}P(M)$, and $\tp(\vec b\vec a)=\tp(\vec c\vec a),$ then $\tp_{P}(\vec b\vec a)=\tp_{P}(\vec c\vec a)$ (where $\tp_P$ refers to the type in the language expanded by the unary predicate symbol).
 
 The following corollary follows directly form Corollary \ref{G-ind-qe}.

\begin{cor}\label{TEA}
 Let  $(V,G)$ be a model  of theory $T^{G+}$. Then the reduct of $(V,G)$ to the language $\cL \cup \{G\}$ satisfies TEA.
\end{cor}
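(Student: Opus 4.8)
The plan is to reduce the TEA statement for the $\cL\cup\{G\}$-reduct to the quantifier elimination result already packaged in Corollary \ref{G-ind-qe}. Recall that TEA asks the following: given $\vec a,\vec b,\vec c$ in a model $(V,G)\models T^{G+}$ with $\vec a$ being $G$-independent, $\vec b\ind_{\vec a}G(V)$, $\vec c\ind_{\vec a}G(V)$, and $\tp(\vec b\vec a)=\tp(\vec c\vec a)$ in the $\cL\cup\{G\}$-reduct, we must conclude $\tp_G(\vec b\vec a)=\tp_G(\vec c\vec a)$ in that reduct. The first step is the key algebraic observation: the three independence hypotheses together force the concatenations $\vec b\vec a$ and $\vec c\vec a$ to themselves be $G$-independent tuples. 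Indeed, transitivity of $\ind$ (in the geometric/pregeometric sense, using $\acl=\spane_\FF$) applied to $\vec a\ind_{G(\vec a)}G(V)$ and $\vec b\ind_{\vec a}G(V)$ yields $\vec a\vec b\ind_{G(\vec a)}G(V)$; and since $G(\vec a\vec b)=G(\vec a)$ — because $\vec b\ind_{\vec a}G(V)$ means no new element of $\vec b$'s span lands in $G(V)$ beyond what is already accounted for over $\vec a$, and $\vec a$'s own $G$-part is $G(\vec a)$ — we get $\vec a\vec b\ind_{G(\vec a\vec b)}G(V)$, i.e. $\vec b\vec a$ is $G$-independent; symmetrically for $\vec c\vec a$.

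The second step is to match up the hypotheses of Corollary \ref{G-ind-qe}. From $\tp(\vec b\vec a)=\tp(\vec c\vec a)$ in the $\cL\cup\{G\}$-reduct we in particular recover $\tp_\cL(\vec b\vec a)=\tp_\cL(\vec c\vec a)$; moreover, since the predicate $G$ is in this reduct's language, the $\cL\cup\{G\}$-type also records exactly which $\FF$-linear combinations of $\vec b\vec a$ lie in $G$, so that $G(\vec b\vec a)$ and $G(\vec c\vec a)$ are "the same" as subtuples (the same $\FF$-linear functionals of the respective tuples land in $G$), and $\tp_\cL(\vec b\vec a, G(\vec b\vec a))=\tp_\cL(\vec c\vec a, G(\vec c\vec a))$. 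Finally, since by the Mordell-Lang property (Lemma \ref{lm_ML}) every $R$-module relation among the elements of $G(\vec b\vec a)$ is an $\FF$-linear relation witnessed inside the $\cL\cup\{G\}$-structure, and since the $R$-module quantifier-free type over $\emptyset$ is determined by such positive pp-relations, we get $\tp_{R-mod}(G(\vec b\vec a))=\tp_{R-mod}(G(\vec c\vec a))$. Now Corollary \ref{G-ind-qe} applies to the $G$-independent tuples $\vec b\vec a$ and $\vec c\vec a$ and yields $\tp_{\cL_G^+}(\vec b\vec a)=\tp_{\cL_G^+}(\vec c\vec a)$; restricting to the sublanguage $\cL\cup\{G\}\subseteq\cL_G^+$ gives the desired $\tp_G(\vec b\vec a)=\tp_G(\vec c\vec a)$.

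The main obstacle I expect is the bookkeeping in the first step — namely verifying cleanly that $G(\vec a\vec b)=G(\vec a)$ and that $\vec b\vec a$ is genuinely $G$-independent in the sense of Definition \ref{def-G-ind}. This is where the interplay between the pregeometric independence $\ind$ (with base $\acl=\spane_\FF$) and the combinatorial condition $\spane_\FF(A)=\vect A$ from Lemma \ref{G-ind-characterization} must be handled carefully: one should argue that if some $\FF$-combination of coordinates of $\vec b$ and $\vec a$ lay in $G(V)$ but not in $\spane_\FF(G(\vec a))$, this would contradict $\vec b\ind_{\vec a}G(V)$. Everything after that is a routine translation between the $\cL\cup\{G\}$-type data and the hypotheses of Corollary \ref{G-ind-qe}, so once the $G$-independence of the concatenated tuples is secured the corollary follows immediately.
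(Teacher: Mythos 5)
You have correctly identified the right strategy — reduce TEA to Corollary~\ref{G-ind-qe} by showing the concatenated tuples $\vec a\vec b$ and $\vec a\vec c$ are $G$-independent — and indeed the paper's own proof is exactly this reduction. However, there is a genuine error in your reading of the TEA hypothesis. As defined in the paper (and in \cite{BeVaGroups}), TEA assumes $\tp(\vec b\vec a)=\tp(\vec c\vec a)$ in the \emph{base} language $\cL$, and concludes $\tp_P(\vec b\vec a)=\tp_P(\vec c\vec a)$ in $\cL\cup\{P\}$; the parenthetical clarification is attached only to $\tp_P$ precisely because $\tp$ is the $\cL$-type. You instead take the hypothesis to be $\tp_{\cL\cup\{G\}}(\vec b\vec a)=\tp_{\cL\cup\{G\}}(\vec c\vec a)$. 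Under your reading the conclusion coincides with the hypothesis and TEA is vacuous for every structure, so nothing is being proved. Worse, your Step~2 leans essentially on the stronger hypothesis: you use the $\cL\cup\{G\}$-type to ``record exactly which $\FF$-linear combinations of $\vec b\vec a$ lie in $G$,'' which is the very information TEA requires you to reconstruct from $\cL$-data plus independence.

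With the correct hypothesis, Step~2 must instead be derived from the independence conditions, and this is where the real content lies: if $b_j\in G(V)$, then $\vec b\ind_{\vec a}G(V)$ forces $b_j\in\spane_\FF(\vec a)$, say $b_j=\vec\lambda\cdot\vec a$; the $\cL$-type equality $\tp_\cL(\vec b\vec a)=\tp_\cL(\vec c\vec a)$ then forces $c_j=\vec\lambda\cdot\vec a=b_j$, hence $c_j\in G(V)$ (and symmetrically). So $G(\vec a\vec b)$ and $G(\vec a\vec c)$ occupy the same positions and are literally the same tuple of elements of $\spane_\FF(\vec a)$, which gives both $\tp_\cL(\vec a\vec b,G(\vec a\vec b))=\tp_\cL(\vec a\vec c,G(\vec a\vec c))$ and the $R$-module type equality for free. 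Also, your Step~1 claim that $G(\vec a\vec b)=G(\vec a)$ as subtuples is too strong (some $b_j\in G$ may be a nontrivial linear combination of $\vec a$ rather than a coordinate of $\vec a$); what you actually need and have is $\spane_\FF(G(\vec a\vec b))=\spane_\FF(G(\vec a))$, so base monotonicity still yields $\vec a\vec b\ind_{G(\vec a\vec b)}G(V)$. In short: right route, but the hypothesis of TEA is misread, and the consequent reliance on $\cL\cup\{G\}$-type information in Step~2 must be replaced by the independence argument sketched above.
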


Note that the conclusion of TEA also holds for the full language $\cL_G^+$.

\begin{rema}
In the context of our construction, one can define the notion of "G-basis" analogous to the one used in \cite{BeVaG}. Namely, given a $G$-independent set $C$ and tuple $\vec a$ in $V$, we are looking for a "canonical" subset of $G(V)$ such that adding it to $\vec a C$ makes the set $G$-independent. We claim that the appropriate notion of  $GB(\vec a/C)$ is given by $$\vect{\vec aC}\cap G(V)\backslash \spane_{\hat R}(G(C)).$$
Indeed, $GB(\vec a/C)\cup \vec a\cup C$ is $G$-independent, and for any $\vec b\in G(V)$ such that $\vec a \ind_{C\vec b} G(V)$, we have $GB(\vec a/C)\subset \spane_{\hat R}(\vec b G(C))$.
\end{rema}

We will now explicitly show that $T^G$ is consistent by showing how to construct a model using the notion of $H$-structure \cite{BeVa-H}. We will assume the reader is familiar with the definition of $H$-structure, but we will not require any deeper knowledge of its theory. Let $(\mathcal{V},H)$ be a sufficiently saturated $H$-structure  and let $G(\mathcal{V})$ be the $R$-submodule of $(V,+,0)$ generated by $H(V)$. 
We will show that $Th(\mathcal{V}, G)=T^G$.

\begin{lem}\label{H-model-axioms-ABC} $G(V)$ is dense-codense in $\mathcal{V}$ in the sense of geometric structures, and $(\mathcal{V}, G)$ satisfies axioms $(A)$, $(C)$, $(D)$ of $T^G$.

\end{lem}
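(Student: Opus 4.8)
The plan is to verify each of the four assertions separately, using the standard theory of $H$-structures. Recall $(\mathcal{V},H)$ is a sufficiently saturated $H$-structure, so $H(V)$ is a maximal set of $\acl$-independent elements (an $\acl$-basis is the wrong phrasing; rather $H$ is $\acl$-independent, and for every finite-dimensional definable set $X$ there is a point of $X$ in $H$ plus the co-density condition holds), and $G(V) = \spane_{R}(H(V))$ is the $R$-submodule of $(V,+,0)$ it generates. Since $\acl = \spane_{\FF}$ in $T$, $\acl$-independence of $H(V)$ is the same as $\FF$-linear independence, so $G(V) = \{r_1 h_1 + \dots + r_k h_k \mid r_i \in R, h_i \in H(V)\}$ is a proper $R$-submodule of $V$ (proper because $H$ is not a spanning set, by maximality plus saturation it cannot be all of $V$ — e.g. any element outside $\spane_{\FF}(H)$, which exists by density/co-density considerations, is not in $G$). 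For axiom $(A)$: one simply \emph{defines} $P_\phi$ on $(\mathcal{V},G)$ by the required equivalence $P_\phi(\vec a) \iff \vec a \in G \wedge G \models \phi(\vec a)$; this is exactly what scheme $(A)$ demands once $G$ is an $R$-module, so $(A)$ holds by fiat. (Note $(B)$ is conspicuously absent from this lemma's list — it requires a genericity property of $H$ that is deferred, presumably because $G$ built from $H$ need not \emph{a priori} satisfy $\FF$-linear independence relations being witnessed over $\hat R$; this is the content of a later lemma.)

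For the density property $(C)$: I would fix $r \in R \setminus \{0\}$ and an $\cL$-formula $\phi(x,\vec y)$ with $\mathcal{V} \models \exists^\infty x\, \phi(x,\vec b)$, and show $\phi(rG, \vec b) \neq \emptyset$. Since $\phi(x,\vec b)$ defines an infinite, hence $1$-dimensional (by elimination of $\exists^\infty$ and $\acl = \spane_{\FF}$), $\cL$-definable set, the defining property of $H$-structures gives infinitely many realizations of $\phi$ in $H(V)$ — or more precisely, since translates and scalings are definable, I would apply the $H$-density to the $\cL$-definable set $\{x : \phi(rx, \vec b)\}$, obtaining $h \in H(V)$ with $\phi(rh, \vec b)$; then $rh \in rG$ as $h \in H \subseteq G$. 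Some care: the set $\{x : \phi(rx,\vec b)\}$ is still infinite and $\cL$-definable, so $H$-density applies directly. For the extension/co-density property $(D)$: given $\phi(x,\vec y)$, $\psi(x,\vec y,\vec z)$ with $\exists^\infty x\, \phi(x,\vec b)$ and $\forall \vec z\, \exists^{\le n} x\, \psi(x,\vec b, \vec z)$, I would invoke the co-density axiom of the $H$-structure: the "large" set $\phi(x,\vec b)$ cannot be covered by the small (finite-bounded over parameters) fibers $\psi(x,\vec b,\vec z)$ as $\vec z$ ranges over $G(V)$, because $G(V) = \spane_R(H(V))$ has the same $\acl$ as $H(V)$, and co-density of $H$ says exactly that a large definable set has a point avoiding $\acl$ of any given small subset — here we need a point of $\phi(x,\vec b)$ avoiding $\psi(x,\vec b, \vec g)$ for all $\vec g \in G$, which follows since $\bigcup_{\vec g \in G} \psi(\mathcal{V}, \vec b, \vec g) \subseteq \acl(\vec b\, G(V)) = \acl(\vec b\, H(V))$ when each fiber is finite.

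The main obstacle I anticipate is precisely the point flagged by the absence of $(B)$ from the statement, which is being handled elsewhere: namely, the translation of the $H$-structure's co-density — stated in terms of $\acl$ over the \emph{independent set} $H(V)$ — into the form required by axiom $(D)$, which quantifies over $\vec z \in G(V) = \spane_R(H(V))$, a genuinely larger set. One must argue that a finite-fiber family $\psi(x,\vec b,\vec z)$ indexed by $G(V)$ still has union contained in $\acl$ of $\vec b$ together with $H(V)$; this is fine because $\acl(\vec b\, G(V)) = \acl(\vec b\, H(V))$ (as $G(V) \subseteq \spane_{\FF}(H(V)) = \acl(H(V))$) and each fiber is algebraic over its parameters, so $\bigcup_{\vec z \in G(V)} \psi(\mathcal{V},\vec b,\vec z) \subseteq \acl(\vec b\, H(V))$, which is a "small" set to which $H$-co-density applies. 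Aside from bookkeeping around scalars in $R$ versus $\FF$ in the density argument, the rest is routine.
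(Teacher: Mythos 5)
Your proposal is correct and takes essentially the same approach as the paper: the paper's proof consists of one sentence noting $H(\mathcal{V})\subset G(\mathcal{V})\subset \acl(H(\mathcal{V}))$ and then asserting that density and codensity transfer from $H$ to $G$, while you simply flesh out that transfer in detail (pulling density back along the definable bijection $x\mapsto rx$, and observing $\acl(\vec b\,G(V))=\acl(\vec b\,H(V))$ for codensity). Your remark about why axiom $(B)$ is deferred to the subsequent lemma is also accurate.
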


\begin{proof}
It is clear that $H(\mathcal{V})\subset G(\mathcal{V})\subset \acl(H(\mathcal{V}))$. Since $H$ structures satisfy the density and codensity property (see Definition 2.2 in \cite{BeVa-H}) then so does $G(V)$ 
and it follows that $(A), (C)$ and $(D)$ hold for $G(V)$.
\end{proof}

\begin{lem}\label{H-model-axiom-D}
Let $\lambda_1,\ldots,\lambda_k\in\mathbb{F}$ be linearly independent over $\hat R$, $g_1,\ldots, g_k\in G(V)$ and assume that $\lambda_1 g_1+\ldots+\lambda_k g_k=0$. Then $g_1=\ldots=g_k=0$, i.e. $(V,G)$ satisfies  axiom $(B)$ of $T^G$.
\end{lem}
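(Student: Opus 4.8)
The plan is to reduce the statement to elementary linear algebra over the chain of rings $R\subseteq \hat R\subseteq \FF$, using only one structural input about $H$-structures. Since $G(\mathcal{V})$ is by construction the $R$-submodule of $(V,+,0)$ generated by $H(\mathcal{V})$, each $g_i$ is a finite $R$-linear combination of elements of $H(\mathcal{V})$. First I would choose a single finite set of pairwise distinct elements $h_1,\dots,h_m\in H(\mathcal{V})$ appearing in all of these expressions and write $g_i=\sum_{j=1}^m r_{ij}h_j$ with all $r_{ij}\in R$.

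Substituting into $\lambda_1 g_1+\dots+\lambda_k g_k=0$ and collecting the coefficient of each $h_j$ gives $\sum_{j=1}^m\big(\sum_{i=1}^k \lambda_i r_{ij}\big)h_j=0$, where each bracketed coefficient lies in $\FF$. The structural input is that $H(\mathcal{V})$ is a set of $\acl$-independent elements, which is part of the definition of an $H$-structure; since $\acl=\spane_\FF$ in the theories under consideration, this means $h_1,\dots,h_m$ are $\FF$-linearly independent. Hence $\sum_{i=1}^k \lambda_i r_{ij}=0$ in $\FF$ for every $j$. Finally, each $r_{ij}$ lies in $R\subseteq\hat R$, so $\sum_{i=1}^k r_{ij}\lambda_i=0$ is an $\hat R$-linear dependence among $\lambda_1,\dots,\lambda_k$; by the hypothesis that $\lambda_1,\dots,\lambda_k$ are linearly independent over $\hat R$, we get $r_{ij}=0$ for all $i,j$, and therefore $g_i=\sum_j r_{ij}h_j=0$ for every $i$, which is exactly axiom $(B)$.

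I do not expect a genuine obstacle here: the only non-formal ingredient is the $\acl$-independence of $H(\mathcal{V})$, which is immediate from the definition of $H$-structure, while everything else is bookkeeping with linear combinations over $R$, $\hat R$ and $\FF$. Combined with Lemma~\ref{H-model-axioms-ABC}, this yields $(\mathcal{V},G)\models T^G$, establishing the consistency of $T^G$.
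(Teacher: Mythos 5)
Your proof is correct and follows essentially the same route as the paper: express each $g_i$ as an $R$-linear combination of finitely many distinct $h_j\in H(\mathcal V)$, collect coefficients, use linear independence of the $h_j$ over $\FF$ to kill each coefficient, and then use $\hat R$-linear independence of the $\lambda_i$ to conclude $r_{ij}=0$. You are slightly more explicit than the paper in naming the structural input (that $H$ is an $\acl$-independent set and $\acl=\spane_\FF$, hence the $h_j$ are $\FF$-linearly independent), a step the paper leaves implicit when passing from $\sum_j(\sum_i\lambda_i r_{ij})h_j=0$ to the vanishing of each bracket.
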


\begin{proof}
 Let $h_1,\ldots, h_m\in H(V)$
be distinct elements such that $$g_1,\ldots,g_k\in \spane_R(h_1,\ldots,h_m).$$
For each $1\le i\le k$ let $r_{ij}\in R$ be such that $$ g_i=\sum_{j=1}^m r_{ij}h_j.$$

Then we have: $$ \lambda_1 \sum_{j=1}^m r_{1j}h_j+\ldots+\lambda_k \sum_{j=1}^m r_{kj}h_j=0  $$

 $$ \sum_{i=1}^k \sum_{j=1}^m \lambda_ir_{ij}h_j=0$$

$$ \left(\sum_{i=1}^k \lambda_i r_{i1}\right)h_1+\ldots+ \left(\sum_{i=1}^k \lambda_i r_{im}\right)h_m=0.  $$

Hence, for any $1\le j\le m$ we have  $\sum_{i=1}^k r_{ij} \lambda_i=0$. Since $\lambda_1,\ldots,\lambda_k$ are linearly independent over $\hat R$, we conclude that $r_{ij}=0$ for all $1\le i\le k$ and $1\le j\le m$. Thus, $g_1=\ldots=g_k=0$, as needed.

\end{proof}

We can now deduce the following result.

\begin{prop}\label{TG-consistent}
The theory $T^G$ is complete and consistent.
\end{prop}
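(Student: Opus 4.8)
The plan is to prove Proposition~\ref{TG-consistent} in two halves: consistency first, then completeness. For consistency, I would assemble the pieces already laid out in Lemmas~\ref{H-model-axioms-ABC} and~\ref{H-model-axiom-D}. Start with a sufficiently saturated $H$-structure $(\mathcal{V},H)$, whose existence is guaranteed by the theory of $H$-structures in \cite{BeVa-H} once $T$ is a geometric theory with $\acl = \spane_\FF$ (and $T$ eliminates $\exists^\infty$, which we have assumed). Let $G(\mathcal{V})$ be the $R$-submodule of $(\mathcal V,+,0)$ generated by $H(\mathcal{V})$, and interpret each $P_\phi$ as the relativization of $\phi$ to $G$ inside the $R$-module $G(\mathcal V)$; this makes axiom scheme $(A)$ hold by construction. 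Lemma~\ref{H-model-axioms-ABC} gives $(A)$, $(C)$, $(D)$ and Lemma~\ref{H-model-axiom-D} gives $(B)$, so $(\mathcal V, G)\models T^G$, and in particular $T^G$ is consistent. (One should check that $G$ is a \emph{proper} submodule, as required in axiom $(A)$ of Definition~\ref{twobasictheories}: this follows from codensity, since $\acl(H(\mathcal V)) \neq \mathcal V$ in a saturated $H$-structure, and $G(\mathcal V)\subseteq \acl(H(\mathcal V))$.)

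For completeness, the key observation is that quantifier elimination for $T^{G+}$ (Theorem~\ref{thm_QE}) reduces the problem to showing that all models have the same quantifier-free type of the empty tuple, i.e.\ that $T^G$ proves all atomic sentences and their negations one way. A cleaner route: apply the back-and-forth machinery from the proof of Theorem~\ref{thm_QE} starting from the empty substructure. Given two models $(V,G)$, $(V',G')$ of $T^{G+}$, both $\abs{\FF}^+$-saturated, the empty map $\emptyset \to \emptyset$ is a partial $\cL_G^+$-isomorphism between the substructures $\vect\emptyset = \spane_\FF(\emptyset) = \{0\}$ and $\vect{\emptyset'} = \{0\}$ — here I would note that $f_{\vec\lambda,i}(0) = 0$ by the defining axiom, so the $\cL_0^+$-substructure generated by $\emptyset$ is just $\{0\}$, and $0\in G$ in both, with the same $R$-module quantifier-free type since it is the zero module. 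Since $T$ is complete, $\tp_\cL(\emptyset)$ is the same in $V$ and $V'$. Then Theorem~\ref{thm_QE}'s back-and-forth shows $(V,G)\equiv (V',G')$; since $T^G$ is the restriction of $T^{G+}$ to $\cL_G$ and every model of $T^G$ expands (uniquely) to a model of $T^{G+}$, the theory $T^G$ is complete. Combined with consistency, this proves the proposition.

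I expect the only genuinely delicate point to be verifying that the empty set really does form a common base for the back-and-forth, i.e.\ that $(V,G)$ and $(V',G')$ restricted to $\vect\emptyset$ are isomorphic as $\cL_G^+$-structures; but this is immediate once one observes $\vect\emptyset = \{0\}$, $0\in G$, $0\in G_{\vec\lambda}$ for every $\vec\lambda$, and the $R$-module type of $\{0\}$ is forced. A secondary subtlety is making sure the saturation hypothesis in Theorem~\ref{thm_QE} is harmless for deducing completeness — this is standard, since elementary equivalence can be checked on saturated elementary extensions. Everything else is bookkeeping already done in the lemmas above, so the proof should be short.
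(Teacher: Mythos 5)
Your proof matches the paper's exactly: consistency is deduced from the $H$-structure construction via Lemmas~\ref{H-model-axioms-ABC} and~\ref{H-model-axiom-D}, and completeness is deduced from Theorem~\ref{thm_QE}. The extra details you supply (properness of $G$, the empty substructure as common base for the back-and-forth, harmlessness of the saturation hypothesis) are all correct and simply make explicit what the paper leaves implicit.
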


\begin{proof}
Consistency follows by Lemmas \ref{H-model-axioms-ABC} and \ref{H-model-axiom-D}. Completeness follows by Theorem \ref{thm_QE}.
\end{proof}

\section{Preservation of stability and NIP}\label{sec:NIPandstable}

Now we will start by proving that NIP is preserved in the expansion by using the 
ideas of Chernikov and Simon \cite{ChSi}. Let $(V,G)$ be a $G$-structure. Recall \cite{ChSi} that a formula $\varphi(\vec x, \vec y) \in \mathcal{L}_G$ is said to be \emph{NIP over $G$} if there
is no $\mathcal{L}_G$-indiscernible sequence $\{\vec a_i: i\in \omega\}$
of tuples of $G$ and an element $\vec b\in M$ such that $\varphi(\vec a_i
, \vec b) \leftrightarrow i$ is even.

\begin{prop}\label{NIPoverG}
Assume that $T$ is NIP. Then no formula
$\varphi(\vec x, \vec y) \in \mathcal{L}_G$ is NIP over $G$.
\end{prop}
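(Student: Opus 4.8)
The plan is to reduce the claim to the case of a single atomic or "bounded" building block, using the quantifier elimination for $T^{G+}$ (Theorem~\ref{thm_QE}) together with the structural description of formulas obtained afterwards. By that description, any $\cL_G$-formula $\varphi(\vec x,\vec y)$ is (up to a finite disjunction) a conjunction of an $\cL$-formula $\psi(\vec x,\vec y)$, an $R$-module formula applied to $\cL_G^+$-terms, and conditions of the form $t(\vec x,\vec y)\in G_{\vec\lambda}$ (or its negation, or $t=0$). Since the class of NIP-over-$G$ formulas is closed under boolean combinations (this is standard for NIP-over-a-predicate, exactly as in \cite{ChSi}), it suffices to treat each of these atomic pieces separately. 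Moreover we only need to consider the situation where $\{\vec a_i : i\in\omega\}$ is an $\cL_G$-indiscernible sequence of tuples \emph{from $G$}; in particular $\vec a_i$ is a $G$-independent tuple with $G(\vec a_i)=\vec a_i$, so Corollary~\ref{inducedstructG} applies to each relevant formula.

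The key step is then: for a $G$-independent parameter situation, Corollary~\ref{inducedstructG} rewrites each atomic piece $[\varphi_0(\vec x,\vec a_i)\wedge G(\vec x)]$ as $[\psi(\vec x,\vec a_i)\wedge P_\theta(\vec x,\vec h_i)\wedge G(\vec x)]$ where $\psi$ is an $\cL$-formula and $\theta$ is an $\cL_{R-mod}$-formula and $\vec h_i = G(\vec a_i)$. Applied with the roles of variables and parameters swapped appropriately (i.e. viewing $\vec b$ as the "$\vec x$" ranging over $G$ and $\vec a_i$ as the $G$-independent parameters), this says that the trace on $G$ of the atomic formula, with parameters $\vec a_i$, is given by an $\cL$-formula and an $R$-module formula in the $R$-module $G$. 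Now if $\{\vec a_i\}$ is $\cL_G$-indiscernible, it is in particular $\cL$-indiscernible and $\cL_{R-mod}$-indiscernible (after passing to $\vec h_i$), so an alternation $\varphi_0(\vec a_i,\vec b)\leftrightarrow i$ even would produce either an $\cL$-formula with IP — contradicting that $T$ is NIP — or an $\cL_{R-mod}$-formula with IP — contradicting that the theory of $R$-modules is NIP (it is even stable, by \cite{Zie}). One must be a little careful: the $\cL$-formula $\psi$ takes parameters in $\vec a_i$ rather than in $G(\vec a_i)$, but since here $\vec a_i\subseteq G$, and $\cL$-indiscernibility of $\{\vec a_i\}$ is all that is used, this causes no problem; the conjunction of an NIP $\cL$-formula and an NIP $R$-module formula is NIP-over-$G$, so no alternation of length $\omega$ can occur.

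The main obstacle is bookkeeping rather than conceptual: one has to confirm that the pieces coming out of quantifier elimination for $T^{G+}$ descend to the language $\cL_G$ in the right way, that the extra $f_{\vec\lambda,i}$-terms do not cause trouble when the parameters $\vec a_i$ already lie in $G$ (on such tuples the functions $f_{\vec\lambda,i}$ are controlled by Lemma~\ref{lm_termscases}(1), so all terms are $\FF$-linear in $\vec a_i$ up to the $\cL$-partition), and that "NIP over $G$" is genuinely closed under finite boolean combinations and under the finite partition into $\cL$-definable cases supplied by Lemma~\ref{lm_reduction_fG} — the latter because on an $\cL_G$-indiscernible sequence each $\vec a_i$ lands in the same cell of any $\emptyset$-definable $\cL$-partition. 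Once these routine verifications are in place, the contradiction is immediate from NIP of $T$ and stability of modules.
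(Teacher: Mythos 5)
Your proof has the right ingredients — Corollary~\ref{inducedstructG}, stability of the theory of $R$-modules, NIP of $T$ — and the final contradiction you describe is exactly the one the paper derives. But there is a genuine confusion in the middle step that would make the argument break down as written. To apply Corollary~\ref{inducedstructG} to $\varphi(\vec a_i,\vec b)$ you need the tuple filling the $\vec x$-slot (the slot that is intersected with $G$) to lie in $G$, and the tuple filling the parameter slot to be \emph{$G$-independent}. In the NIP-over-$G$ situation it is $\vec a_i$ that lies in $G$, so $\vec a_i$ must go in the $\vec x$-slot, and $\vec b$ must play the role of the $G$-independent parameter; but $\vec b$ is an arbitrary element, so one first has to \emph{enlarge} $\vec b$ so that it becomes $G$-independent — a step the paper performs explicitly and which is missing from your write-up. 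You describe the opposite assignment: ``viewing $\vec b$ as the `$\vec x$' ranging over $G$ and $\vec a_i$ as the $G$-independent parameters.'' That reading makes the appeal to Corollary~\ref{inducedstructG} vacuous, because the equivalence it provides only holds on tuples in $G$, and $\vec b$ is not assumed to be in $G$. (Indeed, the equivalence carries the conjunct $G(\vec x)$.) With the roles assigned correctly, the formula $\varphi(\vec x,\vec b)\wedge G(\vec x)$ is equivalent to $\psi(\vec x,\vec b)\wedge P_\theta(\vec x,\vec h)\wedge G(\vec x)$ with $\vec h=G(\vec b)$, and then your closing argument goes through: $\{\vec a_i\}$ is $\cL$-indiscernible and $\cL_{R\text{-mod}}$-indiscernible, $\psi$ cannot alternate since $T$ is NIP, $\theta$ cannot alternate since modules are stable, contradiction.

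Separately, the preliminary reduction via quantifier elimination (decomposing $\varphi$ into atomic pieces, checking closure of NIP-over-$G$ under boolean combinations and $\cL$-definable partitions, tracking the $f_{\vec\lambda,i}$-terms via Lemma~\ref{lm_termscases}) is a detour you do not need and which introduces the very bookkeeping issues you flag as the ``main obstacle.'' Corollary~\ref{inducedstructG} is already stated for an arbitrary $\cL_G^+$-formula with a $G$-independent parameter, so it can be applied directly to $\varphi$ in one step, which is what the paper does. Stripping away the decomposition both shortens the argument and removes the bookkeeping concerns.
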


\begin{proof}
Assume otherwise, so there is a $\mathcal{L}_G$-indiscernible sequence $\{\vec a_i : i\in \omega\}$
of tuples of $G$ and an element $\vec b\in M$ such that $\varphi(\vec a_i
, \vec b) \leftrightarrow i$ is even. We may enlarge $\vec b$ if necessary and assume that $\vec b$ is $G$-independent and let $\vec h=G(\vec b)$. By
Corollary \ref{inducedstructG} there are
$\theta(\vec x)$ an $R$-module formula and $\psi(\vec x,\vec y)$ an $\mathcal{L}$-formula such that 
$\forall x (\varphi(\vec x,\vec b)\wedge G(\vec x) \leftrightarrow \theta(\vec x,\vec h) \wedge \psi(\vec x,\vec b))$.
But every $R$-module formula is stable, so the cofinal value of $\theta(\vec a_i,\vec h)$ is fixed. Similarly, since $T$ is NIP, the cofinal value of $\psi(\vec a_i,\vec b)$ is fixed, a contradiction.
\end{proof}

\begin{prop}
Assume that $T$ is NIP, then $T^{G+}$ is also NIP.
\end{prop}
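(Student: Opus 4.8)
The plan is to deduce this from the criterion of Chernikov and Simon \cite{ChSi} for the transfer of NIP to expansions by a unary predicate: if $T$ is NIP, if every formula of $(V,G)$ is \emph{bounded} (equivalent, modulo $T^{G+}$, to a formula all of whose quantifiers are relativized to $G$), and if the structure induced on $G$ is NIP, then $T^{G+}$ is NIP. Both hypotheses on $(V,G)$ have essentially been established already in the material preceding this proposition, so the proof amounts to assembling them.

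First I would check boundedness. By Theorem~\ref{thm_QE} the theory $T^{G+}$ has quantifier elimination, so it is enough to see that atomic $\cL_G^+$-formulas are bounded. An atomic formula is built from $\cL_G^+$-terms and the relation symbols of $\cL$, together with $=$, $G$, the $G_{\vec\lambda}$, and the $P_\phi$. By axiom scheme $(A)$, $P_\phi(\vec x)$ is equivalent to $\bigwedge_i G(x_i)$ together with the relativization of $\phi$ to $G$, i.e. a formula whose quantifiers all range over $G$; by the defining axioms of Definition~\ref{df_Tplus}, both $G_{\vec\lambda}(y)$ and $x=f_{\vec\lambda,i}(y)$ are expressed by formulas whose only quantifiers range over $G$. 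Using Lemmas~\ref{lm_reduction_fG} and \ref{lm_termsall} to rewrite every $\cL_G^+$-term, on a suitable $\cL$-definable partition of the universe, as an $\FF$-linear combination of the variables and of terms $f_{\vec\mu,k}(c)$ with $c$ an $\cL_0$-term, and then substituting the definitions of the $f_{\vec\mu,k}$, every atomic $\cL_G^+$-formula becomes one in which quantification occurs only inside $G$ (the $\cL$-formulas defining the partition contain no occurrence of $G$). This is precisely the observation recorded after the corollary computing $\acl_G$; hence every $\cL_G^+$-formula is bounded.

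Next I would check that the structure induced on $G$ is NIP. Since each $f_{\vec\lambda,i}$ is existentially $\cL_G$-definable, the $\cL_G^+$- and $\cL_G$-induced structures on $G$ are interdefinable, and Proposition~\ref{NIPoverG} says exactly that no $\cL_G$-formula is NIP over $G$, that is, the induced structure on $G$ is NIP (its proof already folds in the fact that every theory of modules is stable, hence NIP, to control the $R$-module part via Corollary~\ref{inducedstructG}). Combining this with the hypothesis that $T$ is NIP and with boundedness, the Chernikov--Simon criterion gives that $T^{G+}$ is NIP. The only genuinely delicate ingredient is the one already handled by Proposition~\ref{NIPoverG} --- that the interaction of the $R$-module language with the $\cL$-structure on $G$ does not create independence property --- so within this proof there is no real obstacle beyond correctly matching the hypotheses of \cite{ChSi} to the facts already proved; everything else is bookkeeping made transparent by the quantifier elimination of Theorem~\ref{thm_QE}.
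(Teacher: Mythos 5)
Your proposal is correct and follows essentially the same route as the paper: quantifier elimination (Theorem~\ref{thm_QE}) together with the observation, already recorded in the text, that every $\cL_G$-formula is bounded, combined with Proposition~\ref{NIPoverG} to control the induced structure on $G$, and then the Chernikov--Simon transfer theorem. The paper's proof is a one-line assembly of these same ingredients; you simply spell out the boundedness verification in more detail, but there is no genuine difference in the argument.
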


\begin{proof}
By quantifier elimination in $T^{G+}$ every $\mathcal{L}_G$ formula is equivalent to a bounded formula and by Proposition \ref{NIPoverG} no formula has NIP over $G$, by Theorem 2.4 in \cite{ChSi} we have that $T_P$ is also NIP.
\end{proof}

A similar argument studying the induced structure of $(V,G)$ in $G$ and using the ideas of Casanovas and Ziegler \cite{CZ} shows that stability is also preserved in the pair:

\begin{prop}
Assume that $T$ is stable, then $T^{G+}$ is also stable.
\end{prop}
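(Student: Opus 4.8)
The plan is to mirror the proof of the NIP case, replacing the Chernikov--Simon criterion for NIP by the Casanovas--Ziegler criterion for stability. Recall from the previous propositions that quantifier elimination for $T^{G+}$ (Theorem~\ref{thm_QE}) shows every $\cL_G$-formula is \emph{bounded} in the sense of \cite{CZ,ChSi}: it is a boolean combination of $\cL$-formulas, $\cL_{R-mod}$-formulas relativized to $G$, and formulas $t(\vec x)\in G_{\vec\lambda}$ or $t(\vec x)=0$, where all quantifiers range over $G$. So, by the Casanovas--Ziegler transfer theorem for stable pairs, it suffices to check two things: first, that the underlying theory $T$ is stable (true by hypothesis); and second, that the induced structure on the predicate $G$ is stable, i.e.\ that no $\cL_G^+$-formula $\varphi(\vec x,\vec y)$ has the order property when $\vec x$ ranges over tuples from $G$.

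First I would establish the analogue of Proposition~\ref{NIPoverG}: \emph{no formula $\varphi(\vec x,\vec y)\in\cL_G$ is unstable over $G$}, meaning there is no $\cL_G$-indiscernible sequence $\{\vec a_i : i\in\omega\}$ of tuples from $G$ and a tuple $\vec b$ witnessing the order property $\varphi(\vec a_i,\vec b)\iff i<j$. Suppose toward a contradiction such data exist. As in the NIP proof, I would enlarge $\vec b$ so that it is $G$-independent and set $\vec h=G(\vec b)$. By Corollary~\ref{inducedstructG} there are an $\cL$-formula $\psi(\vec x,\vec y)$ and an $\cL_{R-mod}$-formula $\theta(\vec x,\vec y)$ with
\[
\forall\vec x\,\bigl([\varphi(\vec x,\vec b)\wedge G(\vec x)]\leftrightarrow[\psi(\vec x,\vec b)\wedge P_\theta(\vec x,\vec h)\wedge G(\vec x)]\bigr).
\]
Since each $\vec a_i$ lies in $G$, the value of $\varphi(\vec a_i,\vec b)$ equals the value of $\psi(\vec a_i,\vec b)\wedge\theta(\vec a_i,\vec h)$ along the indiscernible sequence. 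Now $\theta$ is an $\cL_{R-mod}$-formula and the theory of $R$-modules is stable, so along an $\cL_G$-indiscernible (hence $\cL_{R-mod}$-indiscernible) sequence the truth value of $\theta(\vec a_i,\vec h)$ is eventually constant; likewise, since $T$ is stable, the truth value of $\psi(\vec a_i,\vec b)$ is eventually constant along the sequence. Hence the conjunction is eventually constant, contradicting the order property $\varphi(\vec a_i,\vec b)\iff i<j$.

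With that lemma in hand, the proposition follows formally: by quantifier elimination (Theorem~\ref{thm_QE}) every $\cL_G$-formula is bounded, by the lemma no formula is unstable over $G$, and by the Casanovas--Ziegler criterion (\cite{CZ}, as used for stable pairs) together with the stability of $T$ we conclude that $T^{G+}$ is stable. I would then add the one-line remark that since $T^{G}$ is a reduct of $T^{G+}$, it is stable as well. The only point requiring a little care---the main obstacle, though a mild one---is matching the precise hypotheses of the Casanovas--Ziegler transfer theorem to our setting: one must check that ``stable + bounded formulas + stable induced structure on $G$'' is exactly the package their theorem consumes, and that the indiscernible sequences used in the definition of ``unstable over $G$'' can be taken inside $G$ without loss, which is immediate here since $G$ is stably embedded enough via Corollary~\ref{inducedstructG}. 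Everything else is a direct transcription of the NIP argument with ``NIP'' replaced by ``stable'' and ``alternation/cofinal value'' replaced by ``order property/eventual constancy.''
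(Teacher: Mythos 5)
Your strategy matches the paper's, which only indicates that stability preservation follows by ``a similar argument [to the NIP case] using the ideas of Casanovas and Ziegler.'' The decomposition you give --- quantifier elimination yields boundedness, Corollary~\ref{inducedstructG} controls the trace of $\cL_G$-formulas on $G$, and the Casanovas--Ziegler transfer theorem does the rest --- is exactly the intended route.

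However, your verification that the induced structure on $G$ is stable is not quite right as written. The criterion for a formula to be unstable over $G$ is the order property, which requires \emph{two} sequences: tuples $(\vec a_i)$ from $G$ and tuples $(\vec b_j)$ with $\varphi(\vec a_i, \vec b_j) \iff i < j$. Your formulation ``$\varphi(\vec a_i,\vec b)\iff i<j$'' fixes a single $\vec b$ and leaves $j$ unbound; more importantly, your concluding step --- that $\psi(\vec a_i,\vec b)$ and $\theta(\vec a_i,\vec h)$ are \emph{eventually constant} along an indiscernible sequence --- is the NIP criterion, and it does not contradict the order property. (If $\varphi(\vec a_i,\vec b_j)\iff i<j$, then for each fixed $j$ the map $i\mapsto\varphi(\vec a_i,\vec b_j)$ is indeed eventually constant, so no contradiction arises this way.) The correct wrap-up is shorter: take an indiscernible array $(\vec a_i,\vec b_j)$ with $\vec a_i$ from $G$ and $\varphi(\vec a_i,\vec b_j)\iff i<j$; uniformly enlarge the $\vec b_j$ to $G$-independent tuples and set $\vec h_j=G(\vec b_j)$; by Corollary~\ref{inducedstructG} (which depends only on the $\cL_G^+$-type of $\vec b_j$, constant along the array) there are a single $\cL$-formula $\psi$ and a single $\cL_{R\text{-mod}}$-formula $\theta$ with $\varphi(\vec a_i,\vec b_j)\leftrightarrow\psi(\vec a_i,\vec b_j)\wedge\theta(\vec a_i,\vec h_j)$ for all $i,j$; since $\psi$ is stable (as $T$ is stable) and $\theta$ is stable (as the theory of $R$-modules is stable), their conjunction is stable and cannot define an infinite linear order, a contradiction. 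With that correction the argument is complete.
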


We will now consider some examples.

\begin{exam}\label{purevectorspace2}
(Pure vector spaces, example \ref{purevectorspace} revisited)

Let $\mathcal{V}$ be a pure vector space over a field $\mathbb{F}$, which is strongly minimal. It is proved in \cite{CZ} that the stability spectrum of the expansion depends on the stability spectrum of $\mathcal{V}$ and the stability spectrum induced by the pair in the predicate. In our setting, since $\mathcal{V}$ is $\aleph_0$-stable, it depends on the relation between $R$ and $\FF$. 

We will concentrate in the case where the pair is $\omega$-stable and we will construct expansions with Morley rank $n$ for every $n\geq 2$. Assume that $R$ and $\FF$ are fields and $[\FF:R]=n$. We can see $\FF$ as a vector space of dimension $n$ over $R$. Choose $\lambda_1,\dots,\lambda_n$ a basis of $\FF$ over $R$. Consider the map $f:G^n\to V$ given by $f(g_1,\dots,g_n)=
\lambda_1 g_1+\dots+\lambda_ng_n$. This map is definable and generically one to one. Since the structure of $G$ is that of a pure vector space over $R$, we have that $MR(G)=1$ and thus $MR(f(G^n))=n$. By the extension property $V$ contains properly the set $f(G^n)$, so $MR(\mathcal{V},G)=n+1$. 
Note that $f(G^n)$ is a vector space over $\FF$ and that the structure $(\mathcal{V},f(G^n))$ is a lovely pair of the theory vector spaces over $\FF$.
\end{exam}

\begin{lem}(Lemma \ref{1basedgroupscase1} revisited)
Let $\mathcal{V}$ be a pure vector space over a field $\mathbb{F}$ and let $R$ be a subring of $\FF$. Then $T^{G^+}$ is $1$-based.
\end{lem}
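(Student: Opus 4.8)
The plan is to reduce the statement to the first-example case (Corollary~\ref{1basedgroupscase1}) by showing that a model of $T^{G+}$ whose $\cL$-reduct is a pure vector space is an abelian structure, and then invoke the fact that abelian structures are $1$-based. First I would recall that a structure is $1$-based if and only if it is (interdefinable with) an abelian structure, i.e.\ every definable set is a boolean combination of cosets of $\emptyset$-definable subgroups; this is exactly the route taken for Corollary~\ref{1basedgroupscase1} via Proposition~\ref{abelianstructure}. So the substance of the argument is to prove the analogue of Proposition~\ref{abelianstructure} in the general setting: if $(V,G)\models T^{G+}$ and $V$ is a pure $\FF$-vector space (hence an abelian structure in $\cL$), then $(V,G)$ is an abelian structure in $\cL_G^+$.

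The key steps, in order, are as follows. By Theorem~\ref{thm_QE} the theory $T^{G+}$ has quantifier elimination, so it suffices to check that the definable sets given by atomic $\cL_G^+$-formulas are boolean combinations of cosets of $\emptyset$-definable subgroups. There are three kinds of atomic formulas to handle: $\cL$-formulas (handled since $V$ is a pure vector space, so these are already cosets of $\emptyset$-definable subspaces); the module predicates $P_\theta(\vec x)$ (handled because every definable set in an $R$-module is a boolean combination of cosets of $\emptyset$-definable subgroups, and $P_\theta$ is the relativization of $\theta$ to $G$, so $P_\theta(V^n)$ is the intersection of $G^n$ with a coset of a definable subgroup, and $G^n$ itself is $\emptyset$-definable); and the formulas $G_{\vec\lambda}(t(\vec x))$ where $t$ is an $\cL_G^+$-term, together with the graphs $x = f_{\vec\lambda,i}(y)$. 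For the latter two, I would first apply Lemma~\ref{lm_reduction_fG} to reduce $t(\vec x)$ to a finite $\cL$-definable partition on each piece of which $t$ is an $\cL_0^+$-term, then Lemma~\ref{lm_termsall} to write that $\cL_0^+$-term as $\sum_i \lambda_i f_{\vec\mu_i,k_i}(\vec\alpha_i\cdot\vec a') + \vec\beta\cdot\vec a$. Since each $f_{\vec\mu,k}$ is definable and $V$ is a pure vector space, the pieces of its graph and of $G_{\vec\lambda}$ are again cosets of $\emptyset$-definable subgroups (the set $G_{\vec\lambda} = \{\,x : \exists\vec y\in G\ \sum_i\lambda_i y_i = x\,\}$ is a $\emptyset$-definable subgroup, being the image of $G^n$ under a linear map); intersecting with the $\cL$-definable partition pieces, which are themselves cosets, keeps us inside the class of boolean combinations of cosets. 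Once $(V,G)$ is shown to be an abelian structure, $1$-basedness is immediate.

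The main obstacle I expect is the bookkeeping around the functions $f_{\vec\lambda,i}$: unlike in Proposition~\ref{abelianstructure}, the language $\cL_G^+$ now contains these extra function symbols, and one must verify that their graphs really are boolean combinations of cosets — which ultimately rests on $f_{\vec\lambda,i}$ being piecewise linear (the "$\vee\,(y\notin G_{\vec\lambda}\wedge x=0)$" branch in Definition~\ref{df_Tplus} is the case split, and each branch is linear) — and then that composing and taking $\FF$-linear combinations of such functions, as in Lemma~\ref{lm_termsall}, does not take us out of the class. This is a routine but slightly delicate induction on term complexity, entirely parallel to the partition arguments already carried out in Lemmas~\ref{lm_reduction_fG} and~\ref{lm_termsall}; no new idea beyond Proposition~\ref{abelianstructure} is needed, only care in propagating the "boolean combination of cosets" property through the term-reduction machinery.
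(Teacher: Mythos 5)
Your proposal is correct and follows the same strategy as the paper: use quantifier elimination for $T^{G+}$, reduce to atomic $\cL_G^+$-formulas, and show each defines a boolean combination of cosets of $\emptyset$-definable subgroups, with the key observation that $G_{\vec\lambda}$ is itself an $\emptyset$-definable subgroup and the $f_{\vec\lambda,i}$ are piecewise affine (additive on $G_{\vec\lambda}$, zero on its complement). The paper's proof is terser — it simply appeals to "a similar computation to the one done in Lemma \ref{1basedgroupscase1}" — but your more explicit handling of the term-reduction through Lemmas~\ref{lm_reduction_fG} and~\ref{lm_termsall} is exactly what that similar computation requires.
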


\begin{proof}
    We follow the same strategy as we did in section \ref{Sub:qefractionfield}. Since $\mathcal{V}$ is a $1$-based group and $T^{G^+}$ has quantifier elimination in the extended language, it is enough to show that atomic formulas define boolean combinations of cosets of $\emptyset$-definable subgroups. By the proof of Lemma \ref{1basedgroupscase1} it suffices to check that the result is true for formulas of the form $G_{\vec \lambda}(t(\vec x,\vec a))$ and $t(\vec x,\vec a)=0$ where $t(\vec x,\vec a)$ is a term. Since $G_{\vec \lambda}(x)$ defines a group, a similar computation to the one done in Lemma \ref{1basedgroupscase1} gives the desired result.
\end{proof}

\begin{exam}
(Ordered vector spaces, example \ref{orderedvectorspace} revisited)

In this example we deal with the base structure $\mathcal{V}=(V, +,0,<,\{\lambda_r\}_{r\in \mathbb{F}})$ and we assume that $\mathbb{F}$ is an ordered field and $V$ is an ordered vector space over $\mathbb{F}$. The theory of 
$\mathcal{V}$ is dense o-minimal, so it is geometric.

As in the previous example, we can consider the case where $R$ and $\FF$ are ordered fields and $[\FF:R]=n$. Then the expansion is strongly dependent and has $dp-rk(\mathcal{V},G)=n+1$. This example can also be studied from the perspective of \cite{BHK}; in particular by Theorem 4.8 \cite{BHK} it was already known that this expansion is NIP. The authors of \cite{BHK} also show this expansion has open core and characterize when it is decidable. 
\end{exam}

\section{Preservation of tree properties: NTP2, NTP1 and NSOP1}\label{sec:preservationTP}

In this section, we prove the expansion preserves other nice properties such as $NTP_2$, NTP$_1$ and NSOP$_1$.
To prove the preservation of $NTP_2$ and NTP$_1$, we will follow the approach from \cite{BeKi}, \cite{DoKi}, but will need to modify several parts of the arguments. The main difference is that in our setting the definable subsets in $G$ involve not only the induced structure from $V$ (as is the case in \cite{BeKi, DoKi}) but also the structure that it carries as an $R$-module. We will follow a similar approach to show the preservation of NSOP$_1$ using as a guide the ideas from \cite{Ra}. Let us start by recalling the basic definitions for NTP$_2$.

\begin{defn}
A theory $T$ has \emph{$k$-TP$_2$} (for some integer $k\geq 2$) if there exist a formula $\varphi (\vec x, \vec y)$ and a set of tuples $\{ \vec a_{i, j} \mid i, j<\og \}$ (in some model of $T$) such that $\{ \varphi (\vec x, \vec a_{i, f(i)})  \mid i<\og \}$ is consistent for every function $f\colon \og \to \og$ and $\{ \varphi (\vec x, \vec a_{i, j}) \mid j<\og \}$ is $k$-inconsistent  for every $i<\og$.
A theory has TP$_2$ if it has $2$-TP$_2$ and a theory has NTP$_2$ if it does not have TP$_2$.
\end{defn}

\begin{defn}
Let $M$ be a structure in a language $\mathcal{L}$. A set of parameters $\{ \vec a_{\mu} \mid \mu\in \og \times \og \}$ in $M$ is called an \emph{indiscernible array} if the $\mathcal{L}$-type of any finite tuple $(\vec a_{\mu_1}, \cdots, \vec a_{\mu_n})$ is determined by the quantifier-free array-type of  the tuple  $(\mu_1, \cdots, \mu_n)$.
\end{defn}

Just as sequences can be enlarged in order to extract indiscernible sequences, for arrays we have
the following result:

\begin{fact}\label{tp2indisc}
If a formula $\varphi(\vec x, \vec y)$ witnesses $k$-TP$_2$ then it may do so with  an indiscernible array. Moreover, for such any such indiscernible array and any function $f\colon \og \to \og$, the collection of formulas $\{ \varphi (\vec x, \vec a_{i, f(i)})  \mid i<\og \}$ has infinitely many realizations.
\end{fact}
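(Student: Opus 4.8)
The plan is to prove the two assertions in turn: the first is a standard extraction of an indiscernible array, and the second follows by adding one extra row and exploiting $k$-inconsistency.

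For the first assertion, start with any array $(\vec a_{i,j})_{i,j<\og}$ witnessing $k$-TP$_2$ for $\varphi$, where the rows $\vec A_i=(\vec a_{i,j})_{j<\og}$ are viewed as (infinite) tuples. By a Ramsey-type extraction argument for arrays --- first make each row indiscernible, then make the sequence of rows $(\vec A_i)_{i<\og}$ indiscernible, and use compactness to combine the two --- one obtains an array $(\vec a'_{i,j})_{i,j<\og}$ which is indiscernible in the sense of the definition above and is \emph{locally based on} the original array: for every finite $(\mu_1,\dots,\mu_n)\in(\og\times\og)^n$ there is $(\nu_1,\dots,\nu_n)\in(\og\times\og)^n$ with the same quantifier-free array-type such that $(\vec a'_{\mu_1},\dots,\vec a'_{\mu_n})$ and $(\vec a_{\nu_1},\dots,\vec a_{\nu_n})$ have the same $\cL$-type.

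Now check that $(\vec a'_{i,j})$ still witnesses $k$-TP$_2$ for $\varphi$. For $k$-inconsistency of the rows, fix $i$ and distinct $j_1,\dots,j_k$; the tuple $(\vec a'_{i,j_1},\dots,\vec a'_{i,j_k})$ has the array-type of $k$ distinct elements of a single row of the old array, hence the same $\cL$-type, so $\{\varphi(\vec x,\vec a'_{i,j_l}):l\le k\}$ is inconsistent because the corresponding set over the old array is. For consistency of a broken path $\{\varphi(\vec x,\vec a'_{i,f(i)}):i<\og\}$ with $f:\og\to\og$ arbitrary, by compactness it suffices to treat a finite subset indexed by $i_1<\dots<i_n$; the tuple $(\vec a'_{i_1,f(i_1)},\dots,\vec a'_{i_n,f(i_n)})$ has the same $\cL$-type as some $(\vec a_{i'_1,c_1},\dots,\vec a_{i'_n,c_n})$ in the old array with $i'_1<\dots<i'_n$ and $(c_1,\dots,c_n)$ having the same equality/order pattern as $(f(i_1),\dots,f(i_n))$; choosing $g:\og\to\og$ with $g(i'_m)=c_m$ shows $\{\varphi(\vec x,\vec a_{i'_m,c_m}):m\le n\}$ is a finite subset of the consistent broken path for $g$, hence consistent, and so is its translate over the new array.

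Finally, for the ``moreover'' clause, fix an indiscernible array $(\vec a_{i,j})_{i,j<\og}$ witnessing $k$-TP$_2$ and $f:\og\to\og$, and put $\pi_f(\vec x)=\{\varphi(\vec x,\vec a_{i,f(i)}):i<\og\}$. By compactness extend the array to an indiscernible array indexed by $(\og+1)\times\og$, with one extra row $(\vec a_{\og,j})_{j<\og}$ above all the others; this row is again $k$-inconsistent for $\varphi$ (any $k$ of its elements have the array-type, hence $\cL$-type, of $k$ elements of an original row), and for each $j<\og$ the set $\pi_f(\vec x)\cup\{\varphi(\vec x,\vec a_{\og,j})\}$ is a broken path of the extended array --- its finite subsets have, by indiscernibility, the same type as finite broken paths over $\og$-indexed rows, hence are consistent. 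Pick a realization $\vec b_j$ for each $j$. By $k$-inconsistency of the row $(\vec a_{\og,j})_j$, no element of the monster realizes $\varphi(\vec x,\vec a_{\og,j})$ for $k$ distinct values of $j$, so each value occurs at most $k-1$ times among the $\vec b_j$; since there are infinitely many $j$, there are infinitely many distinct $\vec b_j$, and all of them realize $\pi_f$. The only genuinely technical step is the array extraction in the second paragraph (handling both the row and column directions, and the fact that rows are infinite tuples); once that modelling statement is in place, the rest is bookkeeping with array-types plus the one compactness trick in the last paragraph.
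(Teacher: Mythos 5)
Your proof is essentially correct, and since the paper states this as a Fact without proof (deferring to \cite{BeKi} and \cite{Ch}), it is a reasonable reconstruction of the standard argument. The second half --- extending the indiscernible array by one extra row, observing that each $\pi_f\cup\{\varphi(\vec x,\vec a_{\og,j})\}$ is a finite-to-consistent broken path, and then using $k$-inconsistency of the new row to force infinitely many distinct realizations of $\pi_f$ --- is clean, correct, and exactly the right way to obtain the ``moreover'' clause.

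The one rough spot is your summary of the extraction step. Performing the two Ramsey extractions sequentially --- first making each row indiscernible, then making the sequence of rows $(\vec A_i)_i$ indiscernible as infinite tuples --- does \emph{not} by itself yield an array where the $\cL$-type of a finite tuple is determined by its array-type. For instance, after that two-step process nothing forces $\tp(\vec a_{0,0},\vec a_{1,0})=\tp(\vec a_{0,1},\vec a_{1,5})$, since the indiscernibility of row $0$ is not over row $1$, and the indiscernibility of the row-sequence only pins down $\tp(\vec A_0,\vec A_1)$ as a pair, not its invariance under shifting column positions. To get the stronger property you need the rows to be mutually indiscernible (or, equivalently, a simultaneous Erd\H{o}s--Rado/Ramsey extraction for the two-dimensional index structure). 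You do flag this as ``the only genuinely technical step'' and correctly state the conclusion (an indiscernible array locally based on the original), so the invocation is fine as a citation of a known lemma --- just be aware that the phrase ``first rows, then columns, then compactness to combine'' is not literally a proof of that lemma. The verification that the extracted array still witnesses $k$-TP$_2$ via local basedness is correct.
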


Using a result of Chernikov \cite{Ch} reducing the property $TP_2$ to formulas of the form $\varphi(x, \vec y)$ (where $x$ is a single variable) and the fact that we can witness $TP_2$ with indiscernible arrays, we can reduce the problem to:

\begin{prop}\label{step1prop}
A theory has TP$_2$ if and only if there exist a formula $\varphi(x, \vec y)$ (where $x$ is a single variable) and an indiscernible array $\{ \vec a_{i, j} \mid i, j<\og \}$ such that
\begin{enumerate}
\item $\bigwedge_{i<\og} \varphi(x, \vec a_{i, 0})$ has infinitely many realizations,

\item $\bigwedge_{j<\og} \varphi(x, \vec a_{0, j})$ has at most finitely many realizations.
\end{enumerate}
\end{prop}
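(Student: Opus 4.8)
The plan is to prove both directions. For the easy direction, suppose $\varphi(x,\vec y)$ and an indiscernible array $\{\vec a_{i,j}\}$ satisfy (1) and (2). We want to conclude that $T$ has $\TP_2$. By indiscernibility, (1) gives that $\bigwedge_{i<\og}\varphi(x,\vec a_{i,0})$ has infinitely many realizations, so for any path $f$ the set $\{\varphi(x,\vec a_{i,f(i)}) : i<\og\}$ has the same number of realizations (since the array-type of $(\vec a_{i,f(i)})_i$ equals that of $(\vec a_{i,0})_i$), hence is consistent (indeed infinitely realized). On the other hand, (2) says $\bigwedge_{j<\og}\varphi(x,\vec a_{0,j})$ has only finitely many realizations, say at most $k-1$; by indiscernibility of rows, the same bound holds for every row, so $\{\varphi(x,\vec a_{i,j}):j<\og\}$ is $k$-inconsistent for each $i$. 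Thus $\varphi$ witnesses $k$-$\TP_2$, so $T$ has $\TP_2$.

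For the converse, assume $T$ has $\TP_2$. First I would invoke the reduction of Chernikov \cite{Ch} to obtain a formula $\varphi(x,\vec y)$ with $x$ a \emph{single} variable witnessing $\TP_2$ (with some inconsistency parameter $k$). Then by Fact~\ref{tp2indisc} we may assume the witnessing array $\{\vec a_{i,j} : i,j<\og\}$ is indiscernible, and moreover that for every path $f$ the column-path set $\{\varphi(x,\vec a_{i,f(i)}):i<\og\}$ is not merely consistent but infinitely realized. In particular, taking $f\equiv 0$, item (1) holds: $\bigwedge_{i<\og}\varphi(x,\vec a_{i,0})$ has infinitely many realizations. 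For item (2), note that $k$-inconsistency of each row $\{\varphi(x,\vec a_{i,j}):j<\og\}$ means no $k$ of the formulas $\varphi(x,\vec a_{0,j})$ have a common realization, so $\bigwedge_{j<\og}\varphi(x,\vec a_{0,j})$ has at most $k-1<\og$ realizations. This gives both (1) and (2), completing the equivalence.

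The only genuine content is the two imported facts, namely Chernikov's single-variable reduction for $\TP_2$ and the array-Ramsey extraction of an indiscernible array (with the ``infinitely many realizations along paths'' refinement) recorded in Fact~\ref{tp2indisc}; everything else is a direct unwinding of the definition of $k$-$\TP_2$ together with indiscernibility of the array's rows and columns. The main (mild) obstacle is just to be careful that the indiscernible-array extraction preserves the consistency of the path sets — which is exactly the ``moreover'' clause of Fact~\ref{tp2indisc} — and that passing from ``$k$-inconsistent'' to ``at most $k-1$ realizations of the full conjunction'' is stated for a single row, which indiscernibility then propagates. No saturation subtleties arise beyond working in a monster model of $T$.
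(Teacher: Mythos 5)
Your forward direction is fine modulo the two imported facts (Chernikov's single-variable reduction and the array extraction of Fact~\ref{tp2indisc}); the observation that $k$-inconsistency of the rows forces $\bigwedge_{j<\og}\varphi(x,\vec a_{0,j})$ to be finitely realized is correct, though note the count is simply zero, not ``at most $k-1$.''

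The gap is in the converse. You infer that, since $\bigwedge_{j<\og}\varphi(x,\vec a_{0,j})$ has at most $k-1$ realizations, the row set $\{\varphi(x,\vec a_{0,j}):j<\og\}$ is $k$-inconsistent. This inference is false: $k$-inconsistency means every $k$-element subset is jointly unrealizable, and that does not follow from the full conjunction having few realizations. If the full row conjunction has even one realization $b$, then $b$ witnesses the consistency of every finite subconjunction, so the row is not $\ell$-inconsistent for any $\ell$, and $\varphi$ alone cannot witness TP$_2$ via this array. To close the gap, first use compactness to find a finite $m$ such that $\bigwedge_{j<m}\varphi(x,\vec a_{0,j})$ already has at most $N$ realizations, where $N$ is the finite number of realizations of the full row conjunction, and then replace $\varphi$ by the tuple formula $\psi(x_0,\ldots,x_N,\vec y)=\bigwedge_{s\le N}\varphi(x_s,\vec y)\wedge\bigwedge_{s<t\le N}x_s\ne x_t$. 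Along a row, $m$ overlapping instances of $\psi$ would produce $N+1$ distinct realizations of a conjunction that, by indiscernibility, has at most $N$, so $\{\psi(\vec x,\vec a_{0,j}):j<\og\}$ is $m$-inconsistent; along any path, $\psi$ stays consistent by item (1) and strong indiscernibility. Thus it is $\psi$, not $\varphi$, that witnesses $m$-TP$_2$. This is exactly the device used in Proposition~\ref{nsop1uptofinite} of the paper, and it is the step your argument elides.
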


\begin{proof}
For details, the reader can see \cite[Section 3]{BeKi}.
\end{proof}

Now let us consider the problem in the setting of this paper. As before, we write $\mathcal{L}$ for the language of the vector space (maybe with extra structure), $T$ for its theory and $\mathcal{L}_G$ and $T^{G+}$ are the language and the theory of the associated $G$-structure. As mentioned before, we want to show that $T$ has $NTP_2$ if and only if $T^{G+}$ does. Our first result in this direction deals with the induced structure on the predicate. 

\begin{prop}\label{step2prop}
Assume there exists some $\mathcal{L}_G$-formula $\varphi(x, \vec{y})$ (where $x$ is a single variable) such that   $\varphi(x, \vec{y})\wedge G(x)$ witnesses $k$-TP$_2$ for some $k\geq 2$. Then $T$ has TP$_2$.
\end{prop}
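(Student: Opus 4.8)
The plan is to push the given TP$_2$ configuration for $\varphi(x,\vec y)\wedge G(x)$ down to an $\mathcal{L}$-configuration witnessing TP$_2$ in $T$, using the description of induced structure on $G$ from Corollary~\ref{inducedstructG}. First I would invoke Proposition~\ref{step1prop} and Fact~\ref{tp2indisc}: since $\varphi(x,\vec y)\wedge G(x)$ witnesses $k$-TP$_2$, after reducing to a single variable $x$ and extracting an indiscernible array $\{\vec a_{i,j}\mid i,j<\og\}$, we may assume $\bigwedge_{i<\og}(\varphi(x,\vec a_{i,0})\wedge G(x))$ has infinitely many realizations while $\bigwedge_{j<\og}(\varphi(x,\vec a_{0,j})\wedge G(x))$ has only finitely many. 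Note the realizations all lie in $G$, so the first row is realized by infinitely many elements of $G(V)$.

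Next I would arrange that the parameter array is $G$-independent. The rows $\vec a_{i,j}$ need not be tuples from $G$, but by adding $G(\vec a_{i,j})$ to each tuple (uniformly, via the indiscernibility of the array, so that the enlarged array is still indiscernible — one extracts from the enlarged sequence if necessary) we may assume each $\vec a_{i,j}$ is $G$-independent; write $\vec h_{i,j}=G(\vec a_{i,j})$. Now apply Corollary~\ref{inducedstructG}: there is an $\mathcal{L}$-formula $\psi(x,\vec y)$ and an $\cL_{R-mod}$-formula $\theta(x,\vec y)$ with
\[
\forall x\,\big(\,[\varphi(x,\vec a_{i,j})\wedge G(x)]\leftrightarrow[\psi(x,\vec a_{i,j})\wedge P_\theta(x,\vec h_{i,j})\wedge G(x)]\,\big)
\]
for all $i,j$ — the same $\psi$, $\theta$ work uniformly because the array is indiscernible (the formula produced by Corollary~\ref{inducedstructG} depends only on $\varphi$ and the quantifier-free $\cL_G^+$-type of the parameter, which is constant along the array). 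So the configuration is really about $\psi(x,\vec a_{i,j})\wedge P_\theta(x,\vec h_{i,j})$ restricted to $G(x)$.

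Finally I would split the conjunction into its $R$-module part and its $\mathcal{L}$-part. The array $\{\vec h_{i,j}\}$ is an indiscernible array of tuples in the $R$-module $G$, and the theory of $R$-modules is stable, hence NTP$_2$; so the formula $P_\theta(x,\vec y)$ alone cannot witness TP$_2$ in $G$. Concretely, along the first column $\bigwedge_{i<\og}P_\theta(x,\vec h_{i,0})$, which must be satisfied by the infinitely many solutions we have in the first row, is a condition that (by stability of modules) cannot become $k$-inconsistent along any row $\bigwedge_{j<\og}P_\theta(x,\vec h_{0,j})$; more precisely, in a stable theory an indiscernible array cannot witness TP$_2$, so the "inconsistency of rows" in the original configuration must already be visible in the $\mathcal{L}$-part $\psi$. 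This forces $\psi(x,\vec y)$, as an $\mathcal{L}$-formula, together with the array $\{\vec a_{i,j}\}$, to witness TP$_2$ in $T$: row-$k$-inconsistency of $\psi(x,\vec a_{0,j})$ and column-consistency of $\psi(x,\vec a_{i,0})$. Hence $T$ has TP$_2$.

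The main obstacle I anticipate is the last step: separating the $R$-module contribution from the $\mathcal{L}$-contribution cleanly enough to conclude that one of them alone witnesses TP$_2$. The honest way to do this is a lemma of the form "if $\chi_1(x,\vec y)$ is NTP$_2$ and $\chi_2(x,\vec y)$ is NTP$_2$ then so is $\chi_1\wedge\chi_2$" — which is \emph{false} in general — so instead one must use that $P_\theta$ comes from a \emph{stable} theory and run the argument at the level of the array: a stable formula cannot create $k$-inconsistency of rows out of consistency of columns, so the $k$-inconsistency in the witnessing configuration is inherited by $\psi$. Making this precise (e.g. by contradiction: if $\psi$ did not witness TP$_2$ one extracts a mutually indiscernible refinement of the array along which $\psi$ is "constant enough" and derives that all rows of the original formula are consistent) is the technical heart, and it mirrors the row/column analysis already referenced in \cite{BeKi,DoKi}.
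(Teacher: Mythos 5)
Your proposal follows the paper's proof closely up through the application of Corollary~\ref{inducedstructG} (single variable via Proposition~\ref{step1prop}, indiscernible array, enlarging to $G$-independent tuples, decomposing $\varphi\wedge G$ into an $\mathcal{L}$-part $\psi$ and an $R$-module part $\theta$), and you correctly observe that neither a naive NTP$_2$-is-closed-under-conjunctions claim nor the bare stability of $\theta$ is enough to push the row-inconsistency onto $\psi$. But the proposal then stops at exactly the point that needs an argument: you write that making the separation precise ``is the technical heart'' and gesture at a refinement/extraction argument, without actually supplying the missing step.

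The ingredient that closes this gap in the paper is Proposition~\ref{prop_shift}, which you never invoke. The argument is: by stability of $R$-modules, $\bigwedge_{j}\theta(x,\vec h_{0,j})$ has infinitely many realizations in $G$ (Claim~1). Now if $\bigwedge_{j}\psi(x,\vec a_{0,j})$ \emph{also} had infinitely many realizations, it would be a non-algebraic $\mathcal{L}$-type; Proposition~\ref{prop_shift} (density of $G^{\dv}$, the shift argument) shows that any non-algebraic $\mathcal{L}$-type is simultaneously realizable in $G$ together with any consistent $R$-module type, with infinitely many realizations. This amalgamation is a special feature of the structure, not a consequence of stability in the abstract. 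Applying it to $\bigwedge_j\psi$ and $\bigwedge_j\theta$ yields infinitely many realizations of $\bigwedge_j(\psi\wedge\theta)(x,\cdot)\wedge G(x)$, i.e.\ of the original row $\bigwedge_j\varphi(x,\vec a_{0,j})\wedge G(x)$, contradicting its $k$-inconsistency. Hence $\bigwedge_j\psi(x,\vec a_{0,j})$ has finitely many realizations, and $\psi$ witnesses TP$_2$ in $T$ by Proposition~\ref{step1prop}. The abstract version you float (``a stable formula cannot create $k$-inconsistency of rows out of consistency of columns'') is not a theorem; it is the concrete type-amalgamation in $G$ that does the work, and it is the step your proposal leaves open.
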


\begin{proof}
Assume that there exists such an $\mathcal{L}_G$-formula $\varphi(x, \vec y)$. By Proposition \ref{step1prop} we may assume that  $\varphi(x, \vec y)\wedge G(x)$ witnesses TP$_2$ with some indiscernible array $\mathcal{A}:=\{ \vec a_{i, j} \ \ \ i, j<\og \}$ and that for every function $f\colon \og \to \og$, the collection $\{ \varphi (\vec x, \vec a_{i, f(i)})  \mid i<\og \}$ is infinite. Furthermore, enlarging the indiscernible array if necessary, we may assume that each $a_{i, j}$ is $G$-independent (for details see \cite[Section 4]{BeKi}). Then, by 
Corollary \ref{inducedstructG}, there exists some $\mathcal{L}$-formula $\psi(x, \vec y)$ and a $\mathcal{L}_{R-mod}$-formula 
$\theta(x,\vec y)$ such that for all $i, j<\og$,
\[  \varphi(x, \vec a_{i, j})\wedge G(x)\, \leftrightarrow \psi(x, \vec a_{i, j})\wedge \theta(x, \vec a_{i, j})\wedge G(x)
\]

Since $\bigwedge_{i<\og} \varphi(x, \vec a_{i, 0})\wedge G(x)$  has infinitely many realizations, 
the conjunction $\bigwedge_{i<\og} \theta(x, \vec a_{i, 0})$ also has infinitely many realizations in $G$. 

\textbf{Claim 1.} $\bigwedge_{j<\og}\theta(x, \vec a_{0, j})$ has at infinitely many realizations. 

Otherwise it has finitely many realizations and by Proposition \ref{step1prop} the $R$-module formula $\theta(x,\vec y)$ has 
TP$_2$. Since the theory of $R$-modules is stable, we obtain a contradiction.

Let us now analyze the formula $\psi(x,\vec y)$. As before, it is easy to see that the conjunction 
$\bigwedge_{i<\og} \psi(x, \vec a_{i, 0})$ also has infinitely many realizations. 

\textbf{Claim 2.} $\bigwedge_{j<\og}\psi(x, \vec a_{0, j})$ has finitely many realizations. 

Otherwise, it has infinitely many solutions.

By Proposition \ref{prop_shift} and Claim 1 we have that
$\bigwedge_{j<\omega}\psi(x, \vec a_{0, j})\wedge \bigwedge_{j<\og}\theta(x, \vec a_{0, j})$ also has infinitely many solutions, a contradiction.

Since the conjunction 
$\bigwedge_{i<\og} \psi(x, \vec a_{i, 0})$ has infinitely many realizations. 
and the conjunction $\bigwedge_{j<\og}\psi(x, \vec a_{0, j})$ has finitely many realizations, by Proposition \ref{step1prop} we conclude that $T$ has TP$_2$.
\end{proof}

As in \cite[Section 4]{BeKi} we can extend the previous result to several variables:

\begin{cor}
 If there exists some $\mathcal{L}_G$-formula $\varphi(\vec x, \vec y)$ such that $\varphi(\vec x, \vec y) \wedge G(\vec x)$
witnesses $k-TP_2$ for some $k \geq 2$ then $T$ has $TP_2$.
\end{cor}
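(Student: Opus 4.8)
The plan is to reduce the multi-variable case to the single-variable case already handled in Proposition \ref{step2prop}, following the template of \cite[Section 4]{BeKi}. Suppose for contradiction that some $\mathcal{L}_G$-formula $\varphi(\vec x, \vec y)$ with $\vec x = (x_1, \dots, x_m)$ is such that $\varphi(\vec x, \vec y) \wedge G(\vec x)$ witnesses $k$-TP$_2$; I want to produce a single-variable $\mathcal{L}_G$-formula witnessing $k'$-TP$_2$ (with its $x$-variable constrained to $G$), and then invoke Proposition \ref{step2prop} to conclude $T$ has TP$_2$. The standard device is the following: given an indiscernible array $\{\vec a_{i,j}\}$ witnessing $k$-TP$_2$ for $\varphi(\vec x,\vec y)\wedge G(\vec x)$, one enlarges it so that for each row $i$ one has a realization $\vec b_i = (b_{i,1}, \dots, b_{i,m})$ of $\bigwedge_j$-many of the formulas along a path, and then pivots: fix $b_{i,1}, \dots, b_{i,m-1}$ along the row and view the formula $\varphi((b_{i,1}, \dots, b_{i,m-1}, x_m), \vec y) \wedge G(x_m)$ as a single-variable formula in $x_m$, with the fixed coordinates absorbed into the parameter tuple. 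Iterating this coordinate-by-coordinate (or using the product-array trick from \cite{BeKi}) turns the $m$-variable instance of TP$_2$ into a single-variable instance.

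First I would apply Proposition \ref{step1prop}-style reductions and Fact \ref{tp2indisc}: witness $k$-TP$_2$ with an indiscernible array $\mathcal{A} = \{\vec a_{i,j} : i,j < \omega\}$ such that for every $f\colon \omega \to \omega$ the set $\{\varphi(\vec x, \vec a_{i,f(i)}) : i < \omega\}$ has infinitely many realizations in $G^m$. Next I would enlarge the array so that all the $\vec a_{i,j}$, together with suitable witnessing tuples, are $G$-independent — exactly as in Proposition \ref{step2prop}, citing \cite[Section 4]{BeKi} for the fact that indiscernible arrays can be taken $G$-independent. The point of $G$-independence is that it lets us apply Corollary \ref{inducedstructG}: after $G$-independence, $\varphi(\vec x, \vec a_{i,j}) \wedge G(\vec x)$ is equivalent to $\psi(\vec x, \vec a_{i,j}) \wedge P_\theta(\vec x, \vec h_{i,j}) \wedge G(\vec x)$ for an $\mathcal{L}$-formula $\psi$ and an $\mathcal{L}_{R-mod}$-formula $\theta$. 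Then I would run the same Claim 1 / Claim 2 dichotomy as in Proposition \ref{step2prop} — the $R$-module part $\theta$ cannot witness TP$_2$ by stability of modules, and the $\mathcal{L}$-part $\psi$ forces TP$_2$ for $T$ via Proposition \ref{step1prop} and the shift lemma (Proposition \ref{prop_shift}, or rather Corollary \ref{cor_shift} since we are now in several variables) — the only modification being that all the counting arguments run with $\vec x$ in place of $x$ and with Corollary \ref{cor_shift} replacing Proposition \ref{prop_shift}.

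The main obstacle is the bookkeeping in the reduction from $m$ variables to one, specifically making sure the $G$-independence hypothesis survives the pivoting step: when we freeze coordinates $b_{i,1}, \dots, b_{i,m-1}$ and move them into the parameter block, we need the resulting tuples to remain $G$-independent so that Corollary \ref{inducedstructG} still applies. This is handled exactly as in \cite[Section 4]{BeKi}: one works with an indiscernible array in which the relevant realizations along paths are already extracted into the array, and $G$-independence is arranged for the whole combined configuration before pivoting. Since the excerpt explicitly says ``As in \cite[Section 4]{BeKi} we can extend the previous result to several variables,'' I would simply record the reduction and point to \cite[Section 4]{BeKi} for the details, then note that the single-variable conclusion is Proposition \ref{step2prop}.

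\begin{proof}
Suppose there is an $\mathcal{L}_G$-formula $\varphi(\vec x, \vec y)$, with $\vec x = (x_1, \dots, x_m)$, such that $\varphi(\vec x, \vec y) \wedge G(\vec x)$ witnesses $k$-TP$_2$ for some $k \geq 2$. By Fact \ref{tp2indisc} we may take an indiscernible array $\mathcal{A} = \{\vec a_{i,j} : i, j < \omega\}$ witnessing this, and such that for every $f \colon \omega \to \omega$ the partial type $\{\varphi(\vec x, \vec a_{i, f(i)}) : i < \omega\}$ has infinitely many realizations in $G^m$. Exactly as in \cite[Section 4]{BeKi}, by absorbing suitable path-realizations into the array and re-extracting, we may reduce to the case of a single variable: that is, we obtain an $\mathcal{L}_G$-formula $\varphi'(x, \vec y')$ in a single variable $x$ and an indiscernible array witnessing that $\varphi'(x, \vec y') \wedge G(x)$ has $k'$-TP$_2$ for some $k' \geq 2$, where along the way one arranges, as in the proof of Proposition \ref{step2prop} (again citing \cite[Section 4]{BeKi}), that every parameter tuple occurring is $G$-independent. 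In carrying out the single-variable analysis one uses Corollary \ref{inducedstructG} to split $\varphi'(x, \vec a) \wedge G(x)$ into an $\mathcal{L}$-part and an $\mathcal{L}_{R-mod}$-part, and then argues as in Claims 1 and 2 of Proposition \ref{step2prop}: the $R$-module part cannot carry TP$_2$ by stability of modules, so (using Proposition \ref{prop_shift}, or Corollary \ref{cor_shift} for the several-variable bookkeeping) the $\mathcal{L}$-part already witnesses TP$_2$ in the reduct, whence $T$ has TP$_2$ by Proposition \ref{step1prop}. Thus, invoking Proposition \ref{step2prop} at the single-variable stage, we conclude that $T$ has TP$_2$.
\end{proof}
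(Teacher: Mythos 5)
Your proposal takes a different route from the paper's, and the route as sketched has a gap. The paper's entire proof is a single sentence: it invokes \emph{submultiplicity of burden}, citing \cite{BeKi} for the details. That is the standard mechanism for reducing a multi-variable TP$_2$ witness to a single-variable one (Chernikov's argument from \cite{Ch}): if $\varphi(\vec x,\vec y)\wedge G(\vec x)$ has $k$-TP$_2$ then the partial type $\bigwedge_i G(x_i)$ admits an inp-pattern of infinite depth, and submultiplicity of burden forces some single coordinate $x_i$, over a suitable base, to carry an inp-pattern of infinite depth inside $G$; this yields a single-variable $\mathcal{L}_G$-formula with $x$ ranging over $G$ that witnesses TP$_2$, at which point Proposition \ref{step2prop} finishes. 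You never mention burden at all.

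The coordinate-freezing (``pivoting'') device you describe in its place does not work as stated. Fix a realization $\vec b=(b_1,\dots,b_m)$ of one path, say $\{\varphi(\vec x,\vec a_{i,0}):i<\omega\}$, and pass to $\varphi((b_1,\dots,b_{m-1},x_m),\vec y)\wedge G(x_m)$ as a formula in $x_m$. Row $k$-inconsistency \emph{does} transfer: any $c$ satisfying $k$-many instances along a row would make $(b_1,\dots,b_{m-1},c)$ contradict the original $k$-inconsistency. But path consistency does \emph{not} transfer: $k$-TP$_2$ requires consistency along \emph{every} function $f\colon\omega\to\omega$, and after freezing $b_1,\dots,b_{m-1}$ only the path $f\equiv 0$ is known to have a realizer with those first $m-1$ coordinates. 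A realizer of a different path need not share them, so the pivoted formula is not shown to have TP$_2$. This is exactly the obstruction that makes the one-variable reduction in the NTP$_2$ setting a theorem rather than an exercise, and it is circumvented by burden submultiplicity, not by pivoting. Your citation of \cite[Section 4]{BeKi} is the right reference, but you should name the actual mechanism being used there.
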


\begin{proof}
It follows from submultiplicity of burden, for details see \cite{BeKi}
\end{proof}

\begin{defn} 
Let $(V,G)\models T^G$ be sufficiently saturated, let $X\subset V^n$ and let $A\subset V$ be a set of parameters. We say that $X$ is $A$-\emph{small} if $X\subset \acl(AG)$. If $X$ is $A$-small for some $A$, then we say $X$ is small, otherwise we say the set $X$ is \emph{large}. Similarly,
for $b\in V$, we write $b\in \scl(A)$ if $b\in \acl(AG)$ and we say that $b$ belongs to the \emph{small closure} of $A$.

\end{defn}

In this section we need to approximate large sets by $\mathcal{L}$-definable sets:
 \begin{prop}\label{uptosmall}
Let $(V,G)\models T^G$ and let $Y\subset V$ be
$\cL_G$-definable. Then there is $X\subset V$ an $\cL$-definable set such
that $Y\triangle X$ is small.
\end{prop}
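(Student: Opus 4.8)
The plan is to peel off a single small set outside of which $Y$ becomes $\cL$-definable. Write $Y=\varphi(V,\vec a)$ for an $\cL_G$-formula $\varphi(x,\vec y)$ and a finite tuple $\vec a$, put $B=\vect{\vec a}$, and set $S=\spane_\FF(\vec a\cup G(V))$. By Lemma~\ref{lm_termsall} we have $B\subseteq S$, and $S$ is exactly $\acl(\vec a\,G(V))$, so $S$ is $\vec a$-small, hence small. Let $W=V\setminus S$. The goal is to produce an $\cL$-definable $X\subseteq V$ with $Y\cap W=X\cap W$: since any point of $Y\triangle X$ lying in $W$ would contradict that equality, one gets $Y\triangle X\subseteq S$, which is small, as desired.

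To find such an $X$, first I would apply quantifier elimination for $T^{G+}$ (Theorem~\ref{thm_QE}) to write $\varphi(x,\vec a)$ as a Boolean combination of atomic $\cL_G^+$-formulas $\chi_1(x,\vec a),\dots,\chi_m(x,\vec a)$ over $\vec a$. Since the operation $A\mapsto A\cap W$ commutes with finite intersections, unions, and complements relative to $W$, it suffices to produce for each $\ell$ an $\cL$-definable $X_\ell$ with $\chi_\ell(V,\vec a)\cap W=X_\ell\cap W$, and then take $X$ to be the corresponding Boolean combination of the $X_\ell$; an easy induction on the Boolean structure gives $Y\cap W=X\cap W$.

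So fix an atomic $\cL_G^+$-formula $\chi(x,\vec a)$, of the shape $R(\tau_1(x,\vec a),\dots,\tau_r(x,\vec a))$ where $R$ is an $\cL$-relation, or $=$, or one of $G$, $G_{\vec\lambda}$, $P_\phi$, and the $\tau_p$ are $\cL_G^+$-terms. By Lemma~\ref{lm_reduction_fG} there is a finite $\cL$-definable (over $\vec a$) partition of $V$ on each piece of which every $\tau_p(x,\vec a)$ agrees with an $\cL_0^+$-term; as the pieces are $\cL$-definable it is enough to work on one of them, so assume each $\tau_p(x,\vec a)$ is an $\cL_0^+$-term. For $x\in W$ one has $x\notin G(V)$ and $x\notin\spane_\FF(BG(V))$, so Lemma~\ref{lm_termscases}(1) gives $\tau_p(x,\vec a)=\lambda_p x+b_p$ with $\lambda_p\in\FF$ and $b_p\in B$; moreover, inspecting the proof of that lemma — the functions $f_{\vec\mu,k}$ vanish on any linear combination of $x$ and $\vec a$ whose $x$-coefficient is nonzero, once $x\notin\spane_\FF(BG(V))$ — shows that $\lambda_p$ and $b_p$ do not depend on $x$. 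Hence on $W$ the formula $\chi$ is $R(\lambda_1 x+b_1,\dots,\lambda_r x+b_r)$. If $R$ is an $\cL$-relation or $=$, this is an $\cL$-formula with the extra parameters $b_1,\dots,b_r$, and we are done. If $R$ is $G$, some $G_{\vec\lambda}$, or some $P_\phi$, then $r=1$ and the predicate entails membership in $\spane_\FF(G(V))$: if $\lambda_1\neq 0$ then $\lambda_1 x+b_1\in\spane_\FF(G(V))$ forces $x\in S$, impossible on $W$, so $\chi(V,\vec a)\cap W=\emptyset$; and if $\lambda_1=0$ then $\chi(x,\vec a)$ is a parameter-only condition, so it holds for all $x$ or for none. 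In every case $\chi(V,\vec a)$ coincides on $W$ with an $\cL$-definable set, which finishes the argument.

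I expect the step requiring the most care to be the reduction of an arbitrary $\cL_G^+$-term to the affine form $\lambda x+b'$ on $W$: this is where Lemmas~\ref{lm_reduction_fG} and~\ref{lm_termscases} together with the vanishing of the $f_{\vec\mu,k}$ outside $\spane_\FF(G(V))$ must be threaded together, and where one has to keep track of the finitely many $\cL$-definable pieces produced along the way (and check that the coefficients that appear do not secretly depend on $x$). The remaining ingredients — the Boolean bookkeeping modulo the small set $S$, the smallness of $S$, and the observation that each of $G$, $G_{\vec\lambda}$, $P_\phi$ entails $\spane_\FF(G(V))$-membership — are routine.
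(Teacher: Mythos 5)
Your proof is correct and, while it shares the same core idea as the paper's proof (use quantifier elimination, then observe that $G$-conditions on terms that genuinely involve $x$ force $x$ into a small set), the bookkeeping is organized differently. The paper writes $Y$ as a finite union of conjuncts $Y_j = \psi_j(V)\cap(\text{$G$-conditions})$ and, for each $j$, case-splits: if $Y_j$ is small take $X_j=\emptyset$, otherwise take $X_j=\psi_j(V)$; the fact that $\psi_j(V)\setminus Y_j$ is small in the ``otherwise'' case is left implicit. You instead fix at the outset the small set $S=\spane_\FF(\vec a\cup G(V))$ and show, atom by atom, that every atomic $\cL_G^+$-formula over $\vec a$ agrees on $W=V\setminus S$ with an $\cL$-definable set, so any Boolean combination does too. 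Your route is a bit longer but has the advantage of making the smallness verification explicit, and of not needing to put the formula in disjunctive normal form. One point where you have to go slightly beyond what the paper's lemmas literally say is the uniformity of the affine representation $\tau_p(x,\vec a)=\lambda_p x+b_p$ for $x\in W$: Lemma~\ref{lm_termscases}(1) is stated for a single point $a$, so you need (and correctly observe, by re-running the induction and using that $f_{\vec\mu,k}$ vanishes on $\lambda x+b$ whenever $\lambda\neq 0$, $b\in S$ and $x\notin S$) that the coefficients $\lambda_p,b_p$ can be chosen independently of $x$ on each piece of the partition coming from Lemma~\ref{lm_reduction_fG}. A small slip: you write ``$r=1$'' when $R$ is $P_\phi$, but $P_\phi$ can have any arity; this does not affect the argument, since $P_\phi(\tau_1,\dots,\tau_r)$ still forces every $\tau_p\in G\subseteq S$, so the same dichotomy (some $\lambda_p\neq 0$ forces $x\in S$; all $\lambda_p=0$ gives a parameter-only condition) applies coordinatewise.
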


\begin{proof} 
By Theorem \ref{thm_QE} we may assume $Y=\cup_{j\leq n} Y_j$ where each $Y_j$ is the set of realizations of a formula of the form
 \[ \psi_j( x)\wedge P_{\theta_j} (x) \wedge \bigwedge_k t_k( x)\in G_{\vec \lambda_j}\wedge \bigwedge_l t_l(x)\notin G_{\vec \mu_j} \wedge \bigwedge_k t_k(x)=0\wedge \bigwedge_l t_l(x)\neq 0 \]
 Assume for each $Y_j$ we can find a $\cL$-definable set $X_j$ such that $Y_j\triangle X_j$ is small. Then $\cup_{j\leq n}Y_j\triangle \cup_{j\leq n} X_i\subset \cup_{j\leq n}(Y_j\triangle X_j)$ is small. So it suffice to prove the result for the sets $Y_j$. If the set $Y_j$ is small we can choose $X=\emptyset$. Otherwise we can choose $X_j=\psi_j(V)$.

\end{proof}

Note that the above proposition also follows from the fact that $T^G$ has TEA, see Corollary \ref{TEA} and \cite[Proposition 2.6]{BeVaGroups}. 
We are ready to prove the first main result.

\begin{thm}
If $T^{G+}$ has TP$_2$ then so does $T$.
\end{thm}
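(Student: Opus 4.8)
The plan is to run the formula-by-formula analysis of \cite{BeKi,DoKi}, fed by the quantifier elimination of Theorem~\ref{thm_QE}, the normal form for $\cL_G$-formulas, and the density/codensity axioms. I would work in $T^{G+}$ (which, being an expansion by definitions of $T^G$, has $\mathrm{TP}_2$ as well) and try to extract from a $\mathrm{TP}_2$-witness there an $\cL$-formula witnessing $\mathrm{TP}_2$ for $T$. First I would reduce: by subadditivity of burden under disjunctions, one piece of the quantifier-free normal form of Theorem~\ref{thm_QE} already witnesses $\mathrm{TP}_2$, so we may assume $\varphi(x,\vec y)$ has the shape $\psi(x,\vec y)\wedge P_\theta(x)\wedge\bigwedge_i(t_i(x,\vec y)\in G_{\vec\lambda_i})\wedge\bigwedge_j(t_j(x,\vec y)\notin G_{\vec\mu_j})\wedge\bigwedge_k(t_k(x,\vec y)=0)\wedge\bigwedge_l(t_l(x,\vec y)\neq 0)$ with $\psi$ an $\cL$-formula; in particular $\varphi(x,\vec y)\models\psi(x,\vec y)$. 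Applying Chernikov's reduction to single-variable formulas, Fact~\ref{tp2indisc} and Proposition~\ref{step1prop}, we may also fix an indiscernible array $\{\vec a_{i,j}\mid i,j<\omega\}$ with $\bigwedge_{i<\omega}\varphi(x,\vec a_{i,0})$ infinitely realized, $\bigwedge_{j<\omega}\varphi(x,\vec a_{0,j})$ only finitely realized, and every path $\bigwedge_{i<\omega}\varphi(x,\vec a_{i,f(i)})$ infinitely realized. As all $\vec a_{i,j}$ have the same $\cL_G^+$-type, ``$\varphi(V,\vec a_{i,j})$ is small'' holds for all $(i,j)$ or for none, and I would split accordingly.

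In the \emph{small} case every realization of a path lies in $\scl$ of the path parameters, i.e.\ in $\spane_\FF(\{\vec a_{i,f(i)}\}_i\cup G(V))$; a compactness argument using $\emptyset$-indiscernibility and the explicit description of $\acl_G$ yields a uniform $N$ and a uniform bound $\ell$ so that each such realization is an $\FF$-linear combination of $\vec a_{0,f(0)},\dots,\vec a_{N,f(N)}$ together with a tuple $\vec g\in G(V)^{\ell}$. Reparametrizing the displayed variable so that the new variable $\vec z$ ranges over $\vec g$, and regrouping the array into blocks of $N+1$ consecutive rows, this turns the $\mathrm{TP}_2$-configuration for $\varphi$ into one witnessed by a formula of the shape $\Phi(\vec z,\vec y')\wedge G(\vec z)$, at the cost of a bounded increase of the inconsistency constant; this is the argument of \cite[Section~4]{BeKi}. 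The multivariable version of Proposition~\ref{step2prop} (the corollary following it) then gives that $T$ has $\mathrm{TP}_2$.

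In the \emph{large} case, $\varphi(V,\vec a_{i,j})\not\subseteq\scl(\vec a_{i,j})$ forces the conjuncts that would confine $x$ to $G(V)$, to a $\scl$-coset, or to finitely many values (namely $P_\theta(x)$, any $t_i(x,\vec y)\in G_{\vec\lambda_i}$ in which $x$ occurs, any $t_k(x,\vec y)=0$ in which $x$ occurs) to be absent, so that $\psi(V,\vec a_{i,j})\setminus\varphi(V,\vec a_{i,j})$ is small --- this is Proposition~\ref{uptosmall} together with $\varphi\models\psi$. I claim $\psi(x,\vec y)$ witnesses $\mathrm{TP}_2$ for $T$ with the same, now $\cL$-indiscernible, array. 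Columns survive trivially: $\bigcap_{i<\omega}\psi(V,\vec a_{i,0})\supseteq\bigcap_{i<\omega}\varphi(V,\vec a_{i,0})$, which is infinite, since $\varphi\models\psi$. For rows, any $b\in\bigcap_{j<\omega}\psi(V,\vec a_{0,j})$ satisfies, for each $j$, either $\varphi(b,\vec a_{0,j})$ --- whence $b\in\bigcap_{j<\omega}\varphi(V,\vec a_{0,j})$, a finite set --- or $b\in\psi(V,\vec a_{0,j})\setminus\varphi(V,\vec a_{0,j})\subseteq\scl(\vec a_{0,j})$; in either case $b\in\scl(\{\vec a_{0,j}\}_{j<\omega})$. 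Thus $\bigcap_{j<\omega}\psi(V,\vec a_{0,j})$ is small, and, being type-definable by instances of one $\cL$-formula along an $\cL$-indiscernible sequence, it must be finite: if it were infinite, a finite sub-intersection would be an infinite $\cL$-definable set over finitely many parameters, which by the codensity axiom (D), elimination of $\exists^\infty$ in $T$ and saturation contains points outside the small closure of any small parameter set, contradicting smallness. By Proposition~\ref{step1prop}, $T$ has $\mathrm{TP}_2$.

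In both cases $T$ has $\mathrm{TP}_2$, which completes the argument. I expect the main difficulty to be the small case: extracting the uniform bounds $N,\ell$ and organising the blocked-array reparametrization so that consistency along paths and inconsistency along rows survive the passage to $\Phi\wedge G(\vec z)$ --- this is exactly where the functions $f_{\vec\lambda,i}$ and the $R$-module structure on $G$ must be managed, following but extending \cite{BeKi,DoKi}. The large case is comparatively short, the only new ingredient being the codensity-based observation that a small $\cL$-definable set is finite.
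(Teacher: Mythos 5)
Your overall strategy matches the paper: reduce to a single variable, extract an indiscernible array of $G$-independent tuples, split into a \emph{small} and a \emph{large} case, handle the large case by approximating $\varphi$ by an $\cL$-formula $\psi$ up to small symmetric difference (Proposition~\ref{uptosmall}), and transport the small case to a formula relativised to $G$ so that Proposition~\ref{step2prop} applies. Your large-case argument is essentially identical to the paper's Case~2 and is correct: columns survive because the nontrivial instances of $\varphi$ are contained in $\psi$, and rows stay finite because an infinite row-intersection for $\psi$ would, by codensity and elimination of $\exists^\infty$, produce a point outside the small closure of the finitely many row parameters, which is then forced into the corresponding finite row-intersection for $\varphi$. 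Your observation that the split ``small$(\varphi(V,\vec a_{i,j}))$'' versus ``large$(\varphi(V,\vec a_{i,j}))$'' differs from the paper's split (whether the \emph{column} $\bigwedge_i\varphi(x,\vec a_{i,0})$ has a small realization) is harmless, since your large-case argument works in the paper's Case~1 scenarios that your small case misses.

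The small case, however, as you describe it has a gap. You propose to find uniform $N,\ell$ and coefficients so that every path realization $b$ is $\sum_{i\leq N}\mu_i\vec a_{i,f(i)}+\vec\nu\cdot\vec g$ with $\vec g\in G^{\ell}$, then replace the displayed variable by $\vec z=\vec g$ and block the array, producing a formula $\Phi(\vec z,\vec y')\wedge G(\vec z)$. The problem is that the $G$-tuple $\vec g$ recovering a fixed realization $b$ via $b=\vec\mu\cdot\vec y'+\vec\nu\cdot\vec g$ \emph{depends on the block parameter} $\vec y'$: for the same $b$ but a different block along the path, the recovering tuple changes. So a single $\vec z$ satisfying $\Phi(\vec z,\vec y'_{i'})\wedge G(\vec z)$ for $i'=0$ does not carry over to $i'>0$, and consistency of paths for $\Phi\wedge G(\vec z)$ does not follow from consistency of paths for $\varphi$. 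The paper's Case~1 sidesteps this: it fixes a \emph{single} realization $b$ of the column $\bigwedge_i\varphi(x,\vec a_{i,0})$ lying in $\scl(\cA)$, decomposes it once and for all as $b=\sum\lambda_i c_i+\sum\lambda_j' h_j$ with $\vec c$ a fixed tuple from the first $N$ rows of $\cA$ and $\vec h\in G$, defines $\hat\phi(\vec z,\vec y;\vec c):=\varphi(\sum\lambda_i c_i+\sum\lambda_j' z_j,\vec y)\wedge\bigwedge_j G(z_j)$ with $\vec c$ frozen as parameters, and then shows $\hat\phi$ witnesses TP$_2$ on the tail sub-array $\cA'=\{\vec a_{i,j}:i>N\}$. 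Because $\vec c$ and the coefficients are frozen rather than varying with the block, row inconsistency and path consistency are transferred cleanly. Your small case should be rewritten along these lines; as stated, the ``block reparametrization'' step does not go through.
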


\begin{proof}
Assume $T^{G+}$ has TP$_2$. So there exists some $\mathcal{L}_G$-formula $\varphi(x, \vec{y})$ (where $x$ is a single variable) witnessing TP$_2$ with some indiscernible array $\cA:= \{ \vec{a}_{i, j} \mid i, j<\og \}$ and we may assume each element $\vec {a}_{i,j}$ is $G$-independent. There are two possible cases:

\underline{Case 1}. $\bigwedge_{i<\og}\varphi(x, \vec{a}_{i, 0})$ is realized by some $b\in \scl(\cA)$.

Such $b$ is then in the algebraic closure (in the language $\mathcal{L}$) of some tuples $\vec c=(c_1,\dots,c_n)\in \cA$ and $\vec h=(h_1,\dots,h_k)\in G(M)$. Since in the old language the algebraic closure coincides with the vector space span, we have $b\in \dcl(\vec c,\vec h)$ and there are some coefficients in $\mathbb{F}$ such that $b=\sum_{i=1}^n \lambda_{i} c_i+\sum_{j=1}^k \lambda_{j}'h_j$. 
Let $\hat \phi(z_1,\dots,z_k,\vec y;\vec c)$ be the formula 
$$\varphi(\sum_{i=1}^n \lambda_{i} c_i+\sum_{j=1}^k \lambda_{j}'z_j,\vec y)\wedge \bigwedge_{j=1}^kG(z_j)$$

Choose a finite $N$ such that $\vec c$ is part of the sub-array $\{\vec a_{i,j} | i \leq N, j < \omega\}$ and let $\cA':= \{ \vec{a}_{i, j} \mid j<\og, N<i<\omega \}$.
It is then easy to show that the $\mathcal{L}_G$-formula $\hat \phi(z_1,\dots,z_k, \vec{y};\vec c)$ has TP$_2$ with respect to the array $\cA'$ and the result follows from Proposition \ref{step2prop}.

\underline{Case 2}.  All the realizations of $\bigwedge_{i<\og}\varphi(x, \vec a_{i, 0})$ are in $M\setminus \scl(\cA)$.

By Proposition \ref{uptosmall} and  indiscernibility of the array $\cA$, there exists a single $\mathcal{L}$-formula $\psi(x, \vec y)$ such that, for each $i, j<\og$, $\varphi(x, \vec a_{i, j})\triangle \psi(x, \vec a_{i, j})$ defines an $\vec a_{i, j}$-small set. Since the realizations of the conjunction are not small, every realization of $\bigwedge_{i<\og}\varphi(x, \vec a_{i, 0})$ is also a realization of $\bigwedge_{i<\og}\psi(x, \vec a_{i, 0})$. In particular, $\bigwedge_{i<\og}\psi(x, \vec a_{i, 0})$ has infinitely many realizations.

Moreover, $\bigwedge_{j<\og} \psi(x, \vec a_{0, j})$ has only finitely many realizations. (Otherwise, the co-density condition of the predicate $G$ implies that $\psi(x, \vec a_{0,0})\wedge\psi(x, \vec a_{0,1})$ is realized by some $d\in M\setminus \scl( \vec a_{0,0}\vec a_{0,1})$. But then such $d$ also realizes $\varphi(x, \vec a_{0,0})\wedge\varphi(x, \vec a_{0,1})$, contradiction.)

Hence, $T$ has TP$_2$ by Proposition \ref{step1prop}.
\end{proof}

Now we deal with NTP$_1$ (also called NSOP$_2$). We will now follow the strategy from \cite{DoKi}, emphasizing the main differences that are needed to adapt the arguments to the new setting.
We start with an appropriate notion of tree-indiscernability (see \cite[Def. 3.3]{DoKi}) that will play the role of indiscernible array in the argument for NTP$_2$.

\begin{defn} Given ordinals $\alpha,\beta$ we see the set $S=\alpha^{<\beta}$ as a tree. 
We say that a tree $(\vec a_{\eta})_{\eta \in S}$ of compatible tuples of elements of a model $M$ is \emph{strongly indiscernible} over a set $C \subset  M$, if whenever the ordered tree types satisfy $qftp_{tree}(\eta_0,\ldots,
\eta_{n-1}) = qftp_{tree}(\nu_0,\ldots,\nu_{n-1})$ then $tp(a_{\eta_0},\ldots,a_{\eta_{n-1}}/C) = tp(a_{\nu_0},\ldots,a_{\nu_{n-1}}/C)$ for all $n <\omega$ and all tuples $(\eta_0,\ldots,\eta_{n-1})$, $(\nu_0,\ldots,\nu_{n-1})$ of elements of $S$.
\end{defn}

\begin{fact}[Dobrowolski-H.Kim]\label{fact_SOP2DobrowolskiKim}
A theory $T$ has SOP$_2$ if there is a formula $\phi(\vec x,\vec y)$ and a strongly indiscernible
tree $(a_\eta)_{\eta\in 2^{<\omega}}$, such that
\begin{enumerate}
    \item $\{\phi(\vec x,a_{0^n})\mid n<\omega \}$ has infinitely many realizations;
    \item $\phi(\vec x,a_0)\wedge \phi(\vec x,a_1)$ has finitely many realizations.
\end{enumerate}

If $q(\vec x)$ is a type and if instead of condition (1) we have

\begin{enumerate}
\item[(1')] $\{\phi(\vec x,a_{0^n})\mid n<\omega \}\wedge q(\vec x)$ has infinitely many realizations
\end{enumerate}
then we say the theory has $SOP_2$ inside $q(\vec x)$.
\end{fact}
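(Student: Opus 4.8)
The plan is to extract from the strongly indiscernible tree $(a_\eta)_{\eta\in 2^{<\omega}}$ a genuine $\mathrm{SOP}_2$-pattern: a formula $\psi(\vec x,\vec y)$ and a tree of parameters for which every branch is consistent and every pair of incomparable nodes is inconsistent. Two gaps have to be bridged. First, condition (2) only supplies \emph{finitely many} common realizations for the single incomparable pair $\{0,1\}$ rather than outright inconsistency. Second, (1) and (2) speak only of the branch $0^\omega$ and of $\{0,1\}$, whereas $\mathrm{SOP}_2$ asks for a condition on all branches and all incomparable pairs; bridging this is exactly the job of strong indiscernibility. Below I take $\phi$ to have a single object variable $x$, the tuple case being identical with heavier notation.

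\textbf{Step 1 (upgrading ``finitely many'' to ``inconsistent'').} Let $k$ be the number of realizations of $\phi(x,a_0)\wedge\phi(x,a_1)$; it is finite by (2) and, being given by $\exists^{=k}$, is the same for every pair of parameters of the same tree type by strong indiscernibility. In a tuple $\vec x=(x_0,\dots,x_k)$ of $k+1$ fresh variables set
\[
\psi(\vec x;\vec y)\ :=\ \bigwedge_{i\le k}\phi(x_i,\vec y)\ \wedge\ \bigwedge_{i<j\le k}x_i\ne x_j .
\]
For incomparable $\eta\perp\nu$ the formula $\phi(x,a_\eta)\wedge\phi(x,a_\nu)$ again has exactly $k$ realizations, so $\psi(\vec x,a_\eta)\wedge\psi(\vec x,a_\nu)$, which would require $k+1$ distinct common realizations of $\phi(\cdot,a_\eta)\wedge\phi(\cdot,a_\nu)$, is inconsistent. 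Along $0^\omega$, condition (1) gives infinitely many realizations of $\{\phi(x,a_{0^n})\mid n<\omega\}$; choosing $k+1$ distinct ones $d_0,\dots,d_k$, the tuple $(d_0,\dots,d_k)$ realizes $\{\psi(\vec x,a_{0^n})\mid n<\omega\}$, which is therefore consistent.

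\textbf{Step 2 (propagating by strong indiscernibility).} For any branch $\sigma\in 2^\omega$, the sequence of its initial segments has the same quantifier-free tree type as that of $0^\omega$ (both are $\trianglelefteq$-chains issuing from the root), so by strong indiscernibility $\{\psi(\vec x,a_{\sigma\upharpoonright n})\mid n<\omega\}$ is consistent for \emph{every} branch. Likewise any two incomparable nodes have the same tree type as $\{0,1\}$, so $\psi(\vec x,a_\eta)\wedge\psi(\vec x,a_\nu)$ is inconsistent for \emph{every} incomparable pair. Hence $\psi$ and $(a_\eta)_{\eta\in 2^{<\omega}}$ witness $\mathrm{SOP}_2$; if an $\omega^{<\omega}$-indexed witness is wanted one re-extracts using the modelling property for strongly indiscernible trees. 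The ``inside $q(\vec x)$'' version runs the same way: in Step 1 replace $\{\phi(x,a_{0^n})\mid n<\omega\}$ by $\{\phi(x,a_{0^n})\mid n<\omega\}\cup q(x)$, which by (1') still has infinitely many realizations, so one again extracts $k+1$ distinct common realizations satisfying $q$, and the resulting tree witnesses $\mathrm{SOP}_2$ inside $q(x_0)\wedge\cdots\wedge q(x_k)$.

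\textbf{Main obstacle.} The one delicate point is the tree-combinatorial bookkeeping of Step 2, namely being sure that strong indiscernibility in the chosen tree language genuinely forces the condition checked at $0^\omega$ to hold on all branches and the condition checked at $\{0,1\}$ to hold on all incomparable pairs. This is precisely the content of the tree-indiscernibility reductions of Kim--Kim and Chernikov--Ramsey that Dobrowolski--Kim build on, which is why the statement is cited here as a fact rather than reproved.
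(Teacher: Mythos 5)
The paper does not give a proof of this statement: it is displayed as a \emph{Fact} attributed to Dobrowolski and H.~Kim (see \cite{DoKi}), so there is no in-paper argument to compare your proposal against. Your derivation is correct and is essentially the intended one; the combinatorial point you flag as delicate at the end is handled because strong indiscernibility is taken with respect to the tree language $\{\trianglelefteq,\wedge,<_{lex}\}$, which has no constant for the root and no length or level predicates, so every two $\trianglelefteq$-chains of the same length share a quantifier-free tree type, as do every two lexicographically ordered incomparable pairs, and hence the consistency checked along $0^\omega$ and the cardinality bound checked at $\{0,1\}$ do propagate to all branches and all incomparable pairs exactly as your Step~2 requires.
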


Instead of an argument of burden, in this setting we have.

\begin{fact}[Dobrowolski-H.Kim(Fact 2.7 \cite{DoKi})]\label{fact_SOP2onevariable}
Suppose a theory $T$ has SOP$_2$ inside of some type $q(x_0,..., x_{n-1}) = \bigcup_{i<n} q_i(x_i)$. Then,
for some $i<n$, $T$ has SOP$_2$ inside of $q_i(x_i)$.
\end{fact}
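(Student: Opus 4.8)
The plan is to follow \cite{DoKi} and reduce the multivariable $\mathrm{SOP}_2$-witness to a single-variable one, in the spirit of the reduction of $\mathrm{TP}_2$ to single variables (which in the $\mathrm{NTP}_2$ part rested on submultiplicativity of burden), but carried out by hand on strongly indiscernible trees, since for $\mathrm{SOP}_2$ no such soft invariant is available. By Fact~\ref{fact_SOP2DobrowolskiKim}, fix a formula $\phi(\vec x,\vec y)$ with $\vec x=(x_0,\dots,x_{n-1})$ and a strongly indiscernible tree $(a_\eta)_{\eta\in 2^{<\omega}}$ such that $\{\phi(\vec x,a_{0^k})\mid k<\omega\}\cup q(\vec x)$ is infinitely realized while $\phi(\vec x,a_0)\wedge\phi(\vec x,a_1)$ has at most $N$ realizations, for some fixed $N$.

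I would then freeze all coordinates but one. Since $\{\phi(\vec x,a_{0^k})\mid k<\omega\}\cup q$ has infinitely many realizations, one extracts from them an indiscernible sequence $(\vec b^{(\ell)})_{\ell<\omega}$ over $\{a_\eta\mid\eta\in 2^{<\omega}\}$; by indiscernibility each coordinate sequence $(b^{(\ell)}_i)_\ell$ is either constant or injective, and not all of them are constant. If some coordinate $x_j$ is constant along the sequence, freeze it to its value: the corresponding instance $\phi(\dots,b_j,\dots)$ gives an $(n-1)$-variable $\mathrm{SOP}_2$-configuration inside $\bigcup_{i\neq j}q_i$ — the antichain clause only shrinks upon specializing a variable, and the spine stays infinitely realized because at least one of the other coordinates still varies — and we conclude by induction on $n$. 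Otherwise every coordinate varies; fix $i$, set $\psi_i(x_i,\vec y):=\phi(b^{(0)}_0,\dots,b^{(0)}_{i-1},x_i,b^{(0)}_{i+1},\dots,b^{(0)}_{n-1},\vec y)$, and note that $b^{(0)}_i\models q_i$ realizes the spine of $\psi_i$ and that incomparable pairs still bound the solution set of $\psi_i$ by $N$. What remains is to produce a strongly indiscernible tree \emph{over the frozen tuple} along which the spine of $\psi_i$ stays consistent with $q_i$ and, once that is in place, infinitely realized; then Fact~\ref{fact_SOP2DobrowolskiKim} applied inside $q_i$ finishes the proof.

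The main obstacle --- present in both the inductive step and the base case --- is exactly this relativized extraction. Naively invoking the tree-modeling property over the frozen parameters need not preserve the ``spine consistent with $q$'' clause, since that clause was verified only along one branch and is tied to one particular realization of the spine, whereas the extraction averages over all branch positions. The way around it, following \cite{DoKi}, is to set up the tree coherently from the start --- for instance as a (strongly indiscernible) tree Morley sequence built from a global type, finitely satisfiable over a small model, extending $\{\phi(\vec x,a_{0^k})\mid k<\omega\}\cup q$ --- so that a realization of the spine sits uniformly over every branch and is therefore inherited by the relativized extraction; the antichain-inconsistency clause is first order over the frozen parameters and transfers automatically. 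Granting this standard but delicate construction, the remainder is bookkeeping with indiscernible trees.
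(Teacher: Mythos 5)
The paper itself gives no proof of this statement; it is quoted as a black box from Dobrowolski and H.~Kim (Fact~2.7 of \cite{DoKi}), so your attempt has to stand on its own merits rather than being checked against an internal argument.

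You correctly identify the general shape of the argument (reduce the number of variables by specializing all but one) and, to your credit, you explicitly locate the crux: after freezing $n-1$ coordinates to those of a fixed spine realization $\vec b^{(0)}$, one needs a strongly indiscernible tree over the frozen tuple along whose spine the specialization $\psi_i(x_i,\vec y)$ is infinitely realized inside $q_i$. But the fix you sketch does not close this. Re-extracting a tree that is strongly indiscernible over $\vec b^{(0)}_{\ne i}$ means, by the modeling property, that the new spine reflects the types of \emph{arbitrary} increasing chains of the original tree over $\vec b^{(0)}_{\ne i}$; the consistency of $\psi_i$ with $q_i$ is only known along the original leftmost branch, since $b^{(0)}_i$ was chosen there, and this is not a first-order condition that transfers through the extraction. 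Worse, condition $(1')$ of Fact~\ref{fact_SOP2DobrowolskiKim} demands \emph{infinitely many} realizations inside $q_i$, and after freezing the other coordinates you have exhibited exactly one: the remaining $\vec b^{(\ell)}$ are not realizations of $\psi_i$, because their $j$-th coordinates for $j\ne i$ differ from those of $\vec b^{(0)}$. The proposed remedy (a tree Morley sequence from a global finitely satisfiable type) would give a coherent tree, but it does not change the fact that the spine realization is chosen after the tree and is therefore guaranteed on only one branch, nor does it produce infinitely many realizations of the frozen spine. So the key step is identified but not carried out, and this remains a genuine gap; you would need to spell out the pigeonhole-type argument that exploits the uniform bound on antichain pairs rather than appealing to a re-extraction.
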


\begin{lem}\label{lm_SOP2G}
Let $\phi(\vec x,\vec y)$ be an $\mathcal L _G$-formula. If $\phi(\vec x,\vec y)\wedge G(\vec x)$ has SOP$_2$, then $T$ has SOP$_2$.
\end{lem}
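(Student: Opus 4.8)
The plan is to follow the proof of Proposition~\ref{step2prop} (its $\mathrm{NTP}_2$ counterpart), with Fact~\ref{fact_SOP2onevariable} playing the role of the reduction to one variable and strongly indiscernible trees replacing indiscernible arrays. First I would reduce to a single variable: since every solution of $\phi(\vec x,\vec y)\wedge G(\vec x)$ lies in $G^{|\vec x|}$, the theory $T^{G+}$ has SOP$_2$ inside the type $\bigcup_i\{G(x_i)\}$, so by Fact~\ref{fact_SOP2onevariable} it has SOP$_2$ inside $\{G(x)\}$ for a single variable $x$. This yields an $\mathcal{L}_G$-formula $\phi(x,\vec y)$ and a strongly indiscernible tree $(\vec a_\eta)_{\eta\in 2^{<\omega}}$ satisfying conditions (1) and (2) of Fact~\ref{fact_SOP2DobrowolskiKim} with respect to $\phi(x,\vec y)\wedge G(x)$. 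As in \cite{BeKi,DoKi}, by replacing each $\vec a_\eta$ with an enumeration of $\vect{\vec a_\eta}$ and re-extracting a strongly indiscernible tree (which preserves (1) and (2)), I may further assume that each $\vec a_\eta$ is $G$-independent; I set $\vec h_\eta:=G(\vec a_\eta)$, a subtuple whose position is the same for all $\eta$ by indiscernibility.

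Corollary~\ref{inducedstructG} and strong indiscernibility then produce an $\mathcal{L}$-formula $\psi(x,\vec y)$ and an $\mathcal{L}_{R-mod}$-formula $\theta(x,\vec y)$ with
\[ \phi(x,\vec a_\eta)\wedge G(x)\ \longleftrightarrow\ \psi(x,\vec a_\eta)\wedge P_\theta(x,\vec h_\eta)\wedge G(x)\qquad\text{for all }\eta, \]
and from (1) I read off that both $\{\psi(x,\vec a_{0^n})\mid n<\omega\}$ and $\{P_\theta(x,\vec h_{0^n})\wedge G(x)\mid n<\omega\}$ have infinitely many realisations, the latter inside $G$. The heart of the argument is then two claims. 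For the module side: $(\vec h_\eta)_{\eta\in 2^{<\omega}}$, being a subtuple of a strongly $\mathcal{L}_G$-indiscernible tree contained in $G$, is a strongly indiscernible tree in the $R$-module $G$, and $\mathrm{Th}(G)$ in $\mathcal{L}_{R-mod}$ is a stable (hence NSOP$_2$) theory of modules; so Fact~\ref{fact_SOP2DobrowolskiKim} applied inside $G$ forces $P_\theta(x,\vec h_0)\wedge P_\theta(x,\vec h_1)\wedge G(x)$ to have infinitely many realisations, as otherwise $\theta$ would witness SOP$_2$ in $\mathrm{Th}(G)$. For the $\mathcal{L}$-side: I claim $\psi(x,\vec a_0)\wedge\psi(x,\vec a_1)$ has only finitely many realisations. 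If not, then, working in a monster model, the infinitely many solutions cannot all be algebraic over $B:=\vect{\vec a_0\vec a_1}$ (which is small), so this partial type extends to a non-algebraic $\mathcal{L}$-type $p(x)$ over $B$; picking $a_*\in G$ realising $\{P_\theta(x,\vec h_0),P_\theta(x,\vec h_1)\}\cup\{G(x)\}$ (possible by the first claim) and $q_1(x):=\tp_{R-mod}(a_*/G(B))$, which contains $P_\theta(x,\vec h_0)$ and $P_\theta(x,\vec h_1)$ since $\vec h_0,\vec h_1\in G(B)$, Proposition~\ref{prop_shift} gives infinitely many realisations of $p(x)\cup q_1(x)$ in $G$, each satisfying $\phi(x,\vec a_0)\wedge\phi(x,\vec a_1)\wedge G(x)$ by the displayed equivalence — contradicting (2). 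With both claims in hand, $\psi$, the strongly $\mathcal{L}$-indiscernible tree $(\vec a_\eta)$, and the conditions "$\{\psi(x,\vec a_{0^n})\}$ has infinitely many realisations" and "$\psi(x,\vec a_0)\wedge\psi(x,\vec a_1)$ has finitely many realisations" witness SOP$_2$ for $T$ via Fact~\ref{fact_SOP2DobrowolskiKim}.

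I expect the main obstacle to be the $\mathcal{L}$-side claim together with the recombination inside $G$: in contrast to \cite{BeKi,DoKi}, the definable subsets of $G$ reflect not only the trace of $\mathcal{L}$ but also the $R$-module structure, so the module formula $\theta$ and the $\mathcal{L}$-formula $\psi$ must be controlled separately (stability of modules for $\theta$, via the induced strongly indiscernible tree $(\vec h_\eta)$; non-algebraicity of $\psi$ via saturation) and then glued back together inside $G$ — which is precisely what the shift Proposition~\ref{prop_shift}, powered by density of the $R$-divisible elements (Lemma~\ref{lm_densediv}), is designed to do. A secondary technical point is checking that the extraction of a $G$-independent strongly indiscernible tree does not disturb the SOP$_2$-configuration, which is handled exactly as in \cite{BeKi,DoKi}.
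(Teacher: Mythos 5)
Your proposal is correct and follows essentially the same route as the paper: reduce to a single variable via Fact~\ref{fact_SOP2onevariable}, use Corollary~\ref{inducedstructG} to split the trace formula on $G$ into an $\cL$-part $\psi$ and an $\cL_{R\text{-mod}}$-part $\theta$, kill $\theta$ by stability of modules, and derive the finiteness of $\psi(x,\vec a_0)\wedge\psi(x,\vec a_1)$ by contradiction via Proposition~\ref{prop_shift}. The only difference is cosmetic: you spell out the $G$-independence enlargement of the tree and the setup of the non-algebraic type $p(x)$ and the module type $q_1(x)$ before invoking Proposition~\ref{prop_shift}, steps the paper leaves implicit (the enlargement is stated only in the earlier $\mathrm{TP}_2$ argument, and the paper writes $\theta(x,\vec a_\eta)$ where it means the module formula applied to $G(\vec a_\eta)$).
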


\begin{proof}
By applying Fact~\ref{fact_SOP2onevariable} with the type $q(\vec x) = G(\vec x)$, we may assume that $|\vec x | =1$. By Fact~\ref{fact_SOP2DobrowolskiKim} there is
a strongly indiscernible tree
$(a_\eta)_{\eta\in 2^{<\omega}}$  witnessing SOP$_1$ and the following properties hold:
\begin{enumerate}
    \item $\{\phi(x,a_{0^n})\wedge G(x)\mid n<\omega \}$ has infinitely many realizations;
    \item $\phi(x,a_0)\wedge \phi(x,a_1)\wedge G(x)$ has finitely many realizations.
\end{enumerate}
By Corollary \ref{inducedstructG}, there exists some $\mathcal{L}$-formula $\psi(x, \vec y)$ and an $R$-module formula 
$\theta(x,\vec y)$ such that for all $i, j<\og$,
\[  \varphi(x, \vec a_{\eta})\wedge G(x)\, \leftrightarrow \psi(x, \vec a_{\eta})\wedge \theta(x, \vec a_{\eta})\wedge G(x)
\]
It is clear that $\bigwedge_n \psi(x,a_{0^n})$ and $\bigwedge_n \theta(x,a_{0^n})$ have infinitely many realisations. As $\theta(x,\vec y)$ is stable, it also follows that $\theta(x,a_0)\wedge \theta(x,a_1)$ has infinitely many realisations (otherwise using Fact~\ref{fact_SOP2DobrowolskiKim} we would have that $\theta(x,\vec y)$ witnesses SOP$_2$).

\textbf{Claim}. $\psi(x,a_0)\wedge \psi(x,a_1)$ has finitely many realisations. \\
Assume not, then $\psi(x,a_0)\wedge \psi(x,a_1)$ has infinitely many realisations and by Proposition \ref{prop_shift}, so does $\psi(x,a_0)\wedge \psi(x,a_1)\wedge \theta(x,a_0)\wedge \theta(x,a_1)\wedge G(x)$, a contradiction.

Since $\psi(x,\vec y)$ is an $\mathcal L$-formula, we conclude that $T$ has SOP$_2$.
\end{proof}

\begin{thm}
If $T^{G+}$ has SOP$_2$ then so does $T$.
\end{thm}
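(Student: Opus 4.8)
The plan is to mirror the proof that $T^{G+}$ having TP$_2$ forces $T$ to have TP$_2$, with Lemma~\ref{lm_SOP2G} in the role of Proposition~\ref{step2prop} and Proposition~\ref{uptosmall} in the role of the $\mathcal{L}$-approximation used there. Assume $T^{G+}$ has SOP$_2$. By Fact~\ref{fact_SOP2onevariable}, applied to the trivial type in the variables of a witnessing formula, we may assume the witness $\phi(x,\vec y)\in\mathcal{L}_G$ uses a single variable $x$; and by Fact~\ref{fact_SOP2DobrowolskiKim} (in the form asserting that SOP$_2$ is witnessed by a strongly indiscernible tree) fix a strongly indiscernible $\mathcal{A}=(a_\eta)_{\eta\in 2^{<\omega}}$ with $\{\phi(x,a_{0^n})\mid n<\omega\}$ having infinitely many realizations and $\phi(x,a_0)\wedge\phi(x,a_1)$ only finitely many. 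Then I would split according to whether the spine conjunction $\bigwedge_{n<\omega}\phi(x,a_{0^n})$ has a realization that is small.

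\textbf{Case 1.} Some $b$ realizing $\bigwedge_{n<\omega}\phi(x,a_{0^n})$ lies in $\scl(\mathcal{A})$. Then $b\in\acl(\vec c\,\vec h)$ for a finite $\vec c$ from $\mathcal{A}$ and $\vec h=(h_1,\dots,h_k)$ from $G$, so by $\acl=\spane_{\FF}$ in $\mathcal{L}$ we get $b=\sum_i\lambda_i c_i+\sum_j\lambda_j' h_j$. Pick $N$ so that every node carrying $\vec c$ has length $\le N$, and restrict $\mathcal{A}$ to the cone above $0^{N+1}$, re-indexed by $\nu\mapsto a_{0^{N+1}\frown\nu}$. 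The point to verify is that this subtree stays strongly indiscernible \emph{over $\vec c$} — each node of $\vec c$ is either below $0^{N+1}$ or incomparable with it, so its tree-configuration with a tuple from the cone is constant. With $\hat\phi(z_1,\dots,z_k,\vec y;\vec c):=\phi(\sum_i\lambda_i c_i+\sum_j\lambda_j' z_j,\vec y)\wedge\bigwedge_j G(z_j)$, one checks that $\hat\phi$ has SOP$_2$ along this cone: incomparable-pair inconsistency transfers from $\phi$ by strong indiscernibility over $\vec c$, and $\vec h$ realizes the new spine, with infinitely many realizations recovered by shifting $\vec h$ through $G^{\dv}$ as in Proposition~\ref{prop_shift} (via Lemma~\ref{lm_densediv}). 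Lemma~\ref{lm_SOP2G} applied to $\hat\phi$ then gives that $T$ has SOP$_2$.

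\textbf{Case 2.} Every realization of $\bigwedge_{n<\omega}\phi(x,a_{0^n})$ lies in $M\setminus\scl(\mathcal{A})$. By Proposition~\ref{uptosmall} and strong indiscernibility there is one $\mathcal{L}$-formula $\psi(x,\vec y)$ with $\phi(x,a_\eta)\,\triangle\,\psi(x,a_\eta)$ an $a_\eta$-small set for all $\eta$. Every realization of $\bigwedge_n\phi(x,a_{0^n})$ is then non-small, so avoids each of these small differences and realizes $\bigwedge_n\psi(x,a_{0^n})$; hence $\bigwedge_n\psi(x,a_{0^n})$ has infinitely many realizations. And $\psi(x,a_0)\wedge\psi(x,a_1)$ has only finitely many realizations: otherwise codensity (axiom $(D)$) gives a realization $d\notin\scl(a_0a_1)$, which avoids both small differences and so realizes $\phi(x,a_0)\wedge\phi(x,a_1)$ — impossible, since the finitely many realizations of the latter are algebraic over $a_0a_1$ and hence small. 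As $\mathcal{A}$ is strongly indiscernible already in the reduct to $\mathcal{L}$, Fact~\ref{fact_SOP2DobrowolskiKim} gives that $T$ has SOP$_2$.

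The step I expect to be the main obstacle is the tree bookkeeping in Case~1: unlike the row/column symmetry of an indiscernible array, one must check that passing to a cone keeps strong indiscernibility over the finitely many ``old'' parameters $\vec c$, and that the SOP$_2$-data — in particular that the spine conjunction have \emph{infinitely many} realizations rather than merely be consistent — survives the substitution defining $\hat\phi$; the latter should follow by the $R$-divisible shifting of Proposition~\ref{prop_shift}, or from the observation that the relevant sets are non-small and therefore infinite. Everything else is a routine recombination of Lemma~\ref{lm_SOP2G}, Proposition~\ref{uptosmall}, and axiom $(D)$, parallel to the TP$_2$ argument.
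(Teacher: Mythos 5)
Your proof is correct and essentially mirrors the paper's: the same dichotomy on whether the spine conjunction has a realization in $\scl(S)$, the same reduction to Lemma~\ref{lm_SOP2G} via a formula in $G$-variables restricted to a cone in Case 1, and the same use of Proposition~\ref{uptosmall} together with axiom $(D)$ in Case 2. The two points you flag in Case 1 (that restricting to a cone above a level $>N$ preserves strong indiscernibility over $\vec c$, and that the new spine conjunction in $G$-variables should have infinitely many realizations rather than merely be consistent) are exactly what the paper dismisses as ``easy to check,'' and your single-contradiction version of Case 2 (a non-small $d$ cannot realize the finitary, hence small, conjunction $\varphi(x,\vec a_0)\wedge\varphi(x,\vec a_1)$) is a marginally cleaner route to the same conclusion than the paper's iterated-codensity argument.
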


\begin{proof}
Assume $T^{G+}$ has SOP$_2$. By Fact~\ref{fact_SOP2onevariable}, there is a formula $\varphi( x, \vec{y})$ and a strongly indiscernible tree $S = (a_\eta)_{\eta\in 2^{<\omega}}$ that witnesses it. We may also assume each element $\vec {a}_{\eta}$ of the tree is $G$-independent. There are two possible cases:

\underline{Case 1}. $\bigwedge_{i<\og}\varphi( x, \vec{a}_{0^i})$ is realized by some $b\in \scl(S)$.

Such $ b$ is in the $\mathcal{L}$-definable closure of some tuples $\vec c$ in $S$ and $\vec h$ in $G(M)$, so there are coefficients $(\lambda_i)_i, (\rho_j)_j$ in $\FF$ such that $b=\sum \lambda_i h_i+\sum \rho_j c_j$ for some elements $\{s_j\}_j$ in $S$. Let $n<\omega$ be such that $\vec c$ belongs to the subtree
$(a_\eta )_{\eta\in 2^{<\omega}, |\eta|\leq n}$ and choose $\nu\in 2^{<\omega}$ such that $|\nu|>n$. Let $\psi(x, \vec x', \vec y, \vec z)$ be the formula $\varphi(x,\vec y)\wedge  x =
\sum \lambda_i x_i'+\sum \rho_j z_j$. It is then easy to check that the $\mathcal{L}_G$-formula $\exists x\psi(x,\vec x',\vec y,\vec c)\wedge G(\vec x')$ has SOP$_2$ with respect to the strongly indiscernible tree $(a_\eta c)_{\eta\in 2^{<\omega}, \eta > \nu}$, and the result follows from
Lemma~\ref{lm_SOP2G}.

\underline{Case 2}.  All the realizations of $\bigwedge_{i<\og}\varphi(x, \vec a_{ 0^i})$ are in $M\setminus \scl(S)$.

By Proposition \ref{uptosmall}, there exists some $\mathcal{L}$-formula $\psi(x, \vec y)$ such that, for each $\eta\in 2^{<\omega}$, $\varphi(x, \vec a_{\eta})\triangle \psi( x, \vec a_{\eta})$ defines an $\vec a_{\eta}$-small set. Since all realizations are not small over the tree $S$, this implies that every realization of $\bigwedge_{i<\og}\varphi(x, \vec a_{ 0^i})$ is also a realization of $\bigwedge_{i<\og}\psi(x, \vec a_{ 0^i})$. In particular, $\bigwedge_{i<\og}\psi(x, \vec a_{ 0^i})$ has infinitely many realizations.

We show that $ \psi(x, \vec a_{0})\wedge \psi(x, \vec a_{1})$ has only finitely many realizations. Assume not, then by the codensity property for $G$, there some $d\in M\setminus \scl( \vec a_{0}\vec a_{1})$ realizing $\psi(x, \vec a_{0})\wedge\psi(x, \vec a_{1})$. As $\varphi(x,\vec a_0)\Delta \psi(x,a_0)$ is $a_0$-small, $d$ satisfies $\varphi(x,a_0)$, and similarly, $d$ satisfies $\varphi(x,a_1)$. It follows that $d$ satisfies $\varphi(x,a_0)\wedge \varphi(x,a_1)$. Repeating this process and using again the codensity property, there is an infinite sequence $(\vec d_i:i< \omega)$ of realisations of $\psi(x, \vec a_{0})\wedge\psi(x, \vec a_{1})$ such that $\vec d_i \not \in \scl(a_0 a_1\vec d_{<i})$. Hence $\varphi(x,a_0)\wedge \varphi(x,a_1)$ has infinitely many realisation, a contradiction. 

It follows that the $\mathcal{L}$-formula $\psi(x, \vec y)$ witnesses that $T$ has SOP$_2$.
\end{proof}

In particular, since simple theories are those that have at the same time NTP$_2$ and NSOP$_2$ we get the following Corollary.

\begin{cor}\label{preservesimple}
Assume $T$ is simple, then so is $T^{G+}$.
\end{cor}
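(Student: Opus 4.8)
The plan is to read this off directly from the two transfer theorems established just above, together with the standard characterization of simplicity via tree properties. Recall that a complete theory is simple if and only if it has no tree property, that the tree property splits as TP $\iff$ TP$_1$ or TP$_2$, and that TP$_1$ coincides with SOP$_2$; consequently a theory is simple exactly when it is simultaneously NTP$_2$ and NSOP$_2$. This is precisely the equivalence the authors invoke in the sentence preceding the corollary.

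First I would observe that if $T$ is simple, then in particular $T$ is NTP$_2$ and NSOP$_2$. Applying the contrapositive of the theorem asserting that TP$_2$ descends from $T^{G+}$ to $T$, we conclude that $T^{G+}$ is NTP$_2$; applying the contrapositive of the theorem asserting that SOP$_2$ descends from $T^{G+}$ to $T$, we conclude that $T^{G+}$ is NSOP$_2$. Combining these two facts with the characterization above gives that $T^{G+}$ is simple, which is the assertion of the corollary.

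I do not anticipate any genuine obstacle here: the corollary is a purely formal consequence of the two preceding transfer theorems and the classical equivalence ``simple $\iff$ NTP$_2$ and NSOP$_2$''. The only point that deserves a remark is that both transfer theorems rest on the same quantifier elimination for $T^{G+}$ (Theorem \ref{thm_QE}) and on the reduction of $\cL_G$-formulas to bounded formulas, so that no additional analysis specific to the combined statement is needed; one simply chains the two implications.
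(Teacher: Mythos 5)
Your proposal is correct and is essentially identical to the paper's reasoning: the authors explicitly invoke the characterization ``simple $\iff$ NTP$_2$ and NSOP$_2$'' in the sentence preceding the corollary and then chain the contrapositives of the two preceding transfer theorems exactly as you describe. No gap, and no meaningful deviation from the paper's argument.
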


Now we deal with NSOP$_1$. We will use the following characterization presented in the work by Ramsey \cite{Ra}:

\begin{defn} Let $T$ be a complete theory and let $M\models T$ be sufficiently saturated. We say $T$ has SOP$_1$ if there is a formula $\varphi(\vec x;\vec y)$, possibly with parameters in a set $C$, and array
$(\vec c_{i,j})_{i<\omega,j<2}$ so that
\begin{enumerate}
    \item[(a)] $\vec c_{i,0} \equiv_{C\vec c_{<i}} \vec c_{i,1}$ for all $i < \omega$.
\item[(b)] $\{\varphi(x; \vec c_{i,0}) : i < \omega\}$ is consistent.
\item[(c)] $\{\varphi(x; \vec c_{i,1}) : i < \omega\}$ is 2-inconsistent.
\end{enumerate}
\end{defn}

By Lemma 2.5 and Theorem 2.7 \cite{Ra} it suffices to check the definition above for formulas $\varphi(x;\vec y)$ where $x$ is a single variable and we may choose an array $(\vec c_{i,j})_{i<\omega,j<2}$ that also satisfies

\begin{enumerate}
    \item[(d)] $(\vec c_i)_{i<\omega}$ is an indiscernible sequence.
\item[(e)] $(\vec c_{k,0})_{k\geq i}$ is $\vec c_{<i}c_{i,1}$-indiscernible.
\end{enumerate}
Whenever $(\vec c_{i,j})_{i<\omega,j<2}$ satisfies these extra conditions, we say the array
$(\vec c_{i,j})_{i<\omega,j<2}$ is an \emph{indiscernible array}.

We will need the following modified version of the property that is more suitable to dense pairs. The next result is due to Nicholas Ramsey: 

\begin{prop}[Ramsey]\label{nsop1uptofinite}
A theory $T$ has SOP$_1$ if and only if there exists some set $C$, a formula $\varphi(x, \vec{y})$ (where $x$ is a single variable) and an indiscernible array $(\vec c_{i,j})_{i<\omega,j<2}$ such that
\begin{enumerate}
    \item[(a)] $\vec c_{i,0} \equiv_{C\vec c<i} \vec c_{i,1}$ for all $i < \omega$;
\item[(b)] $\{\varphi(x; \vec c_{i,0}) : i < \omega\}$ is consistent and non-algebraic;
\item[(c)] Whenever $i<j$, $\varphi(x; \vec c_{i,1})\wedge \varphi(x; \vec c_{j,1})$ is finite.
\end{enumerate}
\end{prop}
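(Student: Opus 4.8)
The plan is to establish the two implications separately: the direction from the displayed configuration back to $\mathrm{SOP}_1$ is a short counting argument, whereas the converse — producing that configuration from an arbitrary $\mathrm{SOP}_1$ witness — requires upgrading a consistent path to a non-algebraic one and is where the real work lies. So I would split the proof into an "if" part and an "only if" part, doing the easy part first.

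\textbf{The "if" direction.} Suppose we are given $C$, a single-variable formula $\varphi(x,\vec y)$ and an indiscernible array $(\vec c_{i,j})_{i<\omega,j<2}$ satisfying (a), (b), (c). Since $(\vec c_{i,j})$ is an indiscernible array, all pairs $(\vec c_{i,1},\vec c_{j,1})$ with $i<j$ have the same type, so the (finite, by (c)) cardinality of the set defined by $\varphi(x;\vec c_{i,1})\wedge\varphi(x;\vec c_{j,1})$ is a single number $k$, independent of $i<j$. As the path in (b) is non-algebraic it has $k+1$ pairwise distinct realizations $a_0,\dots,a_k$. Let $\vec x=(x_0,\dots,x_k)$ and put
\[
\Phi(\vec x;\vec y)\ :=\ \bigwedge_{\ell=0}^{k}\varphi(x_\ell;\vec y)\ \wedge\ \bigwedge_{0\le \ell<\ell'\le k}x_\ell\neq x_{\ell'}.
\]
Then $\{\Phi(\vec x;\vec c_{i,0}):i<\omega\}$ is consistent, witnessed by $(a_0,\dots,a_k)$; and $\{\Phi(\vec x;\vec c_{i,1}):i<\omega\}$ is $2$-inconsistent, since a common realization of $\Phi(\vec x;\vec c_{i,1})$ and $\Phi(\vec x;\vec c_{j,1})$ for $i<j$ would exhibit $k+1$ pairwise distinct elements in $\varphi(x;\vec c_{i,1})\wedge\varphi(x;\vec c_{j,1})$, contradicting the choice of $k$. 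Condition (a) is untouched because it refers only to the array. Hence $\Phi$ together with the array $(\vec c_{i,j})$ witnesses $\mathrm{SOP}_1$ in the usual sense (the defining formula is allowed to have a tuple of object variables), so $T$ has $\mathrm{SOP}_1$.

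\textbf{The "only if" direction, and the main obstacle.} Starting from an arbitrary $\mathrm{SOP}_1$ configuration, by Lemma 2.5 and Theorem 2.7 of \cite{Ra} I would reduce to a single-variable formula $\varphi(x,\vec y)$ together with an indiscernible array $(\vec c_{i,j})_{i<\omega,j<2}$ satisfying (a), with $\{\varphi(x;\vec c_{i,0}):i<\omega\}$ consistent and $\{\varphi(x;\vec c_{i,1}):i<\omega\}$ $2$-inconsistent. Condition (c) is then automatic: $2$-inconsistency makes $\varphi(x;\vec c_{i,1})\wedge\varphi(x;\vec c_{j,1})$ outright inconsistent, hence finite. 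The one remaining task — and the step I expect to be the main obstacle — is to arrange that the consistent path in (b) is non-algebraic while keeping a single variable and the $2$-inconsistency of the second column. Two naive fixes fail: padding $x$ with a dummy variable to inflate the path and then re-running the single-variable reduction only forces the reduction back onto the original coordinate and returns the original (algebraic) path; and ``blocking'' the array to pass to a formula whose path is an intersection of windows again only shrinks the path and, worse, destroys (a). The workable route is to produce the witnessing element non-algebraically from the outset, working with the independence-theoretic content of $\mathrm{SOP}_1$ (Kim-dividing versus Kim-forking over a model, following \cite{Ra}) rather than patching a fixed witness after the fact; this is presumably the ingredient for which the statement is credited separately to Ramsey.
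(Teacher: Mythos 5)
Your ``if'' direction is correct and is essentially identical to the paper's argument for the converse implication: one reads off the constant $N$ from indiscernibility of the array, passes to the formula $\bigwedge_{\ell\le N}\varphi(x_\ell;\vec y)\wedge\bigwedge_{\ell<\ell'}x_\ell\neq x_{\ell'}$, uses non-algebraicity of the path for consistency of the first column, and the counting bound for $2$-inconsistency of the second. That part is fine.

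The ``only if'' direction, however, has a genuine gap, and moreover you have misdiagnosed where it lies. You correctly observe that Ramsey's single-variable reduction [Ra, Lemma 2.5 and Theorem 2.7] immediately gives (a), a consistent path, and a $2$-inconsistent second column (hence (c)). You then declare that upgrading ``consistent'' to ``consistent and non-algebraic'' in (b) is ``the main obstacle'' and that it must be attacked via the Kim-dividing machinery, without carrying this out. In fact no modification of the witness is needed: non-algebraicity of the consistent path is \emph{automatic} from the data you already have. Suppose $\bigwedge_{i<\omega}\varphi(x;\vec c_{i,0})$ had exactly $m\geq 1$ realizations $a_1,\dots,a_m$. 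By compactness fix $n$ with $\bigwedge_{i<n}\varphi(x;\vec c_{i,0})$ having exactly these $m$ realizations. Using the $\exists^{=m}$-formula $\theta_k(\vec y):=\exists^{=m}x\,\bigl(\bigwedge_{i<k}\varphi(x;\vec c_{i,0})\wedge\bigwedge_{n\leq i<k}\varphi(x;\vec c_{i,1})\wedge\varphi(x;\vec y)\bigr)$, whose parameters lie in $C\vec c_{<k}$, and the equivalence $\vec c_{k,0}\equiv_{C\vec c_{<k}}\vec c_{k,1}$ from (a), one shows by induction on $k\geq n$ that every $a_j$ satisfies $\varphi(x;\vec c_{k,1})$. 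Then $a_1$ realizes $\varphi(x;\vec c_{n,1})\wedge\varphi(x;\vec c_{n+1,1})$, contradicting $2$-inconsistency of the second column. So the path is non-algebraic, and (b) holds. This elementary counting argument is what completes the forward direction; your proposed detour through the independence-theoretic content of $\mathrm{SOP}_1$ is neither executed nor, in fact, necessary.
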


\begin{proof}
If $T$ has SOP$_1$, then by \cite[Theorem 2.7]{Ra}, there is a formula in one variable witnessing SOP$_1$, in particular, it satisfies conditions $(a),(b)$ and $(c)$. Conversely assume that $C, (\vec c_{i,j})_{i<\omega, j<2}$ and $\phi(x, \vec y)$ are given. Since the array is indiscernible, there is an $N$ such that for any $i<j$ the set 
$\varphi(x; \vec c_{i,1})\wedge \varphi(x; \vec c_{j,1})$ has cardinality $N$. Consider the formula 
\[\psi(\vec x,\vec y) = \psi(x_0,...,x_N,\vec y) = \bigwedge_{0\leq i\leq N}\phi(x_i,\vec y)\wedge \bigwedge_{0\leq i<j\leq N} x_i\neq x_j\]
Using $(b)$, $\set{\psi(\vec x, \vec c_{i,0}) : i<\omega}$ is still consistent. However if $\psi(\vec x, \vec c_{i,1})\wedge \psi(\vec x,\vec c_{j,1})$ were consistent, there would be more than $N$ different realisations of $\varphi(x, \vec c_{i,1})\wedge \varphi(x,\vec c_{j,1})$, so $\psi(\vec x, \vec c_{i,1})\wedge \psi(\vec x,\vec c_{j,1})$ is inconsistent, and $T$ has SOP$_1$.
\end{proof}

We follow the same strategy that as we did earlier in this section, first we check what happens for formulas
in $G(x)$ and then we extend the results to the general case.

\begin{prop}\label{prop_nsop1reductiontoG}
Assume there exists some $\mathcal{L}_G$-formula $\varphi(x, \vec{y})$ (where $x$ is a single variable) such that  $\varphi(x, \vec{y})\wedge G(x)$ witnesses SOP$_1$. Then $T$ has SOP$_1$.
\end{prop}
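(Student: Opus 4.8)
The plan is to mirror the arguments used for $NTP_2$ in Proposition~\ref{step2prop} and for $SOP_2$ in Lemma~\ref{lm_SOP2G}, with Proposition~\ref{nsop1uptofinite} playing the role of the ``up to finite'' reduction. By Proposition~\ref{nsop1uptofinite} I may assume $\varphi(x,\vec y)\wedge G(x)$ witnesses $SOP_1$ via an indiscernible array $(\vec c_{i,j})_{i<\og,j<2}$ satisfying conditions (a), (b), (c) there; and, by the standard manoeuvre of adjoining to each $\vec c_{i,j}$ its $G$-basis $\vect{\vec c_{i,j}}\cap G(V)\setminus\spane_{\hat R}(G(\vec c_{i,j}))$ and re-extracting an indiscernible array (compare the corresponding step in \cite[Section 4]{BeKi} and in \cite{DoKi}), I may further assume each $\vec c_{i,j}$ is $G$-independent, while keeping (a), (b), (c). Writing $\vec h_{i,j}=G(\vec c_{i,j})$, Corollary~\ref{inducedstructG} then supplies a single $\cL$-formula $\psi(x,\vec y)$ and a single $\cL_{R-mod}$-formula $\theta(x,\vec y)$ with $\varphi(x,\vec c_{i,j})\wedge G(x)\leftrightarrow\psi(x,\vec c_{i,j})\wedge P_\theta(x,\vec h_{i,j})\wedge G(x)$ for all $i,j$.

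First I would dispatch the $R$-module part. The realizations of $\{\varphi(x,\vec c_{i,0})\wedge G(x):i<\og\}$ form an infinite set lying inside the solution sets of $\{\psi(x,\vec c_{i,0}):i<\og\}$ and of $\{P_\theta(x,\vec h_{i,0}):i<\og\}$, so condition (b) descends to both; in particular $\{\theta(x,\vec h_{i,0}):i<\og\}$ is consistent and non-algebraic in the $R$-module $G$, and the reduct array $(\vec h_{i,j})$ inherits (a) and the indiscernible-array conditions. Since the theory of $R$-modules is stable, hence $NSOP_1$, Proposition~\ref{nsop1uptofinite} applied inside $G$ rules out condition (c) for $\theta$: for some --- hence, by indiscernibility, every --- $i<j$ the set $\theta(x,\vec h_{i,1})\wedge\theta(x,\vec h_{j,1})$ has infinitely many solutions in $G$, i.e.\ $P_\theta(x,\vec h_{i,1})\wedge P_\theta(x,\vec h_{j,1})$ is consistent in $(V,G)$.

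Next I would verify that $\psi$ witnesses $SOP_1$ for $T$ along $(\vec c_{i,j})$: conditions (a) and (b) are already in hand, so I only need (c), namely that $\psi(x,\vec c_{i,1})\wedge\psi(x,\vec c_{j,1})$ is finite for $i<j$. Assuming it is infinite for some (hence every) $i<j$, put $B=\vect{\vec c_{i,1}\vec c_{j,1}}$, choose $a\in G$ realizing $P_\theta(x,\vec h_{i,1})\wedge P_\theta(x,\vec h_{j,1})$, let $q_1(x)=\tp_{R-mod}(a/G(B))$, and let $p(x)$ be a non-algebraic $\cL$-type over $B$ containing $\psi(x,\vec c_{i,1})\wedge\psi(x,\vec c_{j,1})$. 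By Proposition~\ref{prop_shift} (in the single variable $x$), $p(x)\cup q_1(x)$ has infinitely many realizations, and each realization lies in $G$ and satisfies $\psi(x,\vec c_{i,1})\wedge\psi(x,\vec c_{j,1})\wedge P_\theta(x,\vec h_{i,1})\wedge P_\theta(x,\vec h_{j,1})$, hence --- by the equivalence from Corollary~\ref{inducedstructG} --- satisfies $\varphi(x,\vec c_{i,1})\wedge\varphi(x,\vec c_{j,1})\wedge G(x)$. This contradicts condition (c) for $\varphi\wedge G$; so $\psi(x,\vec c_{i,1})\wedge\psi(x,\vec c_{j,1})$ is finite for all $i<j$, and Proposition~\ref{nsop1uptofinite} concludes that $T$ has $SOP_1$.

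I expect the main obstacle to be this last amalgamation step, i.e.\ checking that Proposition~\ref{prop_shift} really applies: one must ensure that $q_1(x)$ is an $R$-module type over $G(B)$ with both $\vec h_{i,1}$ and $\vec h_{j,1}$ available there, that $p(x)$ is a genuinely non-algebraic $\cL$-type over $B$ --- which is precisely where the hypothesis ``$\psi(x,\vec c_{i,1})\wedge\psi(x,\vec c_{j,1})$ infinite'' is used --- and that the equivalence of Corollary~\ref{inducedstructG} is simultaneously available for the indices $i$ and $j$ because both tuples were made $G$-independent. A more routine point is confirming that the $G$-basis enlargement of the array preserves the indiscernible-array conditions (a), (b), (c) and that $\psi,\theta$ can be chosen uniformly in the array index.
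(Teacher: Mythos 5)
Your proof is correct and takes essentially the same route as the paper's: reduce to a $G$-independent indiscernible array, split $\varphi\wedge G$ into $\psi\wedge P_\theta$ via Corollary~\ref{inducedstructG}, kill the $\theta$-part using stability of $R$-modules, and use Proposition~\ref{prop_shift} to show $\psi$ inherits the up-to-finite $2$-inconsistency on the second column. Your write-up is in fact slightly more careful than the paper's in two respects: you keep track of the module parameters as $P_\theta(x,\vec h_{i,j})$ with $\vec h_{i,j}=G(\vec c_{i,j})$ rather than $\theta(x,\vec c_{i,j})$, and your Claim about $\psi$ correctly uses $\vec c_{i,1},\vec c_{j,1}$ where the paper's ``Claim 2'' has a typo with $\vec c_{i,0},\vec c_{j,0}$.
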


\begin{proof}
Assume that there exists such an $\mathcal{L}_G$-formula $\varphi(x, \vec y)$ that witnesses SOP$_1$. We may assume that the sequence $(\vec c_i)_{i<\omega}$  witnessesing SOP$_1$ is an indiscernible array and enlarging the tuples if necessary we may assume that each $\vec c_{i, j}$ is $G$-independent. 
Using Corollary \ref{inducedstructG}, there exists some $\mathcal{L}$-formula $\psi(x, \vec y)$ and a $\mathcal{L}_{R-mod}$-formula 
$\theta(x,\vec y)$ such that for all $i, j<\og$,
\[  \varphi(x, \vec a_{i, j})\wedge G(x)\, \leftrightarrow \psi(x, \vec a_{i, j})\wedge \theta(x, \vec a_{i, j})\wedge G(x)
\]

Since $\bigwedge_{i<\og} \varphi(x, \vec c_{i, 0})\wedge G(x)$  is consistent and non-algebraic, clearly the collection of $\mathcal{L}$-formulas $\{ \psi(x, \vec c_{i, 0}): i<\omega\}$ is consistent and non-algebraic. Similarly, $\{ \theta(x, \vec c_{i, 0}): i<\omega\}$ is consistent and non-algebraic.
Also $\vec c_{i,0} \equiv^{\cL}_{C\vec c <i} \vec c_{i,1}$ for all $i < \omega$. We just need to show the almost $2$-inconsistency of the family
$\{\psi(x; \vec c_{i,1}) : i < \omega\}$.

\textbf{Claim 1.} $\theta(x, \vec c_{i, 1})\wedge \theta(x, \vec c_{j, 1})$ has infinitely many realizations whenever
$i< j$.

Otherwise whenever $i< j$ we have that $\theta(x, \vec c_{i, 1})\wedge \theta(x, \vec c_{j, 1})$ has finitely many realizations and by Proposition \ref{nsop1uptofinite} the $R$-module formula $\theta(x,\vec y)$ has 
SOP$_1$. Since the theory of $R$-modules is stable, we obtain a contradiction.

Let us now analyze the formula $\psi(x,\vec y)$. 

\textbf{Claim 2.} $\psi(x, \vec c_{i, 0})\wedge \psi(x, \vec c_{j, 0})$ has  finitely many realizations whenever
$i<j$. 

 Assume not, then $\psi(x, \vec c_{i, 0})\wedge \psi(x, \vec c_{j, 0})$ has infinitely many realisations and by Proposition \ref{prop_shift}, so does $\psi(x, \vec c_{i, 0})\wedge \psi(x, \vec c_{j, 0})\wedge \theta(x, \vec c_{i, 0})\wedge \theta(x, \vec c_{j, 0})\wedge G(x)$, a contradiction.

Now apply again Proposition \ref{nsop1uptofinite} to the formula $\psi(x,\vec y)$ to get the desired result.
\end{proof}

\begin{cor}\label{prop_nsop1tupleinG}
Assume there exists some $\mathcal{L}_G$-formula $\varphi(\vec x, \vec{y})$ (where $\vec x$ is a tuple) such that  $\varphi(\vec x; \vec{y})\wedge G(\vec x)$ witnesses SOP$_1$. Then $T$ has SOP$_1$.
\end{cor}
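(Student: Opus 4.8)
The plan is to reduce the statement to the single-variable case already established in Proposition~\ref{prop_nsop1reductiontoG}. Writing $\vec x=(x_1,\dots,x_n)$, the partial type $G(\vec x)$ appearing in the hypothesis is the union $\bigcup_{k\le n}G(x_k)$ of single-variable partial types, so the natural tool is the SOP$_1$-analogue of Fact~\ref{fact_SOP2onevariable}: if some formula witnesses SOP$_1$ inside a partial type of the form $\bigcup_{k\le n}q_k(x_k)$, then some formula witnesses SOP$_1$ inside a single $q_k(x_k)$. This is the ``inside a partial type'' refinement of the one-variable reduction for SOP$_1$, and I expect it to follow from \cite[Lemma 2.5, Theorem 2.7]{Ra} by the same argument that yields Fact~\ref{fact_SOP2onevariable} from its SOP$_2$ counterpart. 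Applying it with $q_k(x_k)=G(x_k)$ produces a single coordinate $x_k$ and an $\cL_G$-formula $\varphi'$ in the variable $x_k$ such that $\varphi'(x_k,\vec y)\wedge G(x_k)$ witnesses SOP$_1$ --- note that the reduced array is automatically still ``tested inside $G$'', since its tested realizations are coordinates of tuples of $G^{n}$. Proposition~\ref{prop_nsop1reductiontoG} then finishes the argument.

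If one prefers to avoid citing a one-variable reduction and argue directly, the alternative plan is to re-run the proof of Proposition~\ref{prop_nsop1reductiontoG} with $\vec x$ a tuple throughout. First I would record the tuple version of Proposition~\ref{nsop1uptofinite}, whose proof is unchanged once ``$x_i\neq x_j$'' is read as ``the tuples differ in some coordinate''. Then, fixing an indiscernible array $(\vec c_{i,j})_{i<\omega,\,j<2}$ witnessing that $\varphi(\vec x,\vec y)\wedge G(\vec x)$ has SOP$_1$ and enlarging it (as in the proof of Proposition~\ref{prop_nsop1reductiontoG}) so that each $\vec c_{i,j}$ is $G$-independent, Corollary~\ref{inducedstructG} supplies an $\cL$-formula $\psi(\vec x,\vec y)$ and an $\cL_{R-mod}$-formula $\theta(\vec x,\vec y)$ with $\varphi(\vec x,\vec c_{i,j})\wedge G(\vec x)\leftrightarrow\psi(\vec x,\vec c_{i,j})\wedge P_\theta(\vec x,G(\vec c_{i,j}))\wedge G(\vec x)$; the consistency and non-algebraicity of the ``$0$''-column and condition (a) descend to $\psi$ and $P_\theta$. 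Stability (hence NSOP$_1$) of the theory of $R$-modules forces $P_\theta(\vec x,G(\vec c_{i,1}))\wedge P_\theta(\vec x,G(\vec c_{j,1}))$ to have infinitely many realizations in $G$ whenever $i<j$; and then, exactly as in Claim~2 of Proposition~\ref{prop_nsop1reductiontoG} but using the tuple shift lemma Corollary~\ref{cor_shift} instead of Proposition~\ref{prop_shift}, one shows $\psi(\vec x,\vec c_{i,1})\wedge\psi(\vec x,\vec c_{j,1})$ has only finitely many realizations for $i<j$ --- otherwise combining it with the module conditions above would produce infinitely many realizations of $\varphi(\vec x,\vec c_{i,1})\wedge\varphi(\vec x,\vec c_{j,1})\wedge G(\vec x)$, contradicting condition (c) of Proposition~\ref{nsop1uptofinite} for $\varphi\wedge G$. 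The tuple version of Proposition~\ref{nsop1uptofinite} applied to $\psi$ then gives SOP$_1$ for $T$.

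The hard part, in either route, is making the shift/extension mechanism work with tuples rather than a single variable. In the direct route this surfaces in the appeal to Corollary~\ref{cor_shift}, which is stated for $\cL$-types forcing $\FF$-independence of $\vec x$ over the base and for sets of boolean combinations of $G_{\vec\lambda}$-conditions, whereas here $\psi(\vec x,\vec c_{i,1})\wedge\psi(\vec x,\vec c_{j,1})$ need not force $\FF$-independence and the relevant constraint is given as a module formula $P_\theta$; reconciling these --- translating the module conditions via the Mordell--Lang property (Lemma~\ref{lm_ML}) and Lemma~\ref{cor_QEmod}, and using $G$-independence of the array so that coordinates of realizations in $G$ genuinely behave independently --- is the delicate point. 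In the first route, by contrast, everything reduces to confirming that the SOP$_1$ one-variable reduction respects the requirement that all tested realizations lie in $G$, which it does automatically.
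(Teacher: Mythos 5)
Your Route 1 is essentially the paper's proof: the paper proves the corollary by citing ``an easy modification of the proof of Theorem 2.7 in [Ra]'' to obtain a single coordinate $x_i$ and a parameter tuple $\vec b$ (coming from a realization of the consistent path, hence automatically in $G$) such that $\varphi(x_i,\vec b;\vec y)\wedge G(x_i,\vec b)$ has SOP$_1$, and then invokes Proposition~\ref{prop_nsop1reductiontoG}. Your ``SOP$_1$ inside $\bigcup_k q_k(x_k)$'' phrasing is a re-packaging of the same modification of Ramsey's one-variable reduction, and your observation that the tested realizations stay in $G$ is exactly the point the paper is implicitly relying on. Your Route 2 (re-running Proposition~\ref{prop_nsop1reductiontoG} directly for tuples) is a genuinely different alternative that the paper does not take; as you note yourself, it runs into the mismatch between the hypotheses of Corollary~\ref{cor_shift} (which requires the $\cL$-type to force $\FF$-independence of $\vec x$ and constraints phrased via $G_{\vec\lambda}$) and what is actually available in the SOP$_1$ array argument, so it would need the additional reconciliation you describe; the reduction to one variable avoids this entirely, which is presumably why the paper chose it.
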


\begin{proof}
An easy modification of the proof of Theorem 2.7 in \cite{Ra} shows that for some single variable $x_i$ in the tuple $\vec x$ and an appropriate tuple $\vec b$, the formula $\varphi( x_i,\vec b; \vec{y})\wedge G(x_i,\vec b)$ has SOP$_1$. Now apply Proposition \ref{prop_nsop1reductiontoG} to get the desired result.
\end{proof}

With the previous results we are ready to show the last theorem of this section:

\begin{theo}
If $T^{G+}$ has SOP$_1$ then so does $T$.
\end{theo}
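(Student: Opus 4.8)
The plan is to follow the same two-case scheme as the preceding theorems on TP$_2$ and SOP$_2$, now using the SOP$_1$-specific machinery: the one-variable reduction of \cite{Ra} together with Proposition~\ref{nsop1uptofinite}, Corollary~\ref{inducedstructG} for the induced structure on $G$, and Corollary~\ref{prop_nsop1tupleinG} for the final reduction to a formula over $G$. So suppose $T^{G+}$ has SOP$_1$. By \cite[Theorem~2.7]{Ra} and Proposition~\ref{nsop1uptofinite} we may fix a set $C$, a formula $\varphi(x,\vec y)$ with $x$ a single variable, and an indiscernible array $(\vec c_{i,j})_{i<\omega,\,j<2}$ satisfying conditions $(a)$, $(b)$, $(c)$ of Proposition~\ref{nsop1uptofinite}; enlarging the tuples, we may also assume each $\vec c_{i,j}$ is $G$-independent. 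Then one splits according to whether the consistent, non-algebraic partial type $\{\varphi(x,\vec c_{i,0})\mid i<\omega\}$ is realised inside the small closure of the array.

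\emph{Case 1: some realisation $b$ lies in $\scl$ of the array.} Then $b\in\spane_{\FF}(\vec c\cup G(M))$ for a finite subtuple $\vec c$ of the array, and by Lemma~\ref{lm_termscases}(2) we may write $b=\vec\alpha\cdot\vec h+c$ with $\vec\alpha$ an $\hat R$-independent tuple, $c\in\spane_{\FF}(\vect{\vec c})$ and $\vec h=(f_{\vec\alpha,1}(b-c),\dots,f_{\vec\alpha,n}(b-c))\in G^n$. Since the original type is non-algebraic, $b\notin\dcl(\vec c)$, so $\vec h$ is nontrivial and we may fix a coordinate with $\alpha_{i_0}\neq 0$. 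Pick $N$ so that $\vec c$ is contained in the first $N+1$ rows, adjoin $\vec c$ and $c$ to the base, and pass to the truncated array $(\vec c_{i,j})_{N<i<\omega}$, which is again an indiscernible array in the sense of Proposition~\ref{nsop1uptofinite} over this enlarged base. Put $\hat\varphi(\vec x',\vec y):=\varphi(\vec\alpha\cdot\vec x'+c,\vec y)$. I would then check that $\hat\varphi(\vec x',\vec y)\wedge G(\vec x')$ witnesses SOP$_1$ with respect to the truncated array: clause $(a)$ is inherited from the original array; for clause $(c)$, axiom~$(B)$ makes $\vec g\mapsto\vec\alpha\cdot\vec g+c$ injective on $G^n$, so the preimage in $G^n$ of the finite set defined by $\varphi(x,\vec c_{i,1})\wedge\varphi(x,\vec c_{j,1})$ is finite whenever $i<j$; and for clause $(b)$, writing $X$ for the infinite solution set of $\{\varphi(x,\vec c_{i,0})\mid N<i<\omega\}$, the tuple $\vec h$ witnesses consistency, while the $\cL$-definable set $\alpha_{i_0}^{-1}(X-b)$ is infinite, so Lemma~\ref{lm_densediv} yields infinitely many $d\in G^{\dv}$ lying in it, hence infinitely many $G^n$-tuples (obtained from $\vec h$ by adding $d$ to the $i_0$-th coordinate) realising $\bigwedge_{N<i<\omega}\hat\varphi(\vec x',\vec c_{i,0})$. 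Corollary~\ref{prop_nsop1tupleinG} then gives that $T$ has SOP$_1$.

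\emph{Case 2: every realisation of $\{\varphi(x,\vec c_{i,0})\mid i<\omega\}$ lies in $M\setminus\scl$ of the array.} By Proposition~\ref{uptosmall} and indiscernibility of the array, fix a single $\cL$-formula $\psi(x,\vec y)$ such that $\varphi(x,\vec c_{i,j})\triangle\psi(x,\vec c_{i,j})$ defines a $\vec c_{i,j}$-small set for all $i,j$. As no realisation of the consistent conjunction is small, every realisation of $\bigwedge_{i<\omega}\varphi(x,\vec c_{i,0})$ realises $\bigwedge_{i<\omega}\psi(x,\vec c_{i,0})$, so the latter is consistent and non-algebraic, and clause $(a)$ for $\psi$ is immediate. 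For clause $(c)$: were $\psi(x,\vec c_{i,1})\wedge\psi(x,\vec c_{j,1})$ infinite for some $i<j$, then exactly as in the SOP$_2$ theorem an iterated application of the codensity scheme~$(D)$ would produce infinitely many realisations $d_m$ with $d_m\notin\scl(\vec c_{i,1}\vec c_{j,1}\vec d_{<m})$; each $d_m$, being non-small, also realises $\varphi(x,\vec c_{i,1})\wedge\varphi(x,\vec c_{j,1})$, contradicting clause $(c)$ for $\varphi$. Hence $\psi(x,\vec y)$ witnesses SOP$_1$ for $T$ via Proposition~\ref{nsop1uptofinite}.

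I expect the genuine content to sit entirely in Case~1, and more precisely in the non-algebraicity half of clause $(b)$: one must actually produce infinitely many $G$-tuples realising the transported consistent conjunction, which is where the density of the $\hat R$-divisible part of $G$ (Lemma~\ref{lm_densediv}) and the injectivity coming from axiom~$(B)$ are used. Everything else should parallel the TP$_2$ and SOP$_2$ proofs almost verbatim.
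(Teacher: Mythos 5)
Your overall scheme is the same as the paper's: one-variable reduction via \cite[Theorem~2.7]{Ra} and Proposition~\ref{nsop1uptofinite}, then a case split on whether some realization of $\bigwedge_i\varphi(x,\vec c_{i,0})$ lies in $\scl$ of the array, with Corollary~\ref{prop_nsop1tupleinG} in Case~1 and Proposition~\ref{uptosmall} plus codensity in Case~2. Your Case~2 matches the paper essentially verbatim, and your use of Lemma~\ref{lm_termscases}(2) to get an $\hat R$-independent $\vec\alpha$, together with axiom~(B) for injectivity of $\vec g\mapsto\vec\alpha\cdot\vec g+c$ on $G^n$, is a sound way to handle clause~(c) --- the paper only says ``it is easy to see'' here, so your extra care is welcome.

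However, your attempted justification of clause~(b) non-algebraicity in Case~1 has two genuine gaps. First, the set $X$ of realizations of $\{\varphi(x,\vec c_{i,0})\mid N<i<\omega\}$ is cut out by an infinite family of $\cL_G$-formulas, so $\alpha_{i_0}^{-1}(X-b)$ is $\cL_G$-type-definable, not ``\,$\cL$-definable'' (nor $\cL$-type-definable). Lemma~\ref{lm_densediv} only produces elements of $G^{\dv}$ realizing a non-algebraic \emph{$\cL$-type}; it does not tell you that $b+\alpha_{i_0}d$ still satisfies each $\cL_G$-formula $\varphi(x,\vec c_{i,0})$. To make the shift work you must first decompose, via Theorem~\ref{thm_QE}, the $\cL_G$-type of $b$ over the array into its $\cL$-part, its $G_{\vec\lambda}$-membership part and its $R$-module part, then apply Lemma~\ref{lm_densediv} to the $\cL$-part while using $d\in G^{\dv}$ to keep the module and $G$-membership conditions fixed --- i.e.\ run a Proposition~\ref{prop_shift}-style argument at $b$, which your write-up skips. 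Second, your line ``since the original type is non-algebraic, $b\notin\dcl(\vec c)$'' does not follow: non-algebraicity of the partial type is a statement about its solution \emph{set}, and says nothing about the particular realization $b\in\scl(S)$ you picked. Without this, $\vec h$ could be trivial and $\alpha_{i_0}$ undefined, so you must argue separately that a realization in $\scl(S)$ with a genuine $G$-contribution can be chosen (or handle that degenerate subcase by other means). These are the precise points the paper compresses into ``it is easy to see,'' and your attempt to expand them does not yet close them.
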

\begin{proof}
Assume that $T^{G+}$ has SOP$_1$. Let $C$, $\varphi(x, \vec{y})$ and $S = (\vec c_{i,j})_{i<\omega,j<2}$ be as in Proposition \ref{nsop1uptofinite}, in particular $\bigwedge_{i<\omega} \varphi(x; \vec c_{i,0})$ is consistent and non-algebraic. We may assume that $(\vec c_i)_{i<\omega}$ is an indiscernible sequence and enlarging the tuples if necessary we may assume that each $\vec c_{i, j}$ is $G$-independent. Also, to simplify the presentation we will assume that $C=\emptyset$.\\
\underline{Case 1.} $\bigwedge_{i<\omega} \phi(x; \vec c_{i,0})$ is realised by some $b\in \scl(S)$.\\

Let $\vec d\in S$ and let $ \vec g\in G$ be tuples such that $b = \sum_i \lambda_i d_i+\sum_j \rho_j g_j$ for some coefficients $\{\lambda_i\}_i$, $\{\rho_j\}_j$ in $\FF$. Let $\psi(x, \vec x ', \vec y, \vec z)$ be the formula $\phi(x,\vec y)\wedge x = \sum_i \lambda_i z_i +\sum_j \rho_j x'_j$. Let $n_0<\omega$ be such that $\vec d\subset \set{\vec c_{i,j} : i< n_0,j<2}$. Then for all $n_0\leq i<\omega$, $\vec c_{i,0}\equiv_{C\vec d \vec c<i} \vec c_{i,1}$. It is then easy to see that the formula $\exists x\psi(x, \vec x',\vec y, \vec d)\wedge G(\vec x')$ witnesses SOP$_1$, hence by Corollary \ref{prop_nsop1tupleinG}, $T$ has SOP$_1$.\\
\underline{Case 2.} All realisations of $\bigwedge_{i<\og} \phi(x,\vec c_{i,0})$ are not in $\scl(S)$.\\
By Proposition \ref{uptosmall}, there exists some $\mathcal{L}$-formula $\psi(x, \vec y)$ such that, for each $i<\omega$ and $j<2$, $\phi(x, \vec c_{i,j})\triangle \psi( x, \vec c_{i,j})$ defines a $\vec c_{i,j}$-small set. Since all realizations are not small over $S$, this implies that every realization of $\bigwedge_{i<\og}\phi(x, \vec c_{i,0})$ is also a realization of $\bigwedge_{i<\og}\psi(x, \vec c_{i,0})$. In particular, $\bigwedge_{i<\og}\psi(x, \vec c_{i,0})$ has infinitely many realizations.

We show that $ \psi(x, \vec c_{i,1})\wedge \psi(x, \vec c_{j,1})$ has only finitely many realizations, for $i\neq j$. Assume not, then by the codensity property for $G$, there is some $d\in M\setminus \scl( \vec c_{i,1}\vec c_{j,1})$ realizing $\psi(x, \vec c_{i,1})\wedge\psi(x, \vec c_{j,1})$. As $\phi(x,\vec c_{i,1})\Delta \psi(x,\vec c_{j,1})$ is $\vec c_i$-small, $d$ satisfies $\phi(x,\vec c_{i,1})$, and similarly, $d$ satisfies $\phi(x,\vec c_{j,1})$. It follows that $d$ satisfies $\phi(x,\vec c_{i,1})\wedge \phi(x,\vec c_{j,1})$. Repeating this process and using again the codensity property, there is an infinite sequence $( d_i:i< \omega)$ of realisations of $\psi(x, \vec c_{i,1})\wedge\psi(x, \vec c_{j,1})$ such that $d_i \not \in \scl(c_{i,1} c_{j,1} \vec d_{<i})$. Hence $\phi(x,c_{i,1})\wedge \phi(x,c_{j,1})$ has infinitely many realisation, a contradiction. It follows that the $\mathcal{L}$-formula $\psi(x, \vec y)$ witnesses that $T$ has SOP$_1$.
\end{proof}

\textbf{Question.}
It may be interesting to check in this setting if other model theoretic properties are preserved. For example, is $n$-dependence preserved? Is NSOP$_n$ preserved for $n\geq 3$?

We end this section with some examples.

\begin{exam}\label{genericpredicate}

Consider structures of the form $\mathcal{V}=(V, +,0,S,\{\lambda_r\}_{r\in \FF})$,
where $V$ is a pure vector space over a field $\FF$ and $S$ is a generic subset of $V$ in the sense of Chatzidakis-Pillay \cite{ChPi}. To clarify the notation, set $\mathcal{L}_0=\{+,0,\{\lambda_r\}_{r\in \FF}\}$ and $\mathcal{L}=\mathcal{L}_0 \cup\{S\}$. Since for pure vector spaces over $\FF$ we have that $\acl_{\mathcal{L}_0}=\dcl_{\mathcal{L}_0}=\spane_{\FF}$, by \cite{ChPi} the structure $\mathcal{V}$ has quantifier elimination and $\acl=\dcl=\spane_{\FF}$. The completions of the theory depend on the truth value of $S(0)$ and to simplify the example, assume that $0\in S$. By \cite{ChPi} this structure is simple unstable
of $SU$-rk one and thus geometric.

 Regardless how we choose the submodule $R$,  by Corollary \ref{preservesimple} the theory of the pair $(\mathcal{V},G)$ will always be simple, but as before $\kappa_T$ will depend on how we choose $\FF$ and $R$.
 
 Assume first that $R=\mathbb{Z}$ and $\FF=\mathbb{Q}$. Then the theory of the pair $(\mathcal{V},G)$ will be simple and not supersimple. Assume now that $R=\mathbb{Q}=\FF$, then the expansion $T^G$ corresponds to the theory of a lovely pair of models of $T$ (see \cite{Va}), which is known to be supersimple of $SU$-rank two (see \cite{Va}).
 \end{exam}

For our next example, we will need the following lemmas and proposition showing preservation of $NTP_2$ under generic predicate expansions, assuming $\acl$ is given by linear span. Note that a more general version of this result appears as the Theorem 7.3 in \cite{Ch}; however the argument written in in \cite{Ch} seems incomplete (in using singletons rather than tuples), and at this point we are not aware if it had been fixed.

\begin{lem}\label{step1genpred}
Let $T$ be a complete geometric theory extending the theory of vector spaces over $\FF$ where $acl=\spane_{\FF}$, let $P$ be a new predicate and let $T_P$ be the theory $T$ together with the scheme saying that $P$ is a generic predicate. Let $(M,P)\models T_P$. Then
\begin{enumerate}
    \item For any $\vec a,\vec b\in M^n$, we have $\tp_P(\vec a)=\tp_P(\vec b)$ iff
$\tp(\vec a)=\tp(\vec b)$ and for every $\lambda_1,\dots,\lambda_n\in \FF$,
$P(\lambda_1a_1+\dots+\lambda_na_n)$ holds iff
$P(\lambda_1b_1+\dots+\lambda_nb_n)$ holds.
\item Every $\mathcal{L}_P$-formula $\varphi(x, \vec y)$ is equivalent module $T_P$ to a disjunction of formulas of the form $\psi(x, \vec y)\wedge \theta(x,\vec y)$, where $\psi(x, \vec y)$ is a $\mathcal{L}$-formula and $\theta(x,\vec y)$ is a conjunction of formulas of the form $\pm P(t(x,\vec y))$ and each $t(x,\vec y)$ is an $\FF$-linear combination of the elements $x$, $\vec y$
\end{enumerate}
\end{lem}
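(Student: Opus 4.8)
The implication $\Rightarrow$ in part $(1)$ is immediate: $\mathcal L\subseteq\mathcal L_P$, and for $\lambda_1,\dots,\lambda_n\in\FF$ the expression $\lambda_1x_1+\dots+\lambda_nx_n$ is an $\mathcal L_0$-term, so $P(\lambda_1x_1+\dots+\lambda_nx_n)$ is an $\mathcal L_P$-formula. For $\Leftarrow$ the plan is a back-and-forth argument inside a sufficiently saturated model of $T_P$ (if the given $(M,P)$ is not saturated, pass to a saturated elementary extension and note that $\tp_P(\vec a)=\tp_P(\vec b)$ descends). Recall from \cite{ChPi} that $\acl_{\mathcal L_P}=\acl_{\mathcal L}$ in models of $T_P$, hence here equals $\spane_\FF$, and that $P$ obeys the genericity axiom scheme. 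Given $\vec a,\vec b$ satisfying the two hypotheses, the $\FF$-linear bijection $\sigma_0\colon\spane_\FF(\vec a)\to\spane_\FF(\vec b)$, $\lambda_1a_1+\dots+\lambda_na_n\mapsto\lambda_1b_1+\dots+\lambda_nb_n$, is well defined and $\mathcal L$-elementary (the kernel and all relevant conditions are $\mathcal L$-formulas in $\vec a$, using $\mathcal L_0\subseteq\mathcal L$, and are preserved since $\tp_{\mathcal L}(\vec a)=\tp_{\mathcal L}(\vec b)$), and it preserves $P$ by the second hypothesis. One then checks that the collection of all $\mathcal L$-elementary, $\FF$-linear, $P$-preserving bijections between finite-dimensional $\FF$-subspaces extending $\sigma_0$ is a back-and-forth system, giving $\tp_P(\vec a)=\tp_P(\vec b)$.

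The crux is the extension step. Let $\sigma$ lie in this collection, with domain $D=\spane_\FF(D)$, and let $c\notin D$; then $c\notin\acl_{\mathcal L}(D)$, so $p:=\tp_{\mathcal L}(c/D)$ is non-algebraic. Every element of $\spane_\FF(Dc)\setminus D$ is uniquely of the form $d+\mu c$ with $d\in D$ and $\mu\in\FF\setminus\{0\}$, so to extend $\sigma$ linearly to $\spane_\FF(Dc)$ keeping it $\mathcal L$-elementary and $P$-preserving it suffices to find $c'$ realizing $\sigma(p)$ with $P(\sigma(d)+\mu c')\leftrightarrow P(d+\mu c)$ for all such $d,\mu$. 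By compactness (every formula of the complete non-algebraic type $\sigma(p)$ is non-algebraic over $\sigma(D)$) this reduces to realizing one non-algebraic $\mathcal L$-formula from $\sigma(p)$ together with finitely many conditions $P(\sigma(d_k)+\mu_k x)^{\varepsilon_k}$, $k\le m$, where the pairs $(d_k,\mu_k)$ are pairwise distinct and $\mu_k\ne0$. Distinct pairs give affine forms $\sigma(d_k)+\mu_k x$ that coincide for at most one value of $x$, so one may further restrict the $\mathcal L$-formula so that these $m$ forms are pairwise distinct on it; the resulting finite conjunction is then consistent with $T_P$ by the genericity of $P$. Concretely, putting $x_k:=\sigma(d_k)+\mu_k x$ and using $\mu_1\ne0$ to express $x$ as a linear term in $x_1$, the conditions become an $\mathcal L$-formula $\chi(x_1,\dots,x_m)$ that is non-algebraic and implies the $x_k$ pairwise distinct, so $\exists\,x_1\dots x_m\,\bigl(\chi\wedge\bigwedge_kP(x_k)^{\varepsilon_k}\bigr)$ is an instance of the genericity scheme of \cite{ChPi}. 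The back direction is symmetric. I expect this consistency check -- arranging several prescribed $\pm P$-values on affinely related elements at once -- to be the only delicate point; everything else is bookkeeping.

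Part $(2)$ then follows from part $(1)$ by a standard compactness argument, which works for an arbitrary tuple of free variables (the single variable $x$ in the statement is just what is used afterwards). Fix a completion of $T_P$ (determined by $P$ on $\acl(\emptyset)=\{0\}$), an $\mathcal L_P$-formula $\varphi(x,\vec y)$, and set $\vec z=(x,\vec y)$. For each $\vec a\models\varphi$, part $(1)$ shows that $\tp_P(\vec a)$ is axiomatised over $\emptyset$ by $\tp_{\mathcal L}(\vec a)$ together with the formulas $P(\vec\lambda\cdot\vec z)$ and $\neg P(\vec\lambda\cdot\vec z)$ ($\vec\lambda\in\FF^{|\vec z|}$) satisfied by $\vec a$; as this type implies $\varphi(\vec z)$, compactness yields a finite conjunction $\psi_{\vec a}(\vec z)\wedge\bigwedge_{\vec\lambda\in\Lambda_{\vec a}}P(\vec\lambda\cdot\vec z)^{\varepsilon_{\vec a}(\vec\lambda)}$, with $\psi_{\vec a}$ an $\mathcal L$-formula and $\Lambda_{\vec a}\subseteq\FF^{|\vec z|}$ finite, which holds of $\vec a$ and implies $\varphi$. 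These conjunctions cover $\varphi(M)$, so by compactness finitely many do, and $\varphi$ is equivalent to their disjunction, each disjunct having exactly the form in the statement since each $\vec\lambda\cdot\vec z$ is an $\FF$-linear combination of $x,\vec y$. This step uses only part $(1)$ and compactness, so it is routine once part $(1)$ is in hand.
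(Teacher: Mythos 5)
Your proof is correct but takes a more self-contained route than the paper, which dispatches part (1) by citing Corollary 2.6(b) of Chatzidakis--Pillay together with the observation $\acl=\spane_\FF$, and part (2) by compactness (or, as they remark, by citing 2.6(d)). You instead re-derive the back-and-forth directly in the linear setting, which is more explicit about where $\acl=\spane_\FF$ is used and is genuinely informative. The one delicate point in your extension step is the appeal to genericity: after the substitution $x_k:=\sigma(d_k)+\mu_k x$ you obtain a formula $\chi(x_1,\ldots,x_m)$ in which the variables are pairwise interdefinable over $\sigma(D)$ (each an affine bijection of $x_1$), so whether $\chi$ is \emph{literally} an instance of the Chatzidakis--Pillay genericity scheme depends on the exact non-algebraicity hypothesis in those axioms; some formulations require $x_i\notin\acl\bigl(\sigma(D)\cup\{x_j: j\ne i\}\bigr)$, a condition $\chi$ does not satisfy. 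In the present geometric setting this is easy to repair without the scheme: in a sufficiently saturated model one realizes $\phi$ by some $c'$ lying outside the ambient small model $M$; each affine image $\sigma(d_k)+\mu_k c'$ then also lies outside $M$ (since $\mu_k\in\FF$ and $\sigma(d_k)\in M$), the images are pairwise distinct by your restriction of $\phi$, so $P$ extends freely on them and existential closedness pulls a realization into $M$. With that clarification your extension step is complete, and your part (2) is the same compactness argument the paper invokes.
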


\begin{proof}
(1) The left to right direction is clear.
The right direction to left direction follows the argument in Corollary 2.6(b) in  \cite{ChPi} together with the fact that by assumption, whenever $\vec c\in M$, we have $\acl(\vec c)=\spane(\vec c)$. 

(2) Follows from part (1) and compactness. One could also prove this result using  Corollary 2.6(d) in \cite{ChPi}. 
\end{proof}

\begin{lem}\label{ntp2-disjunction}
Suppose $\phi(x,\vec y)=\theta_1(x,\vec y)\vee\ldots\vee \theta_m(x,\vec y)$ has TP$_2$. Then so does $\theta_i(x,\vec y)$ for some $i\le m$.
\end{lem}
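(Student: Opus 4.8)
The plan is a pigeonhole argument on a witnessing indiscernible array. By Fact~\ref{tp2indisc}, first fix an indiscernible array $(\vec a_{i,j})_{i,j<\og}$ witnessing TP$_2$ for $\phi$: for every $f\colon\og\to\og$ the set $\{\phi(x,\vec a_{i,f(i)}) : i<\og\}$ is consistent, and for every $i<\og$ the set $\{\phi(x,\vec a_{i,j}) : j<\og\}$ is $2$-inconsistent.

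Next, since $\{\phi(x,\vec a_{i,0}) : i<\og\}$ is consistent, pick a realization $b$. For each $i<\og$, as $b\models\bigvee_{\ell\le m}\theta_\ell(x,\vec a_{i,0})$, choose $\ell(i)\le m$ with $b\models\theta_{\ell(i)}(x,\vec a_{i,0})$; by the pigeonhole principle there are an infinite set $I\subseteq\og$ and an index $i_0\le m$ with $\ell(i)=i_0$ for all $i\in I$. Restricting the array to the rows indexed by $I$ (and re-indexing $I$ as $\og$) yields again an indiscernible array on which $\phi$ still witnesses TP$_2$ --- every path of the sub-array is a restriction of a path of the original array, and each of its rows is a row of the original array --- and now $b\models\theta_{i_0}(x,\vec a_{i,0})$ for all $i<\og$.

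It remains to verify that $\theta_{i_0}$ witnesses TP$_2$ on this sub-array. Since $\theta_{i_0}(x,\vec y)\vdash\phi(x,\vec y)$, each row $\{\theta_{i_0}(x,\vec a_{i,j}) : j<\og\}$ is $2$-inconsistent. For the paths: $b$ witnesses that $\{\theta_{i_0}(x,\vec a_{i,0}) : i<\og\}$ is consistent, so for every $n<\og$ the sentence $\exists x\bigwedge_{i<n}\theta_{i_0}(x,\vec a_{i,0})$ holds; since the tuples $(\vec a_{0,0},\dots,\vec a_{n-1,0})$ and $(\vec a_{0,f(0)},\dots,\vec a_{n-1,f(n-1)})$ occupy distinct, increasing rows they have the same quantifier-free array-type, hence the same $\cL$-type, so $\exists x\bigwedge_{i<n}\theta_{i_0}(x,\vec a_{i,f(i)})$ holds for every $f\colon\og\to\og$. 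Thus $\{\theta_{i_0}(x,\vec a_{i,f(i)}) : i<\og\}$ is consistent for every $f$, and $\theta_{i_0}$ has TP$_2$.

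The one delicate point is the last step: upgrading consistency of a single path of $\theta_{i_0}$ to consistency of all of its paths, which is precisely where indiscernibility of the array (together with the observation that tuples occupying distinct, increasing rows have the same quantifier-free array-type regardless of which column they sit in) is used. Everything else is bookkeeping about restricting to a sub-array.
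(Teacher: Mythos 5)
Your proof is correct but takes a genuinely different route from the paper's. The paper reduces to $m=2$ and then works with the finitely-versus-infinitely-many-realizations reformulation of Proposition~\ref{step1prop}: supposing both $\bigwedge_{i<\og}\theta_1(x,\vec a_{i,0})$ and $\bigwedge_{i<\og}\theta_2(x,\vec a_{i,0})$ have only finitely many realizations, it pigeonholes on strings $\sigma\in\{1,2\}^{2n+1}$ (each repeats a symbol at least $n$ times) together with indiscernibility of $(\vec a_{i,0})_{i<\og}$ to conclude that $\bigwedge_{i<2n+1}\phi(x,\vec a_{i,0})$ also has finitely many realizations, a contradiction. You pigeonhole differently: fix a realization $b$ of the $0$-path for $\phi$, observe that $b$ chooses some disjunct in each row, extract an infinite set of rows where it always chooses the same $\theta_{i_0}$, and then use indiscernibility of the sub-array to propagate consistency of the $0$-path for $\theta_{i_0}$ to all paths. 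Your route handles arbitrary $m$ directly and stays closer to the raw definition of TP$_2$; the paper's version is a counting argument and dovetails with the finite-versus-infinite-realizations machinery it uses throughout this section. One point of precision in your last step: what actually justifies it is mutual indiscernibility of the rows (one element chosen from each of $n$ distinct rows has a type independent of the chosen column indices), not literal equality of quantifier-free array-types---if the array-type recorded cross-row column comparisons, then $(\vec a_{0,0},\dots,\vec a_{n-1,0})$ and $(\vec a_{0,f(0)},\dots,\vec a_{n-1,f(n-1)})$ would have different array-types for non-constant $f$. Since the notion in play here is mutual indiscernibility of rows together with indiscernibility of the row sequence, the type equality you need does hold and the argument is sound.
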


\begin{proof}
    It suffices to prove the statement for $m=2$. By Proposition \ref{step1prop}, there exists an  indiscernible array $\{ \vec a_{i, j} \ \ \ i<\og,  j<\og \}$ such that $\bigwedge_{i<\og} \varphi(x, \vec a_{i, 0})$ has infinitely many realizations, and $\bigwedge_{j<\og} \varphi(x, \vec a_{0, j})$ has at most finitely many realizations. Clearly, each of
    $\bigwedge_{j<\og} \theta_1(x, \vec a_{0, j})$ and $\bigwedge_{j<\og} \theta_2(x, \vec a_{0, j})$ has at most finitely many realizations. Thus, it remains to show that at least one of $\bigwedge_{i<\og} \theta_1(x, \vec a_{i, 0})$ or $\bigwedge_{i<\og} \theta_2(x, \vec a_{i, 0})$ has infinitely many realizations. 

Suppose both $\bigwedge_{i<\og}\theta_1(x, \vec a_{i,0})$ and $\bigwedge_{i<\og}\theta_2(x, \vec a_{i,0})$ have finitely many realizations. Then for some $n>0$, both $\bigwedge_{i<n}\theta_1(x, \vec a_{i,0})$ and $\bigwedge_{i<n}\theta_2(x, \vec a_{i,0})$ have finitely many realizations. We claim that
$$\bigwedge_{i<2n+1} \theta_1(x, \vec a_{i,0})\vee \theta_2(x, \vec a_{i,0})$$ has finitely many realizations, which is a contradiction. Indeed, this formula is equivalent to $$\bigvee_{\sigma\in \{1,2\}^{2n+1}}\bigwedge_{i<2n+1}
\theta_{\sigma(i)}(x,\vec a_{i,0}).$$ For any $\sigma\in \{1,2\}^{2n+1}$, the formula $$\bigwedge_{i<2n+1}
\theta_{\sigma(i)}(x,\vec a_{i,0})$$ 
 includes a conjunction of the form $$\theta_1(x,\vec a_{i_1,0})\wedge \ldots \wedge (\theta_1(x,\vec a_{i_n,0})$$ or $$\theta_2(x,\vec a_{i_1,0})\wedge \ldots \wedge (\theta_2(x,\vec a_{i_n,0})$$ for some $0\le i_1<\ldots<i_n<2n+1$. By indiscernibility of $(\vec a_{i,0}:i<\og)$, each such conjunction has finitely many realizations.

\end{proof}

\begin{prop}\label{step2genpred}
Let $T$ be a complete geometric theory extending the theory of vector spaces over $\FF$ where $acl=\spane_{\FF}$, let $P$ be a new predicate and let $T_P$ be the theory $T$ together with the scheme saying that $P$ is a generic predicate.   Assume that there is a $\mathcal{L}_P$-formula $\varphi(x, \vec{y})$ (where $x$ is a single variable) such that $\varphi(x, \vec{y})$ witnesses $k$-TP$_2$ for some $k\geq 2$. Then $T$ has TP$_2$.
\end{prop}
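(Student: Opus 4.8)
The plan is to run the argument of Proposition~\ref{step2prop} with the stable $R$-module formula $\theta$ there replaced by the ``$P$-part'' of $\varphi$, using the genericity of $P$ in place of stability of $\theta$. By the reduction to one variable (Chernikov) and Proposition~\ref{step1prop} I may assume $\varphi(x,\vec y)$ is in a single variable and witnesses TP$_2$ with an indiscernible array $(\vec a_{i,j})_{i,j<\og}$ for which $\bigwedge_{i<\og}\varphi(x,\vec a_{i,0})$ has infinitely many realizations while $\bigwedge_{j<\og}\varphi(x,\vec a_{0,j})$ has only finitely many. By Lemma~\ref{step1genpred}(2) and Lemma~\ref{ntp2-disjunction} I may assume $\varphi(x,\vec y)=\psi(x,\vec y)\wedge\theta(x,\vec y)$ with $\psi$ an $\cL$-formula and $\theta(x,\vec y)=\bigwedge_k P(t_k(x,\vec y))^{\varepsilon_k}$, each $t_k$ an $\FF$-linear combination of $x,\vec y$ and $\varepsilon_k\in\{+,-\}$. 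Any $t_k$ not involving $x$ takes a constant $P$-value along the array (a single tuple has a fixed $\cL_G$-type), and that value must be the one making $\varphi$ consistent, else the rows fail; so I discard such $t_k$ and write $t_k(x,\vec y)=\lambda_k x+s_k(\vec y)$ with $\lambda_k\ne 0$.

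Since $\varphi\to\psi$, the conjunction $\bigwedge_i\psi(x,\vec a_{i,0})$ is consistent and non-algebraic, so by Proposition~\ref{step1prop} it suffices to show that $\bigwedge_{j<\og}\psi(x,\vec a_{0,j})$ has only finitely many realizations; then the $\cL$-formula $\psi$ witnesses TP$_2$ for $T$. Suppose instead it has infinitely many. Choose $n_0,N_0$ with $\bigwedge_{j\le n_0}\varphi(x,\vec a_{0,j})$ having at most $N_0$ realizations, put $\bar c=(\vec a_{0,j})_{j\le n_0}$, and let $X$ be the infinite $\cL$-definable set $\bigwedge_{j\le n_0}\psi(M,\vec a_{0,j})$. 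I will produce more than $N_0$ distinct elements of $X$ satisfying $\Theta(x):=\bigwedge_{j\le n_0}\theta(x,\vec a_{0,j})$, contradicting the choice of $n_0,N_0$.

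The heart of the matter is that $\Theta$ carries no forced inconsistency. A forced clash can only come from a forced identity $t_k(x,\vec a_{0,j})=t_{k'}(x,\vec a_{0,j'})$ in $x$ with $\varepsilon_k\ne\varepsilon_{k'}$ --- equivalently $\lambda_k=\lambda_{k'}$ together with the $\cL_0$-linear identity $s_k(\vec a_{0,j})=s_{k'}(\vec a_{0,j'})$ --- or from a forced $t_k(x,\vec a_{0,j})=0$ clashing with the completion-fixed value of $P(0)$, and the latter needs $\lambda_k=0$, excluded. For the former I use indiscernibility of the row $(\vec a_{0,m})_{m<\og}$: it propagates $s_k(\vec a_{0,0})=s_{k'}(\vec a_{0,1})$ to $s_k(\vec a_{0,0})=s_{k'}(\vec a_{0,m})$ for all $m\ge1$, forcing $s_{k'}(\vec a_{0,m})$, and then $s_k(\vec a_{0,m})$, constant in $m$; hence $s_k(\vec a_{0,0})=s_{k'}(\vec a_{0,0})$, so $\theta(x,\vec a_{0,0})$ --- hence $\varphi(x,\vec a_{0,0})$ --- is inconsistent, contradicting consistency of the rows. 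Having ruled this out, I pick $x\in X$ that is $\spane_\FF$-generic over $\bar c$ (possible since $X$ is infinite and $\acl=\spane_\FF$: the $\cL$-type ``$x\in X$'' together with the negations of all $\FF$-linear combinations of $\bar c$ is finitely satisfiable); then each $t_k(x,\vec a_{0,j})$ lies outside $\spane_\FF(\bar c)$, all coincidences and all $\FF$-linear dependencies over $\bar c$ among the finitely many $t_k(x,\vec a_{0,j})$ are exactly those recorded in the array-type, and these are $\varepsilon$-compatible by the previous sentence; so the genericity of $P$ (in the form behind Lemma~\ref{step1genpred}, following Chatzidakis--Pillay) lets $P$ be prescribed on this family exactly as $\Theta$ demands. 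Iterating, now generic over $\bar c$ together with the finitely many realizations already chosen, yields $N_0+1$ distinct elements of $X\cap\Theta(M)$.

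I expect the main obstacle to be that last step: one must check carefully that the generic-predicate axioms genuinely permit simultaneously prescribing the values of $P$ on the whole family $\{t_k(x,\vec a_{0,j})\}_{k,\,j\le n_0}$, which means tracking precisely which equalities and which $\FF$-linear relations among these elements are imposed by the array-type and verifying that each of them is $\varepsilon$-compatible. The reduction of an arbitrary linear collision between elements of distinct tuples $\vec a_{0,j},\vec a_{0,j'}$ to a collision inside a single tuple --- via indiscernibility of a row --- is the crucial point, and it is what replaces the stability of $\theta$ used in Proposition~\ref{step2prop}, which is not available in the present setting.
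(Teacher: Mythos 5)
Your proof is correct and follows the same overall strategy as the paper's: reduce via Lemma \ref{step1genpred}(2) and Lemma \ref{ntp2-disjunction} to a single conjunct $\psi(x,\vec y)\wedge\theta(x,\vec y)$, dispatch the case where the $\psi$-column is already finite by handing TP$_2$ to $\psi$, and in the remaining case use genericity of $P$ to force the $\varphi$-column to be infinite. Where you differ is only in bookkeeping, and in ways that tighten the argument. The paper picks a realization $b$ of the infinite $\psi$-column, applies Erd\"{o}s-Rado to get the row indiscernible over $b$, and only then analyzes coincidences among the values $t_k(b,\vec a_{0,i})$; you observe that since each $\lambda_k\neq 0$, a coincidence at a generic $x$ must already be an identity in $x$, hence a pure $\cL_0$-relation among the row parameters alone, so row-indiscernibility over $\emptyset$ suffices and Erd\"{o}s-Rado is unnecessary. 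You also make explicit the $\varepsilon$-compatibility check that the paper only implicitly relies on (a forced collision of opposite signs would make a single cell $\varphi(x,\vec a_{0,0})$ inconsistent), and you apply the reduction to a single disjunct before the case split, avoiding the slight awkwardness of the paper's ordering in its Case 2. Finally, you work with finite sub-conjunctions and iterate to exceed the uniform bound on the row intersection, where the paper argues directly with the full $\omega$-conjunction. These are refinements of, not departures from, the paper's argument.
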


\begin{proof}
Assume that there exists such an $\mathcal{L}_P$-formula $\varphi(x, \vec y)$. By Proposition \ref{step1prop} we may assume that  $\varphi(x, \vec y)$ witnesses TP$_2$ with some indiscernible array $\mathcal{A}:=\{ \vec a_{i, j} \ \ \ i<\og,  j<\og \}$. By Lemma \ref{step1genpred} there exist some $\mathcal{L}$-formulas $\psi_s(x, \vec y)$, $1\le s\le m$, and a list of linear combinations $t_1(x,\vec y),\dots, t_\ell (x,\vec y)\in \spane(x,\vec y)$
such that $\varphi(x, \vec y)$ is equivalent to $\vee_{s=1}^m \psi_s(x, \vec y)\wedge \theta_s(x,\vec y)$, where each $\theta_s(x,\vec y)$ is a conjunction of formulas of the form $\pm P(t_i(x,\vec y))$. We split the argument into cases.

\textbf{Case 1.}The conjunction $\bigwedge_{k<\og}(\vee_{s=1}^m \psi_s(x, \vec a_{0, k}))$ has finitely many realizations.

Note that for any $f:\omega \to \omega$ the collection $\{ \vee_{s=1}^m \psi_s (\vec x, \vec a_{i, f(i)})  \mid i<\og \}$ has infinitely many realizations since the collection $\{ \varphi (\vec x, \vec a_{i, f(i)})  \mid i<\og \}$ already has infinitely many realizations. It follows from Proposition \ref{step1prop} that the $\mathcal{L}$-formula $\vee_{s=1}^m \psi_s(x,\vec y)$ has 
TP$_2$ as desired.

\textbf{Case 2.} 
The conjunction $\bigwedge_{k<\og}(\displaystyle{\vee_{s=1}^m\psi_s(x, \vec a_{0, k})})$ has infinitely many realizations.

By Lemma \ref{ntp2-disjunction}, we may assume that $m=1$, i.e., $\phi(x,\vec y)=\psi(x,\vec y)\wedge\theta(x,\vec y)$.

Now, we have $\phi(x,\vec y)=\psi(x,\vec y)\wedge \bigwedge_{k=1}^{\ell} P^{\delta_k}(t_k(x,\vec y))$, where $\delta_k=0$ or $1$. We may assume that $x$ appears non-trivially in each linear combination. We know that $\phi(x,\vec a_{0,i})$ has infinitely many realizations for every $i$. We also know that $\bigwedge_{i<\omega}\psi(x,\vec a_{0,i})$
has infinitely many realizations. Let $b$ be one such realization, non-algebraic over the sequence. By Erd\"{o}s-Rado Theorem, there exists an $\mathcal{L}$-indiscernible sequence $(b'\vec a_i':i<\og)$ such that for any $i_1<\ldots <i_n<\og$ there exist $j_1<\ldots <j_n<\og$ such that $$\tp_{\mathcal{L}}(b'\vec a_{i_1}',\ldots, b'\vec a_{i_n}')=\tp_{\mathcal{L}}(b\vec a_{0,j_1},\ldots ,b\vec a_{0,j_n}).$$
Thus, we may assume that $(\vec a_{0,i}:i<\omega)$ is $\mathcal{L}$-indiscernible over $b$.
Suppose for some $i<j$ and some $k,m\le \ell$ we have $t_k(b,\vec a_{0,i})=t_m(b,\vec a_{0,j})$. Choosing $n>j$, we have $t_m(b,\vec a_{0,j})=t_k(b,\vec a_{0,i})=t_m(b,\vec a_{0,n})=t_k(b,\vec a_{0,j})$. Thus, by $\mathcal{L}$-indiscernibility over $b$, $t_k(b,\vec a_{0,i})=t_m(b,\vec a_{0,i})$ for all $i<\og$, and thus, also $t_k(b,\vec a_{0,i})=t_k(b,\vec a_{0,j})$ for all $i,j<\og$.
Then, we can also assume that for some $k\le \ell$, we have $t_s(b,\vec a_{0,i})=t_s(b,\vec a_{0,j})$ for all $1\le s\le k$, and all $i,j<\omega$,  $t_s(b,\vec a_{0,i})\neq t_r(b,\vec a_{0,i})$ for $s<r \le k$ (and fixed $i$), and for $k<m_1,m_2\le\ell$ and $i,j<\og$, $t_{m_1}(b,\vec a_{0,i})\neq t_{m_2}(b,\vec a_{0,j})$ whenever $(m_1,i)\neq (m_2,j)$.
Since $b$ appears non-trivially in each term, none of these elements is algebraic over the sequence of $\vec a_{0,i}$'s. Now we assign $P$ to these terms in any way, using the genericity of the predicate, thus, making the conjunction $\bigwedge_{i=1}^\og \phi(x,\vec a_{0,i})$ consistent, a contradiction.
\end{proof}

\begin{exam}

We start with the ordered vector space case (Example \ref{orderedvectorspace}) and add a generic predicate $S$. This expansion eliminates the quantifier $\exists^{\infty}$ (see section 2 in \cite{DMSgenexp}) and has the same algebraic closure as the original theory (see \cite{ChPi}), so it agrees with the span of the parameters  over $\FF$. This vector space is NTP$_2$, not simple (because of the order) and not NIP (because of the generic predicate). The expansion by any submodule will be again NTP$_2$. As before, depending on the choice of $R$ and $\FF$ we can get examples of theories that are strong and others that are not strong.

\end{exam}

\section{Independence relations in the expansion}\label{sec:G-basis}

We start this section by analyzing the case when the base theory $T$ is simple of $SU$-rank one. We first show that forking in $T^G$ has local character, and in doing so, we will extract conditions that give a geometric characterization of independence in the expansion.

\begin{lem}\label{tuple-indep}
    Let $M$ be a model of an SU-rank one theory, $(D_i: i\in\omega)$ an indiscernible sequence in $M$ (where $D_i$ are small), $\vec a\in M$ a finite tuple of elements $\acl$-independent over $D_0$. Then there exists $\vec a'\models \tp(\vec a/D_0)$ such that $(D_i:i\in\omega)$ indiscernible over $\vec a'$ and $\vec a'$ is $\acl$-independent over $\bigcup_{i\in\omega}D_i$.
\end{lem}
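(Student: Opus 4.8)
The plan is to build $\vec a'$ in two moves: use Extension in the simple theory $T$ to secure the $\acl$-independence from $\bigcup_{i\in\omega}D_i$, and then a Ramsey-style extraction to make the sequence indiscernible over the resulting tuple, all while keeping $\tp(\vec a/D_0)$ unchanged.

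First I would apply Extension. Since an $SU$-rank one theory is simple, choose $\vec a_0\equiv_{D_0}\vec a$ with $\vec a_0\ind_{D_0}\bigcup_{i\in\omega}D_i$. As $T$ has $SU$-rank one and $\acl=\spane_{\FF}$, forking over $D_0$ is exactly a drop in $\acl$-dimension: since $\vec a_0\equiv_{D_0}\vec a$ is $\acl$-independent over $D_0$, the relation $\vec a_0\ind_{D_0}\bigcup_i D_i$ forces $\dim(\vec a_0/\bigcup_i D_i)=|\vec a_0|$, i.e. $\vec a_0$ is $\acl$-independent over $\bigcup_{i\in\omega}D_i$. Thus $\vec a_0$ already realises $\tp(\vec a/D_0)$ and satisfies the second conclusion; only indiscernibility of the sequence over it is missing.

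For the second move, note that $(D_i)_{i\geq 1}$ is indiscernible over $D_0$. Extending it to a sufficiently long $D_0$-indiscernible sequence and running the standard Erd{\H o}s--Rado extraction with the extra parameters $D_0\vec a_0$ produces an infinite $D_0\vec a_0$-indiscernible sequence with the same Ehrenfeucht--Mostowski type over $D_0$; since a $D_0$-indiscernible sequence is determined over $D_0$ by its EM-type, there is $\sigma\in\mathrm{Aut}(M/D_0)$ (working in a sufficiently saturated $M$) carrying the extracted sequence to $(D_i)_{i\geq 1}$. Put $\vec a':=\sigma(\vec a_0)$. Then $\vec a'\equiv_{D_0}\vec a$; the tail $(D_i)_{i\geq 1}$ is indiscernible over $D_0\vec a'$, hence over $\vec a'$; and, as $\sigma$ fixes $D_0$ pointwise and $\bigcup_i D_i$ setwise, $\vec a_0\ind_{D_0}\bigcup_i D_i$ passes to $\vec a'\ind_{D_0}\bigcup_i D_i$, so $\vec a'$ is still $\acl$-independent over $\bigcup_i D_i$. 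The one thing still to check is that $D_0$ behaves over $\vec a'$ like a generic term, so that the whole sequence $(D_i)_{i\in\omega}$, and not merely its tail, is indiscernible over $\vec a'$; here one uses the $\emptyset$-indiscernibility of the full sequence, arguing that the partial type over $\bigcup_i D_i$ consisting of $\tp(\vec a/D_0)$, the scheme ``$(D_i)_{i\in\omega}$ indiscernible over $\vec x$'', and the non-algebraicity conditions expressing $\acl$-independence over $\bigcup_i D_i$ is finitely satisfiable — any finite fragment constrains $\vec x$ only over $D_0$ and mentions finitely many $D_0,\dots,D_N$, and is met by a realisation of $\tp(\vec a/D_0)$ placed generically against a long stretch of the sequence disjoint from $\{D_0,\dots,D_N\}$ and then transported by indiscernibility; a realisation of this partial type is the desired $\vec a'$. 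I expect this reconciliation to be the main obstacle: fixing $\tp(\vec a/D_0)$ forces $D_0$ into the base of every admissible automorphism, which is in tension with wanting $D_0$ to be an indistinguishable term of an $\vec a'$-indiscernible sequence, and the inputs that make the tension only apparent are Extension for the simple theory $T$ together with the $\acl=\spane_\FF$ description of dimension.
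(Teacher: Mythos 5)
Your argument has a repairable slip and a genuine gap. The slip: $\sigma$ does \emph{not} fix $\bigcup_i D_i$ setwise. The extracted $(E_i)$ are drawn from the stretched sequence $(D_i)_{i<\lambda}$ and need not lie among the original $D_i$'s, and $\sigma$ carries $\bigcup_i E_i$ onto $\bigcup_{i\geq 1}D_i$; so $\vec a_0\ind_{D_0}\bigcup_{i<\omega}D_i$ does not become $\vec a'\ind_{D_0}\bigcup_{i<\omega}D_i$ by the argument you give. This can be fixed by first stretching the sequence and then applying Extension to get $\vec a_0\ind_{D_0}\bigcup_{i<\lambda}D_i$, so that independence is inherited by the extracted patterns.

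The genuine gap is the final paragraph. What the extraction over $D_0\vec a_0$ actually produces is only that the \emph{tail} $(D_i)_{i\geq 1}$ is indiscernible over the \emph{set} $D_0\vec a'$, with $D_0$ a frozen parameter; it says nothing about how $D_0$ compares with $D_1,D_2,\dots$ over $\vec a'$, which is exactly what indiscernibility of the whole sequence over $\vec a'$ demands. The compactness sketch then does not bottom out: a finite fragment of ``$(D_i)_{i\geq 0}$ indiscernible over $\vec x$'' together with $\tp(\vec a/D_0)$ is itself a Ramsey-type symmetry constraint, not a genericity condition, and a realisation of $\tp(\vec a/D_0)$ that is merely $\acl$-independent of a fresh stretch $D_M,\dots,D_{M+N}$ will generically \emph{not} be indiscernible over that stretch, so ``transporting by indiscernibility'' gives you nothing new. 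The piece you are missing follows from the same dimension count you already used: since $\vec a$ is $\acl$-independent over $D_0$, also $\dim(\vec a/\emptyset)\le |\vec a|=\dim(\vec a/D_0)\le\dim(\vec a/\emptyset)$, so in fact $\vec a\ind_\emptyset D_0$, i.e.\ $\tp(\vec a/D_0)$ does not fork over $\emptyset$. This is precisely what allows the \emph{whole} sequence, $D_0$ included, to be made indiscernible over some realisation of $\tp(\vec a/D_0)$. The paper builds this in by a different device: it inducts on $|\vec a|$, and in the singleton case it bundles $n$ pairwise $\acl$-independent realisations of $\tp(b/D_0)$ into a tuple $\vec b$ for which $\vec b\ind_\emptyset D_0$ is automatic (same dimension count), uses non-dividing over $\emptyset$ to make $(D_i)_{i\in\omega}$ $\vec b$-indiscernible, and then lets $n$ grow so that the joint type over $\bigcup_i D_i$ has arbitrarily many realisations and hence one outside $\acl(\bigcup_i D_i)$; the inductive step adds a coordinate to $D_0$. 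Your route can be completed, but only by inserting the observation $\vec a\ind_\emptyset D_0$ in place of the final compactness sketch.
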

\begin{proof}
    The proof will be by induction on the length of $\vec a$. Suppose first that $\vec a=b$ is a single element and that $b\not\in\acl(D_0)$. Fix $n\ge 1$, and let $b_1,\ldots, b_n$ be realizations of $\tp(b/D_0)$ which are $\acl$-independent over $D_0$. Let $\vec b=(b_1,\dots,b_n)$. Then $\tp(\vec b/D)$ does not divide over $\emptyset$, and, thus, we can assume that $(D_i:i\in\omega)$ is indiscernible over $\vec b$. Since $n$ was arbitrary, we have that the partial type in variables $x_1, x_2, \ldots$ over $\bigcup_{i\in\omega}D_i$ saying that $x_i$ are distinct realizations of $\tp(b/D_0)$ and $(D_i:i\in\omega)$ is indiscernible over each of $x_i$ is consistent. It follows that there exists a realization $b'$ of $\tp(b/D_0)$ such that $(D_i:i\in\omega)$ is indiscernible over $b'$ and $b'\not\in\acl(\bigcup_{i\in\omega}D_i)$. This finishes the base case. The induction step is done by adding a part of the tuple $\vec a$ to $D_0$ and repeating the above construction.
\end{proof}

\begin{rem}
 Let $(V_0,G)\models T^{G+}$ let $C\subseteq V_0$ be closed and let $\vec a=(a_1,\dots,a_n)\in V_0$.
 We may assume that for some $k\leq n$,
 we have $dim(a_1,\dots,a_k/CG(V)=k$
 and for $i>k$, $a_i\in \spane_{\FF}(a_1,\dots,a_kCG(V))$. Assume $a_i=\sum_{j\leq k} \mu_j a_j+c+
 \sum_{j\leq \ell} \lambda_j g_j$ for some $c\in C$ and $g_j\in G$. Choose $\ell$ minimal with with property, so the tuple $(g_1,\dots,g_{\ell})\in G$ is algebraically independent over $C$. Then $a_i$ is interalgebraic in the extended language (using the functions $f_{\vec \lambda,i}$) with $(g_1,\dots,g_{\ell})$ and thus we may assume, when considering a tuple over $C$, that $\vec a=(\vec a_1,\vec a_2,\vec a_3)$, where $\vec a_1$ is independent over $CG$, $\vec a_2\in G$ is independent over $C\vec a_1$ 
 and $\vec a_3\in dcl(\vec a_1,\vec a_2,C)$.
 \end{rem}

\begin{theo}\label{forking-localchar}
Assume that $T$ has $SU$-rank one. Then dividing in $T^G$ has local character (and, therefore, $T^G$ is simple).
\end{theo}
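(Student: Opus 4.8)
The plan is to establish directly that dividing in $T^{G+}$ has local character. Since $T^{G+}$ is an expansion by definition of $T^{G}$ (each $f_{\vec{\lambda},i}$ is single-valued by axiom $(B)$ and existentially $\cL_{G}$-definable), this transfers to $T^{G}$, and simplicity of $T^{G}$ then follows from the standard fact that local character of dividing forces dividing to coincide with forking --- consistently with Corollary~\ref{preservesimple}. So I would fix a sufficiently saturated $(V,G)\models T^{G+}$, a finite tuple $\vec{a}$ and a parameter set $B=\vect{B}$, and look for a subset $B_{0}\subseteq B$ with $|B_{0}|\leq |R|+\aleph_{0}$ over which $\tp(\vec{a}/B)$ does not divide.

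I would first normalize the data along the lines of the Remark preceding the statement. Let $\vec{h}$ be a finite tuple from $G(V)$ which is an $\FF$-linear basis of $\vect{\vec{a}B}\cap G(V)$ modulo $\spane_{\FF}(G(B))$ (this absorbs the component $\vec{a}_{2}\in G$ of $\vec{a}$); then $\vec{a}$ is interalgebraic over $B$ in $\cL_{G}^{+}$ with $(\vec{a}_{1},\vec{h})$, where $\vec{a}_{1}$ is $\acl$-independent over $BG(V)$ in the sense of $T$. By quantifier elimination (Theorem~\ref{thm_QE}), Corollary~\ref{G-ind-qe}, Lemma~\ref{cor_QEmod}, Remark~\ref{rk_cor_term} and the Mordell--Lang property (Lemma~\ref{lm_ML}, Remark~\ref{rk:mordelllang}), the type $\tp_{\cL_{G}^{+}}(\vec{a}_{1}\vec{h}/B)$ is determined by the pair $\bigl(\tp_{\cL}(\vec{a}_{1}\vec{h}/B),\ \tp_{R-mod}(\vec{h}/G(B))\bigr)$: any $\FF$-linear combination of $\vec{a}_{1},\vec{h},B$ in which $\vec{a}_{1}$ occurs nontrivially lies outside $G$, and each combination of $\vec{h},B$ lying in $G$ is, after clearing denominators and using Mordell--Lang, recorded by a $pp$-formula over $G(B)$. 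Two facts I would record now, both relying on the identity $\spane_{\FF}(B)\cap\spane_{\FF}(G(V))=\spane_{\FF}(G(B))$ valid for $\cL_{0}^{+}$-closed $B$: the tuple $\vec{h}$ is $\acl$-independent over $B$ in $\cL$, and $\vec{a}_{1}$ is $\acl$-independent over $B\vec{h}$ in $\cL$; since $T$ has $SU$-rank one, so that forking is governed by $\acl$-dimension, both $\cL$-types are nonforking over $\emptyset$. Hence only the module type needs a small base, and I would pick a countable $B_{2}\subseteq B$ over which $\tp_{R-mod}(\vec{h}/G(B))$ does not fork (such $B_{2}$ exists because the theory of $R$-modules is stable), and set $B_{0}:=\vect{B_{2}}$.

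To check non-dividing over $B_{0}$, take any $B_{0}$-indiscernible sequence $(B^{i})_{i<\omega}$ of copies of $B$ with $B^{0}=B$, and put $D_{i}:=G(B^{i})$, a $B_{0}$-indiscernible sequence of small sets. I would build a realization in three moves. \emph{(i)} Apply Lemma~\ref{tuple-indep} inside the $\cL$-structure $V$, to the sequence $(B^{i})$ and the tuple $\vec{h}$ (which is $\acl$-independent over $B^{0}$ in $\cL$), to obtain $\vec{h}_{0}\models\tp_{\cL}(\vec{h}/B)$ with $(B^{i})$ indiscernible over $\vec{h}_{0}$ and $\vec{h}_{0}$ $\acl$-independent over $\bigcup_{i}B^{i}$. \emph{(ii)} Keeping the $\cL$-type fixed, adjust $\vec{h}_{0}$ to some $\vec{h}'$ that in addition realizes the nonforking extension of $\tp_{R-mod}(\vec{h}/G(B))$ over $\bigcup_{i}D_{i}$, with $(D_{i})$ module-indiscernible over $\vec{h}'$; this uses stability of $R$-modules together with the observation that, by Mordell--Lang (Remark~\ref{rk:mordelllang}), the $\cL$-reduct and the $R$-module reduct of $G(V)$ share only the $\hat R$-linear structure, which is already subsumed by the module type, so the two nonforking extensions amalgamate into a single $\vec{h}'$. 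Then $\bigl(\tp_{\cL}(\vec{h}'/B),\tp_{R-mod}(\vec{h}'/G(B))\bigr)=\bigl(\tp_{\cL}(\vec{h}/B),\tp_{R-mod}(\vec{h}/G(B))\bigr)$, so by Corollary~\ref{G-ind-qe} (applied to the $G$-independent tuples $(\vec{h}',B)$ and $(\vec{h},B)$) there is an automorphism over $B$ sending $\vec{h}$ to $\vec{h}'$. \emph{(iii)} Finally, extend this automorphism to $\vec{a}_{1}$ by one further application of Lemma~\ref{tuple-indep} (to the sequence $(B^{i}\vec{h}')$ and the appropriate image of $\vec{a}_{1}$) combined with axiom $(D)$ and $|\FF|^{+}$-saturation, choosing the realization $\vec{a}_{1}'$ generic over $G(V)$, so that $\vec{a}_{1}'$ contributes no new element of $G$ and, by the reduction of the previous paragraph, $(B^{i})$ is $\cL_{G}^{+}$-indiscernible over $\vec{a}_{1}'\vec{h}'$ while $\vec{a}_{1}'\vec{h}'\models\tp_{\cL_{G}^{+}}(\vec{a}_{1}\vec{h}/B)$. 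This shows $\tp(\vec{a}/B)$ does not divide over $B_{0}$, hence dividing has local character.

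The step I expect to be the main obstacle is move \emph{(ii)}: producing a single $\vec{h}'$ that simultaneously realizes the prescribed $\cL$-type and the nonforking $R$-module extension while preserving module-indiscernibility of $(D_{i})$. This is precisely where the strengthened Mordell--Lang property is indispensable --- every $\FF$-linear dependence inside $G$ is witnessed with coefficients in $\hat R$, hence captured positively by the $R$-module structure, so the $\cL$-reduct and the $R$-module reduct of $G(V)$ are orthogonal over their common $\hat R$-linear reduct and the two nonforking extensions can be fused. A secondary point requiring care is move \emph{(iii)}, namely keeping $\vec{a}_{1}'$ generic over all of $G(V)$ while preserving indiscernibility, and checking that no base beyond $B_{2}$ is needed, which rests on the closed-set identity above and on forking in $SU$-rank one theories being controlled by $\acl$-dimension.
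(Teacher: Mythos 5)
Your proposal follows essentially the same route as the paper: decompose $\vec a$ into a $G(V)$-generic part, a part inside $G$, and a dependent remainder; use Lemma~\ref{tuple-indep} for the generic part; derive the local-character bound from stability of $R$-modules; and amalgamate the $\cL$-data with the $R$-module data by a shifting argument. What you call ``fusing the two nonforking extensions'' in move~(ii) is exactly the content of Corollary~\ref{cor_shift}, which the paper cites explicitly at the analogous point (its Claim~2), and the only other difference is that you handle the $G$-part before the generic part, the reverse of the paper's order, which is immaterial.
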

\begin{proof}
We work inside a sufficiently saturated model  $(\mathcal{V},G)\models T^G$. Let $D \subseteq V$ be small, $D=\langle D\rangle$.

Consider a tuple of the form $\vec a=\vec a_1\vec a_2\vec a_3$, where $\vec a_1$ is a $D\cup G(V)$-independent tuple, $\vec a_2\in G(V)$ is $\FF$-linearly independent over $D$ and $\vec a_3\in \spane_{\mathbb{F}}(\vec a_1 \vec a_2D)$.
Note that under this assumption $\dim(\vec a_1\vec a_2/D)=|\vec a_1\vec a_2|$.
Let $C\subseteq D$ be such that $\tp_R(\vec a_2/G(D))$ does not fork over $G(C)$ in the sense of $R$-modules and such that $\vec a_3\in\spane_\FF(\vec a_1\vec a_2C)$. We will show that $\tp_G(\vec a/D)$ does not fork over $C$.

Let $(D_i
: i \in  \omega)$ be an $\mathcal{L}_G$-indiscernible sequence over $C$ with $D_0=D$.
We start with $\vec a_1$. 

\textbf{Claim 1} There exists $\vec a'_1\models \tp_G(\vec a_1/D)$ such that $\vec a_1'$ is 
$\FF$-linearly independent over $G(V)\cup\bigcup_{i\in\omega} D_i$ and $(D_i:i\in\omega)$ is $\mathcal{L}_G$-indiscernible over $\vec a'_1C$.

Let $p(x, D) =\tp(\vec{a}_1, D)$. 
Since $\vec a_1$ is independent over $D$, $\tp(\vec a_1/D)$ does not divide over $C$ (in fact, over $\emptyset$) and
$\cup_{i\in \omega} p(x, D_i)$ is consistent. By Lemma \ref{tuple-indep} we can find $\vec a'_1$ with $\vec a'_1\models \cup_{i\in \omega} p(x, D_i)$ such that $\{\vec a'_1D_i
: i \in \omega \}$ is indiscernible over $C$ and $\vec a'_1$
is $\FF$-linearly independent over $\cup_{i\in \omega} D_i$.
Applying the \emph{extension property} in each of the components of $\vec a_1'$, we may assume that $\vec a'_1$ is $\FF$-linearly independent over $\cup_{i\in \omega} D_iG(V)$. Note that $G(\spane_{\FF}(\vec a_1D))=G(D)$ and that
$G(\spane_{\FF}(\vec a_1'D_i))=G(D_i)$ for all $i$. 
Thus $\tp_G(\vec a_1 'D_i)=\tp_G(\vec a_1D)$ for all $i$. Applying Erd\"{o}s-Rado theorem, we can assume that $\{\vec a'_1D_i
: i \in \omega \}$ is $\mathcal{L}_G$-indiscernible over $C$. Since the $\mathcal{L}_G$-type of any increasing tuple of the new sequence coincides with that of one of the increasing tuples of the old sequence, we still have that $\vec a'$ is $\FF$-linearly independent over $\cup_{i\in \omega} D_i$.

Now we deal with the part that belongs to $G$. 
We may assume that $\vec a_1=\vec a_1'$ satisfies the statement of Claim 1.

\textbf{Claim 2} 
There exists  $\vec  a_2'$ such that for all 
$i\in\omega$, we have $\tp_G(D_i\vec a_1\vec a_2')=\tp_G(D\vec a_1\vec a_2)$.
\\
Let $q(x, D) =\tp_R(\vec{a}_2, G(D))$. Note that $(G(Di)
: i \in  \omega)$ is an $R$-module indiscernible sequence over $G(C)$. Since $\vec a_2$ is $R$-module independent from $G(D)$ over $G(C)$ we get that
$\cup_{i\in \omega} q(x, G(D_i))$ is consistent. 
\\
Let $p_2(x, D\vec a_1) =\tp(\vec{a}_2/ D\vec a_1)$. By construction, the sequence $(D_i\vec a_1: i \in  \omega)$ is an $\mathcal{L}_G$-indiscernible sequence over $C$, hence, also $\mathcal{L}$-indiscernible over $C$. Since $\vec a_2$ is a independent tuple over $D\vec a_1$, 
$\tp(\vec a_2/D\vec a_1)$ does not $\mathcal{L}$-divide over $\emptyset$, hence, also over $C\vec a_1$ and $\cup_{i\in \omega} p_2(x, D_i\vec a_1)$ is consistent.

We need to make the $R$-module extension $q_\omega(x)=\cup_{i\in \omega} q(x, G(D_i))$ and the $\mathcal{L}$-extension $p_{2\omega}(x)=\cup_{i\in \omega} p_2(x, D_i\vec a_1)$ compatible. Since $\dim(\vec a_2/D\vec a_1)=|\vec a_2|$, 
by Lemma \ref{tuple-indep} we also have $\dim(p_{2\omega}(x))=|\vec a_2|$ and the consistency of $q_\omega(x) \cup p_{2\omega}(x)$ follows from Corollary \ref{cor_shift}. Let $\vec a_2'\in G^{|\vec x_2|}$ be such that $\vec a_2' \models p_{2\omega}(x)\cup q_\omega(x)$. Note that $\vec a_2'$ belongs to $G$ and thus for each $i\in\omega$, $D_i\vec a_1\vec a_2'$ is $G$-independent, $\tp(D_i\vec a_1\vec a_2')=\tp(D\vec a_1\vec a_2)$ and $\tp_G(\vec a_2, G(D))=\tp_G(\vec a_2', G(D_i))$ and thus 
$\tp_G(D_i\vec a_1\vec a_2')=\tp_G(D\vec a_1\vec a_2)$, as needed.

Finally let 
$\vec a_3'$ be such that $\tp_G(D\vec a_1\vec a_2'\vec a_3' )=\tp_G(D\vec a_1\vec a_2\vec a_3)$. Then $\vec a_3'\in\spane_\FF(\vec a_1\vec a_2' C)$, and, therefore,  for all $i\in\omega$, $\tp_G(D_i\vec a_1\vec a_2'\vec a_3')=\tp_G(D\vec a_1\vec a_2\vec a_3)$.
Recall that the requirements on the set $C$ are that $\tp_R(\vec a_2/G(D))$ does not fork over $G(C)$ in the sense of $R$-modules and such that $\vec a_3\in\spane_\FF(\vec a_1\vec a_2C)$, and therefore, there is a uniform bound on the size of $C$. Hence, dividing in 
 $T^G$ has local character, and, thus $T^G$ is simple.
Furthermore, the local character associated to $T^G$ is the local character associated to forking in $R$-modules, since for the condition $\vec a_3\in\spane_\FF(\vec a_1\vec a_2 C)$ it suffices to pick a finite subset $C\subset D$.
\end{proof}

The main goal of this section is to study independence relations in the expansion. We start by extracting the information that we needed for the argument above to work. For Claim 1 and Claim 2,  we require $\dim(\vec a/C)=\dim(\vec a/D)$, that is, algebraic independence should hold. For claim 1 we need $\dim(\vec a/CG(V))=\dim(\vec a/DG(V))$, so we need algebraic independence after localizing in $G$. Finally for claim 2 we need $R$-module independence in the components that belong to $G$. This will give us intuition when we set up Definition \ref{forkingVG} below.
In any case, we point out a strong limitation of the argument above:

\begin{rema}
In the argument above we used the extension property to guarantee 
that $\dim(\vec a_1'/G(V)D)=\dim(\vec a_1'/G(V)C)$. The extension property only guarantees the existence of such tuples for algebraic independence localized in $G(V)$, not for other forms of independence that may properly extend algebraic independence. 
\end{rema}

Now we introduce a more general setting for the rest of this section. Assume the structure $\mathcal{V}=(V,+,0,\{\lambda\}_{\lambda\in \mathbb{F}},\dots)\models T$ is sufficiently saturated and assume the models of its theory have a natural independence relation $\ind^0$ that implies (but may extend properly) \emph{algebraic independence}. We will reserve the symbol $\ind$ for algebraic independence, as we did earlier in the text.

Key to the arguments below is the following result of \cite{DK21} (see also \cite[Fact 4.6]{CdEHJRK22},):

\begin{fact}\label{fact:KPnsop1}
Let $T$ be a first order theory. Then $T$ is NSOP$_1$ if and only if there is an automorphism invariant ternary relation $\ind^*$ on small subsets of a
monster model, only allowing models in the base, satisfying the following properties:
\begin{enumerate}
  \item symmetry. If $A\ind^*_M B$ then $B\ind^*_M A$,
  \item existence. $A\ind^*_M M$, for all $M$.
  \item finite character. If $\vec a\ind^*_M B$ for all finite $\vec a\subseteq A$, then $A\ind^*_M B$.
  \item monotonicity. If  $A\ind^*_M BD$ then $A\ind^*_M B$.
  \item transitivity. For $M\subseteq N$ if $A \ind^*_{ N} B$ and $ N \ind^*_M B$ then $A N \ind^*_M B$.
  \item extension. If $A\ind^*_M B$, then for all $D$ there exists $A'\equiv_{M B}A$ and $A'\ind^*_M BD$.
  \item local character*. Let $\vec a$ be a finite tuple and $\kappa >\abs{T}$ a regular cardinal. For every continuous chain $(M_i)_{i<\kappa}$ of models with $\abs{M_i}<\kappa$ for all $i<\kappa$ and $M=\bigcup_{i<\kappa} M_i$, there is $j<\kappa$ such that $\vec a\ind^*_{M_{j}} M$.
\item independence theorem over $M$. If there exists $C_1,C_2$ and $A,B$ such that $C_1\equiv_M C_2$, $A\ind^*_M B$, $C_1\ind^*_M A$ and $C_2\ind^*_M B$. Then there exists $C$ such that $C\ind^*_M A,B$, $C\equiv_{M A} C_1$ and $C \equiv_{M B} C_2$.
\end{enumerate}
Furthermore, in this case $\ind^*$ is Kim-independence over models.
\end{fact}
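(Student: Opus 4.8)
This is a Kim--Pillay style criterion for NSOP$_1$, proved in \cite{DK21}; we only indicate the shape of the argument. For the direction from left to right one takes $\ind^{*}$ to be Kim-independence over models, $\ind^{K}$. Automorphism-invariance and finite character are immediate, since Kim-dividing of a partial type over a model $M$ is witnessed by a single formula together with a Morley sequence of a global $M$-invariant type. Existence, $A\ind^{K}_{M}M$, holds because the constant sequence over $M$ is a Morley sequence over $M$ (in the realised, hence $M$-invariant, global type), so no formula of $\tp(A/M)$ Kim-divides over $M$; monotonicity is clear. The substantial inputs --- symmetry of $\ind^{K}$, transitivity over models, the extension lemma, and the independence theorem over models --- are the standard structural theorems on Kim-independence in NSOP$_1$ theories, which we invoke. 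Local character$^{*}$ follows from a counting/club argument along the chain: for a fixed finite tuple $\vec a$ and a continuous chain $(M_i)_{i<\kappa}$ with $M=\bigcup_i M_i$, the stages over which $\tp(\vec a/M)$ Kim-forks cannot be cofinal, so some $M_j$ satisfies $\vec a\ind^{K}_{M_j}M$.

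For the converse, suppose an automorphism-invariant relation $\ind^{*}$ with the listed properties exists. First we show $T$ is NSOP$_1$: assume towards a contradiction that a formula $\varphi(x;\vec y)$ witnesses SOP$_1$; passing to a strongly indiscernible tree and using local character$^{*}$, one finds a model $M$ over which the type along a fixed branch is $\ind^{*}$-free from the rest of the tree, and then symmetry, monotonicity, extension and the independence theorem over $M$ allow one to amalgamate the instances of $\varphi$ attached to two incomparable nodes into a single consistent type --- contradicting the $2$-inconsistency encoded in an SOP$_1$ configuration. Once NSOP$_1$ is known, the canonicity theorem for Kim-independence (an automorphism-invariant relation over models satisfying finite character, existence, monotonicity, symmetry and the independence theorem over models coincides with $\ind^{K}$) yields the final clause $\ind^{*}=\ind^{K}$.

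The genuinely hard inputs are symmetry and the independence theorem for $\ind^{K}$ on the forward side --- both of which really use NSOP$_1$, through the amalgamation behaviour of Morley sequences of global invariant types --- and, on the backward side, turning the abstract independence-theorem axiom into a concrete contradiction with the tree witnessing SOP$_1$. All of these are carried out in \cite{DK21} and the literature cited there, so we do not reproduce them.
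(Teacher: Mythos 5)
The paper does not prove this result --- it is stated as a Fact and attributed to \cite{DK21} (see also \cite[Fact 4.6]{CdEHJRK22}), with no proof given. Your sketch is a correct high-level outline of the Kim--Pillay style characterization of NSOP$_1$ as it appears in those sources, and, like the paper, it defers all the substantive ingredients (symmetry, transitivity, the independence theorem, local character of $\ind^K$, and the canonicity step) to the cited literature.
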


 We will \textbf{assume} that $\ind^0$ has the basic properties listed above: invariance, symmetry, existence, finite character, monotonicity, transitivity and extension. We will write $\ind^{R-mod}$ for forking independence inside $R$-modules.
 
 \begin{obse} Let $V_0 \subset V$ be such that $V_0\models T$, let $D\subset V$ be a set and let $\vec a$ be a finite tuple. Then whenever $\vec a\ind^0_{V_0} D$, we also have
$\spane_{\FF}(\vec a)\ind^0_{V_0} \spane_{\FF}(D)$. 

Indeed, by extension, there exists $a'\equiv^{\mathcal{L}}_{V_0D} a$ with $a'\ind^0_{V_0}\spane_{\FF}(D)$. If $\sigma$ is an $\mathcal{L}$-automorphism over $V_0D$ sending $a'$ on $a$, then by invariance of $\ind^0$ we also get $a\ind^0_{V_0} \spane_{\FF}(D)$, and we conclude by the result by symmetry.
 \end{obse}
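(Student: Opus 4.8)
The plan is to derive the statement from only three ingredients: the \emph{extension} axiom for $\ind^0$, its \emph{invariance} under $\cL$-automorphisms, and the elementary remark that an $\cL$-automorphism fixing a set $D$ pointwise must fix $\spane_{\FF}(D)$ pointwise as well (since $\cL\supseteq\cL_0=\{+,0,(\lambda\cdot)_{\lambda\in\FF}\}$, every element of $\spane_{\FF}(D)$ is an $\cL_0$-term in parameters from $D$). The heart of the matter is the following one-sided version.

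\textbf{Claim.} If $\vec a$ is a finite tuple and $D\subseteq V$ satisfies $\vec a\ind^0_{V_0}D$, then $\vec a\ind^0_{V_0}\spane_{\FF}(D)$. To see this, apply the extension property to $\vec a\ind^0_{V_0}D$, enlarging the right-hand side by $\spane_{\FF}(D)$; as $D\subseteq\spane_{\FF}(D)$ this yields some $\vec a'\equiv^{\cL}_{V_0D}\vec a$ with $\vec a'\ind^0_{V_0}\spane_{\FF}(D)$. Working inside the saturated monster model, pick an $\cL$-automorphism $\sigma$ fixing $V_0D$ pointwise with $\sigma(\vec a')=\vec a$; by the remark above $\sigma$ fixes $\spane_{\FF}(D)$ pointwise too, so applying $\sigma$ to the relation $\vec a'\ind^0_{V_0}\spane_{\FF}(D)$ and using invariance of $\ind^0$ gives $\vec a\ind^0_{V_0}\spane_{\FF}(D)$.

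With the Claim in hand I would finish by symmetrising. From the hypothesis and the Claim, $\vec a\ind^0_{V_0}\spane_{\FF}(D)$, hence $\spane_{\FF}(D)\ind^0_{V_0}\vec a$ by symmetry. For every finite tuple $\vec d$ from $\spane_{\FF}(D)$, monotonicity together with symmetry give $\vec d\ind^0_{V_0}\vec a$, and the Claim applied with $\vec d$ in the role of the finite tuple and $\vec a$ in the role of the set gives $\vec d\ind^0_{V_0}\spane_{\FF}(\vec a)$; finite character then upgrades this to $\spane_{\FF}(D)\ind^0_{V_0}\spane_{\FF}(\vec a)$, and one further application of symmetry yields the desired $\spane_{\FF}(\vec a)\ind^0_{V_0}\spane_{\FF}(D)$.

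The whole argument uses only the abstract axioms assumed for $\ind^0$ (invariance, symmetry, monotonicity, finite character and extension), and in particular no NSOP$_1$ machinery from Fact~\ref{fact:KPnsop1}; the same proof goes through verbatim for $\ind^{R-mod}$ or for $\acl$-independence $\ind$. The only step calling for a moment's care is the passage from ``$\sigma$ fixes $D$ pointwise'' to ``$\sigma$ fixes $\spane_{\FF}(D)$ pointwise'', which is exactly where the hypothesis that the language extends the vector-space language $\cL_0$ enters; everything else is routine bookkeeping with the axioms, so I do not expect any real obstacle.
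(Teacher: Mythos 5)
Your proof is correct and follows essentially the same route as the paper's: use extension to replace $\vec a$ by some $\vec a'\equiv_{V_0D}\vec a$ that is $\ind^0$-independent from $\spane_\FF(D)$, transport back by an automorphism over $V_0D$ (observing that such an automorphism fixes $\spane_\FF(D)$ pointwise because $\cL\supseteq\cL_0$), and then symmetrise. The only difference is that you carry out the symmetrisation step explicitly via monotonicity, a second application of the Claim, and finite character, whereas the paper compresses all of this into the phrase ``and we conclude by the result by symmetry''; your version is more careful but conceptually identical.
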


We now introduce an independence relation in the expansion 
$(\mathcal{V},G)\models T^{G+}$ and assume the expansion is sufficiently saturated.

\begin{defn}\label{forkingVG}
Let $V_0 \subset D\subset V$ be such that $V_0\models T^{G+}$, $D$ is algebraically closed in $T^{G+}$ and let $\vec a\in V^n$. For $\vec \lambda = (\lambda_1,\ldots,\lambda_n)\in \FF^n$, denote $G_{\vec \lambda} = \lambda_1G+\ldots +\lambda_nG$. Define $\vec a\ind^G_{V_0} D$ if and only if 
\begin{enumerate}

\item $\langle V_0\vec a\rangle \ind^0_{V_0} D$

\item $G(\langle \vec a V_0 \rangle) \ind^{R-mod}_{G(V_0)}{G(D)}$. 
 \item $G(\vect{\vec a V_0}+ D) = G(\vect{\vec a V_0})+G(D)$.
\end{enumerate}
\end{defn}

In the above definition above we require the set $D$ to be algebraically closed in $T^{G+}$ and in particular it is $G$-independent. Note that for 
$\vec a$, $\vec d$ tuples, we can define $D'=\langle V_0\vec d \rangle$, it satisfies $V_0\subset D'$, so we can extend the definition and say that $\vec a\ind^G_{V_0} \vec d$ holds whenever we have $\vec a\ind^G_{V_0} D'$. Similarly, for a set $A$ we write $A\ind^G_{V_0} D$ if and only if $\vec a\ind^G_{V_0} D$ for all finite $\vec a\subset A$.
Note that by definition, we have $\vec a\ind^G_{V_0} D$ if and only if $\vec aV_0\ind^G_{V_0} D$.

Let us see how condition (3) above arises naturally in our setting.
A key observation is the following strong connection between forking and cosets of definable groups.

\begin{rem}\label{cosetsforking}
    Let $T$ be any theory of abelian group in which there is a predicate $G$ for a subgroup of the ambient theory. Assume that for any $b\notin G$ there exists an infinite sequence $(b_i)_{i<\omega}$ in $\tp(b/C)$ such that $b_i-b_j\notin G$ for all $i\neq j$.
    Denote by $\ind^d$ the dividing independence relation. Then for any subgroups $C\subseteq A\cap B$
    \[A{\ind}^d_C B\implies G(A+B) = G(A)+G(B)\]
    First, we prove that for any $b$ the formula $\phi(x,b)$ $2$-divides over $C$, where $\phi(x,y)$ is defined by $x+y\in G\wedge y\notin G$. If $b\in G$ the formula $\phi(x,b)$ is inconsistent hence $2$-divides over emptyset. Assume that $b\notin G$ and let $(b_i)_{i<\omega}$ be as in the hypotheses, in $\tp(b/C)$. If $a$ is a realisation of $\phi(a,b_i)\wedge \phi(a,b_j)$, then $a-b_i\in G$ and $a-b_j\in G$ implies $b_i-b_j\in G$ which is a contradiction. So $\set{ \phi(x,b_i)\mid i<\omega}$ is $2$-inconsistent. Now we prove the above implication, or rather its contrapositive. Assume that for $C\subseteq A\cap B$ we have $G(A+B) \neq G(A)+G(B)$ then there exists $a\in A$ and $b\in B$ such that $a+b\in G$ and $a+b\notin G(A)+G(B)$. As $a+b\in G$, we have that \[a\in G \iff b\in G\]
    (using that $A$ and $B$ are groups). In particular, if $b\in G$ then $b\in G(B)$ and $a\in G(A)$ hence $a+b\in G(A)+G(B)$, a contradiction. It follows that neither $a$ nor $b$ is in $G$. In particular $\phi(x,b)\in \tp(A/B)$ hence $\tp(A/B)$ divides over $C$.
\end{rem}

\begin{rem}
    The same argument gives that Kim-dividing implies this condition as long as for any $b\notin G$ there exists an invariant/f.s./non-forking-Morley sequence in $\tp(b/C)$ such that $b_i-b_j\notin G$. A case where such sequences does not exist: consider a theory $T$ of abelian group where there is a unary function symbol $f$ which is a group homomorphism and assume that $G = \ker f$ in this theory. Now assume that $b\notin G$ but that $f(b) = c\in C$. Then for any sequence $(b_i)_{i<\omega}$ in $\tp(b/C)$ we have $f(b_i) = c = f(b)$ hence $f(b_i-b_j) = 0$ i.e. $b_i-b_j\in \ker f = G$. In that case the formula $\phi(x,y) = x+y\in G\wedge y\notin G$ does not necessarily divide over $C$.
\end{rem}

\begin{lem}
The independence relation $\ind^G$ satisfies invariance and monotonicity.
\end{lem}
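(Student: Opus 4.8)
The plan is to verify both properties directly from Definition~\ref{forkingVG}, checking each of the three defining clauses separately and reducing each one to the already-known invariance and monotonicity of the three auxiliary relations $\ind^0$, $\ind^{R\text{-}mod}$, and algebraic independence (or rather, in clause (3), to a set-theoretic identity about $G$-traces of spans that is manifestly automorphism-invariant). For invariance, suppose $\vec a\ind^G_{V_0}D$ and let $\tau$ be an $\cL_G^+$-automorphism of the monster model of $T^{G+}$. Since $\tau$ restricts to an $\cL$-automorphism of $\mathcal{V}\models T$ that fixes $G(V)$ setwise (because $G$ is among the symbols of $\cL_G^+$), and since $\tau$ commutes with the definable functions $f_{\vec\lambda,i}$ and hence preserves the operation $B\mapsto\vect B$, we get $\tau\vect{V_0\vec a}=\vect{\tau V_0\,\tau\vec a}$ and $\tau(G(X))=G(\tau X)$ for every $X$. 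Clause (1) for $\tau\vec a,\tau V_0,\tau D$ then follows from invariance of $\ind^0$; clause (2) follows from invariance of $\ind^{R\text{-}mod}$ applied to the restriction of $\tau$ to the $R$-module $G(V)$ (noting that this restriction is an automorphism of that $R$-module, as $\tau$ preserves $+$ and scalar multiplication by elements of $R$); and clause (3) is an equation between $\cL_G^+$-definable (indeed $\vect{\cdot}$- and $G$-definable) objects, so it transports under $\tau$ automatically.

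For monotonicity, assume $\vec a\ind^G_{V_0}BD$ — more precisely, with the notational conventions following Definition~\ref{forkingVG}, assume $\vec a\ind^G_{V_0}D'$ where $D'=\vect{V_0 BD}$ is algebraically closed — and we want $\vec a\ind^G_{V_0}\vect{V_0B}$. Clause (1): from $\vect{V_0\vec a}\ind^0_{V_0}\vect{V_0BD}$ and monotonicity of $\ind^0$ we get $\vect{V_0\vec a}\ind^0_{V_0}\vect{V_0B}$. Clause (2): from $G(\vect{\vec aV_0})\ind^{R\text{-}mod}_{G(V_0)}G(\vect{V_0BD})$ and the inclusion $G(\vect{V_0B})\subseteq G(\vect{V_0BD})$, monotonicity of forking independence in $R$-modules gives $G(\vect{\vec aV_0})\ind^{R\text{-}mod}_{G(V_0)}G(\vect{V_0B})$. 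Clause (3): we must show $G(\vect{\vec aV_0}+\vect{V_0B})=G(\vect{\vec aV_0})+G(\vect{V_0B})$; the inclusion $\supseteq$ is trivial, and for $\subseteq$, given $x\in\vect{\vec aV_0}+\vect{V_0B}$ with $x\in G$, we have in particular $x\in\vect{\vec aV_0}+\vect{V_0BD}$, so by clause (3) for the big set we can write $x=u+v$ with $u\in G(\vect{\vec aV_0})$, $v\in G(\vect{V_0BD})$; then $v=x-u\in\vect{\vec aV_0}+\vect{V_0B}$, and the point is to conclude $v\in G(\vect{V_0B})$. This is where clause (2) — the $R$-module independence — should be used, in the form that $G(\vect{\vec aV_0})\cap\bigl(G(\vect{\vec aV_0})+G(\vect{V_0B})\bigr)$ interacts well with the decomposition, ultimately via $G(\vect{\vec aV_0})\cap G(\vect{V_0BD})\subseteq G(V_0)$-type statements coming from $R$-module independence together with clause (1) localized in $G$.

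The main obstacle is clause (3) in the monotonicity argument: unlike clauses (1) and (2), it is not a formal consequence of a monotonicity property of a pre-existing independence relation but a genuine statement about how the $G$-trace of a sum of spans distributes, and extracting $v\in G(\vect{V_0B})$ from $v\in G$ and $v\in\vect{\vec aV_0}+\vect{V_0B}$ requires combining all three clauses for the larger set $D'$ — in particular one needs to know that an element of $G$ lying in both $\vect{\vec aV_0}+\vect{V_0B}$ and $\vect{V_0BD}$ is already accounted for by $G(\vect{V_0B})$, which follows from $\cL$-independence (clause (1)) plus the Mordell--Lang property (Lemma~\ref{lm_ML}) guaranteeing that $\FF$-linear coincidences among elements of $G$ are witnessed over the base. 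I would carry out clauses (1) and (2) first as routine invocations, then spend the bulk of the effort isolating the correct form of clause (3) and proving it by the decomposition-and-descent argument sketched above.
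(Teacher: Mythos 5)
Your handling of invariance and of clauses (1)--(2) of monotonicity is correct and matches the paper's (very terse) proof: the paper just cites invariance of $\ind^0$, invariance of $\ind^{R\text{-}mod}$, and the automorphism-invariance of condition (3); for monotonicity it appeals to monotonicity of $\ind^0$, monotonicity of forking in the stable theory of $R$-modules together with $G(\vect{V_0 B})\subseteq G(\vect{V_0 BD})$, and for clause (3) it simply defers to \cite[Section 3]{De}. You correctly flag clause (3) as the only non-formal step and your decomposition is the right shape: write $x\in G(\vect{\vec a V_0}+\vect{V_0 B})$ as $x=u+v$ with $u\in G(\vect{\vec a V_0})$, $v\in G(\vect{V_0 BD})$ using the hypothesis for the larger base, and observe that $v=x-u$ lies in $(\vect{\vec a V_0}+\vect{V_0 B})\cap\vect{V_0 BD}$.

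The gap is in how you propose to close this. You reach for the Mordell--Lang property and for the $R$-module independence of clause (2) (``$G(\vect{\vec a V_0})\cap G(\vect{V_0 BD})\subseteq G(V_0)$-type statements''), but neither is the right tool and neither is needed. What finishes the argument is purely lattice-theoretic: since $\vect{V_0 B}\subseteq\vect{V_0 BD}$, the modular law for $\FF$-subspaces gives
\[(\vect{\vec a V_0}+\vect{V_0 B})\cap\vect{V_0 BD}=(\vect{\vec a V_0}\cap\vect{V_0 BD})+\vect{V_0 B},\]
and clause (1) alone (because $\ind^0$ implies algebraic independence, both sides are $\acl_\cL$-closed, and the pregeometry is modular) gives $\vect{\vec a V_0}\cap\vect{V_0 BD}=V_0\subseteq\vect{V_0 B}$. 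Hence $v\in\vect{V_0 B}$, so $v\in G(\vect{V_0 B})$, and $u=x-v\in G\cap\vect{\vec a V_0}=G(\vect{\vec a V_0})$. Mordell--Lang concerns $\FF$-linear dependences \emph{inside} $G$ and plays no role here, and clause (2) is not used at all in this step; presenting them as the ingredients would send a reader down the wrong path. With the modular-law step substituted in, your plan becomes a complete and correct proof.
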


\begin{proof}
Invariance follows from the fact that $\ind^0$ satisfies invariance, that condition (3) is invariant under automorphisms and the fact that, $\ind^{R-mod}$ being forking independence in $R$-modules, also satisfies invariance.

Let us consider now monotonicity. Given two sets $A\subseteq B\subseteq V$, we have that  $G(\langle A \rangle)\subseteq G(\langle B \rangle)$, since monotonicity holds for forking-independence, in stable theories, we have that item (2) in the definition of $\ind^G$ is preserved after taking subsets of $C$ and $D$. Since $\ind^0$ satisfies monotonicity, it remains to show that monotonicity holds for condition (3), which is easy, and where done in more generality in \cite[Section 3]{De}.

\end{proof}

Now we point out some observations on $\FF$-linear independence.

\begin{lem}
The relation $\ind^G$ is symmetric.
\end{lem}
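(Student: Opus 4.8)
The plan is to verify that each of the three clauses in Definition~\ref{forkingVG} is symmetric, and then combine them. Recall the conventions introduced after that definition: for tuples $\vec a,\vec d$ one sets $\vec a\ind^G_{V_0}\vec d$ to mean $\vec a\ind^G_{V_0}\langle V_0\vec d\rangle$, and for a set $A$ one sets $A\ind^G_{V_0}D$ to mean $\vec a\ind^G_{V_0}D$ for every finite $\vec a\subseteq A$. So it is enough to prove that for tuples $\vec a,\vec d$ we have $\vec a\ind^G_{V_0}\vec d$ if and only if $\vec d\ind^G_{V_0}\vec a$. Write $A=\langle V_0\vec a\rangle$ and $D=\langle V_0\vec d\rangle$; by the corollary computing $\acl_G$ in $T^{G+}$, both $A$ and $D$ are algebraically closed in $T^{G+}$ and contain $V_0$, so each of $\vec a\ind^G_{V_0}\vec d$ and $\vec d\ind^G_{V_0}\vec a$ is a legitimate instance of Definition~\ref{forkingVG}, with $G(D)=G(\langle V_0\vec d\rangle)$ and $G(A)=G(\langle V_0\vec a\rangle)$.

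First I would deal with clause $(1)$: it reads $A\ind^0_{V_0}D$ for $\vec a\ind^G_{V_0}\vec d$ and $D\ind^0_{V_0}A$ for $\vec d\ind^G_{V_0}\vec a$, and these are equivalent since $\ind^0$ is symmetric by our standing assumptions. Next, clause $(2)$ reads $G(A)\ind^{R-mod}_{G(V_0)}G(D)$ versus $G(D)\ind^{R-mod}_{G(V_0)}G(A)$; as the theory of $R$-modules is stable, forking independence inside it is symmetric, so these coincide. Finally, clause $(3)$ reads $G(A+D)=G(A)+G(D)$ versus $G(D+A)=G(D)+G(A)$, which is literally the same statement because $A+D=D+A$ and $G(A)+G(D)=G(D)+G(A)$. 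Putting the three equivalences together gives $\vec a\ind^G_{V_0}\vec d\iff\vec d\ind^G_{V_0}\vec a$.

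To extend this to arbitrary sets, I would observe that all three clauses have finite character on each side: for $(1)$ from finite character (and symmetry) of $\ind^0$, for $(2)$ from finite character of forking in modules, and for $(3)$ because any witness to a failure of $G(A'+D')=G(A')+G(D')$ already lies in $\langle V_0\vec a\rangle+D'$ for a single element $\vec a$ of $A'$. Hence $A'\ind^G_{V_0}D'$ is equivalent to $\vec a\ind^G_{V_0}\vec d$ holding for all finite $\vec a\subseteq A'$, $\vec d\subseteq D'$, which by the tuple case is symmetric in $A'$ and $D'$.

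I do not anticipate a real obstacle here: symmetry of $\ind^G$ is simply inherited from symmetry of its two ingredients, namely $\ind^0$ on $\mathcal V$ (which we assume) and forking independence in the stable theory of $R$-modules, together with the visible symmetry of the coset-equation clause $(3)$. The one point needing a moment of care is the bookkeeping with the generated substructures $\langle V_0\vec a\rangle$ — checking that the reversed instance is again of the form permitted in Definition~\ref{forkingVG} — which is exactly where the description of $\acl_G$ in $T^{G+}$ is used.
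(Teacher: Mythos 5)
Your proof is correct and follows essentially the same route as the paper's: symmetry of $\ind^G$ is inherited clause-by-clause from symmetry of $\ind^0$, symmetry of forking in the stable theory of $R$-modules, and the manifest symmetry of condition $(3)$ once both sides are replaced by the generated substructures $\langle V_0\vec a\rangle$ and $\langle V_0\vec d\rangle$. The paper's proof is terser (it only treats tuples), while you also spell out the bookkeeping that the reversed instance is a legitimate one and add the finite-character extension to arbitrary sets, but the underlying argument is identical.
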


\begin{proof}
Fix tuples $\vec a,\vec d$. Clearly items (1) and (2) are symmetric conditions since $\ind^0$ satisfies symmetry
as well as forking-independence in modules. Condition $(3)$ satisfies symmetry once we change the sets for their $\mathcal{L}_G$-algebraic closures with $V_0$.
\end{proof}

\begin{lem}
Assume that $\ind^0$ is transitive, then so is $\ind^G$.
\end{lem}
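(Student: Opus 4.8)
The plan is to unwind Definition~\ref{forkingVG} and check its three clauses one at a time. In the transitivity configuration (in the sense of Fact~\ref{fact:KPnsop1}) we are given models $V_0\subseteq V_1$ of $T^{G+}$, a set $A$, and a $T^{G+}$-algebraically closed set $D\supseteq V_1$, with $A\ind^G_{V_1}D$ and $V_1\ind^G_{V_0}D$, and we want $AV_1\ind^G_{V_0}D$. Using finite character one first reduces to the case $A=\vec a$ a finite tuple, and it is convenient to abbreviate $B:=\langle V_1\vec a\rangle$. The bookkeeping point to record at the outset is the identity $\langle V_0\vec a V_1\rangle=B$ (since $V_0\subseteq V_1$) together with $V_1\subseteq B$, hence $\langle V_1\rangle=V_1$ (as $V_1\models T^{G+}$ is $\cL_0^+$-closed) and $G(V_1)\subseteq G(B)$; these are exactly what let us chain the two hypotheses.

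For clause~(1): from $\vec a\ind^G_{V_1}D$ we read off $B\ind^0_{V_1}D$, and from $V_1\ind^G_{V_0}D$ we read off $V_1=\langle V_1\rangle\ind^0_{V_0}D$. Since $V_0\subseteq V_1$ are models of $T$ and $\ind^0$ is assumed transitive, we get $BV_1=B\ind^0_{V_0}D$, which is precisely clause~(1) for $\vec a V_1\ind^G_{V_0}D$.

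Clause~(2) is handled the same way inside $R$-modules: the two hypotheses give $G(B)\ind^{R-mod}_{G(V_1)}G(D)$ and $G(V_1)\ind^{R-mod}_{G(V_0)}G(D)$, and since the theory of $R$-modules is stable, forking independence there is transitive over arbitrary parameter sets, so $G(B)=G(B)G(V_1)\ind^{R-mod}_{G(V_0)}G(D)$, giving clause~(2). Clause~(3) needs no work: the identity $G(B+D)=G(B)+G(D)$ to be established is verbatim clause~(3) of the hypothesis $\vec a\ind^G_{V_1}D$ (using $\langle V_1\vec a\rangle=B$ and $\langle D\rangle=D$). Combining the three clauses yields $\vec a V_1\ind^G_{V_0}D$.

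So the argument is essentially a formal manipulation, and the only step that takes a moment's care — the ``main obstacle'', such as it is — is lining up the base and left-hand objects: one must notice that the left object $\langle V_0(\vec a V_1)\rangle$ of the conclusion coincides with the left object $\langle V_1\vec a\rangle$ of the first hypothesis, and that $G(V_1)$ sits inside $G(\langle V_1\vec a\rangle)$, so that transitivity of $\ind^0$ (respectively of forking in $R$-modules) can be applied with $V_1$ as the intermediate model (respectively $G(V_1)$ as the intermediate parameter set). No idea beyond transitivity of the two component relations is needed; in particular clause~(3) is preserved under this operation for free, and the third clause of the hypothesis $V_1\ind^G_{V_0}D$ is not even used.
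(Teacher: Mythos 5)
Your proof is correct and takes essentially the same route as the paper: clause~(1) by transitivity of $\ind^0$ over models of $T$, clause~(2) by transitivity of forking in the (stable) theory of $R$-modules, and clause~(3) handled separately. The one point of difference is in clause~(3): the paper defers to an external reference (``done in more generality in \cite[Section 3]{De}''), whereas you observe directly that, because $V_0\subseteq V_1$ forces $\langle V_0\vec a V_1\rangle=\langle V_1\vec a\rangle$, the required identity $G(\langle V_1\vec a\rangle+D)=G(\langle V_1\vec a\rangle)+G(D)$ is \emph{verbatim} clause~(3) of the hypothesis $\vec a\ind^G_{V_1}D$, so nothing needs to be proved and the third clause of $V_1\ind^G_{V_0}D$ is never used. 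This is a cleaner and more self-contained treatment of the same point, and it makes the proof genuinely routine rather than depending on an outside lemma. One small remark worth keeping in mind (your proof is fine on this): the transitivity of $\ind^0$ one invokes is the one from Fact~\ref{fact:KPnsop1}, which only allows models in the base; this is satisfied here since $V_0,V_1\models T^{G+}$ and hence their $\cL$-reducts are models of $T$.
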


\begin{proof}
Let $(V_0,G)\subseteq (V_1,G)\models T^{G+}$ and assume that $A \ind^G_{V_1} B$ and $V_1 \ind^G_{V_0} B$. Using the definition and the fact that $\ind^0$ is transitive we get $A V_1 \ind^{G}_{V_0} B$. Similarly, since transitivity holds for forking-independence in $R$-modules we also get the condition $G(\langle A V_1\rangle) \ind^{R-mod}_{G(V_0)} G(\langle B V_0\rangle )$ as desired.  
It remains to show condition (3) which is easy, and is done in more generality in \cite[Section 3]{De}

\end{proof}

\begin{lem}
The independence relation $\ind^G$ satisfies extension.
\end{lem}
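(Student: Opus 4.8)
By finite character it suffices to treat a finite tuple $\vec a$ with $\vec a\ind^G_{V_0}D$; we must produce, for any set $D'$, some $\vec a'\equiv_{V_0D}\vec a$ with $\vec a'\ind^G_{V_0}DD'$. Set $E=\acl_G(V_0D)$ and $F=\acl_G(V_0DD')$, so $E\subseteq F$ are closed and, since $\acl_G=\dcl_G$ in $T^{G+}$, realizing a type over $V_0D$ is the same as realizing it over $E$; the goal becomes $\vec a'\equiv_E\vec a$ with $\vec a'\ind^G_{V_0}F$. Using $\acl_G=\dcl_G$ and the normal form from the Remark preceding Theorem \ref{forking-localchar}, write $\vec a=h(\vec a_1,\vec a_2)$ with $h$ an $\cL_G^+(V_0)$-definable function, $\vec a_1$ an $\FF$-independent tuple over $V_0\cup G(V)$ and $\vec a_2\subseteq G$ an $\FF$-independent tuple over $\spane_\FF(V_0\vec a_1)$; from conditions (1) and (3) of $\vec a\ind^G_{V_0}E$ one checks $\vec a_1$ is $\FF$-independent over $E\cup G(V)$ and $\vec a_2$ over $\spane_\FF(E\vec a_1)$. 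It is then enough to extend $(\vec a_1,\vec a_2)$ and put $\vec a'=h(\vec a_1',\vec a_2')$.

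\textbf{Extending $\vec a_1$.} Since $\vec a_1\cup E$ is $G$-independent (Lemma \ref{G-ind-characterization}) and contributes no new elements of $G$, Corollary \ref{G-ind-qe} shows $\tp_{\cL_G^+}(\vec a_1/E)$ is determined by $\tp_\cL(\vec a_1/E)$. By extension for $\ind^0$ there is $\vec a_1^\ast\equiv^\cL_E\vec a_1$ with $\langle V_0\vec a_1^\ast\rangle\ind^0_{V_0}F$. Now realize $\tp_\cL(\vec a_1^\ast/F)$ together with the requirement that $\vec x$ be $\FF$-independent over $F\cup G(V)$: this partial type is consistent by axiom $(D)$ and saturation, since $\tp_\cL(\vec a_1^\ast/F)$ is non-algebraic. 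Let $\vec a_1'$ realize it. As $V_0\subseteq F$ and $\tp_\cL(\vec a_1'/F)=\tp_\cL(\vec a_1^\ast/F)$, invariance of $\ind^0$ gives $\langle V_0\vec a_1'\rangle\ind^0_{V_0}F$; and since $\vec a_1'$ is again $\FF$-independent over $E\cup G(V)$, Corollary \ref{G-ind-qe} gives $\vec a_1'\equiv_E\vec a_1$ in $\cL_G^+$. Replacing $\vec a$ by its image under an automorphism over $E$ sending $\vec a_1\mapsto\vec a_1'$, we may assume $\vec a_1$ itself satisfies $\langle V_0\vec a_1\rangle\ind^0_{V_0}F$ and is $\FF$-independent over $F\cup G(V)$.

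\textbf{Extending $\vec a_2$.} Now $\vec a_2\subseteq G$ has $\tp_{R-mod}(\vec a_2/G(E))$ non-forking over $G(V_0)$ (condition (2) of $\vec a\ind^G_{V_0}E$) and $\tp_\cL(\vec a_2/E\vec a_1)$ non-$\ind^0$-forking over $V_0\vec a_1$. Choose a non-forking extension $q(\vec x_2)$ of $\tp_{R-mod}(\vec a_2/G(E))$ to $G(F)$ over $G(V_0)$ (extension holds in the stable theory of $R$-modules) and an $\ind^0$-independent extension $p(\vec x_2)$ of $\tp_\cL(\vec a_2/E\vec a_1)$ to $F\vec a_1$; note $p$ still forces $\vec x_2$ to be $\FF$-independent over $F\vec a_1$. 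Since $\dim(\vec a_2/E\vec a_1)=|\vec a_2|$ is preserved under both extensions, Corollary \ref{cor_shift} — after rephrasing $q$ via the predicates $G_{\vec\lambda}$ using Lemma \ref{cor_QEmod} — shows $p\cup q$ is consistent in $G$. Let $\vec a_2'$ realize it and put $\vec a'=h(\vec a_1,\vec a_2')$; by Corollary \ref{G-ind-qe}, $\vec a_2'\equiv_{E\vec a_1}\vec a_2$, hence $\vec a'\equiv_E\vec a$, i.e.\ $\vec a'\equiv_{V_0D}\vec a$.

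\textbf{Checking $\vec a'\ind^G_{V_0}F$ and the main obstacle.} Condition (1): $\langle V_0\vec a_1\vec a_2'\rangle\ind^0_{V_0}F$ follows from $\langle V_0\vec a_1\rangle\ind^0_{V_0}F$, $\langle V_0\vec a_1\vec a_2'\rangle\ind^0_{V_0\vec a_1}F$ and transitivity of $\ind^0$ (and the remaining coordinates of $\vec a'$ are $\cL_G^+$-definable over $V_0\vec a_1\vec a_2'$). Condition (2): $G(\langle\vec a'V_0\rangle)=\spane_R(\vec a_2')+G(V_0)$ because $\vec a_1$ adds no elements of $G$, so (2) is exactly the non-forking of $q$ over $G(V_0)$. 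Condition (3): this is a Mordell--Lang computation — the $\FF$-genericity of $\vec a_1$ over $F\cup G(V)$ discards its contribution, and Lemma \ref{lm_ML} (Remark \ref{rk:mordelllang}) controls $\FF$-dependences among elements of $G$ by $\hat R$-coefficients; it is carried out as in \cite[Section 3]{De}. The delicate point throughout is that axiom $(D)$ only supplies generics for algebraic independence localized in $G$ (cf.\ the remark after Theorem \ref{forking-localchar}), so in the first step one must pass to the $\ind^0$-extension first and only then re-realize with $G$-genericity imposed, using invariance of $\ind^0$; and in the second step the $\cL$-type and the $R$-module type have to be amalgamated along a common dimension count via Corollary \ref{cor_shift}. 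Verifying condition (3) for the amalgamated tuple is the fiddliest bookkeeping, and is where we lean on \cite[Section 3]{De}.
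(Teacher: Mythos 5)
Your proof is correct and follows essentially the same route as the paper's: both decompose $\vec a$ into a part $\vec a_1$ generic over $G(V)$, a part $\vec a_2\subseteq G$, and a definable remainder, then extend $\vec a_1$ by $\ind^0$-extension plus codensity (with the $\ind^0$-invariance point you make explicit being left implicit in the paper) and extend $\vec a_2$ by stitching together a non-forking $R$-module type and the $\cL$-type via Corollary \ref{cor_shift}, concluding with Corollary \ref{G-ind-qe}. One small slip: $G(\langle\vec a'V_0\rangle)$ is not generally $\spane_R(\vec a_2')+G(V_0)$ — by the Mordell--Lang property it is only contained in $\spane_{\hat R}(\vec a_2',G(V_0))\cap G$ — but this does not affect the argument since condition~(2) still follows from the non-forking choice of $q$.
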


\begin{proof} Let $(V_0,G)\models T^{G+}$, let $B$, $D$ be sets, and let $p(\vec x)$ be a $\cL_{G+}$-type over $V_0B$ such that whenever $\vec a\models p$ we have $\vec a\ind^G_{V_0}B$. After adding to $\vec a$ elements from $\langle \vec a V_0\rangle$ if necessary, we may assume that if $\vec a\models p$, then we can write $\vec a=(\vec a_1,\vec a_2,\vec a_3)$, where $\vec a_1\ind_{V_0} G(V)$ and it is $\FF$-linearly independent over $V_0$, $\vec a_2\in G(V)$ and $\vec a_2$ is  $\FF$-linearly independent
over $V_0\vec a_1$,  and 
$\vec a_3\in \spane_{\FF}(\vec a_1\vec a_2V_0)$. Since $\ind^0$ satisfies extension 
we can find $\vec a'\models p$ such that
$\vec a'=(\vec a_1',\vec a_2',\vec a_3')\ind^0_{V_0}BD$. 

Since $\dim(\vec a_1/V_0)=|\vec a_1|$ and $\ind^0$ extends algebraic independence, we also have $\dim(\vec a_1'/V_0BD)=\dim(\vec a_1'/V_0)=|\vec a_1|$. By the codensity property in the pair $(V,G)$
we may assume $\dim(\vec a_1'/ G(V)BD)=|\vec a_1'|$, so the tuple $\vec a_1'$ can also be chosen to be $\FF$-linearly independent over $BDG(V)$ $(*)$. 

Now let $p_2(\vec x_2)=\tp_{\cL}(\vec a_2'/V_0BD\vec a_1')$ and let $p_{2}^R(\vec x_2)=\tp_{R-mod}(\vec a_2/G(\langle V_0B\rangle ))$. Since $\ind^{R-mod}$ is a stable independence notion, it satisfies existence and we can find $p_{2D}^R(\vec x_2)$ a non-forking extension of $p_{2}^R(\vec x_2)$ over $G(\langle V_0BD \rangle)$. 

Using Corollary \ref{cor_shift} we may assume that $\vec a_2'\models p_2(\vec x_2)\cup p_{2D}^R(\vec x_2)$. Let $\vec a'=(\vec a_1',\vec a_2',\vec a_3')$.

We check that condition (3) is satisfied, i.e. $G(\vect{\vec a'}+\vect{BD}) = G(\vect{\vec a'})+G(\vect{BD})$. Let $x = x_1+x_2+b+d\in G$ be in the left hand side, for $x_1\in\vect{\vec a_1'}, x_2\in \vect{\vec a_2'},b\in B,d\in D$. Then $x_1\in \vect{\vec a_1'}\cap \vect{G(V)BD} = \set{0}$, by $(*)$. So $x = x_2+b+d\in G$. It is clear that $b+d\in G(\vect{BD})$ so it follows that $x\in G(\vect{\vec a_2'})+G(\vect{BD})$.

This shows that $\vec a'\ind^G_{V_0}D$. Since both tuples $(\vec a_1,\vec a_2)$ , $(\vec a_1',\vec a_2')$ are $G$-independent over $V_0B$, $G(\vec a_1,\vec a_2)=\vec a_2$ and $G(\vec a_1',\vec a_2')=\vec a_2'$ we get by Corollary \ref{G-ind-qe} that $\tp_{G+}(\vec a/V_0B)=\tp_{G+}(\vec a'/V_0B)$ and $\vec a'$ is the desired tuple.
\end{proof}

\begin{rem}\label{rk:locforG}
    If $A,B$ are $\FF$-vector spaces in a model of $T^G$ then there exists $B_0\seq B$ with $\abs{B_0}\leq \abs{A}$ such that $G(A+B) \seq G(A+B_0)+G(B)$. To see this, for each $a\in A$, define $b_a$ to be any element of the set $\set{b\in B\mid a+b\in G}$ if it is nonempty, or else $b_a = \emptyset$. Set $B_0 = \bigcup_{a\in A} b_a$ and take any $g\in G(A+B)$, then $g = a+b = a+b_a +(b-b_a)$ and $b-b_a\in G$ hence $g\in G(A+B_0)+G(B)$.
\end{rem}

\begin{lem}
Assume that $\ind^0$ satisfies local character*, then so does $\ind^G$.
\end{lem}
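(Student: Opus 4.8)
The plan is to check the local character* axiom of Fact~\ref{fact:KPnsop1} for $\ind^G$ directly. So fix a finite tuple $\vec a$, a regular cardinal $\kappa>\abs{T}$, and a continuous chain $(M_i)_{i<\kappa}$ of models of $T^{G+}$ with $\abs{M_i}<\kappa$ and union $W=\bigcup_{i<\kappa}M_i$ (a model of $T^{G+}$, hence $\mathcal{L}_G^+$-algebraically closed). I want a single $j<\kappa$ witnessing the three clauses of $\vec a\ind^G_{M_j}W$ from Definition~\ref{forkingVG}. I would produce three cofinal thresholds, one per clause, and order the three arguments so that the first two clauses survive the passage to the last threshold.

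The first (and main) step is bookkeeping about the generated substructures. By Lemma~\ref{lm_termscases}, $\vect{W\vec a}=\spane_\FF(W\cup\vec a\cup\vec h)$ for a finite tuple $\vec h$ of elements of $G$; since $\vect{M_i\vec a}$ is increasing with union $\vect{W\vec a}$ and $\vect{W\vec a}/W$ is finite-dimensional over $\FF$, one shows, using the Mordell--Lang property (Lemma~\ref{lm_ML}, so that $\FF$-dependences inside $G$ are already $\hat R$-dependences and the $G$-part of $\vect{M_j\vec a}$ over $G(M_j)$ is finitely generated as an $R$-module) together with continuity and regularity of the chain, that there is $j_0<\kappa$ such that for all $j\in[j_0,\kappa)$ one has $\vect{M_j\vec a}=\spane_\FF(M_j\cup\vec a\cup\vec h)$, $\vect{M_j\vec a}\cap W=M_j$, and $G(\vect{M_j\vec a})$ is the $R$-submodule of $G$ generated by $G(M_j)\cup\vec h$. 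Clause~(3) is then free for every $j\geq j_0$: indeed $\vect{M_j\vec a}+W=\vect{W\vec a}$, so $G(\vect{M_j\vec a}+W)=G(\vect{W\vec a})=G(W)+\langle\vec h\rangle_R=G(\vect{M_j\vec a})+G(W)$, using $G(M_j)\subseteq G(W)$. (This is exactly the phenomenon that makes Remark~\ref{rk:locforG} available, and it is what one is really exploiting here.)

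Next I would treat clause~(2). The sets $\big(G(M_i)\big)_{i\in[j_0,\kappa)}$ form a continuous chain with union $G(W)$ inside the monster $R$-module, each of size $<\kappa$. Since the theory of $R$-modules is stable, forking independence $\ind^{R-mod}$ has local character; applying it along this chain to the finite tuple $\vec h$ gives $j_2\in[j_0,\kappa)$ with $\vec h\ind^{R-mod}_{G(M_{j_2})}G(W)$, and by base monotonicity of forking in a stable theory $\vec h\ind^{R-mod}_{G(M_j)}G(W)$ for all $j\in[j_2,\kappa)$; by the first step this says $G(\vect{M_j\vec a})\ind^{R-mod}_{G(M_j)}G(W)$, i.e.\ clause~(2) holds at every $j\geq j_2$. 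For clause~(1), the $\mathcal{L}$-reducts $(M_i)_{i\in[j_2,\kappa)}$ form a continuous chain of models of $T$ with union $W$, so the hypothesis that $\ind^0$ satisfies local character*, applied to the finite tuple $\vec a\vec h$, gives $j_1\in[j_2,\kappa)$ with $\vec a\vec h\ind^0_{M_{j_1}}W$; by the observation preceding Definition~\ref{forkingVG} (and $M_{j_1}\subseteq W$) this yields $\spane_\FF(M_{j_1}\cup\vec a\cup\vec h)\ind^0_{M_{j_1}}W$, which by the first step is $\vect{M_{j_1}\vec a}\ind^0_{M_{j_1}}W$, i.e.\ clause~(1) at $j_1$. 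Setting $j:=j_1$ we have $j\geq j_2\geq j_0$, so all three clauses hold and $\vec a\ind^G_{M_j}W$, as required. Note that only $\ind^{R-mod}$ needed base monotonicity; clause~(1) is used only at the final $j$, so no persistence property of $\ind^0$ beyond the assumed list is invoked.

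I expect the genuine obstacle to be the first step: producing one fixed finite tuple $\vec h$ that uniformly, for all large $j$, both generates $\vect{M_j\vec a}$ over $M_j$ (as an $\mathcal{L}_0^+$-structure) and generates the $R$-module $G(\vect{M_j\vec a})$ over $G(M_j)$, and verifying $\vect{M_j\vec a}\cap W=M_j$ so that clause~(1) is not obstructed by $W$-elements that are $\mathcal{L}_G^+$-algebraic over $M_j\vec a$. This is a finiteness-and-stabilisation argument combining the term analysis of Lemma~\ref{lm_termscases}, the Mordell--Lang property (Lemma~\ref{lm_ML}), and the $G$-basis description from the remark after Corollary~\ref{TEA}; once it is in place, the remaining two clauses are just applications of the local character of the two component relations to fixed finite tuples, plus the cost-free reading-off of clause~(3).
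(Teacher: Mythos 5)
Your reordering of the three clauses --- settling clauses (2) and (3) first, then invoking local character* of $\ind^0$ for the \emph{tail} chain $(M_i)_{i\geq j_2}$, so that the $\ind^0$ clause is decided at the final index $j$ and no persistence is required --- is a genuine and useful observation: the assumed list of properties of $\ind^0$ does not include base monotonicity, and this order of operations avoids needing it.

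However, Step 1 contains a real gap, and it is exactly the gap that the paper's $\omega$-iteration via Remark~\ref{rk:locforG} is there to fill. You assert that the Mordell--Lang property makes ``the $G$-part of $\vect{M_j\vec a}$ over $G(M_j)$ finitely generated as an $R$-module'', and you then use the identity $G(\vect{W\vec a})=G(W)+\langle\vec h\rangle_R$. Neither holds in general: $\vect{W\vec a}/W$ is finite-dimensional over $\FF$, so a finite $\vec h$ does control the $\FF$-span, but the $R$-module $G(\vect{W\vec a})$ need not be finitely generated over $G(W)$. Concretely, take $R=\Z$, $\FF=\Q$, and suppose the tuple $\vec h$ consists of one element $h=a-c\in G$ with $c\in W$, $a\notin W$ (so $h\notin W$), and suppose $h\in 2^nG$ for all $n$. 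Then $h/2^n\in\Q h\subseteq\vect{W\vec a}$ and $h/2^n\in G$, so $h/2^n\in G(\vect{W\vec a})$ for every $n$; but $h/2\notin G(W)+\Z h$, since $h/2-nh=(1/2-n)h\notin W$ for any $n\in\Z$. The Mordell--Lang property controls which $\FF$-linear relations hold among elements of $G$ (they come from $\hat R$-linear ones), but it says nothing about $R$-module generation, and in fact the $\hat R$-divisibility of $G$-elements is exactly what destroys finite generation. Consequently your ``clause~(3) is free for all $j\geq j_0$'' does not follow: the stabilization of the $\FF$-span at $j_0$ does not entail stabilization of the $G$-coset structure, and $G(\vect{M_j\vec a})+G(W)$ can remain a proper subgroup of $G(\vect{W\vec a})$ for $j$ well past your $j_0$.

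The conclusion you want is nonetheless salvageable, but it requires a separate argument. One option is the paper's route: iterate Remark~\ref{rk:locforG} $\omega$ times inside the chain to reach a $V_k$ with $\vect{\vec a V_k}$ closed under the defect, using regularity of $\kappa>\omega$. Another, perhaps closer to your intent, is cardinality-based: the quotient group $G(\vect{W\vec a})/G(W)$ embeds in $\vect{W\vec a}/W\cong\spane_\FF(\vec a\vec h)$ and so has size at most $\abs{\FF}<\kappa$; the subgroups $G(\vect{M_j\vec a})/G(M_j)$ increase with union $G(\vect{W\vec a})/G(W)$, so by regularity of $\kappa$ the chain stabilizes at some $j_*<\kappa$, and for $j\geq j_*$ clause~(3) holds. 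Either argument, but not the $\langle\vec h\rangle_R$ identity, must be supplied. (A milder shadow of the same issue appears in your passage from $\vec h\ind^{R\text{-}mod}_{G(M_j)}G(W)$ to clause~(2); there the fix is cheap, since by torsion-freeness $G(\vect{M_j\vec a})\subseteq\acl_{R\text{-}mod}(G(M_j)\vec h)$, and forking in a stable theory is preserved under passing to the algebraic closure of the left side.)
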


\begin{proof}

Let $\vec a$ be a finite tuple and let $\kappa >|T^{G+}|=|T|$ be a regular cardinal. Let $\{(V_j,G_j)\}_{j<\kappa}$ be a continuous chain of models of $T^{G+}$ and let $(V,G)=\cup_{j<\kappa}(V_j,G_j)$. Then $\{V_j\}_{j<\kappa}$ is also
a continuous chain and by hypothesis there is $j_1<\kappa$ such that $\vec a\ind^0_{V_{j_1}} V$. 

Since forking independence in $R$-modules is stable, let $\kappa_R$ be the corresponding local character cardinal and note that $\kappa_R\leq |T^{G+}|$. 
Let $B\subset G(V)$ be of cardinality $\leq \kappa_R$ be such that $G(\langle \vec a B \rangle)\ind^{R-mod}_{G(B)}G(V)$. Since $\kappa$ is regular and $\kappa_R<\kappa$, we may find $j_2<\kappa$ such that 
$B\subset G(V_{j_2})$ and thus 
$G(\langle \vec a G(V_{j_2}) \rangle)\ind^{R-mod}_{G(V_{j_2})}G(V)$. 

Let $j_3=\max(j_1,j_2)$, then we have $\vec a\ind^0 _{V_{j_3}} V$
and $G(\langle \vec a G(V_{j_3}) \rangle)\ind^{R-mod}_{G(V_{j_3})}G(V)$.

 Let $\tau$ be the operator defined by setting 
\[\tau(A) = \spane_\FF(A\cup \bigcup_{\vec\lambda,i} f_{\vec\lambda,i}(A)).\] Then $A = \vect{A}$ if and only if $f_{\vec\lambda,i}(A) \seq A$ for all $\lambda$
and all $i$. Using Remark \ref{rk:locforG}, we iteratively construct a sequence $(A_n)_{n<\omega}$ with $A_0 = \vect{\vec a V_{j_3}}$ and such that for some sequence $(V_{k_n})_{n<\omega}$ we have:
\begin{enumerate}
    \item $\tau(A_n)+V_{k_n}\seq A_{n+1}$.
    \item $A_{n}\seq \vect{\vec a V_{k_n}}$.
    \item $j_3\leq k_0\leq k_1\leq \ldots <\kappa$.
    \item $G(A_n+V)\seq G(A_{n+1})+G(V)$.
\end{enumerate}
(Start with $A_0 = \vect{\vec a V_{j_3}}$ and apply Remark \ref{rk:locforG} to $\tau(A_0)$ to get $k_0\geq j_3$ and $V_{k_0}$ such that $G(\tau(A_0)+V)\seq G(\tau(A_0)+V_{k_0})+G(V)$. Then define $A_1 = \tau(A_0)+V_{k_0}\seq \vect{\vec aV_{k_0}}$, we have $G(A_0+V)\seq G(\tau(A_0)+V)\seq G(A_1)+G(V)$. Then apply again Remark \ref{rk:locforG} with $\tau(A_1)$, etc... An iteration of this process yields the desired sequences $(A_n)_{n<\omega}$ and $(V_{k_n})_{n<\omega}$.)

Let $A = \bigcup_{n<\omega} A_n$, so $G(A+V) = G(A)+G(V)$ by (4). By (3), as $\kappa$ is regular, there exists $V_k$ such that $A= \vect{\vec a V_{k}}$. As $k\geq j_3$ we conclude that $\vec a\ind^0_{V_k} V$, $G(\vect{\vec aV_k}) \ind^{R-mod}_{G(V_{k})} G(V)$ and $G(\vect{\vec aV_k}+V) = G(\vect{\vec a V_k})+G(V)$ hence $\vec a \ind^G_{V_k} V$.
\end{proof}

\begin{rem}\label{rk:heir}
    Assume that for some $A,D\seq G$ we have $A\ind^{R-mod}_{G(V_0)} D$. Let $a\in A,d\in D$ and $r\in R$ be such that $a \in  d + rG$ then there exists $v_0\in G(V_0)$ such that $a\in v_0+rG$. 
\end{rem}
\begin{proof}
    The theory of $R$-modules is stable, hence as $\tp(a/DG(V_0))$ (in the sense of $R$-modules) does not fork over the model $G(V_0)$ we get that $\tp(a/DG(V_0))$ is an heir of $\tp(a/G(V_0))$, hence there exists $v_0\in G(V_0)$ such that the formula $\exists y (\in G)  x = v_0+ry$ belongs to $\tp(a/G(V_0))$.
\end{proof}


\begin{theo}
Assume that $T$ has $SU$-rank one and that $\hat{R}=\FF$. Then the independence theorem also holds for $\ind^G$.
\end{theo}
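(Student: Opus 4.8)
The goal is to verify clause~(8) of Fact~\ref{fact:KPnsop1} for $\ind^{G}$. Fix $V_0\models T^{G+}$ and, using finite character, sets $A=\langle V_0\vec a\rangle$, $B=\langle V_0\vec b\rangle$, $C_1=\langle V_0\vec c_1\rangle$, $C_2=\langle V_0\vec c_2\rangle$ with $C_1\equiv_{V_0}C_2$, $A\ind^{G}_{V_0}B$, $C_1\ind^{G}_{V_0}A$ and $C_2\ind^{G}_{V_0}B$; we must produce $C$ with $C\ind^{G}_{V_0}AB$, $C\equiv_{V_0A}C_1$ and $C\equiv_{V_0B}C_2$. The overall strategy is to amalgamate separately in the two ``coordinates'' of Definition~\ref{forkingVG}: the geometric coordinate $\ind^{0}=\ind$, using the independence theorem over models in the simple theory $T$ (an $SU$-rank one theory is simple), and the $R$-module coordinate $\ind^{R-mod}$, using stationarity over the model $G(V_0)$ in the stable theory of $R$-modules; then glue the two amalgams into a single tuple via the shifting machinery of Corollary~\ref{cor_shift}. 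We use crucially that $\hat R=\FF$, so $\acl_{G}=\spane_{\FF}$ and every set is $G$-independent, whence $\cL_G^+$-types are governed by Corollary~\ref{G-ind-qe} through the pair $\bigl(\tp_\cL,\ \tp_{R-mod}\text{ of the }G\text{-part}\bigr)$.

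\emph{Geometric amalgam.} Since $\tp_\cL(\vec c_1/V_0)=\tp_\cL(\vec c_2/V_0)$, $A\ind_{V_0}B$, $C_1\ind_{V_0}A$, $C_2\ind_{V_0}B$, the independence theorem in $T$ gives $\vec c^{*}$ with $\vec c^{*}\equiv^{\cL}_{V_0\vec a}\vec c_1$, $\vec c^{*}\equiv^{\cL}_{V_0\vec b}\vec c_2$ and $\langle V_0\vec c^{*}\rangle\ind_{V_0}\langle V_0\vec a\vec b\rangle$. Decompose $\vec c^{*}=\vec c_g\vec c_G\vec c_r$ as in the remark before Theorem~\ref{forking-localchar} ($\vec c_g$ $\FF$-linearly independent over $V_0$, $\vec c_G\subseteq G$, $\vec c_r\in\spane_\FF(V_0\vec c_g\vec c_G)$), and apply the codensity axiom $(D)$ componentwise to $\vec c_g$ — exactly as in the proof that $\ind^{G}$ has extension — so as to make $\vec c_g$ $\FF$-linearly independent over $\spane_\FF(V_0\vec a\vec b\,G(V))$ while keeping $\langle V_0\vec c^{*}\rangle\ind_{V_0}\langle V_0\vec a\vec b\rangle$. \emph{$R$-module amalgam.} In the stable module $G(V)$ over the model $G(V_0)$, let $q^{*}$ be the unique nonforking extension of $\tp_{R-mod}(G(\vec c_1)/G(V_0))=\tp_{R-mod}(G(\vec c_2)/G(V_0))$ to $G(\langle V_0\vec a\vec b\rangle)$; by stationarity together with condition~(2) of the hypotheses, $q^{*}\!\upharpoonright G(\langle V_0\vec a\rangle)=\tp_{R-mod}(G(\vec c_1)/G(\langle V_0\vec a\rangle))$ and $q^{*}\!\upharpoonright G(\langle V_0\vec b\rangle)=\tp_{R-mod}(G(\vec c_2)/G(\langle V_0\vec b\rangle))$. \emph{Gluing.} Rewriting $q^{*}$ as a set of conditions of the form $t(\vec x,\vec a\vec b)\in G_{\vec\lambda}$ together with $\vec x\subseteq G$ (consistent in $G$, with restriction to $\emptyset$ consistent with the nonalgebraic $\cL$-type of $\vec c_G$ over $V_0$ by Proposition~\ref{prop_shift}), Corollary~\ref{cor_shift} produces $\vec c=\vec c_g'\vec c_G'\vec c_r'$ in the same normal form with $\tp_\cL(\vec c/V_0\vec a\vec b)=\tp_\cL(\vec c^{*}/V_0\vec a\vec b)$ and $G(\langle V_0\vec c\rangle)$ realizing $q^{*}$; set $C=\langle V_0\vec c\rangle$.

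It remains to check $C\ind^{G}_{V_0}AB$ and the two type equalities. Conditions~(1) and~(2) of Definition~\ref{forkingVG} hold by construction. For condition~(3), one shows $G(\langle V_0\vec c\rangle+\spane_\FF(\vec a\vec b))=G(\langle V_0\vec c\rangle)+G(\spane_\FF(\vec a\vec b))$: given $g$ in the left-hand side, the $\vec c_g'$-coordinates drop out since $\vec c_g'$ is $\FF$-independent over $\spane_\FF(V_0\vec a\vec b\,G(V))$, reducing to $g=u+w$ with $u\in\spane_\FF(V_0\vec c_G')$ and $w\in\spane_\FF(\vec a\vec b)$; one then splits $g$ using condition~(3) of $A\ind^{G}_{V_0}B$ and the heir property of $R$-module nonforking (Remark~\ref{rk:heir}, applied to $G(C)\ind^{R-mod}_{G(V_0)}G(\langle V_0\vec a\vec b\rangle)$), mirroring the verification of condition~(3) in the extension lemma. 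Finally $\vec c$ is $G$-independent over $V_0\vec a$ with $G(\vec c)=\vec c_G'$, and $\tp_\cL(\vec c/V_0\vec a)=\tp_\cL(\vec c_1/V_0\vec a)$, $\tp_{R-mod}(G(\vec c)/V_0\vec a)=\tp_{R-mod}(G(\vec c_1)/V_0\vec a)$ by the previous paragraph, so Corollary~\ref{G-ind-qe} yields $\tp_{\cL_G^+}(\vec c/V_0\vec a)=\tp_{\cL_G^+}(\vec c_1/V_0\vec a)$, i.e. $C\equiv_{V_0A}C_1$, and symmetrically $C\equiv_{V_0B}C_2$. The crux — and the main obstacle — is the gluing step together with the verification of condition~(3): realizing the geometric amalgam and the module amalgam by one tuple and then showing the new intersections with $G$ are accounted for; it is precisely here that $\hat R=\FF$ (forcing $\acl_{G}=\spane_\FF$, so the normal form and Corollary~\ref{cor_shift} apply cleanly) and the heir property are essential.
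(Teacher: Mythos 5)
Your proposal is correct and follows essentially the same route as the paper: decompose the tuple into a part generic over $G$, a part in $G$, and a dependent part; amalgamate the $\cL$-types via the independence theorem in $T$ and push the generic part off $\spane_\FF(ABG)$ by codensity; amalgamate the $R$-module types by stationarity over $G(V_0)$; glue with Corollary~\ref{cor_shift}; and verify clause~(3) using $\hat R = \FF$ to clear denominators and Remark~\ref{rk:heir} (the heir property). The only cosmetic difference is that you verify the criterion in the three-set form of Fact~\ref{fact:KPnsop1}(8), whereas the paper phrases it as amalgamating two independent extensions $p_B,p_C$ of a type $p$ over $V_0$; these are interchangeable.
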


\begin{proof}
Let $(V_0,G)\models T^{G+}$ and let $B,C\supset V_0$ be supersets such that $B\ind^G_{V_0} C$. In particular by clause $(1)$ of $G$-independence we get $B\ind^0_{V_0} C$ and, exchanging the sets for their $\mathcal{L}_G$-algebraic closure, we may assume that $C=\langle C\rangle$ and $D=\langle D\rangle$.

Let $p(\vec x)$ be an $\mathcal{L}_G$-type over $V_0$ and let $p_B(\vec x)\in S(B)$, $p_C(\vec x)\in S(C)$ be independent extensions of $p(\vec x)$ in the sense of $\ind^G$. If  $\vec a\models p(\vec x)$, after adding elements of $\langle \vec a V_0\rangle$ if necessary, we may assume that any realization of $p(\vec x)$ is of the form $\vec a=\vec a_1\vec a_2\vec a_3$, where
$\vec a_1$ is a $V_0\cup G(V)$-independent tuple, $\vec a_2\in G(V)$ is $\FF$-linearly independent over $V_0$ and $\vec a_3\in \dcl_{\mathbb{F}}(\vec a_1 \vec a_2V_0)$. Note that under this assumption $\vec a$ is $G$-independent over $V_0$.

Let $p_{B}^\mathcal{L}(\vec x)$ be the restriction of $p_B(\vec x)$ to the language $\mathcal{L}$. Similarly let $p_{C}^{\mathcal{L}}(\vec x)$ be the restriction of $p_C(\vec x)$ the language $\mathcal{L}$.
Since $B\ind^0_{V_0} C$, we use the independence theorem for $\ind^0$-independence to find $\vec a_1 \vec a_2 \vec a_3\models p_{B}^{\mathcal{L}}(\vec x) \cup p_{C}^{\mathcal{L}}(\vec x)$ such that $\vec a_1 \vec a_2 \vec a_3\ind_{V_0}^0B\cup C$. 

Using the codensity property, we can assume that $\vec a_1$ is $\FF$-linearly independent from $G(V)$ over $B\cup C$. 

We could use the density property and assume that $\vec a_2\in G$, but the module type of $\vec a_2$ over $V_0$ may not be the desired one. Instead, let $p^{\mathcal{L}}_{2BC\vec a_1}(\vec x_2)=\tp_{\cL}(\vec a_2/BC\vec a_1)$; we will refine the way we choose the realization of this last type in order to take into account the module structure.

Let $p_{2B}(\vec x_2)$ be the restriction of $p_B(\vec x)$ to the $|\vec a_2|$-coordinates considered over the parameters set $B$ and let $q_{2B}^{R}(\vec x_2)$ be its restriction to the language of $R$-modules over the parameter set $G(B)$. Similarly let $p_{2C}(\vec x_2)$ be the restriction of $p_C(\vec x_2)\in S(C)$ to the $|\vec a_2|$-coordinates considered only over the parameters set $C$ and let $q_{2C}^{R}(\vec x_2)$ be its restriction to the language of $R$-modules to $G(C)$. Finally let $q_2^{R}(\vec x_2)$ be the restriction of $p(\vec x)$ to the $|\vec a_2|$-coordinates considered only over the parameters set $G(V_0)$ restricted to the language of $R$-modules. 

Since $C\ind^G_{V_0}D$ we get using clause $(2)$ that $G(C)\ind^{R-mod}_{G(V_0)}G(D)$.

Since the theory of $R$-modules is stable, $q_{2C}^{R}(\vec x_2)$ is the unique non-forking extension of $q_2^{R}(\vec x_2)$ to $G(C)$ and $q_{2B}^{R}(\vec x_2)$ is the unique non-forking extension of $q_2^{R}(\vec x_2)$ to $G(B)$ which can be amalgamated to the unique non-forking extension of $q_2^{R}(\vec x_2)$ to $G(B)\cup G(C)$, which we will call $q_{2BC}^{R}(\vec x_2)$.

Now we show that both extensions
$p_{2BC\vec a_1}^{\mathcal{L}}$ and 
$q_{2BC}^{R}$ are compatible and can be extended to a type over $B\cup C$ that does not fork over $V_0$ in the sense of $\ind^G$.

\textbf{Claim 1.} $ q_{2BC}^{R}(\vec x_2)\cup p^{\mathcal{L}}_{2BC\vec a_1}(\vec x_2)$ is consistent and any realization is independent in the sense of $R$-modules from $G(B)\cup G(C)$ over $G(V_0)$.

Since $\dim(\vec a_2/BC\vec a_1)=|\vec a_2|$, the consistency result follows from Corollary \ref{cor_shift}. Let $\vec a_2'\in G^{|\vec x_2|}$ be such that $\vec a_2' \models p^{\mathcal{L}}_{2BC\vec a_1}(\vec x_2) \cup q_{2BC}^{R}(\vec x_2)$, then $\vec a_2'\models p_{2BC}^{R}$ and we have that $\vec a_2'$ is independent in the sense of $R$-modules from $G(B)\cup G(C)$ over $G(V_0)$. Note that 
$\vec a_1\vec a_2'\equiv^{\mathcal{L}}_{BC}\vec a_1\vec a_2$ and thus $\vec a_1\vec a_2'\ind^0_{V_0}B\cup C$.

Let $\vec a_3'$ be such that 
$\vec a_1\vec a_2'\vec a_3'\equiv^{\mathcal{L}}_{BC}\vec a_1\vec a_2\vec a_3$ and write $\vec a'=\vec a_1\vec a_2' \vec a_3'$ for short.

\textbf{Claim 2.} $\vec a'\models p_B(\vec x)\cup p_C(\vec x)$ and it is $\ind^G$-independent from $B\cup C$ over $V_0$.

Since $\vec a_1 \vec a_2 \vec a_3\ind_{V_0}^0B\cup C$, $\vec a_1\vec a_2'\vec a_3'\equiv^{\mathcal{L}}_{BC}\vec a_1\vec a_2\vec a_3$ we get that part $(1)$ in the definition of $G$-independence holds. By the way we constructed $\vec a_2'$ we know that part $(2)$ in the definition of $\ind^G$-independence also holds.

Let us consider part (3). Let $t\in G(\vect{\vec a'V_0} + \vect{BC})$, then $t=d+\sum_{i}\alpha_i a_i$ for some $d\in \vect{BC}$. We may enumerate
$\vec a_1=(a_1,\dots,a_n)$, $\vec a'_2=(a_{n+1}',\dots,a_{n+k}')$. By construction of $\vec a_1$, we must have $\alpha_i=0$ for $1\leq i\leq n$. Thus $t=d+\sum_{n<i\leq n+k}\alpha_i a_i'$. 

In this part of the argument we will use that $\FF=\hat{R}$. Then we can write $t=d+\sum_{n<i\leq n+k}(p_i/q_i) a_i'$ for some $p_i,q_i\in R$. Then 
$$q_{n+1}\cdots q_{n+k}t=q_{n+1}\cdots q_{n+k}d+\sum_{n<i\leq n+k}p_i q_{n+1}\cdots \hat{q_i} \cdots q_{n+k} a_i'\in G.$$

To simplify the notation, for each $n<i\leq n+k$ write $p_i'=p_i q_{n+1}\cdots \hat{q_i} \cdots q_{n+k}$. Then we get
\[\sum_{n<i\leq n+k}p_i' a_i'\in q_{n+1}\ldots q_{n+k}d+q_{n+1}\ldots q_{n+k}G.\]
By Remark \ref{rk:heir}, there is $v_0\in V_0$ such that $\sum_{n<i\leq n+k}p_i' a_i'\in q_{n+1}\cdots q_{n+k}v_0+q_{n+1}\cdots q_{n+k}G$, so $q_{n+1}\cdots q_{n+k}(v_0-d)\in q_{n+1}\cdots q_{n+k}G$, so $d=v_0+g$ for some $g\in G$. So 
$t=g+v_0+\sum_{n<i\leq n+k}(p_i/q_i) a_i'$, with $g\in G(\langle BC\rangle )$, $v_0+\sum_{n<i\leq n+k}(p_i/q_i) a_i'\in G(\langle \vec a'V_0\rangle )$.

\end{proof}

\begin{cor}
    Assume that $T$ has $SU$-rank one and that $\hat{R}=\FF$. Then $T^G$ is simple and forking independence is given by $\ind^G$.
\end{cor}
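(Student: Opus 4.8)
The plan is to apply Fact~\ref{fact:KPnsop1}. By Theorem~\ref{forking-localchar}, the assumption that $T$ has $SU$-rank one already gives that $T^G$ (equivalently, its definitional expansion $T^{G+}$) is simple, and hence NSOP$_1$; so it suffices to verify that $\ind^G$ satisfies the eight properties listed in Fact~\ref{fact:KPnsop1} for relations with models of $T^{G+}$ in the base, conclude that $\ind^G$ is Kim-independence over models, and then note that in the simple theory $T^G$ Kim-independence over models coincides with forking independence over models.

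First I would collect the properties established earlier in this section. Invariance and monotonicity are the content of one of the Lemmas above; symmetry, transitivity, and extension are each proved in their own Lemma (for transitivity one uses the transitivity of the ambient relation $\ind^0$, which is assumed throughout this section and in any case holds when $\ind^0$ is forking independence in the supersimple theory $T$); and local character$^{*}$ is the content of the relevant Lemma, using that forking has local character both in $T$ and in the theory of $R$-modules. Finite character holds directly from the way $\ind^G$ is defined on arbitrary sets. Existence, $\vec a\ind^G_{V_0}V_0$, is immediate: clauses (1) and (2) of Definition~\ref{forkingVG} follow from existence for $\ind^0$ and for $\ind^{R-mod}$, while clause (3) is trivial since $V_0\subseteq \langle \vec a V_0\rangle$. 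Finally, since $T$ has $SU$-rank one and $\hat R=\FF$, the independence theorem over models for $\ind^G$ is exactly the Theorem immediately preceding this corollary.

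Having verified all eight properties, Fact~\ref{fact:KPnsop1} yields that $\ind^G$ is Kim-independence over models of $T^{G+}$; since $T^G$ is simple, Kim-independence over models equals (non-)forking independence over models. Thus forking independence over models is given by $\ind^G$, and because in a simple theory non-forking over arbitrary sets is determined by its restriction to models (via base monotonicity, extension, and transitivity of forking), this completely pins down forking in $T^G$.

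The main point here is bookkeeping rather than a new idea: all the substance lies in the preceding lemmas and in the last Theorem. The only things to be careful about are that the hypothesis ``$SU$-rank one'' enters twice --- once to know $T^G$ is simple, and once (together with $\hat R=\FF$) for the independence theorem --- and that the full axiom list of Fact~\ref{fact:KPnsop1} is genuinely met, in particular the transitivity input coming from $\ind^0$ and the local-character input coming simultaneously from $T$ and from the theory of $R$-modules.
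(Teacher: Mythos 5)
Your proof is correct and is the intended argument: the entire scaffolding of Section~6 (invariance, monotonicity, symmetry, transitivity, extension, local character\textsuperscript{*}, and the independence theorem over models, together with the definitional built-in finite character and the trivial existence check) is set up precisely so that Fact~\ref{fact:KPnsop1} can be invoked to identify $\ind^G$ with Kim-independence over models, and simplicity (already supplied independently by Theorem~\ref{forking-localchar}) upgrades Kim-independence to forking independence. One small remark: since $\ind^G$ in Definition~\ref{forkingVG} is only defined with a model in the base, the corollary should be read as asserting the coincidence of $\ind^G$ with forking independence over models; your closing sentence about transferring this to arbitrary bases is a harmless extra, but not needed for the statement as given.
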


\textbf{Question}
Assume that $T$ has $SU$-rank one. Does the independence theorem hold for $\ind^G$
without the assumption $\FF=\hat R$?
In the proof above we only use this assumption for proving part (3) in the amalgam. A positive answer would also 
show that our characterization of forking independence given by $\ind^G$ also works in this more general setting.\\

The following examples give a base theory $T$ which is geometric, $MR(T)=2$, $\ind^0\neq \ind$ and conditions (1),(2) and (3) still capture forking independence. This gives evidence that our characterization of independence works under more general assumptions.

\begin{exam} \label{lovely-one-based} Let $\mathcal{V}=(V,\ldots)$ be a supersimple SU-rank 1 expansion of a vector space over a field $\mathbb{F}$, and we assume that $\acl=\spane_{\mathbb{F}}$ and $T=Th(\mathcal{V})$ has quantifier elimination. Let $T^P$ be the theory of a lovely pair $(\mathcal{V},P)$. It has been shown in \cite{Va} that in this case the algebraic closure in the pair coincides with the algebraic closure in $\mathcal{V}$, i.e. $\acl_P=\acl=\spane_{\mathbb{F}}$ and $T^P$ is supersimple of SU-rank 2. As shown in \cite{Va2}, for any SU-rank 1 theory $T$, the theory of lovely pairs of $T$ has the weak non-finite cover property. In particular, $T^P$ eliminates $\exists^\infty$. Thus, in our setting, $T^P$ is geometric, with algebraic closure given by the linear span. Note also that due to the modularity of the pregeometry, $T^P$ has quantifier elimination (any closed set is $P$-independent). 

Assume that $R=\mathbb{F}$ and consider the dense-codense generic submodule expansion $T^{PG}$ of $T^P$. Thus, $G(V)$ is an $\mathbb{F}$-vector subspace of $V$, and moreover, by the density property, an elementary submodel of $\mathcal{V}$ viewed as an $\mathcal{L}$-structure. The fact that it is an elementary submodel follows (from the Tarski-Vaught test) by density and being closed under $\acl$: if a 1-formula with parameters in $G(V)$ is algebraic, then it is realized in $G(V)$ because $G(V)$ is algebraically closed; if it is non-algebraic, it is realized by density of $G(V)$.

Clearly, $T^{PG}$ is the theory of lovely pairs of $T^P$, where $T^P$ is viewed as a geometric theory. On the other hand, note that $T^{PG}$ is not the theory of lovely pairs of $T^P$ in the sense of \cite{BPV}. In any model $(\mathcal{V}, P, G)$ of $T^{PG}$, for any $a\in \mathcal{V}$ the formula $P(x-a)$ is non-algebraic, and therefore is realized in $G(V)$. It follows that $V=P(V)+G(V)$. To compare this behavior with that of a lovely pair expansion $T^{PQ}$ of $T^P$ in the sense of \cite{BPV}, recall that forking independence in $(\mathcal{V},P)$ (denoted by $\ind^P$) is given by $$A\ind^P_{C}B\iff A\ind_{C\cup P(V)}B,\  \spane_{\mathbb{F}}(AC)\cap\spane_{\mathbb{F}}(BC)=\spane_{\mathbb{F}}(C).$$ The extension property in the sense of a pair $((\mathcal{V},P),Q)$ of models of $T^P$ then implies that for any $v_1,\ldots,v_n\in V$, we can find $w\in V\backslash P(V)$ such that $w\ind^P_{\emptyset} \vec v Q(V)$, and, in particular, $w\ind_{P(V)}\vec v Q(V)$, equivalently, $w\not\in\spane_{\mathbb{F}}(\vec vP(V)Q(V))$. It follows that $V$ has infinite dimension over $P(V)+Q(V)$. Note also that
any pair $(\mathcal{V}, P, G)\models T^{PG}$ fails the extension property (in the sense of $T^P$): the $L_P$-type of $a\not\in P(V)\cup G(V)$ over $\emptyset$ has no non-forking extension  over $G(V)$ realized in $V$.
Indeed, for any such $a$, we have $a=b+c$ where $b\in P(V)$ and $c\in G(V)$. Then $a\nind_{P(V)}G(V)$, and, thus, $a\nind^P_{\emptyset}G(V)$.
\end{exam}

\begin{exam} Let us simplify further the previous example and assume that $T$ is the theory of pure vector spaces over $\FF$ and $(\mathcal{V},P,G)\models T^{PG}$. As explained in the previous example, $V=P(V)+G(V)$ and we can view $(\mathcal{V},P,G)$ as a definable reduct of $(\mathcal{W},P,G)$, a lovely pair of $T^P$ in the sense of \cite{BPV} (so $W$ has infinite dimension over $V$). Note that both $(P(V),P(V)\cap G(V))$ and $(G(V),P(V)\cap G(V))$, viewed as lovely pairs of vector spaces, have full quantifier elimination. Each of them can be viewed as the small model inside a lovely pair of $T^P$ or $T^G$ and no extra structure is induced on them when viewed in $(\mathcal{W},P,G)$, and so, also in $(\mathcal{V},P,G)$. We claim that $MR(\mathcal{V}, P, G)=3$. Indeed,  $MR(P(V),P(V)\cap G(V))=MR(G(V),P(V)\cap G(V))=2$ as each is a lovely pair of pure vector spaces.
 Consider now the definable linear map $m:G\times P\to V$ given by $m(a,b)=a+b$. The map is onto with kernel $K=\{(a,-a):a\in G\cap P$ and we have $MR(K)=1$.  Thus, $MR(V)=4-1=3$.

We now show that independence as described in Definition \ref{forkingVG} agrees with non-forking in $(\mathcal{V}, P, G)$. Notice that condition (1) recovers independence in the pair $(\mathcal{V}, P)$ and independence in the pair strictly extends algebraic independence. Note also that condition (2) corresponds to algebraic independence inside $G$. It remains to show we can witness dependence when an element becomes algebraic over $G$. Let $D,V_0$ as in Definition \ref{forkingVG} and consider a singleton $a$. Suppose we have $a\in \acl(G\cup D)\setminus \acl(G\cup V_0)$, then $a=g+d$ for some $a\in A$ and $g\in G$, so $g=a-d\in G(a+D)$ but
if $a-d\in G(\spane_{\FF}(aV_0))+G(D)$, then
$a=d+g_1+g_2$ with $g_1+g_2\in G(\spane_{\FF}(aV_0))+G(D)$. Then either $g_1=\mu a+v_0$ for $\mu\neq 0$, $v_0\in V_0$ and then $a\in \acl(G\cup V_0)$, a contradiction, or $g_1=v_0$ and thus $a\in \acl(G\cup V_0)$, again a contradiction. The converse
follows from Remark \ref{cosetsforking}.
\end{exam}

\section{Example: Lovely pair of vector spaces with a generic NSOP1 structure on its factor-space}\label{sec:moreexamples}

In this section, we describe a candidate for an example of an NSOP$_1$ not simple theory which is modular pregeometric. This construction shall serve as a general method for constructing modular pregeometric theories of arbitrary combinatorial complexity (stable, simple, NIP, NSOP$_1$). We leave most of the details to the reader. 

Let $\FF$ be any field, and let us denote by $\FF\text{-vs}$ the theory of infinite dimensional $\FF$-vector space in the usual one-sorted language. For $V\models \FF\text{-vs}$ and $\vec a$ a tuple in $V$, we write $\tp^V(\vec a)$ for it type in the vector space language. Similarly, when we add a new sort $S$ with extra structure to $V$, we will write 
$\tp^S(\vec a)$ for the type of the tuple seeing $S$ with the induced structure, provided $\vec a$ belongs to the sort $S$.

\begin{fact}\label{fact_HpriminV}
Let $V\models \FF\mathrm{-vs}$. Let $H\subseteq V$ such that $(V,H)$ is an $H$-structure. Then $H_\mathrm{ind}$, the induced structure on $H$, is trivial. Let $H'\supseteq H_\mathrm{ind}$ be any expansion of $H$ which is NSOP$_1$. Then $(V,H')$ is NSOP$_1$ and for all $\vec a\subseteq V$, we have 
\[\tp^{V}(\vec a)\cup \tp^{H'}(\dcl^{\mathrm{feq}}(H(\spane_\FF(\vec a))))\vdash \tp^{(V,H')}(\vec a)\]
where $\dcl^{\mathrm{feq}}(A)$ is the set of codes for finite subsets of $A$.
\end{fact}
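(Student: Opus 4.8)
The plan is to recognize $(V,H')$ as an instance of the $G$-structures studied in this paper, with $R=\FF$ (so $\hat R = \FF$), and then to transfer the quantifier-elimination and preservation results already established. First I would take $T=\FF\text{-vs}$, which is strongly minimal, has quantifier elimination, satisfies $\acl=\dcl=\spane_\FF$, and eliminates $\exists^\infty$. Since $(V,H)$ is an $H$-structure, Lemmas \ref{H-model-axioms-ABC} and \ref{H-model-axiom-D} give that $(V,G)$, where $G=\spane_\FF(H)=H$ (as $R=\FF$), satisfies axioms $(A),(B),(C),(D)$; in fact with $R=\FF$ the submodule $G$ is already $\FF$-divisible, so $G=H$ literally and the functions $f_{\vec\lambda,i}$ become trivial, matching the fact that the induced structure $H_{\mathrm{ind}}$ on $H$ is trivial (this is the content of the first sentence, which I would cite from \cite{BeVa-H} or reprove using the exchange property and codensity). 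So $(V,H')$ is the theory $T^G$ of this paper, where now we allow the structure on $G=H$ to be an arbitrary NSOP$_1$ expansion $H'$ rather than the $R$-module structure — this is the setting of Section \ref{sec:preservationTP} applied not to $R\text{-mod}$ but to an arbitrary stable-or-NSOP$_1$ theory on the predicate.

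The key steps, in order: (1) Check that the quantifier-elimination argument of Theorem \ref{thm_QE} (equivalently Theorem \ref{theo_QEfrac} since $\hat R=\FF$) goes through verbatim when the "$R$-module formulas $P_\psi$" are replaced by arbitrary formulas in the language of $H'$ — the only properties used are that the induced structure on $H$ in the pure-pair sense is trivial (so that $G$-independent tuples carry exactly their $V$-type plus their $H'$-type) and that $H'$ is stable or NSOP$_1$ enough for the steps invoking stability of $R$-modules to be replaced by the corresponding preservation input. (2) From QE deduce, exactly as in Corollary \ref{G-ind-qe} and Corollary \ref{inducedstructG}, that for a $G$-independent tuple $\vec a$ the type $\tp^{(V,H')}(\vec a)$ is determined by $\tp^V(\vec a, H(\spane_\FF\vec a))$ together with the $H'$-type of $H(\spane_\FF\vec a)$; since any tuple $\vec a$ is automatically $\FF$-span-closed up to $\acl$ and $H(\spane_\FF\vec a)$ is the relevant trace, and since the finite-set codes $\dcl^{\mathrm{feq}}(H(\spane_\FF\vec a))$ record exactly which finite subsets of that trace are named, the stated implication follows. (3) For NSOP$_1$-preservation: run the proof of the last theorem of Section \ref{sec:preservationTP} (the one showing $T^{G+}$ has SOP$_1$ $\Rightarrow$ $T$ has SOP$_1$) with $R\text{-mod}$ replaced by $H'$; the only place stability of $R$-modules was used is in Claim 1 of Proposition \ref{prop_nsop1reductiontoG} (and its analogue), where one needs: if $\theta(x,\vec c_{i,1})\wedge\theta(x,\vec c_{j,1})$ is finite for $i<j$ along an indiscernible array then the $H'$-formula $\theta$ has SOP$_1$ — so one concludes instead that $H'$ has SOP$_1$, contradicting the hypothesis that $H'$ is NSOP$_1$. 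The small-set/large-set dichotomy (Proposition \ref{uptosmall}) and the codensity argument in Case 2 are unchanged.

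I expect the main obstacle to be step (1): verifying that the QE proof is genuinely insensitive to what structure sits on the predicate, i.e. that nowhere beyond "the induced structure on $H$ is trivial" and "formulas over $H$ are handled as a black box" did the original argument use the module structure. Concretely one must check the shift argument (Lemma \ref{lm_densediv}, Proposition \ref{prop_shift}, Corollary \ref{cor_shift}): these use only the density of $\FF$-divisible elements (automatic since $R=\FF$) and that formulas pulled back to $G$ are "closed under boolean combinations and depend only on the $H'$-type" — which holds for any expansion. Once this is in place, the preservation of NSOP$_1$ and the type-determination statement follow formally, and the triviality of $H_{\mathrm{ind}}$ is exactly what licenses replacing "$R$-module type of the trace" by "$\dcl^{\mathrm{feq}}$ of the trace" in the displayed implication. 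I would also remark that the same scheme yields the stable/simple/NIP variants by feeding a stable/simple/NIP $H'$ into the respective preservation theorems of Sections \ref{sec:NIPandstable} and \ref{sec:preservationTP}, which is why this construction produces modular pregeometric theories of prescribed complexity.
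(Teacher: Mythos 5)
The approach you propose is fundamentally different from what the paper does, and unfortunately it contains a gap that I don't think is repairable along the route you sketch.

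The first problem is the identification $G=\spane_\FF(H)=H$. This is false: in an $H$-structure $(V,H)$, the predicate $H$ names a \emph{dense $\acl$-independent set}, i.e.\ a basis-like set, not a subspace. Even with $R=\FF$, $\spane_\FF(H)$ is a proper $\FF$-subspace of $V$ that strictly contains $H$, and the structure being enriched in Fact~\ref{fact_HpriminV} is the independent set $H$ itself, not the span. So $(V,H')$ is not a model of any $T^G$ from this paper, and Lemmas~\ref{H-model-axioms-ABC}--\ref{H-model-axiom-D} do not apply as you want: those lemmas produce a $G$-structure from an $H$-structure by passing to $G=\spane_R(H)$, but they do not identify the two.

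The second problem is more serious even if one tried to patch the first. You claim that the quantifier-elimination argument of Theorem~\ref{thm_QE} (resp.\ \ref{theo_QEfrac}) goes through verbatim with ``$R$-module formulas $P_\psi$'' replaced by arbitrary formulas of $H'$, because the shift lemma ``uses only density of divisible elements and that formulas depend only on the $H'$-type''. That is not the case. The shift argument in Proposition~\ref{prop_shift} (and in Step~1 of Theorem~\ref{theo_QEfrac}) crucially uses quantifier elimination in $R$-modules: the conditions in $q_1^\sigma$ are boolean combinations of cosets $ra'+g\in s_1G+\dots+s_nG$, and this additive form is exactly what makes ``shift by $d\in G^{\dv}$'' preserve the module type. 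If the structure on the predicate is an arbitrary NSOP$_1$ theory, its formulas are not coset conditions, and $a''+d$ need not satisfy the same $H'$-formulas as $a''$. So the core amalgamation step of the QE proof breaks, and with it steps (2) and (3) of your plan, since they lean on that QE.

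What the paper actually does is much shorter and disjoint from the $G$-structure machinery: it observes that for $T=\FF\text{-vs}$ the $H$-structure $(V,H)$ is $\omega$-stable with $H$ stably embedded and $H_{\mathrm{ind}}$ trivial, and then invokes the general ``enriching a stably embedded predicate'' framework of Conant--d'Elb\'ee--Halevi--Jimenez--Rideau-Kikuchi (\cite[Section 3]{CdEHJRK22}). The two conditions one checks are (i) $\acl^{\eq}(\vec a)\cap H^{\eq}=\dcl^{\mathrm{feq}}(H(\spane_\FF(\vec a)))$, which follows from weak elimination of imaginaries in $H_{\mathrm{ind}}$ and $V$ together with the description of $\acl$ in $H$-structures, and (ii) $H$ is algebraically embedded in $(V,H)$. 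NSOP$_1$ preservation and the displayed type-determination are then consequences of the cited black-box. Your instinct that ``the only thing that matters is that the induced structure on $H$ is trivial and stably embedded'' is correct, but that is precisely the content of the theorem you would need from \cite{CdEHJRK22}; reproving it by re-running the module-specific QE of Sections~\ref{Sub:qefractionfield}--\ref{sec:basics} is not an available shortcut.

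A note in your favor: in the specific case $T=\FF\text{-vs}$ the non-algebraic $1$-type over any set is unique (strong minimality), so a direct QE proof for $(V,H')$ that avoids the shift argument altogether is plausible — one would realize the $\cL$-type and the $H'$-type separately and observe they are freely amalgamable. But that is a different argument from the one you wrote, and you would still need to handle tuples, $\dcl^{\mathrm{feq}}$, and the NSOP$_1$ transfer separately; it would essentially re-derive a special case of \cite{CdEHJRK22}.
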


\begin{proof}
Since the theory $\FF\mathrm{-vs}$ is strongly minimal the pair $(V,H)$ is $\omega$-stable, $H$ is stably embedded and there is no outside structure induced on $H_\mathrm{ind}$. Moreover, this last set just models the theory in the empty language with infinitely many elements. The rest follows from the work in \cite[Section 3]{CdEHJRK22}. We check the following properties:
\begin{enumerate}
    \item $\acl^{\eq}(\vec a)\cap H^{\eq} =  \dcl^{\mathrm{feq}}(H(\spane_\FF(\vec a)))$;
    \item $H$ is \textit{algebraically embedded in $(V,H)$}, which means: if $\vec a\ind^{(V,H)}_{\vec c} \vec b$, then \[\acl^{\eq}(\vec a\vec b \vec c)\cap H^{\eq} \subseteq \acl^{\eq}(\acl^{\eq}(\vec a\vec c)\cap H^{\eq}, \acl^{\eq}(\vec b\vec c)\cap H^{\eq})\]
    (see \cite[Definition 4.9]{CdEHJRK22}).
\end{enumerate} 
(1) follows from the fact that $H_\mathrm{ind}$ and $V$ have weak elimination of imaginaries, and the description of algebraic closure in $H$-structures (see Corollary 3.14 in \cite{BeVa-H} 
\end{proof}

\begin{fact}\label{fact_Vproj} Let $(V,W)$ be a lovely pair of models of the theory $\FF\text{-vs}$. Consider the two sorted structure $(V,V/W,\pi)$ with the quotient homomorphism $\pi:V\rightarrow V/W$ and let $V^* = V/W$ the quotient vector space.
\begin{enumerate}
    \item The theory of $(V,V^*,\pi)$ is stable. 
    \item For all $a\subseteq V$, $\alpha\subseteq V^*$ we have $tp^{V^*}(\alpha/\pi(a))\vdash tp^{(V,V^*)}(\alpha/a)$.
    \item The induced structure on $V^*$ in $(V,V^*, \pi)$ is the one of a pure vector space. In particular $V^*$ is stably embedded and $V^*_\mathrm{ind} = V^*$ has  weak elimination of imaginaries.
    \item $(V,V^*,\pi)$ has weak elimination of imaginaries.
    \item For all $\vec a\subseteq V$ and $\vec \alpha\subseteq V^*$, $\acl^{\eq} (\vec a,\vec \alpha) = \dcl^{\mathrm{feq}}(\spane_\FF(\vec a)\cup \spane_\FF(\pi(\vec a),\vec \alpha))$, in particular $\acl^{\eq} (\vec a,\vec \alpha)\cap {V^*}^{\eq} = \dcl^{\mathrm{feq}}(\spane_\FF(\pi(\vec a),\vec \alpha))$.
  
\end{enumerate}
\end{fact}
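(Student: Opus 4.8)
The plan is to prove Fact~\ref{fact_Vproj} by recognizing the two-sorted structure $(V,V^*,\pi)$ as (a reduct of, or a structure bi-interpretable with) a lovely pair of $\FF$-vector spaces. First I would set up the dictionary: if $(V,W)$ is a lovely pair of models of $\FF\text{-vs}$, then $W$ is an $\FF$-subspace of $V$ of infinite codimension, and $V^* = V/W$ is just another infinite-dimensional $\FF$-vector space. The map $\pi$ together with a choice of $\FF$-linear section identifies $V$ with $W \oplus V^*$ as $\FF$-vector spaces (though not definably without choices), and more usefully, the structure $(V, V^*, \pi)$ is interpretable in the pure two-sorted vector space with a surjective linear map, which is a well-understood stable structure. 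So item (1) should follow either from this interpretation or directly: the theory of a pair of vector spaces $(U_1, U_2)$ with a surjective linear map $U_1 \to U_2$ is bi-interpretable with the theory of a vector space with a distinguished subspace (the kernel), and that is stable (indeed, it is a classical example — vector space pairs are $\omega$-stable; one can cite the stability of lovely pairs over a stable SU-rank one base, as in Example~\ref{lovely-one-based} which uses \cite{Va}, or just quote $\FF$-vs being strongly minimal so the pair is $\omega$-stable). I would establish (1) first since (2)--(5) are really refinements describing the fine structure of this stable theory.

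Next, for item (3) — that the induced structure on $V^*$ is that of a pure vector space — I would argue that any definable (in the pair) relation on tuples from $V^*$ is already definable in the pure $\FF$-vector space language of $V^*$. The cleanest route is quantifier elimination: $(V,W)$, being a lovely pair of $\FF$-vs, has QE after adding predicates/functions for the linear span intersected with $W$ (analogous to the QE statements proved earlier in this paper, e.g. Theorem~\ref{thm_QE} and the discussion of lovely pairs); projecting to $V^*$ and noting that $W$ becomes trivial downstairs, one sees the only surviving structure on $V^*$ is linear. Alternatively, appeal to the general principle that in a lovely pair of a strongly minimal modular theory the factor space carries no extra structure — this is in the spirit of Fact~\ref{fact_HpriminV} for $H$-structures. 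From QE on $V^*$ one reads off stable embeddedness of $V^*$ and that $V^*_{\mathrm{ind}} = V^*$ has weak elimination of imaginaries (pure infinite $\FF$-vector spaces have weak EI: every imaginary is interalgebraic with a finite tuple from the home sort together with a canonical parameter for a coset, which in a vector space reduces to a finite real tuple). Then item (2), $\tp^{V^*}(\alpha/\pi(a)) \vdash \tp^{(V,V^*)}(\alpha/a)$, is exactly a consequence of stable embeddedness plus the fact that the trace of $\dcl(a)$ on the $V^*$-sort is $\spane_\FF(\pi(a))$: any $(V,V^*)$-formula with parameters $a$ evaluated on tuples from $V^*$ is, by QE/stable embeddedness, equivalent to a $V^*$-formula with parameters from $\dcl(a) \cap V^* = \spane_\FF(\pi(a))$.

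For item (5), the computation of $\acl^{\eq}(\vec a, \vec\alpha)$, I would proceed sort by sort. Since $V$ (pure vector space) and $V^*$ (pure vector space, by (3)) each have $\acl = \dcl = \spane_\FF$ and weak EI, and since $(V,V^*,\pi)$ has the product-like structure $W \oplus V^*$, the algebraic closure of $\vec a, \vec\alpha$ in the real home sorts is $\spane_\FF(\vec a)$ in $V$ and $\spane_\FF(\pi(\vec a), \vec\alpha)$ in $V^*$; the imaginary part is then generated by codes for finite subsets of these (the $\dcl^{\mathrm{feq}}$), by weak EI of the two pure reducts and the stability/weak-EI of the whole structure established in (1) and (4). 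The key point making $\acl^{\eq}(\vec a,\vec\alpha) \cap {V^*}^{\eq} = \dcl^{\mathrm{feq}}(\spane_\FF(\pi(\vec a),\vec\alpha))$ is precisely item (2): nothing in the $V$-sort above $\vec a$ contributes new algebraic imaginaries over the $V^*$-part beyond $\pi(\vec a)$. Item (4), weak EI of $(V,V^*,\pi)$, I would get either as a consequence of (5) (which explicitly exhibits every algebraic-closure-imaginary as coming from $\dcl^{\mathrm{feq}}$ of real tuples, and a standard argument upgrades this: a structure in which $\acl^{\eq}$ of every real tuple is the $\dcl^{\mathrm{feq}}$-closure of a real tuple has weak EI), or from the general theorem that lovely pairs of theories with weak EI and $\acl = \dcl$ have weak EI, combined with the bi-interpretation.

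The main obstacle I anticipate is item (5) (and hence (4)): carefully controlling imaginaries in a two-sorted structure is delicate, and one must be sure that the interaction through $\pi$ does not create a canonical parameter that is not accounted for by $\spane_\FF(\vec a) \cup \spane_\FF(\pi(\vec a),\vec\alpha)$ — for instance, a coset of $W$ in $V$, which is a $V/W = V^*$-point, is exactly what $\pi$ records, so this is consistent, but one should check there is no subtler imaginary coming from, say, the graph of $\pi$ restricted to a finite-dimensional subspace. I would handle this by reducing everything to the bi-interpretation with a single-sorted vector space with a predicate for a subspace, where the imaginary analysis is classical (vector spaces with a distinguished subspace have weak EI and a fully understood $\acl^{\eq}$), and then transport the answer back through $\pi$. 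The stability claim (1) and the triviality of induced structure (3) I expect to be routine given the machinery already developed in this paper and the cited literature on lovely pairs; everything else is bookkeeping around weak elimination of imaginaries for pure vector spaces.
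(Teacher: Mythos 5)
The paper gives no proof of Fact~\ref{fact_Vproj}: the entire argument is the sentence ``This is left to the reader as an exercise, it is very similar to the work in \cite[Section 3]{DeACFG}.'' So there is no paper proof to compare against; one can only assess whether your sketch plausibly fills the gap the paper leaves. Your overall plan — observe that $(V,V^*,\pi)$ and the single-sorted pair $(V,W)$ carry the same information, use quantifier elimination for the lovely pair to read off stable embeddedness of $V^*$ and the triviality of its induced structure, then transport classical weak elimination of imaginaries for vector-space pairs — is indeed the natural one and is very likely the ``exercise'' the authors have in mind (it is also structurally parallel to what is done in \cite{DeACFG} for $(K,K/G)$).

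There are two specific soft spots. First, the ``standard argument'' you invoke for deriving item (4) from item (5) does not go through as stated. Knowing that $\acl^{\eq}(\vec a,\vec\alpha)=\dcl^{\mathrm{feq}}(\text{real tuples})$ for all real $\vec a,\vec\alpha$ tells you that any imaginary $e\in\dcl^{\eq}(\vec a\vec\alpha)$ is definable over a real tuple $\vec c$ lying in $\acl^{\eq}(\vec a\vec\alpha)$ — but weak EI requires $\vec c\in\acl^{\eq}(e)$, i.e. the code must be algebraic over the \emph{imaginary}, not over the original parameters. Those are different conditions, and the implication you want is not a general theorem. The correct direction is to establish weak EI (4) first — either directly (e.g. via the stable criterion on canonical bases of types over models, or by quoting weak EI for the single-sorted theory of a vector space with a distinguished subspace) — and then deduce the description of $\acl^{\eq}$ in (5) from it. Second, weak elimination of imaginaries is not in general preserved by bi-interpretation; when you transport from $(V,W)$ to $(V,V^*,\pi)$ you should check that the interpretation is sufficiently tame (e.g. that it is a reduction to a quotient by a $\emptyset$-definable equivalence relation and that the home sorts are interdefinable with the intended ones) so that the imaginary analysis actually carries over. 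Neither of these issues is fatal — both can be repaired — but as written the last paragraph of your sketch does not quite close the argument.
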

\begin{proof}
This is left to the reader as an exercise, it is very similar to the work in \cite[Section 3]{DeACFG}.
\end{proof}

\begin{cor}
Let $(V, (V^*,H'))$ be the structure obtained by expanding the structure on $V^*$ by an $H$-predicate and an NSOP$_1$ expansion $H'\supseteq H$. Then the expansion $(V,(V^*,H'))$ is NSOP$_1$. Furthermore, for each $\vec a \vec \alpha$ we have 
\[\tp^{(V,V^*)}(\vec a \vec \alpha )\cup \tp^{H'}(\dcl^{\mathrm{feq}}(H(\spane_\FF(\pi(\vec a),\vec \alpha))))\vdash \tp^{(V,(V^*,H'))}(\vec a\vec \alpha).\]
\end{cor}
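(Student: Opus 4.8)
The plan is to combine Fact~\ref{fact_HpriminV} and Fact~\ref{fact_Vproj}, using the machinery of \cite[Section 3]{CdEHJRK22} that was already invoked in the proof of Fact~\ref{fact_HpriminV}. The key point is that $H'$ only lives on the sort $V^*$, and on $V^*$ we are exactly in the situation of Fact~\ref{fact_HpriminV} applied to the pure vector space $V^*$ (by Fact~\ref{fact_Vproj}(3)). So the first step is to record that $(V^*,H')$ is NSOP$_1$ by Fact~\ref{fact_HpriminV}, with the stated type-implication $\tp^{V^*}(\vec\alpha)\cup\tp^{H'}(\dcl^{\mathrm{feq}}(H(\spane_\FF(\vec\alpha))))\vdash\tp^{(V^*,H')}(\vec\alpha)$.

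The second step is to transfer this to the two-sorted structure $(V,V^*,\pi)$. Here I would use that $V^*$ is stably embedded in $(V,V^*,\pi)$ with no induced structure beyond the pure vector space (Fact~\ref{fact_Vproj}(3)), so expanding $V^*$ by $H'$ does not interact with the $V$-sort except through $\pi$. Concretely, one checks the two hypotheses needed to apply the NSOP$_1$-preservation result of \cite[Section 3]{CdEHJRK22} with the ``small'' stably embedded sort being $V^*$ (or rather the pair $(V^*,H')$ relative to the ambient $(V,V^*,\pi)$): namely that $\acl^{\eq}(\vec a\vec\alpha)\cap {V^*}^{\eq}=\dcl^{\mathrm{feq}}(\spane_\FF(\pi(\vec a),\vec\alpha))$, which is Fact~\ref{fact_Vproj}(5), and that $V^*$ is algebraically embedded, which again follows from stable embeddedness and the description of algebraic closure in Fact~\ref{fact_Vproj}(5) together with the corresponding fact inside the $H$-structure $(V^*,H')$ (Fact~\ref{fact_HpriminV}). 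Granting these, the cited preservation theorem yields that $(V,(V^*,H'))$ is NSOP$_1$ and that $\tp^{(V,V^*)}(\vec a\vec\alpha)$ together with the $H'$-type of $\acl^{\eq}(\vec a\vec\alpha)\cap {V^*}^{\eq}$ implies $\tp^{(V,(V^*,H'))}(\vec a\vec\alpha)$. By Fact~\ref{fact_Vproj}(5) the set $\acl^{\eq}(\vec a\vec\alpha)\cap{V^*}^{\eq}$ is $\dcl^{\mathrm{feq}}(\spane_\FF(\pi(\vec a),\vec\alpha))$, and intersecting with $H^{\eq}$ (as in clause (1) of the proof of Fact~\ref{fact_HpriminV}) gives the displayed set $\dcl^{\mathrm{feq}}(H(\spane_\FF(\pi(\vec a),\vec\alpha)))$; this yields the stated implication.

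The main obstacle I anticipate is bookkeeping the imaginary sorts correctly: one has to be careful that the ``base'' sort for the $H$-structure construction is $V^*$ with its pure vector space structure, that $(V,V^*,\pi)$ is stable (Fact~\ref{fact_Vproj}(1)) so that the hypotheses of \cite[Section 3]{CdEHJRK22} about the ambient theory are met, and that weak elimination of imaginaries (Fact~\ref{fact_Vproj}(4) and~(3)) is what licenses replacing $\acl^{\eq}$-bases by $\dcl^{\mathrm{feq}}$ of spans. Once those identifications are pinned down the argument is a direct citation, which is why the statement is phrased as a corollary; I would keep the written proof short, pointing to Fact~\ref{fact_HpriminV}, Fact~\ref{fact_Vproj}, and \cite[Section 3]{CdEHJRK22}, and verifying only that properties (1) and (2) from the proof of Fact~\ref{fact_HpriminV} hold in the present two-sorted setting with $\pi(\vec a),\vec\alpha$ in place of $\spane_\FF(\vec a)$.
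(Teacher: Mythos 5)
Your proposal is correct and follows essentially the same route as the paper's proof: invoke Fact~\ref{fact_HpriminV} to get that $(V^*,H')$ is an NSOP$_1$ expansion of the stably embedded sort $V^*$, then apply the preservation theorem of \cite[Section 3]{CdEHJRK22} after checking that $V^*$ is algebraically embedded in $(V,V^*)$, which reduces via Fact~\ref{fact_Vproj}(5) to a span computation. The additional detail you give about identifying $\acl^{\eq}(\vec a\vec\alpha)\cap{V^*}^{\eq}$ and intersecting with $H^{\eq}$ to get the displayed set is consistent with the paper's (more terse) argument.
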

\begin{proof}
It will again follow from \cite[Section 3]{CdEHJRK22}. By Fact \ref{fact_HpriminV}, $(V^*,H')$ is an NSOP$_1$ expansion of the stably embedded set $V^*$ inside $(V,V^*)$. To preserve NSOP$_1$, one has to check that $V^*$ is algebraically embedded in $(V,V^*)$, which is the following: for $\ind^f$ forking independence in $(V,V^*)$, if  $\vec a\vec \alpha\ind^f _{\vec c\vec \gamma} \vec b\vec \beta$, and $\vec c\vec \gamma\subseteq \vec a\vec \alpha\cap \vec b\vec \beta$, then
\[\acl^{\eq} (\vec a\vec b\vec \alpha\vec \beta)\cap {V^*}^{\eq} \subseteq  \acl^{\eq} (\acl^{\eq} (\vec a\vec \alpha)\cap {V^*}^{\eq} , \acl^{\eq} (\vec b\vec \beta)\cap {V^*}^{\eq})\]
This easily follows by Fact \ref{fact_Vproj} (5), (regardless of the forking condition): \[\spane_\FF(\pi(\vec a\vec b)\vec \alpha\vec \beta)= \spane_\FF(\spane_\FF(\pi(\vec a)\vec \alpha),\spane_\FF(\pi(\vec b)\vec \beta)),\]
so
\[\dcl^{\mathrm{feq}}(\spane_\FF(\pi(\vec a\vec b)\vec \alpha\vec \beta))= \dcl^{\mathrm{feq}}(\spane_\FF(\dcl^{\mathrm{feq}}(\spane_\FF(\pi(\vec a)\vec \alpha)),\dcl^{\mathrm{feq}}(\spane_\FF(\pi(\vec b)\vec \beta)))),\]
which proves that $V^*$ is algebraically embedded in $(V,V^*)$.
\end{proof}

Now, we consider the structure $V^\pi$ whose universe is $V$ with the structure $(V^*, H')$ pulled back under $\pi$. By this we mean that there is a predicate $H^\pi\subseteq V$ such that 
\[x\in H^\pi \iff \pi(x)\in H.\]
Also, for any relation $R\subseteq H^n$ in $H'$, define $R^\pi\subseteq (H^\pi)^n$ so that
\[x\in R^\pi \iff \pi(x)\in R.\]
For function symbol $f$ and constant symbol $c$, define the relations $R_f^\pi$ and $R_c^\pi$ as follows:
\[x\in R_f^\pi \iff \pi(x)\in \mathrm{Graph}(f).\]
\[x\in R_c^\pi \iff \pi(x) = c.\]
We also keep the predicate $W$ in the structure $V^\pi$.
\begin{cor}
The structure $V^\pi$ is NSOP$_1$. If $H'$ is not simple, neither is $V^\pi$.
\end{cor}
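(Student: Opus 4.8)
The plan is to observe that $V^\pi$ is essentially a reduct of the structure that $(V,(V^*,H'))$ from the previous Corollary carries on its home sort $V$, while conversely $H'$ is interpretable in $V^\pi$; both assertions then follow from that Corollary together with the standard facts that NSOP$_1$ is inherited by reducts and that simplicity is inherited by structures interpretable in a given one --- both being properties witnessed by a single formula, so that an infinite language causes no difficulty.

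For the first assertion I would check that $V^\pi$ is $\emptyset$-definable on the home sort of $(V,(V^*,H'))$. The universe of $V^\pi$ is $V$ itself; the $\FF$-vector space operations and the predicate $W=\{x:\pi(x)=0\}$ are already present; and every pulled-back symbol of $V^\pi$ is $\emptyset$-definable through $\pi$, namely $H^\pi(x)\leftrightarrow H(\pi(x))$, $R^\pi(\vec x)\leftrightarrow R(\pi(\vec x))$, $R_f^\pi(\vec x)\leftrightarrow \pi(x_n)=f(\pi(x_1),\dots,\pi(x_{n-1}))$, and $R_c^\pi(x)\leftrightarrow \pi(x)=c$. Since $(V,(V^*,H'))$ is NSOP$_1$ by the previous Corollary, so is the structure it induces on the home sort $V$; and $V^\pi$, being a reduct of that structure, is NSOP$_1$ as well.

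For the second assertion I would recover $H'$ inside $V^\pi$. The predicate $W$ names an $\FF$-subspace of $V$ that is $\emptyset$-definable in $V^\pi$, so the quotient $V/W$ and the quotient map are interpretable in $V^\pi$; the set $H^\pi$ is a union of $W$-cosets and hence descends to a definable subset $\overline H := H^\pi/W$ of the imaginary sort $V/W$, and likewise each $R^\pi,R_f^\pi,R_c^\pi$ descends to a relation on $\overline H$. Since $\ker\pi=W$, the map $\pi$ induces a bijection $\overline H\to H$ which transports these descended relations precisely onto the $H'$-structure on $H$. Thus $H'$ is interpretable in $V^\pi$ (equivalently in $(V^\pi)\eq$); as $H'$ is not simple and simplicity is inherited by interpretable structures, $V^\pi$ is not simple.

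The argument is largely bookkeeping. The one step requiring a little care is the second part, since one genuinely passes through imaginaries to form $H^\pi/W$: one must check that the pulled-back relations descend along the quotient and that the descended structure is exactly $H'$, neither a proper reduct nor a proper expansion of it --- but this is immediate from the definitions, each $R^\pi$ being by construction the $\pi$-preimage of $R$. One could also package the whole Corollary by noting that $V^\pi$ and $(V,(V^*,H'))$ are bi-interpretable and that, by the same recovery of $H'$, simplicity of $(V,(V^*,H'))$ would force simplicity of $H'$, contrary to the hypothesis.
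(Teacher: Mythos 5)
Your proposal is correct and follows essentially the same route as the paper, which disposes of the matter in one line by asserting that $V^\pi$ and $(V,(V^*,H'))$ are interdefinable (really, bi-interpretable). You usefully unpack what that assertion amounts to: one direction is that $V^\pi$ is a $\emptyset$-definable reduct of the home-sort structure of $(V,(V^*,H'))$, giving NSOP$_1$; the other is that $H'$ is recovered in $(V^\pi)^{\mathrm{eq}}$ by passing to $H^\pi/W$, giving the non-simplicity transfer. Both checks are the intended ones, so this is the paper's argument with the bookkeeping made explicit.
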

\begin{proof}
Recall that $(V,W)$ is a lovely pair and $\pi: V\to V/W$ is the projection and $W$ is kept as a predicate inside $V^{\pi}$. The structure $V^\pi$ is interdefinable with $(V,(V^*, H'))$ so $V^\pi$ is NSOP$_1$. Also if $H'$ is not simple, $V^\pi$ is not simple.
\end{proof}

Now even if the algebraic closure in $(V^*,H')$ might be larger than the vector space span, the algebraic closure should not change in $V^\pi$ because $\pi$ has infinite fibers. Every formula $\phi(x)$ in $(V^*, H')$ has a dual formula $\phi^\pi(x)$ in $V^\pi$ such that 
$V^\pi\models \phi^\pi (a)\iff V^*\models \phi(\pi(a))$
so no algebraicity should come from the extra structure in $V^\pi$. For the same reason, uniform finiteness should be preserved. This leads us to state as a conjecture:\\

\textit{Conjecture.} In $V^\pi$, the algebraic closure of a set $A\subset V$ is given by the $\spane_{\FF}(A)$, and $V^\pi$ eliminates $\exists^\infty$. In particular the theory of $V^\pi$ is geometric. \\


\begin{thebibliography}{99}


\bibitem{Adler09} {\sc H. Adler},
{\rm A geometric introduction to forking and thorn-forking}, Journal of Mathematical Logic, 9 (2009), pp. 1--20.



\bibitem{BPV} {\sc I. Ben Yaacov, A. Pillay and E. Vassiliev},
{\rm Lovely pairs of models}, Annals of Pure and Applied Logic, 122 (2003), pp. 235--261.




\bibitem{BeKi} {\sc A. Berenstein, J. Kim},
{\rm Dense codense predicates and the NTP2}, Mathematical Logic Quarterly, vol 62, issue 1-2, 2016, pp 16-24.

\bibitem{BeVaLP} {\sc A. Berenstein, E. Vassiliev},
{\rm On lovely paris of geometric structures, Annals of Pure and Applied Logic, vol. 161, no. 7, April 2010, pp. 866-878.}

\bibitem{BeVaG} {\sc A. Berenstein, E. Vassiliev},
{\rm Fields with a  dense-codense linearly independent multiplicative subgroup}, Archive for Mathematical Logic,  vol. 59 (2020), 197-228.

\bibitem{BeVaGroups} {\sc A. Berenstein, E. Vassiliev},
{\rm Definable groups in dense pairs of geometric structures}, Archive for Mathematical Logic, published online October 2021.




\bibitem{BeVa-H} {\sc A. Berenstein, E. Vassiliev},
{\rm Geometric structures with a dense independent subset}, Selecta Mathematica New Series, vol 22, 2016, pp.191-225.

\bibitem{Abg} {\sc A. Block Gorman},
{\rm Companionability Characterization for the Expansion of an O-minimal Theory by a Dense Subgroup}, 2021, arxiv:2003.06366.

\bibitem{BHK}{\sc A. Block Gorman, P. Hieronymi, E. Kaplan}, {\rm Pairs of theories satisfying a Mordell-Lang condition}, Fundamenta Mathematicae, vol. 251, 131-160, 2020.


\bibitem{Bue} {\sc S. Buechler}, {\rm Pseudoprojective strongly minimal sets are locally projective},
J. Symbolic Logic, vol. 56, no. 4 (1991), pp. 1184--1194.


\bibitem{CZ}{\sc E. Casanovas and M. Ziegler}, {\rm Stable Theories with a new Predicate}, J. Symbolic Logic,vol 66, 2000, pp 1127--1140.


\bibitem{ChPi}{\sc Z. Chatzidakis, A. Pillay},
{\rm Generic structures and simple theories},
Annals of Pure and Applied Logic
Vol 95, Issues 1-3, 5 November 1998, pp. 71-92.



\bibitem{Ch}{\sc A. Chernikov}
{\rm Theories without the tree property of the second kind}, Annals of Pure and
Applied Logic 165 (2014), no. 2, 695--723.

\bibitem{ChSi}{\sc A. Chernikov and P. Simon},
{\rm Externally definable sets and dependent pairs},
Israel Journal of Mathematics,  194, no. 1, 2013, pp. 409-425.



\bibitem{CdEHJRK22} {\sc G. Conant, C. d'Elb\'ee, Yatir Halevi, L\'eo Jimenez and Silvain Rideau-Kikuchi},
{\rm Enriching a predicate and tame expansions of the integers}, arXiv[math.LO] 2203.07226, 2022.

\bibitem{De} {\sc C. D'Elb\'ee},
{\rm Generic expansions by a reduct}, Journal of Mathematical Logic, 21, no. 3, 2021

\bibitem{DeACFG} {\sc C. D'Elb\'ee},
{\rm Forking, imaginaries and other features of ACFG} Journal of Symbolic Logic, 86 (2), 2021, pp 669-700.



\bibitem{DeA} {\sc C. D'Elb\'ee},
{\rm Generic expansion of an abelian variety by a subgroup} Math. Log. Quart., 67 (2021), pp 402-408.

\bibitem{DK21} {\sc J. Dobrowolski and M. Kamsma},
{\rm Kim-independence in positive logic}, arXiv[math.LO] 2105.07788, 2021.

\bibitem{DoKi}{\sc J. Dobrowolski and H. Kim}
{\rm A preservation theorem for theories without the tree property of the first kind}, Math. Logic Quart, Vol 63, Issue 6, December 2017, pp 536--543.


\bibitem{DoGo}{\sc A. Dolich and J. Goodrick}
{\rm Strong theories of ordered Abelian groups}, Fundamenta Mathematicae, vol. 236 (2017), no. 3, pp. 267--296.

\bibitem{DMSgenexp}{\sc A. Dolich, C. Miller and C. Steinhorn}
{\rm Extensions of ordered theories by generic predicates},
Journal of Symbolic Logic, Vol 78, Issue 2, June 2013 , pp. 369--387.





\bibitem{vdDG}{\sc L. van den Dries and A. Gunaydin},
{\rm The fields of real and complex numbers with a multiplicative subgroup}, Proceedings of London Mathematical Society, 93, 2006, pp. 43--81.

\bibitem{KrRa}{\sc A. Kruckman and N. Ramsey},
{\rm Generic expansion and Skolemization in NSOP$_1$ theories},
Annals of Pure and Applied Logic, Vol 169 Issue 8, 2018, pp. 755--774.

\bibitem{Nub}{\sc H. N\"{u}bling},
{\rm Adding Skolem functions to simple theories}, Archive for Mathematical
Logic, 43(3), pp. 359-370, 2004.





\bibitem{Ra} {\sc N. Ramsey},
{\rm A note on NSOP1 in one variable},
Journal of Symbolic Logic 84 (2019), pp 388--392.


\bibitem{Va}{\sc E. Vassiliev},
{\rm Generic pairs of SU-rank 1 structures}, Annals of Pure and
Applied Logic, {\bf 120}, (2003), pp. 103--149.

\bibitem{Va2}{\sc E. Vassiliev},
{\rm On the weak non-finite cover property and the $n$-tuples of simple structures}, Journal of Symbolic Logic, {\bf 70}, (2005), pp. 235--251.

\bibitem{Win}{\sc P. Winkler}
{\rm Model-completeness and Skolem expansions}, Model Theory and
Algebra, Springer, 1975.

\bibitem{Zie}{\sc M. Ziegler}
{\rm Model theory of modules}, 
Annals of Pure and Applied Logic
Volume 26, Issue 2, (1984), pp. 149--213


\end{thebibliography}
\end{document}